\theoremstyle{plain}
\newtheorem{prop}[subsection]{Proposition}
\newtheorem{lemma}[subsection]{Lemma}
\newtheorem{cor}[subsection]{Corollary}
\newtheorem{sublemma}[subsubsection]{Lemma}
\newtheorem{subthm}[subsubsection]{Theorem}
\newtheorem{subprop}[subsubsection]{Proposition}
\newtheorem{subcor}[subsubsection]{Corollary}
\theoremstyle{definition}
\newtheorem{defi}[subsection]{Definition}
\newtheorem{nota}[subsection]{Notation}
\newtheorem{subdef}[subsubsection]{Definition}
\newtheorem{subrmk}[subsubsection]{Remark}
\newtheorem{subex}[subsubsection]{Example}
\newtheorem{subnotation}[subsubsection]{Notation}
\numberwithin{equation}{subsubsection}
\renewcommand{\mathcal}{\mathscr}
\newcommand{\Nat}{\mathbb{N}}
\newcommand{\Q}{\mathbb{Q}}
\newcommand{\Z}{\mathbb{Z}}
\newcommand{\Aff}{\mathbb{A}}
\newcommand{\Hp}{{}^p\mathrm{H}}
\renewcommand{\H}{\mathrm{H}}
\DeclareMathOperator{\Hom}{Hom}
\DeclareMathOperator{\Ext}{Ext}
\DeclareMathOperator{\Homf}{\underline{Hom}}
\newcommand{\GD}{\mathbb{G}}
\newcommand{\F}{\mathcal{F}}
\newcommand{\Gf}{\mathcal{G}}
\newcommand{\Of}{\mathcal{O}}
\newcommand{\Mf}{\mathcal{M}}
\newcommand{\Bf}{\mathcal{B}}
\newcommand{\Uf}{\mathcal{U}}
\newcommand{\adj}{\mathrm{adj}} 
\newcommand{\Af}{\mathcal{A}}
\DeclareMathOperator{\can}{can} 
\DeclareMathOperator{\Cb}{\mathrm{C}^b}
\DeclareMathOperator{\Co}{\mathrm{C}}
\DeclareMathOperator{\Cpl}{\mathrm{C}^+}
\DeclareMathOperator{\Cmn}{\mathrm{C}^-}
\DeclareMathOperator{\Cech}{\mathrm{\check{C}}}
\newcommand{\Calt}[2]{\mathrm{\check{C}_{alt,#2}^{#1}}} 
\newcommand{\Cf}{\mathcal{C}}
\DeclareMathOperator{\Coker}{Coker}
\DeclareMathOperator{\D}{D} %% catégorie dérivée
\newcommand{\Df}{\mathcal{D}}
\newcommand{\Dbete}{{\mathrm{DF}_{\text{b\^ete}}}}
\DeclareMathOperator{\Kb}{\mathrm{K}^b}
\DeclareMathOperator{\Ko}{\mathrm{K}}
\DeclareMathOperator{\Kpl}{\mathrm{K}^+}
\DeclareMathOperator{\Kmn}{\mathrm{K}^-}
\DeclareMathOperator{\Nb}{\mathrm{N}^b}
\DeclareMathOperator{\Db}{\mathrm{D}^b}
\DeclareMathOperator{\Dpl}{\mathrm{D}^+}
\DeclareMathOperator{\Dpos}{\mathrm{D}^{\geq 0}}
\DeclareMathOperator{\DFb}{\mathrm{DF}^b} 
\DeclareMathOperator{\DFpl}{\mathrm{DF}^+} 
\DeclareMathOperator{\Dbc}{\mathrm{D}^b_c} 
\DeclareMathOperator{\Dbh}{\mathrm{D}^b_h} 
\DeclareMathOperator{\Dbm}{\mathrm{D}^b_m}
\DeclareMathOperator{\DFbc}{\mathrm{DF}^b_c} 
\DeclareMathOperator{\DFbh}{\mathrm{DF}^b_h} 
\DeclareMathOperator{\DFbm}{\mathrm{DF}^b_m} 
\DeclareMathOperator{\Dba}{\mathrm{D}^b_a} 
\DeclareMathOperator{\DF}{DF} %% catégorie dérivée filtrée
\DeclareMathOperator{\DFc}{DF_c} 
\DeclareMathOperator{\Fil}{Fil} %% catégorie filtrée
\DeclareMathOperator{\Filf}{Fil^f} %% catégorie filtrée
\newcommand{\DP}{{}^{w}\D}
\newcommand{\Dp}{{}^{p}\D}
\newcommand\geom{\text{geom}}
\newcommand{\If}{\mathcal{I}}
\newcommand{\Jf}{\mathcal{J}}
\DeclareMathOperator{\diag}{diag}
\newcommand{\et}{\mathrm{\acute{e}t}} %% plus accent
\newcommand{\proet}{\mathrm{pro\acute{e}t}} %% plus accent
\newcommand{\naive}{\mathrm{naive}} %% plus accent
\newcommand{\fl}{\longrightarrow}
\newcommand{\ra}{\longrightarrow}
\def\flnom#1{\stackrel{#1}{\fl}}
\newcommand{\fle}{\longmapsto}
\DeclareMathOperator{\Gr}{Gr} %% graded
\DeclareMathOperator{\Gra}{Gr} %% graded
\DeclareMathOperator{\Gal}{Gal} %% for Galois group
\newcommand{\id}{\mathrm{id}} %% morphisme identité 
\renewcommand{\Im}{\mathrm{Im}} %% replaces old \Im (Imaginary part
\DeclareMathOperator{\Ind}{Ind}
\newcommand{\iso}{\stackrel{\sim}{\fl}}
\DeclareMathOperator{\Ker}{Ker}
\newcommand{\Lf}{\mathcal{L}} %% système local
\newcommand{\Loc}{\mathcal{Loc}} %% système local
\newcommand{\Ltimes}{\otimes}
\DeclareMathOperator{\Ob}{\mathrm{Ob}}
\newcommand{\op}{\mathrm{op}} 
\DeclareMathOperator{\Perv}{Perv}
\newcommand\quash[1]{}
\newcommand{\real}{\mathrm{real}} %% foncteur réalisation
\newcommand{\Sch}{\mathbf{Sch}}
\newcommand{\Sets}{\mathrm{Sets}}
\newcommand{\Sgoth}{\mathfrak{S}}
\DeclareMathOperator{\Sh}{Sh}
\DeclareMathOperator{\Spec}{Spec}
\DeclareMathOperator{\Tot}{Tot}
\newcommand{\TR}{\mathfrak{TR}}
\newcommand{\ungras}{1\!\!\mkern -1mu1}
\newcommand{\var}{\mathrm{var}}
\newcommand{\X}{\mathcal{X}}
\newcommand{\Y}{\mathcal{Y}}
\newcommand\footnotevar[1]{}
\title{Mixed $\ell$-adic complexes
for schemes over number fields}
\author{Sophie Morel}
\begin{document}

\maketitle

\begin{abstract}
If $X$ is a variety over a number field, Annette Huber has defined
in \cite{Hu} a category of ``horizontal'' (or ``almost everywhere unramified'')
$\ell$-adic complexes and $\ell$-adic perverse sheaves on $X$. For such
objects, the notion of weights makes sense
(in the sense of Deligne, see \cite{De}), just as in the case of
varieties over finite
fields. However, contrary to what happens in that last case, mixed
perverse sheaves (or mixed locally constant sheaves) on $X$
do not have a weight
filtration in general, even when $X$ is a point.
The goal of this paper is to show how to avoid
this problem by working directly in the derived category of the abelian
category of perverse
sheaves that do admit a weight filtration. As an application, the methods of \cite{M}
to calculate the intermediate extension of a pure perverse sheaf apply over any finitely
generated field, and not just over a finite field.

\end{abstract}

\footnote{2010 \emph{Mathematics Subject Classification.} Primary 14F43, 14G25; Secondary 13D09}

\section{Introduction}

Let $k$ be a field of finite type over its prime subfield,
let $X$ be a separated scheme of finite type over $k$, 
and let $\ell$ be a prime number invertible in $k$. In her article
\cite{Hu} Annette Huber introduced a category $\Dbm(X)=D^b_m(X,E)$
of mixed horizontal $\ell$-adic sheaves on $X$, where $E$ is an
algebraic extension of $\Q_\ell$.
The idea of \cite{Hu}
is to consider the category of $\ell$-adic complexes on $X$ that
extend to a constructible $\ell$-adic complex on a model $\X$
of $X$ over a normal scheme $\Uf$ of finite type over $\Z$ and with field
of fractions $k$;
we also want the
morphisms between complexes to extend to $\X$. There is a
natural definition of weights (in the sense of Deligne's \cite{De}) on such
complexes, by considering their restriction to the fibers of $\X$ over
closed points of $\Uf$. So we have a notion of pure sheaves, and mixed
complexes are defined (as in \cite{De}) as those complexes whose cohomology
sheaves have a filtration with pure quotients.

By sections 2 and 3 of \cite{Hu}, the 6 operations (usual and exceptional
direct and inverse images, tensor products and internal Homs) exist on these
categories of complexes. Moreover, it is shown in 2.5 and 3.2 of \cite{Hu}
that the category $D^b_m(X)$ has a (self-dual) perverse t-structure,
whose heart $\Perv_m(X)$ is called the category of horizontal mixed perverse
sheaves on $X$.

Also, the results of chapters 4 and 5 of
Beilinson-Bernstein-Deligne's book \cite{BBD} about
the t-exactness (or perverse cohomological amplitude) of the 6 functors,
and the way these 6 functors affect weights, can be extended to our
situation thanks to
Deligne's generic base change theorem (SGA 4 1/2 [Th. finitude]
section 2), see for example 3.4 and 3.5 of \cite{Hu}.

Finally, there is a notion of weight filtration on an object of $\Perv_m(X)$
(see \cite{Hu} 3.7); it is an increasing filtration whose quotients are
pure perverse sheaves
of increasing weights. This filtration is unique if it exists (\cite{Hu}
3.8), but unfortunately
it doesn't always exist, unless $k$ is a finite field. As noted
in the remark below \cite{Hu} 3.8, the category of horizontal mixed
perverse sheaves on $X$ admitting a weight filtration is a full abelian
subcategory $\Perv_{mf}(X)$ of $\Perv_m(X)$ which is stable by subquotients,
but it is not stable by extensions.

As a consequence, if we start from a horizontal mixed perverse sheaf that
does have a weight filtration and apply some sheaf operations, then it
is not clear that the perverse cohomology sheaves of the resulting
mixed complex will still have weight filtrations. (Although we
would certainly expect that to be the case.)
For example,
this is a problem if we want to generalize the arguments of \cite{M}, that
gives among other things a formula for
the intersection complex of $X$.

The goal of this paper is to give a solution to this problem, inspired
by Beilinson's theorem that, if $k$ is a finite field, then the derived
category of $\Perv_m(X)$ is canonically equivalent to $D^b_m(X)$
(see \cite{Be1}, \cite{Be2}; note that Beilinson's result is more general).
Beilinson also gives a way to reconstruct the derived direct image
functors from their perverse versions, and formulas adapted to perverse
sheaves for the unipotent
nearby
and vanishing cycles functors. Building on this, Morihiko Saito has
shown in \cite{S0} and \cite{S} how to recover the other operations
(inverse images, tensor products and internal $\Hom$s) using only
perverse sheaves.

In this paper, we will follow the ideas of Beilinson and M. Saito to construct
all the sheaf operations on the bounded derived categories of the
categories $\Perv_{mf}(X)$. The main point, which is taken as an axiom
in \cite{S}, is the fact that these categories are stable by
perverse direct images; in section \ref{stab_Pmf}, we show how to deduce it
from Deligne's weight-monodromy theorem. Another difficulty is to state
all the compatibilities that the sheaf operations should satisfy. We have
chosen to use the formalism of crossed functors (``foncteurs crois\'es''),
originally due to Deligne and developed by Voevodsky and Ayoub. In
order to check that the constructions of Beilinson and M. Saito do fit
into this formalism, we have had to rewrite some of them. (Another
reason is that the categories $\Perv_{mf}(X)$ satisfy assumptions that are
slightly different from the axioms of \cite{S}, and so certain proofs
become simpler, and at least one proof has to be totally changed. However,
most of the constructions are very similar to the ones in \cite{S}.)

Here is a quick description of the different parts of the paper.
Section 1 is the introduction, and section 2 contains reminders about
$\ell$-adic perverse sheaves, the realization functor and a quick
summary of the beginning of
Huber's article \cite{Hu}, in particular the definition of the main
object of study $\Perv_{mf}(X)$. In section 3, we state the main results
of the paper, first informally and then using the language of crossed
functors. Section 4 gives a list of functors that obviously preserve the
categories $\Perv_{mf}(X)$. 
Section 5 contains reminders about Beilinson's construction of unipotent
nearby and vanishing cycles.
In section 6, we state the form of Deligne's
weight-monodromy theorem that we will use, and deduce the crucial fact that
perverse direct images also preserve the categories $\Perv_{mf}(X)$; we also
give an application to complexes with
support in a closed subscheme, that was already noted in 2.2.1 of
\cite{Be1} and Theorem 5.6 of \cite{S}. 
Section 7 gives the proof
of the first main theorem (Theorem \ref{thm_main1}, concerning the
existence of the four operations $f^*,f_*,f_!,f^!$), and section 8 gives
the proof of the second main theorem (Theorem \ref{thm_tensor_product},
about the existence of tensor products and internal $\Hom$s).
Section 9 shows how the results of this article imply that we can extend
the formalism of weight truncation functors defined in \cite{M}.
Finally, the appendix gives a review of filtered derived categories and
f-categories (i.e. filtered triangulated categories), with a focus on
the compatibility between the realization functor and derived functors.

Here are some conventions that will be used throughout the paper :
\begin{itemize}
\item[-] As we are considering sheaves for the \'etale topology or
pro\'etale topology, we
only care about schemes up to universal homeomorphism. So we will allow
ourselves to specify a closed subscheme of a scheme $X$ by giving only the
underlying closed subset.

\item[-] We are mostly interested in the triangulated versions of the
sheaf operations, so we will denote them without the usual ``R'''s or
``L'''s. For example, the derived direct image functors will simply be denoted
by $f_*$, and we will similarly write $f^*$, $f_!$ and $f^!$ for the
other direct and image inverse functors, seen as functors between the
triangulated categories of complexes of sheaves. The only exception we will
make is for the functor $R\Hom$ (in an abelian category), in order to distinguish it
from $\Hom$.

\item[-] All the schemes will be assumed to be excellent and separated,
and all the morphisms will be assumed to be of finite type. (We are only
interested in schemes that are of finite type over $\Z$ or over a field,
and these schemes are automatically excellent.) If we write ``scheme
over $k$'', where $k$ is a field, we will mean ``separated field of
finite type over $k$''.
Also,
the letter $\ell$ always stands for a prime number invertible over
all the schemes considered.

\item[-] If $\Cf$ is an additive category, we will denote by $\Co(\Cf)$ (resp. $\Cpl(\Cf)$, $\Cmn(\Cf)$, $\Cb(\Cf)$) the additive
category of (cohomological) complexes of objects of $\Cf$ (resp. of bounded below complexes, bounded above complexes, bounded
complexes); we also denote by $\Ko(\Cf)$ (resp. $\Kpl(\Cf)$, $\Kmn(\Cf)$, $\Kb(\Cf)$) the corresponding triangulated categories
of complexes modulo homotopy.

\item[-] If $\Af$ is a quasi-abelian category (for example of abelian category), we denote by $\D(\Af)$ (resp. $\Dpl(\Af)$, $\D^-(\Af)$,
$\Db(\Af)$) the derived category (resp. bounded below derived category, bounded above derived category, bounded derived) of $\Af$,
defined as the quotient of $\Ko(\Af)$ (resp. $\Kpl(\Af)$, $\Kmn(\Af)$, $\Kb(\Af)$) by the triangulated subcategory of exact complexes.

\end{itemize}

\tableofcontents

\section{Horizontal perverse sheaves}
\label{section_def}

In this section, we recall the definition of $\ell$-adic constructible
complexes and $\ell$-adic perverse sheaves, 
the construction of the realization
functor from the bounded derived category of perverse sheaves to
the category of constructible complexes, and finally
the definition by A. Huber of horizontal
$\ell$-adic complexes and horizontal perverse sheaves, as well as the
mixed versions.

\subsection{$\ell$-adic complexes}
\label{l-adic_complexes}

Let $X$ be a scheme and $E$ be an algebraic extension of $\Q_\ell$.
If we want to stay in the familiar framework of triangulated categories
(and avoid $\infty$-categories), there are two approaches to the
category of bounded constructible \'etale $E$-complexes on $X$ that
work at the level of generality that we need : Ekedahl's approach in \cite{Ek}
(see also Fargues's paper \cite{Far} for some complements)
and the Bhatt-Scholze definition via the pro\'etale site in \cite{BS}.
The second works in a more general setting, and it is known to be
equivalent to the first when they both apply.
While we could make the constructions that we need work with both
approaches, we
will mostly stick to the Bhatt-Scholze approach, because it makes the
homological algebra simpler.

\begin{subrmk} In his article \cite{Ek}, Ekedahl makes the assumption that
the scheme $X$ is of finite type over a regular scheme of dimension $\leq 1$.
The reason for this is that the necessary theorems for torsion \'etale
sheaves were only available in this setting at the time. Since then, Gabber
has proved the finiteness theorem (see Expos\'e XIII of \cite{Ga}), the
absolute purity theorem (see \cite{Fuji} or III.3 of \cite{Ga}) 
and the existence of a
dualizing complex (see Expos\'e XVII of \cite{Ga}) in the more general
setting considered here, so the results of \cite{Ek} extend to this setting.

\end{subrmk}

Let us review quickly the construction of
Bhatt and Scholze via the pro\'etale
site $X_\proet$ of $X$ (see \cite{BS}) :
The category $\Dbc(X,E)$ is defined as
a the full subcategory of the category $\D(X_\proet,E)$ of sheaves of
$E$-modules on $X_\proet$ (definition 4.1.1 of \cite{BS})
whose objects are bounded complexes with constructible cohomology sheaves,
where a pro\'etale sheaf $\F$ of $E$-vector spaces is called \emph{constructible}
if $X$ has a finite stratification $(Z_i)_{i\in I}$ by locally closed subschemes
such that each
$\F_{|Z_i}$ is lisse, i.e. locally (in the pro\'etale topology)
free of finite rank;
see definitions 6.8.6 and 6.8.8 of \cite{BS}. By propositions
5.5.4, 6.6.11, 6.8.11 and 6.8.14 of \cite{BS},
this category is canonically equivalent to the one defined by Ekedahl
if $X$ satisfies condition (A) or (B) of \cite{BS} 5.5.1.
\footnote{Technically, Ekedahl only defines the category $\Dbc(X,\Of_E)$ for
a finite extension $E$ of $\Q_\ell$, so to be precise, we should say
that the category $\Dbc(X,E)$ of Bhatt-Scholze is canonically
equivalent to the inverse $2$-limit over all finite subextensions of
$E$ of the tensor product over $\Of_E$ of $E$ and of the category of
constructible $\Of_E$-complexes defined
by Ekedahl.}
This point of view is conceptually simpler 
and has the advantage that direct images, tensor
products and internal Homs are the restriction of actual derived functors
on the categories $\D(X_\proet,E)$.

The six operations on the categories $\Dbc(X,E)$
(direct and inverses images, direct images with
proper support, exceptional inverse images, derived tensor products and
derived internal Homs) are constructed in sections 6.7 and 6.8 of
\cite{BS}. Suppose that we are given a dimension function $\delta$ on $X$
(see D\'efinition XVII.2.1.1 of \cite{Ga}), and let $K_X$ be the corresponding
dualizing complex on $X$. 
By this, we mean a potential dualizing complex on $X$ for the dimension
function $\delta$
(see D\'efinition XVII.2.1.2 of \cite{Ga}); this is known to be unique
up to unique isomorphism (Th\'eor\`eme XVII.5.1.1 of \cite{Ga}) and
to be a dualizing complex (Th\'eor\`eme XVII.6.1.1 of \cite{Ga}).
We then denote by $D_X=\Homf_X(.,K_X)$ the duality functor defined
by $K_X$; it satisfies all the usual properties, see Lemma 6.7.20 of \cite{BS}.

The category $\Dbc(X,E)$ has a canonical t-structure,
whose heart is the category $\Sh_c(X,E)$ of
constructible sheaves (this is automatic if we use definition
6.8.8 of \cite{BS} for $\Dbc(X,E)$).
This category has 
a full abelian subcategory stable by extensions $\Loc(X,E)$,
the category of lisse sheaves (or locally constant sheaves, or local
systems), see definition 6.8.3 of \cite{BS}.
We will only use
the category $\Loc(X,E)$ if $X$ is connected regular; 
in that case (and more generally if
$X$ is geometrically unibranch), this category is equivalent
to the category of continuous representations of the \'etale
fundamental group $\pi^\et(X)$
of $X$ on finite-dimensional $E$-vector spaces (see
lemmas 7.4.7 and 7.4.10 and remark 7.4.8 of \cite{BS}; the equivalence
is given by taking stalks at a geometric point of $X$). 
In particular, if $X$ is smooth of relative dimension $d$ over
a field $k$ and if we use the dimension function $\delta:x\fle\dim(\overline{\{x\}})$
(see the beginning of section \ref{def_perverse}), then
for every $\Lf\in\Ob(\Loc(X,E))$ corresponding
to a representation of $\pi^\et(X)$, if we denote
by $\Lf^\vee$ the lisse sheaf corresponding to the dual representation,
then $D_X(\Lf)\simeq\Lf^\vee(-d)[-2d]$. Indeed, we have $K_X=\underline{E}_X
(-d)[-2d]$, hence $\H^0\Homf_X(\Lf,K_X)=\Lf^\vee(-d)[-2d]$, and all
the $\H^i\Homf_X(\Lf,K_X)$ for $i\geq 1$ vanish by
exercise III.1.31 in \cite{Milne} (and lemma 6.7.13 of \cite{BS}).

\subsection{Perverse sheaves}
\label{def_perverse}

In this section, we assume that $X$ satisfies the conditions
of Corollaire XIV.2.4.4 of \cite{Ga} (for example, $X$ is of finite
type over $\Z$ or over a field)
and we
fix the dimension function $\delta$ on $X$
defined by $\delta(x)=\dim(\overline{\{x\}})$.
As explained
in \ref{l-adic_complexes}, it determines a dualizing complex $K_X$ and a duality
functor $D_X$. 
We define two full subcategories $\Dp^{\leq 0}$ and $Dp^{\geq 0}$ 
of $\Dbc(X,E)$ by the
following formulas:
\[\Dp^{\leq 0}=\{K\in\Ob\Dbc(X,E)|\forall x\in X,\ \forall i>-\delta(x),
\ \H^i(i_x^* K)=0\}\]
and
\[\Dp^{\geq 0}=\{K\in\Ob\Dbc(X,E)|\forall x\in X,\ \forall i<-\delta(x)
,\ \H^i(i_x^! K)=0\},\]
where, for every point $x$ of $X$ (not necessarily closed),
we denote the inclusion $x\fl X$ by $i_x$.
This is a t-structure on $\Dbc(X,E)$ for the same reasons as in \cite{BBD}
2.2.9-2.2.19: We consider couples $(\mathcal{S},\mathcal{L})$, where
where $\mathcal{S}$ is a finite stratification
of $X$ by locally closed connected regular subschemes,
and $\mathcal{L}$ is the data, for each stratum $Z$ of
$\mathcal{S}$, of
a finite set $\mathcal{L}(Z)$
of lisse sheaves on $Z$, such that condition (c) of \cite{BBD}
2.2.9 is satisfied. 
We denote by $\D_{(\mathcal{S},\mathcal{L})}(X,E)$ the full
subcatgeory of $\Dbc(X,E)$ whose objects are the complexes
$K$ such that, for each stratum $Z$ of $\mathcal{S}$ and each
$i\in\Z$, $\H^iK_{|Z}$ is isomorphic to an element of $\mathcal{L}(Z)$.
The categories $(\Dp^{\leq 0},\Dp^{\geq 0})$ induces a t-structure on 
$\D_{(\mathcal{S},\mathcal{L})}(X,E)$ by gluing, as in \cite{BBD} 1.4.
Then we note that the category $\Dbc(X,E)$ is the filtered inductive limit of
its subcategories $\D_{(\mathcal{S},\mathcal{L})}(X,E)$, and that the t-structures
are compatible thanks to the purity theorem (\cite{Fuji} or
XVI.3 of \cite{Ga}).

We will call the t-structure $(\Dp^{\leq 0},\Dp^{\geq 0})$ the
\emph{perverse t-structure} on $\Dbc(X,E)$, and denote its heart
by $\Perv(X,E)$. This is the category of \emph{perverse sheaves} on $X$
(with coefficients in $E$). We denote the associated cohomology
functor by $\Hp^i:\Dbc(X,E)\fl\Perv(X,E)$.

Let us list the exactness properties of the (derived) sheaf operations
for this t-structure. 

Suppose that we have a flat morphism of finite type $X\fl S$.
The following
proposition is an immediate consequence of the definitions.

\begin{subprop} Let $u:T\fl S$ be an \'etale map
(resp. the inclusion of the generic point of $S$),
and consider the functor
$u^*:\Dbc(X,E)\fl\Dbc(X\times_S T,E)$.

Then $u^*$ (resp. $u^*[-\dim S]$)
is t-exact.

\label{prop_restr_fg}
\end{subprop}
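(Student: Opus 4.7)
The plan is to check both defining conditions of the perverse t-structure pointwise, using the stalk and costalk vanishing criteria for $\Dp^{\leq 0}$ and $\Dp^{\geq 0}$. Write $v:X\times_S T\fl X$ for the base change of $u$, and for $y$ a point of the source mapping to $x:=v(y)\in X$, note that one needs two ingredients: identifications $i_y^*v^*K\simeq i_x^*K$ and $i_y^!v^*K\simeq i_x^!K$, and a comparison between $\delta(y)$ on the source and $\delta(x)$ on $X$.

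If $u$ is \'etale, then so is $v$, so $v^!=v^*$, and the morphism $v\circ i_y$ factors as the finite separable extension $\alpha:\Spec k(y)\fl\Spec k(x)$ composed with $i_x$; thus the stalk (resp.\ costalk) of $v^*K$ at $y$ is the image under the exact functor $\alpha^*$ of the stalk (resp.\ costalk) of $K$ at $x$, and in particular has vanishing $\H^i$ if and only if the latter does. Since an \'etale morphism preserves the dimension of the closure of a point, $\delta(y)=\delta(x)$, so the $\Dp^{\leq 0}$- and $\Dp^{\geq 0}$-conditions at $y$ are equivalent to those at $x$, and $v^*$ is t-exact with no shift.

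If $u:\eta\fl S$ is the inclusion of the generic point, then $v:X_\eta\fl X$ is the cofiltered limit of the base changes $v_U:X_U\fl X$ of the open immersions $U\subset S$ containing $\eta$. This has two consequences. First, $v$ is a monomorphism preserving residue fields, so $v\circ i_y=i_{v(y)}$; and since $v_U^!=v_U^*$ for each $U$ in the system, passing to the limit yields $v^!=v^*$. Together these give $i_y^*v^*K=i_{v(y)}^*K$ and $i_y^!v^*K=i_{v(y)}^!K$. Second, the required dimension identity is $\delta(v(y))=\delta(y)+\dim S$: indeed $Y:=\overline{\{v(y)\}}\subset X$ is integral and dominates $S$ (since $v(y)$ lies over $\eta$), so the dimension formula for a dominant morphism of finite type between integral excellent schemes gives $\dim Y=\dim Y_\eta+\dim S=\delta(y)+\dim S$. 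The shift $[-\dim S]$ then converts ``$\H^i(i_y^*(v^*K[-\dim S]))=0$ for $i>-\delta(y)$'' into ``$\H^j(i_{v(y)}^*K)=0$ for $j>-\delta(v(y))$'' (setting $j=i-\dim S$), which holds because $K\in\Dp^{\leq 0}(X)$; the costalk inequality is verified identically.

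The only points requiring care are in the generic point case: the identity $v^*=v^!$, which uses that $!$-pullback on bounded constructible complexes commutes with cofiltered limits of open immersions, and the dimension formula, which rests on the excellence of $S$. Both are standard, and once granted the proposition follows from a routine matching of indices.
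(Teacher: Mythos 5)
Your proof is correct and is precisely the verification the paper leaves implicit: the paper gives no argument beyond ``immediate consequence of the definitions,'' and your pointwise check of the stalk/costalk conditions $\H^i(i_y^*\cdot)$, $\H^i(i_y^!\cdot)$ together with the comparison of the dimension functions is exactly that verification. One small correction: the identity $\delta(v(y))=\delta(y)+\dim S$ (and likewise $\delta(y)=\delta(x)$ in the \'etale case) should be justified by the standing hypothesis that all schemes involved are of finite type over $\Z$ or over a field, where $\dim\overline{\{x\}}$ is computed from the transcendence degree of the residue field, rather than by excellence of $S$ alone, which does not suffice in general (e.g.\ $S=\Spec\Z_{(p)}$, $Y=\Spec\Q$ is a dominant finite type morphism of integral excellent schemes violating the formula).
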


Then we recall the properties proved in sections 4.1 and 4.2 of
\cite{BBD}.

\begin{subprop} 
Let $f:X\fl Y$ be a finite type morphism.
Then:

\begin{itemize}
\item[(i)] The functors $D_X$ and $D_Y$ are t-exact.
\item[(ii)] If $f$ is affine, then $f_*$ is right t-exact and $f_!$
is left t-exact.
\item[(iii)] If the dimension of the fibers of $f$ is $\leq d$, then
$f_*$ (resp. $f_!$, resp. $f^*$, resp. $f^!$) is of perverse
cohomological amplitude $\geq -d$ (resp. $\leq d$, resp.
$\leq d$, resp. $\geq -d$).
\item[(iv)] If $f$ is quasi-finite and affine, then $f_*$ and $f_!$
are t-exact.
\item[(v)] If $f$ is smooth of relative dimension $d$, then $f^!\simeq
f^*[2d](d)$, and $f^*[d]$ and $f^![-d]$ are t-exact.
In particular, if $f$ is \'etale, then $f^*=f^!$ is t-exact.
\item[(vi)] The external tensor product $\boxtimes:\Dbc(X,E)\times
\Dbc(Y,E)\fl\Dbc(X\times Y,E)$ is t-exact.
\item[(vii)] The Tate twist functor $K\fle K(1)$ is t-exact.

\end{itemize}
\label{prop_exactness}
\end{subprop}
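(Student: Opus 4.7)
The plan is to reduce each item to the parallel result in \cite{BBD} sections 4.1--4.2, checking that the required inputs are available in the present setting. These inputs are Artin's affine vanishing theorem, smooth and proper base change, the K\"unneth formula, absolute purity, and Verdier duality relative to a self-dual dimension function. Thanks to Gabber's results recalled in \ref{l-adic_complexes}, they are all valid for excellent schemes of finite type over $\Z$ or over a field. The coefficients do not cause new trouble: each statement reduces to the case of a finite subextension of $E/\Q_\ell$, since the perverse t-structure and the six operations are compatible with scalar extension along $E'\subset E$.

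Items (i) and (vii) are formal. The duality $D_X$ swaps $i_x^*$ and $i_x^!$ while preserving cohomological indexing, so the defining conditions of $\Dp^{\leq 0}$ and $\Dp^{\geq 0}$ are exchanged by $D_X$, proving (i); the Tate twist is an autoequivalence commuting with $i_x^*$, $i_x^!$ and $\H^i$, giving (vii). For (vi), the K\"unneth formula yields $i_{(x,y)}^*(K\boxtimes L)\simeq i_x^*K\otimes i_y^*L$ (and dually for $i^!$), and the additivity $\delta_{X\times Y}(x,y)=\delta_X(x)+\delta_Y(y)$ at points lying over the generic points of the factors gives t-exactness. For (v), the smooth purity isomorphism $f^!\simeq f^*[2d](d)$ is standard, and combining it with the shift $\delta_X(x)=\delta_Y(f(x))+d$ at generic points of fibers yields t-exactness of $f^*[d]$ and $f^![-d]$.

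The core case is (ii), Artin's affine vanishing for perverse sheaves: if $f$ is affine, then $f_*$ is right t-exact. The argument of \cite{BBD} Th\'eor\`eme 4.1.1 applies verbatim, using Artin's affine Lefschetz theorem for torsion sheaves (SGA 4, Expos\'e XIV), together with the compatibility of $f_*$ with the extension of scalars from $\Of_E$ to $E$. Right t-exactness of $f_!$ then follows by duality via (i). Part (iii) follows from (ii) by factoring $f$ Zariski-locally through a closed immersion into a relative affine space $\Aff^d_Y$ and using generic base change to bound $i_y^* f_*$ and $i_y^! f_!$ fiber-by-fiber. Part (iv) is then immediate from (iii) applied with $d=0$, combined with (ii).

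The main obstacle I expect is the bookkeeping needed to check that the pro\'etale framework of \cite{BS} produces the same base change isomorphisms and the same dualizing complex as Ekedahl's formalism on the finite-coefficient side, and in particular that Gabber's potential dualizing complex for the dimension function $\delta$ is the one implicitly entering the definitions of $i_x^!$ and $D_X$; once this compatibility is recorded, none of (i)--(vii) requires a new idea beyond those of \cite{BBD}.
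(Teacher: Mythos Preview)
Your overall strategy matches the paper's: each item is the analogue of a result in \cite{BBD} 4.1--4.2, and one only needs to supply references for the inputs in the present generality. Two concrete slips are worth fixing.

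For (ii), SGA~4 Expos\'e~XIV does not cover the generality needed here (schemes that may be of finite type over $\Z$); the required Artin vanishing is Gabber's extension, Expos\'e~XV of \cite{Ga}, which is what the paper cites. Also, duality from (i) gives that $f_!$ is \emph{left} t-exact, not right, as in the statement.

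For (iii), your sketch does not work as written. A morphism with fibers of dimension $\leq d$ need not factor, even Zariski-locally on the source, through a closed immersion into $\Aff^d_Y$; it only factors through some $\Aff^N_Y$ with $N$ unrelated to $d$, so this factorisation gives no bound in terms of $d$. ``Generic base change'' is also not the relevant tool. The amplitude bounds for $f^*$ and $f^!$ follow directly from the support and cosupport conditions together with the inequality $\delta_X(x)\leq\delta_Y(f(x))+d$; for $f_!$ one uses proper base change to reduce to the fibers and then the cohomological-dimension bound for schemes of dimension $\leq d$, exactly as in \cite{BBD} 4.2.4, and $f_*$ follows by duality. The paper simply points to \cite{BBD} 4.2.4 for this.
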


Remember that the external tensor product
of $K\in\Ob\Dbc(X,E)$ and $L\in\Dbc(Y,E)$ is the object
$K\boxtimes L$ of $\Dbc(X\times Y,E)$ defined by
\[K\boxtimes L=(pr_X^* K)\otimes(pr_Y^* L),\]
where $pr_X:X\times Y\fl X$ and $pr_Y:X\times Y\fl Y$ are the two projections
(see SGA 5 III 1.5, where $K\boxtimes L$ is denoted
$K\otimes_{\Spec k}L$).

\begin{proof} For (i), note that, by Theorem 6.3(iii) of \cite{Ek},
for all $K,L\in\Ob\Dbc(X,E)$ and every $x\in X$, we have a canonical isomorphism
\[i_x^!\Homf_X(K,L)\simeq \Homf_x(i_x^*K,i_x^!L).\]
Applying this to $L=K_X$ and using the isomorphisms
$i_x^! K_X\simeq E(\delta(x))[2\delta(x)]$ that are part of the
definition of a potential dualizing complex (see D\'efinition
XVII.2.1.2 of \cite{Ga}), we see that 
$K\in\Ob(Dp^{\leq 0})$ if and only if $D_X(K)\in\Ob(\Dp^{\geq 0})$. 
As $D_X^2\simeq\id_{\Dbc(X,E)}$, this also
implies that $K\in\Ob(\Dp^{\geq 0})$ if and only if $D_X(K)\in\Ob(\Dp^{\leq 0})$,
and we are done.

Point (ii) is
proved exactly as Th\'eor\`eme 4.1.1 of \cite{BBD}, as soon as we have
the analogue of Artin's vanishing theorem, which is proved in
Expos\'e XV of \cite{Ga}. Point (iii) is proved exactly as 4.2.4 of
\cite{BBD}, and (iv) follows from (ii) and (iii). To prove point
(v), it suffices to prove the isomorphism $f^!\simeq f^*[2d](d)$; but
this is SGA 4 XVIII 3.2.5. Point (vi) is proved as in Proposition 4.2.8
of \cite{BBD}. Finally, point(vii) follows from (vi), because $K(1)$ is the exterior
tensor product of the complex $K$ on $X$ and of the perverse sheaf $\Q_\ell(1)$ on
$\Spec k$.
\end{proof}

We define the intermediate extension functor as in \cite{BBD}: if
$j:U\fl X$ is a locally closed
immersion and $K$ is an object of $\Perv(U,E)$, then
\[j_{!*}K=\Im(\Hp^0 j_! K\fl\Hp^0 j_* K).\]

The methods of section 4.3 of \cite{BBD} adapt immediately to our case and
give the following result:

\begin{subthm} The category
$\Perv(X,E)$ is Artinian and Noetherian, that is, all its objects have
finite length. Moreover, the simple objects are of the form
$j_{!*}L[d]$, where $j:Z\fl X$ is a locally closed immersion, the
subscheme $Z$ is connected regular of dimension $d$, and $L$ is a lisse
sheaf on $Z$ corresponding to an irreducible representation
of $\pi_1^\et(Z)$ (we call such a $L$ a \emph{simple} lisse sheaf).

\end{subthm}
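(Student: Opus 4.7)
The plan is to follow the proof of Théorème 4.3.1 of \cite{BBD}, reducing first to a fixed admissible pair $(\mathcal{S},\mathcal{L})$ and then inducting on the number of strata. By construction of the perverse t-structure, every $K\in\Perv(X,E)$ lies in some $\D_{(\mathcal{S},\mathcal{L})}(X,E)$; after enlarging $\mathcal{L}(Z)$ to be closed under subquotients in $\Loc(Z,E)$ for each stratum $Z$, the subcategory $\Perv_{(\mathcal{S},\mathcal{L})}(X,E):=\D_{(\mathcal{S},\mathcal{L})}(X,E)\cap\Perv(X,E)$ is stable under subobjects and quotients in $\Perv(X,E)$, so it suffices to prove finite length for this subcategory.

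For the finite-length statement I would induct on the number of strata. Pick a stratum $U$ that is open in $X$, with $j:U\hookrightarrow X$ its inclusion and $i:F\hookrightarrow X$ the complementary closed inclusion, so that $F$ is a union of strictly fewer strata. The recollement triangle $j_!j^*K\to K\to i_*i^*K\to$, together with Proposition \ref{prop_exactness}, yields an exact sequence in $\Perv_{(\mathcal{S},\mathcal{L})}(X,E)$ expressing $K$ as an extension built from objects of $\Perv_{(\mathcal{S}|_F,\mathcal{L}|_F)}(F,E)$ (of finite length by induction) and $\Perv_{(\mathcal{S}|_U,\mathcal{L}|_U)}(U,E)$. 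The base case, where $U$ is a single stratum (connected regular of some dimension $d$, with perverse sheaves being shifts $L[d]$ for $L$ in a fixed finitely-generated subcategory of $\Loc(U,E)$), reduces to the observation that finite-dimensional continuous representations of $\pi_1^\et(U)$ over $E$ are automatically of finite length.

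To classify the simple objects, let $K$ be simple and let $Z$ be its support. Since $Z$ is excellent, it contains a non-empty open dense connected regular subscheme $V$ of some dimension $d$ over which $K$ restricts to $L[d]$ for some lisse sheaf $L$; write $j:V\hookrightarrow X$ for the composite locally closed immersion. Simplicity of $K$ forces $L$ to correspond to an irreducible continuous representation of $\pi_1^\et(V)$, and the standard characterization of $j_{!*}(L[d])$ as the unique perverse extension of $L[d]$ with no non-zero sub- or quotient object supported on $X\setminus V$ identifies $K$ with $j_{!*}(L[d])$.

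The main obstacle is really just bookkeeping: one must verify that every step of \cite{BBD} section 4.3 goes through formally given the t-exactness statements of Proposition \ref{prop_exactness}, and one must invoke excellence to know that every irreducible closed subscheme admits a dense open regular subscheme. No genuinely new conceptual difficulty appears in the adaptation.
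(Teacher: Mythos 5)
Your route is the intended one (the paper itself offers nothing beyond the remark that \cite{BBD} 4.3 adapts), but your opening reduction contains a genuine gap. First, a finite set $\mathcal{L}(Z)$ cannot in general be enlarged to a \emph{finite} set closed under all subquotients in $\Loc(Z,E)$: a finite-dimensional representation of $\pi_1^\et(Z)$ may have infinitely many isomorphism classes of subquotients, so the enlarged pair is no longer of the type used to construct the t-structure in section 2.2 (the correct closure is under Jordan--H\"older constituents, with $(\mathcal{S},\mathcal{L})$-constructibility defined via JH factors, as in \cite{BBD}). More seriously, the assertion that $\Perv_{(\mathcal{S},\mathcal{L})}(X,E)$ is then stable under subobjects and quotients formed in the ambient category $\Perv(X,E)$ is precisely the nontrivial point, and you give no argument for it. If $A\subset K$ in $\Perv(X,E)$ with $K$ $(\mathcal{S},\mathcal{L})$-constructible, the restriction of $A$ to a non-open stratum is not a subobject of the restriction of $K$ (restriction to a closed stratum is not t-exact), so lisseness of the $\H^n(A)$ along deeper strata is not formal; even on a single stratum $T$ one needs the support/cosupport estimates to see that a perverse subobject of $L[d]$ ($L$ lisse) is again $M[d]$ with $M\subset L$ lisse (no nonzero perverse sub- or quotient-object of $L[d]$ is supported on a proper closed subset, and a generic sub-local system extends to $T$ because $\pi_1^\et(V)\fl\pi_1^\et(T)$ is surjective for $V$ dense open in the regular connected $T$). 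This is the heart of the theorem, not bookkeeping, and as stated your reduction is close to circular: stability under subquotients in $\Perv(X,E)$ essentially amounts to already knowing that all composition factors are intermediate extensions from strata.

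The standard way to close the gap is to drop the stability claim altogether: prove directly, by the support/cosupport argument just sketched, that $j_{!*}(L[d])$ is simple in $\Perv(X,E)$ for $Z$ regular connected and $L$ an irreducible lisse sheaf; then show by your recollement induction on strata that every $(\mathcal{S},\mathcal{L})$-constructible perverse sheaf admits a finite filtration whose successive quotients are of this form; Jordan--H\"older then gives finite length in $\Perv(X,E)$ itself, without having to control arbitrary subobjects taken in the ambient category. Relatedly, your induction step is loosely phrased: the triangle $j_!j^*K\fl K\fl i_*i^*K$ produces $\Hp^0(j_!j^*K)$, which is an object on $X$, not on $U$, and its length is not that of $j^*K$; the correct statement is the usual recollement comparison (strip off the maximal subobject and the maximal quotient supported on the closed complement, so that what remains is the intermediate extension of a subquotient of $j^*K$), or an explicit bound of the length of $K$ by the lengths of $j^*K$ and of the perverse cohomologies of $i^*K$ and $i^!K$. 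Your classification of the simple objects is fine once these points are in place.
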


\subsection{Perverse t-structures and the realization functor}
\label{section_perverse}

Fix schemes $X,Y$ as in section \ref{def_perverse}.
By Theorem \ref{thm_real} and Section~\ref{f-horizontal}, we have a triangulated \emph{realization functor}
$\real:\Db\Perv(X,E)\fl\Dbc(X,E)$ extending the inclusion $\Perv(X,E)\subset\Dbc(X,E)$, and similarly for
$\Dbc(Y,E)$.

The proof of the following proposition is explained 
on page~\pageref{perverse_test} in Section~\ref{f-horizontal}.

\begin{subprop}
Let $f:X\fl Y$ be a finite type morphism.
\begin{itemize}
\item[(i)] If $f$ is quasi-finite and affine, then we have
commutative diagrams 
(up to
natural isomorphism)
\[\xymatrix{\Db(\Perv(X,E))\ar[r]^-{f_*}\ar[d]_{\real} & 
\Db(\Perv(Y,E))\ar[d]^{\real} \\
\Dbc(X,E)\ar[r]_-{f_*} & \Dbc(Y,E)}\quad\mbox{and}\quad
\xymatrix{\Db(\Perv(X,E))\ar[r]^-{f_!}\ar[d]_{\real} & 
\Db(\Perv(Y,E))\ar[d]^{\real} \\
\Dbc(X,E)\ar[r]_-{f_!} & \Dbc(Y,E)}\]
\item[(ii)] If $f$ is smooth and of relative dimension $d$, then we have
a commutative diagram
(up to
natural isomorphism)
\[\xymatrix{\Db(\Perv(Y,E))\ar[r]^-{f^*[d]}\ar[d]_{\real} & 
\Db(\Perv(X,E))\ar[d]^{\real} \\
\Dbc(Y,E)\ar[r]_-{f^*[d]} & \Dbc(X,E)}\]
\item[(iii)] We have
a commutative diagram
(up to
natural isomorphism)
\[\xymatrix{\Db(\Perv(X,E))^\op\ar[r]^-{D_X}\ar[d]_{\real} & 
\Db(\Perv(X,E))\ar[d]^{\real} \\
\Dbc(X,E)^\op\ar[r]_-{D_X} & \Dbc(X,E)}\]

\end{itemize}
\label{prop_comp_real1}
\end{subprop}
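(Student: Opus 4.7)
The plan is to apply Proposition \ref{prop_der_fil} (and its evident contravariant variant) separately to each of the four functors $f_*$, $f_!$, $f^*[d]$, and $D_X$ appearing in (i)--(iii). Three hypotheses must be verified in each case: the functor must be right exact for the perverse t-structures, so as to induce a functor $F^0$ on perverse hearts; it must lift to a triangulated functor on the filtered derived categories, compatibly with $\omega$ and every $\Gr^i$; and the source perverse category must contain enough $F^0$-acyclic objects.

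\emph{t-exactness.} All four functors are in fact t-exact: Proposition \ref{prop_exactness}(iv) handles both $f_*$ and $f_!$ when $f$ is quasi-finite and affine, (v) handles $f^*[d]$ when $f$ is smooth of relative dimension $d$, and (i) handles $D_X$. Hence the induced $F^0$ on hearts is exact, so $LF^0$ is just the termwise application of $F^0$ to bounded complexes, and every object is trivially $F^0$-acyclic; in particular the acyclic-subcategory hypothesis of Proposition \ref{prop_der_fil} is automatic.

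\emph{Filtered lifts.} The paragraph preceding the proposition already constructs compatible filtered lifts of $f_*$, $\otimes$, $\Homf_X$, and $f^*$ on $\DF(X_\proet,E)$; combining them produces a lift of $f^*[d]$ and of $D_X=\Homf_X(\cdot,K_X)$, where $K_X$ is viewed as a filtered object concentrated in filtration degree $0$. For $f_!$ with $f$ quasi-finite and affine, we obtain a filtered lift from the identity $f_!\simeq D_Y\circ f_*\circ D_X$ as a composition. In each case the compatibility with $\omega$ and $\Gr^i$ is inherited from the building blocks.

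Feeding these into Proposition \ref{prop_der_fil} yields the canonical isomorphism $F\circ\real\simeq\real\circ LF^0$ for each $F$, which is exactly the commutativity asserted in (i)--(iii). The main point to watch is coherence of the filtered lifts, in particular for the composite $D_Y\circ f_*\circ D_X$ used to handle $f_!$, and for the contravariant invocation of Proposition \ref{prop_der_fil} in the case of $D_X$ (either by passing to opposite categories or by appealing to the analogous right-derived variant); once these coherences are recorded, the conclusion is formal.
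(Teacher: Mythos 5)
Your proposal is correct and follows essentially the same route as the paper: the paper's proof also consists in verifying the hypotheses of Proposition \ref{prop_der_fil}, using the t-exactness statements of Proposition \ref{prop_exactness}, the filtered lifts of $f_*$, $\Homf_X$, $D_X$ and $f^*$ constructed just before the proposition, and the observation that these lifts preserve the relevant subcategories $\Dbete$ because they are compatible with the functors $\Gr^i$. Your explicit treatment of $f_!$ via $D_Y\circ f_*\circ D_X$ and of the contravariant case of $D_X$ simply spells out details the paper leaves implicit.
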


We need one last compatibility. 
Suppose that we have a flat morphism of finite type $X\fl S$.
If $T\fl S$ is \'etale
(resp. the inclusion of the generic point of $S$),
$u:X_T\fl X$ is its base change to $X$ and $u^*:\Dbc(X,E)\fl\Dbc(X_T,E)$
is the restriction functor, then $u^*$ (resp. $u^*[-\dim S]$) is
t-exact by Proposition \ref{prop_restr_fg}. The following result
is proved exactly as Proposition \ref{prop_comp_real1} (see page~\pageref{perverse_test} in Section~\ref{f-horizontal}).

\begin{subprop}
In the situation above, we have
a commutative
diagram (up to
natural isomorphism)
\[\xymatrix{\Db(\Perv(X,E))\ar[r]^-{u^*[a]}\ar[d]_{\real} & 
\Db(\Perv(X_T,E))\ar[d]^{\real} \\
\Dbc(X,E)\ar[r]_-{u^*[a]} & \Dbc(X_T,E)}\]
where $a=0$ is $T\fl S$ is \'etale and $a=-\dim S$ if
$T\fl S$ is the inclusion of the generic point of $S$.

\label{prop_comp_real2}
\end{subprop}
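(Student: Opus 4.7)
The plan is to invoke Proposition \ref{prop_der_fil} with $F = u^*[a]$, exactly mirroring the proof of Proposition \ref{prop_comp_real1}. Three conditions need to be verified: $(a)$ that $u^*[a]$ is right t-exact for the perverse t-structures; $(b)$ that it lifts to a triangulated functor on the appropriate filtered derived categories compatibly with $\omega$ and the $\Gr^i$; and $(c)$ that $\Perv(X,E)$ admits an $F^0$-projective subcategory in the sense of Definition 13.3.4 of \cite{KS1}.

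Condition $(a)$ is precisely the content of Proposition \ref{prop_restr_fg}, which gives t-exactness (hence in particular right t-exactness) of $u^*$ when $T\fl S$ is \'etale and of $u^*[-\dim S]$ when $T$ is the generic point of $S$. For condition $(b)$, I would repeat the construction of Section \ref{section_perverse}: the naive pro\'etale pullback $u^*_\naive$ is exact, so Corollary \ref{cor_der_fil} extends it to a triangulated functor $\DF(X_\proet,E)\fl\DF(X_{T,\proet},E)$, and one then sets $u^* K = u^*_\naive K\Ltimes_{u^*_\naive E_X} E_{X_T}$ using the filtered derived tensor product supplied by the same corollary. The shift $[a]$ is trivial at the filtered level, and the required compatibilities with $\omega$ and $\Gr^i$ are built into the construction.

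As observed in the proof of Proposition \ref{prop_comp_real1}, this compatibility automatically forces the filtered lift to preserve the subcategory $\Dbete$: if $\Gr^i K[i]\in\Ob\Perv(X,E)$ for every $i\in\Z$, then $\Gr^i(u^*[a]K)[i]\simeq u^*[a](\Gr^i K[i])$ lies in $\Perv(X_T,E)$ by the t-exactness from $(a)$. Condition $(c)$ is then vacuous: since $u^*[a]$ is t-exact, the induced functor $F^0:\Perv(X,E)\fl\Perv(X_T,E)$ is exact, every object is $F^0$-acyclic, and the whole category itself serves as an $F^0$-projective subcategory. Proposition \ref{prop_der_fil} then yields the desired commutative square. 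I do not foresee any substantive obstacle, since the whole argument is a paraphrase of the proof of Proposition \ref{prop_comp_real1}, with Proposition \ref{prop_restr_fg} playing the role of Proposition \ref{prop_exactness}; the only mildly delicate point is the construction of the filtered lift of $u^*$ when $T$ is the generic point of $S$ (so $u$ is not of finite type), but this is handled identically to the \'etale case at the level of pro\'etale sheaves.
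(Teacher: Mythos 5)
Your proposal is correct and follows exactly the paper's route: the paper proves Proposition \ref{prop_comp_real2} by the same argument as Proposition \ref{prop_comp_real1}, i.e.\ by checking the hypotheses of Proposition \ref{prop_der_fil} (using the t-exactness from Proposition \ref{prop_restr_fg} and the filtered lift of $u^*$ via Corollary \ref{cor_der_fil}, which automatically preserves the categories $\Dbete$). Your expanded verification of the three conditions, including the acyclicity remark for condition (c) and the handling of the generic-point case at the pro\'etale level, is just a more detailed write-up of the same argument.
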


\subsection{Horizontal constructible complexes}
\label{section_def_hor}

From now on, we fix
a field $k$ of finite type over its prime field (in other words,
$k$ is the field of fractions of an integral scheme of finite type over
$\Z$) and an algebraic extension $E$ of $\Q_\ell$.
We will consider separated schemes of finite type over $k$ and denote
by them by capital Roman letters such as $X$, $Y$, $U$ etc.

We will recall some constructions and results of sections 1-3 of
\cite{Hu}. In this article, Huber assumes that $k$ is a number field, but,
as she notes herself in the remark after proposition 2.3, this is not
really necessary and all her constructions extend to the more general
situation considered here, either by Deligne's generic constructibility
theorem (SGA 4 1/2 [Th. finitude]) or by Gabber's finiteness results
(\cite{Ga}).

Let $\Uf$ be the set (ordered by inclusion) of $\Z$-subalgebras $A\subset k$
that are regular and of finite type over $\Z$ and such that $k$ is the
field of fractions of $A$.
By a theorem of Nagata
(see EGA IV 6.12.6), if $\Bf$ is an integral scheme of finite type over
$\Z$,
then the regular
locus of $\Bf$ is open in $\Bf$. Hence $k=\varinjlim_{A\in\Uf}A$. So we are
in the situation of EGA IV 8 and can use the results of this reference.

If $A\in\Uf$, we say that a scheme over $\Spec A$ is \emph{horizontal}
if it is flat and of finite type over $A$. Let $X$ be a scheme over $k$.
We denote by $\Uf X$ the category of triples $(A,\X,u)$, where $A\in\Uf$,
$\X$ is a horizontal scheme over $A$ and $u$ is an isomorphism of $k$-schemes
$X\iso \X\otimes_A k$; we will often omit $u$ from the notation. A morphism
$(A,\X,u)\fl(A',\X',u')$ is an inclusion $A\subset A'$ and an open embedding
$f:\X'\fl\X\otimes_A A'$ such that $u'=u\circ f$. Then we have a canonical
isomorphism (given by the entry $u$ of the triples)
\[X\iso\varinjlim_{(A,\X)\in\Ob\Uf X}\X\otimes_A k.\]

If $f:(A,\X)\fl (A',\X')$ is a morphism
in $\Uf X$, then it induces an exact functor
\[\Dbc(\X,E)\fl \Dbc(\X\otimes_A A',E)\flnom{f^*}\Dbc(\X',
E),\]
where the first functor is the restriction functor along the open embedding
$\X\otimes_A A'\fl\X$.

\begin{subdef} (See \cite{Hu}, Definition 1.2.)
Let $X$ be a scheme over $k$. 
We define the category $\Dbh(X,E)$ by
\[\Dbh(X,E)=2-\varinjlim_{(A,\X)\in\Ob\Uf X}\Dbc(\X,E).\]

We call this category the \emph{category
of bounded constructible horizontal  $E$-complexes
of sheaves on $X$}.

\end{subdef}

Note that we could also define versions of these categories with coefficients
in $\Of_E$ (if $E$ is a finite extension of $\Q_\ell$), as Huber does. But
we will only be interested in this article in the category $\Dbh(X,E)$.

As in the remark following Definition 1.2 of \cite{Hu}, we see that
these categories are triangulated and have a tautological
t-structure (induced by the tautological
t-structure on the categories $\Dbc(\X,E)$), and that all the
properties of Theorem 6.3 of \cite{Ek} carry over.
The heart of the canonical t-structure will be denoted by $\Sh_h(X,E)$, and we will call
its objects 
\emph{horizontal constructible sheaves} on $X$.

We denote by $\eta^*:\Dbh(X,E)\fl \Dbc(X,E)$ the exact
functor induced
by the restriction functors $\Dbc(\X,E)\fl
\Dbc(\X\otimes_Ak,E)\flnom{u^*}
\Dbc(X,E)$, for $(A,\X,u)\in\Ob\Uf X$.

\begin{subprop}
\begin{itemize}
\item[(i)] The functor $\eta^*$ is fully faithful on the heart of the
tautological t-structure.
\item[(ii)] If 
$\F,\Gf\in\Ob(\Sh_h(X,E))$, then
\[\eta^*:\Ext^1_{\Dbh(X,E)}(\F,\Gf)\fl\Ext^1_{\Dbc(X,E)}(\eta^*\F,\eta^*\Gf)\]
is injective.

\end{itemize}
\label{prop_1.3}
\end{subprop}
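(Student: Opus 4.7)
The plan is to use the $2$-colimit definition of $\Dbh(X,E)$ together with the fact that morphisms of constructible sheaves on the generic fibre spread to a model. For~(i), pick a common model $(A,\X)\in\Ob\Uf X$ realising both $\F$ and $\Gf$ as $\F_\X,\Gf_\X\in\Sh_c(\X,E)$ (possible by the filteredness of $\Uf X$). By definition of the $2$-colimit,
\[
\Hom_{\Sh_h(X,E)}(\F,\Gf)=\varinjlim_{(A',\X')\to(A,\X)}\Gamma\bigl(\X',\Homf_{\X'}(\F_{\X'},\Gf_{\X'})\bigr).
\]
Since $\Homf$ of constructible sheaves is constructible (Gabber's finiteness theorem, Expos\'e XIII of \cite{Ga}) and $X$ is the cofiltered limit of the $\X'$ along affine transitions, the standard limit formalism for constructible sheaves (SGA 4 VII.5, valid in our generality by \cite{BS} and \cite{Ga}) identifies this colimit with $\Gamma(X,\Homf_X(\eta^*\F,\eta^*\Gf))=\Hom_{\Sh_c(X,E)}(\eta^*\F,\eta^*\Gf)$; the resulting bijection is $\eta^*$, proving~(i).

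For~(ii), recall that for the heart $\Cf$ of a t-structure on a triangulated category $\Df$ and objects $\F,\Gf\in\Cf$, the group $\Hom_\Df(\F,\Gf[1])$ is canonically identified with the Yoneda $\Ext^1_\Cf(\F,\Gf)$: a morphism $\F\to\Gf[1]$ is completed to a triangle $\Gf\to\Hf\to\F\to\Gf[1]$, and $\Hf$ lies in $\Cf$ by the long exact cohomology sequence. Thus an element $\xi\in\Ext^1_{\Dbh(X,E)}(\F,\Gf)$ killed by $\eta^*$ is represented by a short exact sequence $0\to\Gf\to\Hf\to\F\to 0$ in $\Sh_h(X,E)$, coming from an extension in some $\Sh_c(\X,E)$, whose image under $\eta^*$ splits. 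Pick such a splitting $s\colon\eta^*\F\to\eta^*\Hf$. Applying the bijection in~(i) to the pair $\F,\Hf\in\Sh_h(X,E)$, the morphism $s$ lifts to a unique horizontal morphism $\til s\colon\F\to\Hf$. The composition $(\Hf\to\F)\circ\til s\colon\F\to\F$ restricts under $\eta^*$ to $\id_{\eta^*\F}$, so by the faithfulness part of~(i) it equals $\id_\F$ in $\Sh_h(X,E)$; hence $\til s$ splits the horizontal extension, and $\xi=0$.

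The only non-formal input is the compatibility in~(i) of global sections with the limit $\Spec k=\varprojlim_{A'\in\Uf}\Spec A'$ applied to the constructible sheaf $\Homf(\F_\X,\Gf_\X)$ -- exactly the ingredient Huber invokes at the start of her section~1 in the number-field case, and available in our generality via Gabber's work \cite{Ga}. Once this is granted, part~(ii) is a formal consequence of~(i) and the Yoneda description of $\Ext^1$ in the heart of a t-structure.
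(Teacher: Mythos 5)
Your proof of (ii) is correct, and it takes a somewhat different, more formal route than the paper's. The paper works at a finite level: it writes $\Ext^1_{\Dbh(X,E)}(\F,\Gf)$ as the colimit over $A'\in\Uf$ of $\Ext^1_{\Dbc(\X\otimes_A A',E)}$, uses the fact that $\Sh_c$ is a Serre subcategory of all pro\'etale $E$-module sheaves to read each of these groups as Yoneda extensions of constructible sheaves on the model, and then invokes (i) to spread the splitting over $X$ out to some $\X\otimes_A A''$, so that the class already dies at a finite stage of the colimit. You instead stay entirely in $\Sh_h(X,E)$: you use the general fact that for any t-structure $\Hom(\F,\Gf[1])$ is the Yoneda $\Ext^1$ of the heart (valid here since the extension term lies in the heart by stability of $\Df^{\leq 0}\cap\Df^{\geq 0}$ under extensions), lift the splitting through $\eta^*$ by fullness, and verify it is a section by faithfulness. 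Both arguments rest on exactly the same input, namely (i); yours avoids the Serre-subcategory detour and the reduction to sheaves on a model, while the paper's makes explicit the ``defined at a finite level'' mechanism. For (i) the paper simply quotes Huber's Proposition 1.3, and your sketch is essentially her argument (spreading out of morphisms via the limit formalism); note, however, that two steps you compress need care in this setting: the comparison map $\eta^*\Homf_{\X'}(\F_{\X'},\Gf_{\X'})\fl\Homf_X(\eta^*\F,\eta^*\Gf)$ is only an isomorphism after shrinking $\X'$ (generic constructibility, SGA 4 1/2 [Th. finitude]), and the passage-to-the-limit statement for global sections is not literally SGA 4 VII.5, since $E$-sheaves are not torsion — one has to go through $\Of_E$-lattices and Ekedahl's (or Huber's adic) formalism rather than argue directly on the pro\'etale site. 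These are precisely the points Huber's proof handles, so I would call this a gloss rather than a gap.
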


\begin{proof} The first point is proposition 1.3 of \cite{Hu}. 
We prove the second point.
Let $(A,\X,u)\in\Ob(\Uf X)$ such
that $\F$ and $\Gf$ come from objects $K$ and $L$ of $\Dbc(\X,E)$. 
We use
$u$ to identify $X$ and $\X\otimes_A k$. 
The constructible sheaves $\bigoplus_{i\not=0}\H^i K$ and
$\bigoplus_{i\not=0}\H^i L$ on $\X$ are supported on a closed subset disjoint
from $X$, so, after shrinking $\X$, we may assume that
$K$ and $L$ are constructible sheaves on $\X$. By definition of
$\Dbh(X,E)$, we have
\[\Ext^1_{\Dbh(X,E)}(\F,\Gf)=\varinjlim_{A\subset A'\in\Uf}
\Ext^1_{\Dbc(\X\otimes_A A',E)}(K_{|\X\otimes_A A'},L_{|\X\otimes_A A'}).\]
Let $A'\supset A$ be an element of $\Uf$. The category $\Dbc(\X\otimes_A A',
E)$ is a full subcategory of $\D((\X\otimes_A A')_\proet,E)$, the
derived category of the category of $E$-modules on the pro\'etale
site of $\X\otimes_A A'$, so the groups $\Ext^1_{\Dbc(\X\otimes_A A',E)}$
parametrize extensions in this category of $E$-modules (see section
3.2 of chapter III of Verdier's book \cite{Ve}). But $\Sh_c(\X\otimes_A A',
E)$ is a Serre subcategory of the category of all sheaves of $E$-modules
(see proposition 6.8.11 of \cite{BS}), so
$\Ext^1_{\Dbc(\X\otimes_A A',E)}(K_{|\X\otimes_A A'},L_{|\X\otimes_A A'})$
is the group of equivalence classes of extensions of 
$K_{|\X\otimes_A A'}$
by $L_{|\X\otimes_A A'}$ in $\Sh_c(\X\otimes_A A',E)$. We have a similar
statement for $\Ext^1_{\Dbc(X,E)}(\eta^*\F,\eta^*\Gf)$. 

Now let $c\in\Ext^1_{\Dbh(X,E)}(\F,\Gf)$, and suppose that its
image in $\Ext^1_{\Dbc(X,E)}(\eta^*\F,\eta^*\Gf)$ is $0$. There exists
an element $A'\supset A$ of $\Uf$ and an extension
\[0\fl L_{|\X\otimes_A A'}\fl M\fl K_{|\X\otimes_A A'}\fl 0\]
in $\Sh_c(\X\otimes_A A',E)$ whose class is $c$. The hypothesis
on $c$ says that the restriction of this extension to $X$
is split. But, by point (i), this implies that there exists an element
$A''\supset A'$ of $\Uf$ such that the restriction of the extension
to $\X\otimes_A A''$ already splits, which means that $c=0$.
\end{proof}

\subsection{Horizontal perverse sheaves}
\label{section_hor}

In this section, we define the perverse t-structure on $\Dbh(X,E)$.
Note that, by Proposition 1.4 of Giral's article \cite{Gi},
all the rings $A$ in $\Uf$ have the same Krull dimension $c$, which is
the transcendence degree of $k$ over its prime field if $k$ is of positive characteristic, and
$1$ plus this transcendence degree if $k$ is of characteristic $0$.

If $(A,\X,u)\in\Ob\Uf X$, then we consider the perverse t-structure
on $\Dbc(\X,E)$ defined in section \ref{def_perverse}.
Then the functor $u^*[-c]:\Dbc(\X,E)\fl\Dbc(X,E)$ is t-exact 
by Proposition \ref{prop_restr_fg}.
Also, for every morphism
$f:(A,\X,u)\fl(A',\X',u')$ in $\Uf X$, the restriction functor
$f^*:\Dbc(\X',E)\fl\Dbc(\X,E)$ is t-exact by the same proposition.
By taking the limit of the shift by $c$ of the t-structures on the
$\Dbc(\X,E)$,
we get a t-structure on $\Dbh(X,E)$ such that $\eta^*:\Dbh(X,E)\fl
\Dbc(X,E)$ is t-exact. We denote the heart of this t-structure by
$\Perv_h(X,E)$ and call it the category of \emph{horizontal perverse sheaves} 
on $X$. We still denote the perverse cohomology functors by $\Hp^i:
\Dbh(X,E)\fl\Perv_h(X,E)$.
We also have a realization functor $\real:\Db\Perv_h(X,E)\fl\Dbh(X,E)$
extending the inclusion $\Perv_h(X,E)\subset\Dbh(X,E)$, 
by Theorem \ref{thm_real} and Section~\ref{f-horizontal}.

\begin{subrmk}
This is not Huber's construction.
Let us recall her construction 
and compare it with ours.
Let $A\in\Uf$ and let $\X$ be a horizontal scheme over $A$. As in \cite{Hu} 2.1,
we say that a stratification of $\X$ is \emph{horizontal} 
if all its strata are
smooth over $A$. 
Suppose that $E/\Q_\ell$ is finite.
If $S$ is a horizontal stratification of $\X$ and
$L$ is the data of a set of irreducible lisse $\Of_E$-sheaves on every stratum
of $S$ satisfying condition (c) of \cite{Hu} 2.2, we get as in Definition 2.2
and Lemma 2.4 of \cite{Hu} a full subcategory $D^b_{(S,L)}(\X,\Of_E)$ of
$(S,L)$-constructible objects in $\Dbc(\X,\Of_E)$, and it has a self-dual
perverse t-structure, whose heart we will denote by $\Perv_{(S,L)}(\X,\Of_E)$.
Because the strata $S$ are smooth over $\Spec A$, lisse sheaves on them are
perverse for our t-structure on $\Dbc(\X,\Of_E)$ when placed
in degree $-\dim(A)=-c$, so Huber's t-structure is the shift
by $c$ of the one
induced by our perverse t-structure on $\Dbc(\X,\Of_E)$.

By Proposition 2.3 of \cite{Hu}, 
the category $\Dbh(X,\Of_E)$ is the 2-colimit
of the categories $D^b_{(S,L)}(\X,\Of_E)$ over all $(A,\X)\in\Ob\Uf X$ and
couples $(S,L)$ as before, and by Theorem 2.5 of \cite{Hu}, the perverse
t-structure goes to the limit and induces a t-structure on $\Dbh(X,\Of_E)$.
This is the t-structure that is used in \cite{Hu}, and, by the observation
of the previous paragraph, it coincides with the one that we defined
at the beginning of this section.

\end{subrmk}

Huber's definition has the advantage that we can apply Deligne's generic
base change theorem (from SGA 4 1/2 [Th. finitude]) to deduce statements
for horizontal perverse sheaves from statements for perverse
sheaves on schemes over finite fields proved in sections 4 and 5 of
\cite{BBD}.

For example, we see as in \cite{Hu} 2.7 that
the six operations have the usual exactness properties
with respect to the perverse t-structure (which means the properties of
\cite{BBD} 4.1 and 4.2), that the category $\Perv_h(X,E)$ is
Artinian and Noetherian, and that we have the same description of its simple
objects as in Theorem 4.3.1 of \cite{BBD}.
\footnote{But we could also have proved all these statements from our
definition.}

The following result is a slight generalization of the first part of
Lemma 2.12 of \cite{Hu}. 

\begin{subprop} 
\begin{enumerate}
\item The functor $\eta^*:\Perv_h(X,E)\fl\Perv(X,E)$ is fully
faithful, and its essential image is the full category of perverse sheaves
on $X$ that extend to a constructible complex on some $\X$, for
$(A,\X)\in\Ob \Uf X$.
\item For every $K,L\in\Ob\Perv(X,E)$, the morphism
\[\Ext^1_{\Dbh(X,E)}(K,L)\fl\Ext^1_{\Dbc(X,E)}(\eta^* K,\eta^* L)\]
induced by $\eta$ is injective.

\end{enumerate}
\label{prop_eta_fully_faithful}
\end{subprop}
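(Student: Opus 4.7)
The plan is to prove (1) first and deduce (2) from it formally. Since $\Perv_h(X,E)$ and $\Perv(X,E)$ are hearts of t-structures on $\Dbh(X,E)$ and $\Dbc(X,E)$, and $\eta^*$ is t-exact, the full faithfulness claim in (1) is equivalent to showing that for $K,L \in \Perv_h(X,E)$ the natural map
\[\Hom_{\Dbh(X,E)}(K,L) \fl \Hom_{\Dbc(X,E)}(\eta^*K, \eta^*L)\]
is bijective.

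To verify this, I would first choose $(A,\X) \in \Ob\Uf X$ and complexes $\widetilde{K},\widetilde{L} \in \Dbc(\X,E)$ representing $K$ and $L$, shrinking $\X$ to remove the (proper closed) support of the perverse cohomology of $\widetilde{K}[c]$ and $\widetilde{L}[c]$ away from degree $0$. Then $\widetilde{K}[c], \widetilde{L}[c]$ are perverse on $\X$, and by the definition of $\Dbh$ as a $2$-colimit the left-hand side becomes
\[\varinjlim_{(A',\X') \to (A,\X)} \Hom_{\Perv(\X',E)}\bigl(\widetilde{K}[c]|_{\X'},\widetilde{L}[c]|_{\X'}\bigr),\]
while the right-hand side equals $\Hom_{\Perv(X,E)}\bigl(u^*[-c]\widetilde{K}[c], u^*[-c]\widetilde{L}[c]\bigr)$ by t-exactness of $u^*[-c]$ (Proposition~\ref{prop_restr_fg}). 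The key step is bijectivity of the natural map between these two groups. I would establish this by picking a horizontal stratification $\mathcal{S}$ of $\X$ adapted to both $\widetilde{K}$ and $\widetilde{L}$ and using the glueing triangles $j_!j^*N \fl N \fl i_*i^*N$ along $\mathcal{S}$ to reduce the computation of $\Hom$s of perverse sheaves to $\Hom$s between lisse sheaves on strata; the latter amount to morphisms of representations of \'etale fundamental groups of regular schemes, and the required bijection becomes the special case of Proposition~\ref{prop_1.3}(i) on each smooth horizontal stratum.

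For the essential image in (1): if $P \in \Perv(X,E)$ extends to some $\widetilde{P} \in \Dbc(\X,E)$, then the perverse $\Hp^0$ of the shifted t-structure on $\Dbc(\X,E)$ produces a perverse sheaf $\Hp^0(\widetilde{P}[c])$ on $\X$, and t-exactness of $u^*[-c]$ gives $u^*[-c]\Hp^0(\widetilde{P}[c]) = \Hp^0(P) = P$, so $P$ lies in the essential image of $\eta^*$. For part (2), I would deduce injectivity directly from (1): a class $c \in \Ext^1_{\Dbh(X,E)}(K,L)$ is represented by an extension $0 \fl L \fl M \fl K \fl 0$ in $\Perv_h(X,E)$ (extensions in the heart compute $\Ext^1$ in the derived category), and its image is the extension $0 \fl \eta^*L \fl \eta^*M \fl \eta^*K \fl 0$ in $\Perv(X,E)$. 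If the latter is split by some $s : \eta^*K \fl \eta^*M$, then by full faithfulness $s$ lifts uniquely to $\widetilde{s} : K \fl M$ in $\Perv_h(X,E)$, and $\widetilde{s}$ splits the original extension because this holds after $\eta^*$ (the composition with $M \fl K$ lifts the identity of $\eta^*K$, which is the identity of $K$ by faithfulness). Hence $c = 0$.

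The main obstacle is the glueing step inside (1): promoting Proposition~\ref{prop_1.3}(i) from constructible sheaves to perverse sheaves on horizontal schemes. Concretely, one has to verify that both the $\Hom$s of lisse sheaves on strata and the connecting morphisms arising from the BBD-style triangles along the stratification spread out compatibly to a common shrinking model $\X'$. This is a devissage along a horizontal stratification combined with the standard description of $\Hom$s between stratified perverse sheaves in terms of strata restrictions and glueing data.
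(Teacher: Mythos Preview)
Your deduction of (2) from (1) is correct and is cleaner than what the paper does: once $\eta^*$ is known to be fully faithful on the heart, a splitting of $\eta^*$ of an extension lifts to a splitting, exactly as you say. The paper instead establishes (2) by an ad hoc support analysis in each case of its induction, so your observation would have shortened that argument.

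The gap is in your plan for (1). The glueing triangle along an open/closed pair $(j:U\hookrightarrow X,\ i:Z\hookrightarrow X)$ gives
\[
0 \to \Hom^0(i^*K,i^!L) \to \Hom^0(K,L) \to \Hom^0(j^*K,j^*L) \to \Hom^1(i^*K,i^!L),
\]
and the five lemma requires injectivity of $\eta^*$ on the rightmost term. Since $i^*K\in{}^p\!D^{\leq 0}(Z)$ and $i^!L\in{}^p\!D^{\geq 0}(Z)$, the group $\Hom^1(i^*K,i^!L)$ is filtered with graded pieces of the form $\Hom(\Hp^a i^*K,\Hp^b i^!L)$ for $a-b=-1,0$ together with $\Ext^1(\Hp^0 i^*K,\Hp^0 i^!L)$. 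So the d\'evissage for (1) on $X$ already needs (2) on $Z$. Your stated strategy ``prove (1) first, then deduce (2) formally'' therefore cannot work as written: (1) is not independent of (2).

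The fix is easy and close in spirit to the paper: run a Noetherian induction on $X$ proving (1) and (2) simultaneously. At each step, prove (1) on $X$ using (1) and (2) on the lower-dimensional closed stratum $Z$ (for the $\Hom^0$ and $\Hom^1$ terms above) plus Proposition~\ref{prop_1.3}(i) on the open smooth stratum (where the restrictions are lisse); then deduce (2) on $X$ from (1) on $X$ by your formal extension-splitting argument. The paper organises the same Noetherian induction differently: it cites Huber's Lemma~2.12 for injectivity on $\Hom$, then proves surjectivity and the $\Ext^1$-injectivity by a further induction on the sum of the lengths of $K$ and $L$, reducing to simple objects $j_{!*}\Lf[\dim Y]$ and doing a case analysis on the relative position of their supports. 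Your route, once reorganised, avoids that length induction and the case analysis and is arguably more streamlined; the paper's route, on the other hand, makes the reduction to Proposition~\ref{prop_1.3} more explicit by landing directly on $\Ext^1$ between lisse sheaves on a common smooth locus.
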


\begin{proof}
If the category where we take the $\Ext^i$ is
clear from context, we omit it in this proof. Also, we omit the
coefficients $E$ in the notation.

We prove both points by Noetherian induction on $X$. If $\dim X=0$,
then the perverse $t$-structure on $\Dbc(X,
E)$ is the usual t-structure, so both points follow from Proposition
\ref{prop_1.3} (which is an easy consequence of Proposition
1.3 of \cite{Hu}).

Suppose that $\dim X>0$, and let $K,L\in\Ob\Perv_h(X)$. Lemma 2.12
of \cite{Hu} says that the map $\Hom(K,L)\fl
\Hom(\eta^* K,\eta^* L)$ is injective, and we want to show
that it is also surjective. We show this by induction on the sum of the
lengths of $K$ and $L$.

Suppose first that $K$ and $L$ are both simple. Then we have smooth connected
locally closed subschemes $k_1:Y_1\fl X$ and $k_2:Y_2\fl X$ and
horizontal
locally constant sheaves $\Lf_1$ on $Y_1$ and $\Lf_2$ on $Y_2$
such that $K=k_{1!*}\Lf_1[\dim Y_1]$ and $L=k_{2!*}\Lf_2[\dim Y_2]$.
We have $\Hom(K,L)=0$ if $K\not\simeq L$, and
$\Hom(K,K)=\Hom_{\Dbh(X)}(\Lf_1,\Lf_1)$.
In particular, by
Proposition 1.3 of \cite{Hu}, $\eta^*K$ and $\eta^* L$ are also simple, and
$\Hom(K,L)\iso\Hom(\eta^* K,\eta^* L)$, proving the first point.

We prove the second point.
Let $Z=\overline{Y}_1\cap\overline{Y}_2$, and denote by $i:Z\fl X$ and
$j:X-Z\fl X$ the inclusions. We have an exact triangle
\[R\Hom(i^*K,i^!L)\fl R\Hom(K,L)\fl R\Hom(j^* K,j^*L)\flnom{+1}.\]
As $j^*K$ and $j^*L$ are perverse with disjoint supports on $X-Z$,
$R\Hom(j^*K,j^*L)=0$, so we get isomorphisms $\Ext^i(i^*K,i^!L)\iso
\Ext^i(K,L)$ for every $i\in\Z$. We have a similar result for $\eta^*K$
and $\eta^*L$. 

If $\overline{Y}_1$ is not contained is $\overline{Y}_2$, then $Z$ is a proper
closed subset of $\overline{Y}_1$, so $i^*K$ and $i^*\eta^*K$ are concentrated
in perverse degree $\leq -1$. As $i^!L$ and $i^!\eta^*L$ are concentrated in
perverse degree $\geq 0$, we get
\[\Ext^1(K,L)=\Hom(\Hp^{-1}i^*K,\Hp^0 i^!L)\]
and
\[\Ext^1(\eta^*K,\eta^*L)=\Hom(\Hp^{-1}i^*\eta^*K,\Hp^0 i^!\eta^*L),\]
so the second point follows from the induction hypothesis applied to
$Z$.

If $\overline{Y}_2$ is not contained is $\overline{Y}_1$, then $Z$ is a proper
closed subset of $\overline{Y}_2$, so $i^!L$ and $i^!\eta^*L$ are concentrated
in perverse degree $\geq 1$. As $i^*K$ and $i^*\eta^*K$ are concentrated in
perverse degree $\leq 0$, we get
\[\Ext^1(K,L)=\Hom(\Hp^{0}i^*K,\Hp^1 i^!L)\]
and
\[\Ext^1(\eta^*K,\eta^*L)=\Hom(\Hp^{0}i^*\eta^*K,\Hp^1 i^!\eta^*L),\]
so the second point again follows from the induction hypothesis applied to
$Z$.

Finally, suppose that $\overline{Y}_1=\overline{Y}_2$. Then $i^*K$ and
$i^!L$ are perverse and simple, and we may assume that $Y_1=Y_2$. Let $b$ be the
inclusion of the open subscheme $Y_1$ of $Z$, and $a$ be the inclusion
of its complement. As before, we have an exact triangle
\[R\Hom(a^*i^*K,a^!i^!L)\fl R\Hom(i^*K,i^!L)\fl R\Hom(b^*i^* K,b^*i^!L)
\flnom{+1}.\]
As $i^*K$ and $i^!L$ are simple of support $Z$, we know that
$a^*i^*K$ is concentrated in perverse degree $\leq -1$ and that
$a^!i^!L$ is concentrated in perverse degree $\geq 1$, so we get an
injective map
\[\Ext^1(i^*K,i^!L)\fl\Ext^1(b^*i^*K,b^*i^!L)=\Ext^1_{\Dbh(X)}(\Lf_1,\Lf_2).\]
We have a similar calculation for $i^*\eta^*K$ and $i^!\eta^*L$, and
so the second point follows from Proposition \ref{prop_1.3}.
This finishes the proof in the case where $K$ and $L$ are both simple.

Now suppose that we have an exact sequence $0\fl K_1\fl K\fl K_2\fl 0$,
and that we know the result for the pairs $(K_1,L)$ and $(K_2,L)$. We show it
for $(K,L)$. Write $K'=\eta^*K$, $L'=\eta^*L$ etc.
We have a commutative diagram with exact rows
\[\xymatrix@C=5pt{
0\ar[r] & \Hom(K_2,L)\ar[r]\ar[d]^\wr & \Hom(K,L)\ar[r]\ar[d] &\Hom(K_1,L)
\ar[r]\ar[d]^\wr & \Ext^1(K_2,L)\ar[r]\ar@{^{(}->}[d] &
\Ext^1(K,L)\ar[r]\ar[d] & \Ext^1(K_1,L)\ar@{^{(}->}[d]\\
0\ar[r] & \Hom(K'_2,L')\ar[r] & \Hom(K',L')\ar[r] &\Hom(K'_1,L')\ar[r] & 
\Ext^1(K'_2,L')\ar[r] &
\Ext^1(K',L')\ar[r] & \Ext^1(K'_1,L')}\]
so both points follow from the five lemma.

The case where we have an exact sequence $0\fl L_1\fl L\fl L_2\fl 0$ such
that the result is known for $(K,L_1)$ and $(K,L_2)$ is treated in the
same way.
\end{proof}

\subsection{Mixed perverse sheaves}
\label{def_mixed}

The key observation in order to define weights in our setting, if $A\in\Uf$, then, as $A$ is a $\Z$-algebra of
finite type, the residue fields of closed points of $\Spec A$ are finite, so
we can use the theory of \cite{BBD} chapter 5 as in section 3 of \cite{Hu}
to define categories $\Dbm(X,E)$ of mixed
horizontal complexes. Once we have defined what it means for a horizontal constructible
sheaf to be punctually pure of a certain weight, the definition
proceeds as in \cite{BBD} 5.1.5. If $\F\in\Ob\Sh_h(X,E)$ 
and $w\in\Z$,
we say that $\F$ is \emph{punctually pure of weight $w$} if there exists
$(A,\X)\in\Ob\Uf X$ and $\F'\in\Ob\Sh_c(\X,E)$ a constructible sheaf extending $\F$
such that, for every closed point $x$ of $\Spec A$, $\F'_{|\X_x}$ is punctually
pure of weight $w$ in the sense of \cite{BBD} 5.1.5 (that is, of
Deligne's \cite{De}). We say that $\F$ is \emph{mixed} if it has a filtration
whose graded pieces are punctually pure of some weight.

We denote by $\Dbm(X,E)$ the full subcategory of \emph{mixed complexes}
in $\Dbh(X,E)$; the objects of $\Dbm(X,E)$ are complexes
$K$ such that all the (usual) cohomology sheaves of $K$ are
mixed.

By Proposition 3.2 of \cite{Hu}, these subcategories
are stable by the 6 operations and inherit a perverse t-structure from
$\Dbh(X,E)$. We denote the heart of this t-structure by
$\Perv_m(X,E)$; it is a full subcategory of $\Perv_h(X,E)$,
stable by subquotients and extensions. All the compatibilities between
the six operations (and the intermediate extension functor) and weights
that are proved in \cite{De} and \cite{BBD} chapter 5 remain true, see
\cite{Hu} 3.3-3.6. Also, the realization functor $\real:D^b\Perv_h(X,E)\fl
\Dbh(X,E)$ restricts to a functor $\real:D^b\Perv_m(X,E)\fl
\Dbh(X,E)$, whose essential image is contained in $\Dbm(X,E)$ by
definition of $\Dbm(X,E)$.

Let us introduce weight filtrations, following Definition 3.7 of \cite{Hu}.

\begin{subdef} Let $K\in\Ob\Perv_m(X,E)$. A \emph{weight filtration} on
$K$ is a separated exhaustive ascending filtration $W$ on $K$ (in the abelian category
$\Perv_m(X,E)$) such that $\Gra_k^W K$ is pure of weight $k$ for every
$k\in\Z$.

\end{subdef}

As the abelian category $\Perv_m(X,E)$ is Artinian and Noetherian, such
a filtration is automatically finite. Note also that morphisms in
$\Perv_m(X,E)$ are strictly compatible with weight filtrations (Lemma
of 3.8 \cite{Hu}), so in particular a weight filtration is unique if it exists.

\begin{subdef} Let $\Perv_{mf}(X,E)$ be the full subcategory of
$\Perv_m(X,E)$ whose objects are mixed horizontal perverse sheaves
admitting a weight filtration.

\end{subdef}

This subcategory is clearly stable by subquotients in $\Perv_m(X,E)$, but
it is not stable by extensions (even if $X=\Spec k$),
see the warning before Proposition 3.4 of \cite{Hu}.

Finally, the following conservativity result will be very useful.

\begin{subprop}
\begin{itemize}
\item[(i)] The functor $\eta^*:\Dbh(X,E)\fl\Dbc(X,E)$ is conservative.
\item[(ii)] The realization functor $\Db\Perv_h(X,E)\fl\Dbh(X,E)$ is conservative.
\item[(iii)] The obvious functor $\Db\Perv_{mf}(X,E)\fl\Db\Perv_h(X,E)$ is conservative.

\end{itemize}
\label{prop_real_cons}
\end{subprop}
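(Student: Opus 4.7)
The plan is to reduce each of the three parts to the observation that a triangulated functor $F$ is conservative if and only if it reflects zero objects, since the cone of $f$ being zero is equivalent to $f$ being an isomorphism.

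For (i), I would take $K \in \Ob \Dbh(X,E)$ with $\eta^* K = 0$ and lift it to some $K' \in \Ob \Dbc(\X, E)$ for a fixed $(A,\X,u) \in \Ob \Uf X$. The hypothesis says $K'$ restricts to zero on the generic fiber $\X \otimes_A k$. Since $K'$ is bounded and each $\H^i K'$ is constructible on $\X$, the union of their supports is a closed constructible subset of $\X$ disjoint from $\X \otimes_A k$, whose image in $\Spec A$ is a constructible subset missing the generic point, and therefore contained in the vanishing locus $V(f)$ of some non-zero $f \in A$. Then $A' := A[1/f]$ still lies in $\Uf$ (regularity, finite type over $\Z$, and the field of fractions being $k$ are all inherited), and all $\H^i(K'_{|\X \otimes_A A'})$ vanish, whence $K'_{|\X \otimes_A A'} = 0$ in $\Dbc(\X \otimes_A A', E)$, forcing $K = 0$ in the $2$-colimit defining $\Dbh(X,E)$. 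The only substantive step here is this elementary spreading-out.

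For (ii) and (iii) I would use a uniform strategy: both functors are $t$-exact between the natural $t$-structures and induce a faithful functor of abelian categories on hearts, so they detect zero objects by passing to cohomology. In (ii), the realization functor $\real$ is $t$-exact for the tautological $t$-structure on $\Db \Perv_h(X,E)$ and the perverse $t$-structure on $\Dbh(X,E)$, and satisfies $\Hp^i \circ \real \simeq H^i$, where $H^i$ denotes the cohomology for the tautological $t$-structure; this compatibility is built into the BBD construction recalled in Theorem \ref{thm_real} (see Proposition 3.1.10 of \cite{BBD}). Hence $\real(K) = 0$ forces $H^i(K) = 0$ in $\Perv_h(X,E)$ for every $i$, and boundedness of $K$ gives $K = 0$. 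In (iii), the inclusion $\iota : \Perv_{mf}(X,E) \hookrightarrow \Perv_h(X,E)$ is exact (because $\Perv_{mf}$ is stable under subquotients in $\Perv_m$, hence in $\Perv_h$) and fully faithful, so the induced functor $\Db \iota$ is $t$-exact and faithful on hearts, and the same cohomological argument applies.

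The main obstacle I anticipate lies only in (ii), in pinning down the identification $\Hp^i \circ \real \simeq H^i$; but this is exactly the content of Proposition 3.1.10 of \cite{BBD}, invoked above, so no new argument is required and the proposition falls out by the single pattern used in all three parts.
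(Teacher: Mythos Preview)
Your proof is correct. Parts (ii) and (iii) match the paper's argument exactly: a $t$-exact functor that is faithful (hence reflects zero objects) on the heart is conservative.

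For part (i) you take a genuinely different route. The paper applies the \emph{same} pattern uniformly to all three cases: it notes that $\eta^*$ is $t$-exact for the perverse $t$-structures on $\Dbh(X,E)$ and $\Dbc(X,E)$, and that on the heart $\eta^*:\Perv_h(X,E)\fl\Perv(X,E)$ is faithful (indeed fully faithful, by Proposition~\ref{prop_eta_fully_faithful}), so conservativity follows. Your spreading-out argument instead works directly at the level of a model $\X$: it is more hands-on and has the advantage of not invoking the perverse $t$-structure or the faithfulness of $\eta^*$ on $\Perv_h$, only elementary constructibility and Chevalley's theorem. The paper's version buys uniformity and brevity (one sentence covers all three parts); yours makes (i) self-contained and independent of the results of section~\ref{section_hor}.
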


\begin{proof} 
In all three cases, we have t-structures on the source and target for which the functors
are t-exact and such that the family of cohomology functors for the t-structure is conservative
(the perverse t-structure on $\Dbh(X,E)$ and $\Dbc(X,E)$, and the canonical t-structure on the
derived categories). So it suffices to check that the functors on the hearts are conservative. 
But these functors are all faithful and exact (in fact, they are all fully faithful), so they are
conservative.
\end{proof}

\section{Main theorems}

From now on, we will fix the algebraic extension $E$ of $\Q_\ell$ and
omit it in the notation.

\subsection{Informal statement}

Informally, the main theorems say that the sheaves operations
($f_*$, $f^*$, $f_!$ and $f^!$, $\Homf$, $\Ltimes$, Poincar\'e-Verdier
duality, unipotent nearby and
vanishing cycles) lift to the categories $\Db\Perv_{mf}(X)$ in
a way that is compatible with the realization functors $\Db\Perv_{mf}(X)\fl
\Dbm(X)$, and that all the relations between these functors that are
true in the categories $\Dbm(X)$ are still true in the categories
$\Db\Perv_{mf}(X)$.

A convenient way to say this is to use the formalism introduced
in Ayoub's thesis \cite{Ay1} (and in his article \cite{AyReal1}).
Then Theorem \ref{thm_main1} says that the four operations 
$f_*$, $f^*$, $f_!$ and $f^!$ exist and satisfy all the expected adjunctions
and compatibilities, and Theorem \ref{thm_tensor_product} asserts the existence
and properties of the derived internal $\Hom$s and derived tensor products.
The stability of the categories $\Perv_{mf}$ under the perverse direct image functors
is proved in section
\ref{stab_Pmf}, and the
unipotent vanishing cycles are constructed in section
\ref{section_Beilinson_cycles} (see Corollary \ref{cor_nearby_etc}).

\subsection{Formal statement}

We denote by $\Sch/k$ the category of schemes over $k$
(always assumed to be
separated of finite type, as before)
and by $\TR$ the $2$-category of triangulated
categories.

The notion of a formalism of the four operations ($f^*$, $f_*$, $f_!$, $f^!$)
has been axiomatized by Deligne, Voevodsky and Ayoub, under the name
of ``foncteur crois\'e''.
\footnote{There are other approaches, but this particular one seems better
suited to our situation. For example, using derivators is complicated by the
fact that it is difficult to make sense of the notion of ``perverse
sheaf over a diagram of schemes'', because inverse image functors typically
do not preserve perverse sheaves.}
We will follow Ayoub's presentation.

\begin{subdef} (See Definition 1.2.12 of \cite{Ay1}.)
\footnote{Note that
we take the two categories $\Cf_1$ and $\Cf_2$ of this reference to be equal
to $\Sch/k$.}
A {\em crossed functor} (``foncteur crois\'e'') on $\Sch/k$ with values
in $\TR$ (relatively to the class of cartesian squares)
is a quadruple of $2$-functors $H=(H^*,H_*,H_!,H^!):\Sch/k\fl\TR$, such
that :
\begin{itemize}
\item[(0)] for every $X\in\Ob(\Sch/k)$, we have $H_*(X)=H_!(X)=H^*(X)=H^!(X)$
(we denote this triangulated category by $H(X)$);
\item[(1)] the functors $H_*,H_!$ are covariant, and the functors $H^*,H^!$
are contravariant;
\item[(2)] the functor $H^*$ is a global left adjoint of $H_*$;
\item[(3)] the functor $H^!$ is a global right adjoint of $H_!$;
\end{itemize}
together with the data of exchange structures of type $\swarrow$ on the couples
$(H_*,H_!)$ and $(H^*,H^!)$ (see Definition 1.2.1 of \cite{Ay1}), i.e., for
every cartesian square
\[\xymatrix{X'\ar[r]^-{g'}\ar[d]_{f'} & X\ar[d]^f \\
X\ar[r]_-{g} & Y}\]
in $\Sch/k$, we have morphisms of functors $H_!(f)\circ H_*(g')\fl
H_*(g)\circ H_!(f')$ and $H^*(g')\circ H^!(f)\fl H^!(f')\circ H^*(g)$ compatible
with horizontal and vertical composition of squares.

This data is moreover required to satisfy the following condition :
For every cartesian square
\[\xymatrix{X'\ar[r]^-{g'}\ar[d]_{f'} & X\ar[d]^f \\
X\ar[r]_-{g} & Y}\]
in $\Sch/k$, the morphisms
$H^*(g)\circ H_!(f)\fl H_!(f')\circ H^*(g')$ and $H_!(f')\circ H^*(g')\fl
H^*(g)\circ H_!(f)$ formally constructed used the exchange structures and
adjunctions (see the beginning of \cite{Ay1} 1.2.4) are isomorphisms
and inverses of each other; equivalently, we could required that the
morphisms
$H^!(f)\circ H_*(g')\fl H_*(g)\circ H^!(f')$ and $H_*(g)\circ H^!(f')\fl
H^!(f)\circ H_*(g')$ are isomorphisms
and inverses of each other.

\end{subdef}

\begin{subdef} (See Definition 3.1 and Theorem 3.4 of \cite{AyReal1}.)
Suppose that we have two crossed functors $H_1,H_2:\Sch/k\fl\TR$.
A {\em morphism of crossed functors} $R:H_1\fl H_2$ is the following data :
\begin{itemize}
\item[(1)] For every $X\in\Ob(\Sch/k)$, a triangulated functor
$R_X:H_1(X)\fl H_2(X)$.
\item[(2)] For every $f:X\fl Y$ in $\Sch/k$, invertible natural
transformations
\[\theta_f:H_2^*(f)\circ R_Y\iso R_X\circ H_1^*(f)\]
\[\gamma_f:R_Y\circ H_{1,*}(f)\iso H_{2,*}(f)\circ R_X\]
\[\rho_f:H_{2,!}(f)\circ R_X\iso R_Y\circ H_{2,!}(f)\]
\[\xi_f:R_X\circ H_2^!(f)\iso H_2^!(f)\circ R_Y.\]

\end{itemize}

We require these transformations to satisfy the compatibility conditions
spelled out in section 3 of Ayoub's paper \cite{AyReal1}.

\label{def_hom_cf}
\end{subdef}

\begin{subex}
\label{example_crossed_functor}
For $a\in\{c,h,m\}$,
we have a crossed functor $H_a=(H_a^*,H_{a,*},H_{a,!},H_a^!):\Sch/k\fl\TR$
defined in the following way :
\begin{itemize}
\item For every $X\in\Ob(\Sch/k)$, 
\[H_a^*(X)=H_{a,*}(X)=H_{a,!}=(X)=H_a^!(X)=\Dba(X).\]
\item For every $f:X\fl Y$ in $\Sch/k$, we have
$H_a^*(f)=f^*$, $H_{a,*}(f)=f_*$, $H_{a,!}(f)=f_!$ and $H_a^!(f)=f^!$.

\end{itemize}

Moreover, we have morphisms of crossed functors $H_m\fl H_h\fl H_c$.

\end{subex}

Then our first main result is the following theorem.

\begin{subthm} There exists a crossed functor
$H_{mf}=(H_{mf}^*,H_{mf,*},H_{mf,!},H_{mf}^!):\Sch/k\fl\TR$ and a morphism
of crossed functors $R:H_{mf}\fl H_m$ such that, for every $X\in\Ob(\Sch/k)$,
$H_{mf}(X)=\Db\Perv_{mf}(X)$ and $R_X:H_{mf}(X)=\Db\Perv_{mf}(X)\fl H_m(X)
=\Dbm(X)$ is the composition of the obvious functor $\Db\Perv_{mf}(X)\fl\Db
\Perv_m(X)$ and of the realization functor of section \ref{section_hor}.

Moreover, the functor $R_X$ is conservative for every $k$-scheme $X$, and
we have for every morphism $f$ in $\Sch/k$
a natural transformation $H_{mf,!}(f)\fl H_{mf,*}(f)$, which is an isomorphism
if $f$ is proper.

\label{thm_main1}
\end{subthm}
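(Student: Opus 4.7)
The plan is to follow the strategy of Beilinson and M.~Saito, adapted to Ayoub's crossed-functor formalism, built on the stability result of section \ref{stab_Pmf}.

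Set $H_{mf}(X):=\Db\Perv_{mf}(X)$ for every $k$-scheme $X$. The decisive input, deduced in section \ref{stab_Pmf} from Deligne's weight-monodromy theorem, is that for every $f:X\fl Y$ and every $i\in\Z$ the perverse direct images $\Hp^i f_*$ and $\Hp^i f_!$ send $\Perv_{mf}(X)$ into $\Perv_{mf}(Y)$. To construct $H_{mf,*}(f)$ and $H_{mf,!}(f)$ I would lift $f_*$ and $f_!$ on $\Dbm$ to the filtered derived categories using Corollary \ref{cor_der_fil}, apply Proposition \ref{prop_der_fil} to obtain triangulated functors $\Db\Perv_m(X)\fl\Db\Perv_m(Y)$ commuting with $\real$ up to canonical isomorphism, and then restrict to $\Db\Perv_{mf}$---which is legitimate precisely because the stability result forces the restrictions to land there.

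For inverse images the smooth case is easy: if $f$ is smooth of relative dimension $d$, the shift $f^*[d]$ is perverse t-exact (Proposition \ref{prop_exactness}(v)) and preserves weight filtrations, so its trivial derivation gives $H_{mf}^*(f)$. The delicate case is that of a closed immersion $i$, where $i^*$ is only right t-exact and $i^!$ only left t-exact. Following the Beilinson construction recalled in section \ref{section_Beilinson_cycles}, $i^*$ and $i^!$ can be assembled from the unipotent nearby and vanishing cycles $\Psi^u$, $\Phi^u$, which are in turn built from $j_*$ and $j_!$ for the complementary open immersion, and hence are already handled by the previous step. For a general morphism I would use the graph factorization to reduce to the two cases just treated, which also supplies the adjunctions $(H_{mf}^*,H_{mf,*})$ and $(H_{mf,!},H_{mf}^!)$.

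The crossed-functor data---the exchange morphisms of type $\swarrow$, the adjunction units and counits, and the canonical transformation $H_{mf,!}(f)\fl H_{mf,*}(f)$---are then transported along $R$ from the already-available crossed functor $H_m$. That each resulting morphism on $\Db\Perv_{mf}$ satisfies the compatibility axioms of Definition \ref{def_hom_cf}, that the exchange morphisms are invertible, and that $H_{mf,!}(f)\fl H_{mf,*}(f)$ is an isomorphism for proper $f$, all reduce via the conservativity of $R_X$ (Proposition \ref{prop_real_cons}) to the analogous facts in $H_m$. The individual operations each follow a Beilinson--Saito recipe, so the main obstacle is not any single construction but the systematic bookkeeping required to organize everything into the crossed-functor formalism. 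Conservativity of $R$ reduces the verification of each identity to a known identity in $H_m$, but it does not \emph{construct} natural transformations: every unit, counit, and exchange morphism on $\Db\Perv_{mf}$ must first be exhibited explicitly, which is where the rewriting of Saito's arguments promised in the introduction has to be carried out.
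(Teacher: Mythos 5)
Your proposal assembles several of the right ingredients (stability of $\Perv_{mf}$ under $\Hp^i f_*$, Beilinson's functors for closed immersions, conservativity of $R_X$ as the tool for checking identities), but two of its load-bearing steps do not work as stated. First, you cannot obtain $H_{mf,*}(f)$ by deriving $f_*$ on $\Db\Perv_m$ and then ``restricting'' to $\Db\Perv_{mf}$: since $\Perv_{mf}(X)$ is not stable under extensions in $\Perv_m(X)$, the category $\Db\Perv_{mf}(X)$ is not a full subcategory of $\Db\Perv_m(X)$ (the paper only proves the comparison functor is \emph{conservative}, Proposition \ref{prop_real_cons}), so ``restriction'' is not even defined; the functor has to be built directly on $\Db\Perv_{mf}$. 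Moreover Proposition \ref{prop_der_fil} only applies to a right t-exact functor, which $f_*$ is not for general $f$; the paper first treats $f$ with affine source, where $\Hp^0 f_*$ is right exact and where the crucial refinement of Beilinson's Theorem \ref{basic_lemma} (enough $f_*$-acyclic objects \emph{inside} $\Perv_{mf}$) makes the derived functor computable within $\Perv_{mf}$, and then handles general $f$ by {\v C}ech/simplicial resolutions and a limit over rigidified affine coverings (Definition \ref{def_H_mf}). None of this is supplied by the bare statement that $\Hp^0 f_*$ preserves $\Perv_{mf}$. Second, the graph factorization $X\fl X\times Y\fl Y$ does not reduce a general morphism to your two cases: the projection $X\times Y\fl Y$ is smooth only when $X$ is smooth, and for singular $X$ the constant object is not a shifted perverse sheaf, so there is no ready-made $p^*$. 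The paper has to construct the unit object $\ungras_X\in\Db\Perv_{mf}(X)$ for arbitrary $X$ by a gluing argument over an affine cover (Corollary \ref{cor_constant_sheaf}) and then prove the adjunction $p^*=\ungras_X\boxtimes(-)\dashv p_*$ by hand (Corollary \ref{cor_inverse_image_projection}); this step is absent from your plan.

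Finally, your strategy for the crossed-functor structure itself is incomplete in exactly the way you concede at the end: conservativity of $R$ lets you verify identities but cannot ``transport'' units, counits, exchange morphisms or the map $f_!\fl f_*$ from $H_m$, because $R$ is neither full nor essentially surjective. The paper's solution to precisely this bookkeeping problem is the missing organizing idea: it only constructs $f_*$, a left adjoint $f^*$ (via the two special cases above, with $i^*$ coming from Corollaries \ref{cor_support} and \ref{cor_i^*}), and verifies the six axioms of a stable homotopic $2$-functor (Definition \ref{def_sh2f}, Theorem \ref{thm_H^*}); then Ayoub's Scholie 1.4.2 and his realization theorems (Theorem \ref{thm_crossed_functors}, from \cite{Ay1} and \cite{AyReal1}) automatically produce $f_!$, $f^!$, the exchange structures, the transformation $f_!\fl f_*$, and the extension of $R$ to a morphism of crossed functors. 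Without invoking this machinery (or carrying out an equivalent hand-made construction of all four operations and their compatibilities, which is a substantially longer task), the statement of Theorem \ref{thm_main1} is not yet proved by your outline.
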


To prove this, we will follow the same strategy as in chapter 1 of
\cite{Ay1} and section 3 of \cite{AyReal1}, and deduce the existence
of the crossed functor and of the natural transformation $f_!\fl f_*$
from that of a stable homotopic $2$-functor (see Definition \ref{def_sh2f}).

We note that the conservativity of $R_X$ follows immediately
from Proposition \ref{prop_real_cons}, and then
the fact that $f_!\fl f_*$ is
an isomorphism for $f$ proper follows from the conservativity
of the functors $R_X$.

\begin{subdef} (See \cite{Ay1} 1.4.1.)
Let $H^*:\Sch/k\fl\TR$ be a contravariant 2-functor. For $X\in\Ob(\Sch/k)$,
we write $H^*(X)=H(X)$, and for $f$ a morphism of $\Sch/k$, we also
denote the $1$-functor $H^*(f)$ by $f^*$. We assume that $H^*$ is
strictly unital, i.e., for every morphism $f:X\fl Y$ in $\Sch/k$,
the connection isomorphisms $(f\circ\id_X)^*\simeq f^*$ and $(\id_Y\circ f)^*
\simeq f^*$ are the identity.

We say that $H^*$ is a {\em stable homotopic $2$-functor} if it satisfies the
following conditions :
\begin{itemize}
\item[(1)] $H(\varnothing)=0$.
\item[(2)] For every $f:X\fl Y$ in $\Sch/k$, the functor $f^*:H(Y)\fl H(X)$
admits a right adjoint $f_*$. Moreover, if $f$ is a locally closed immersion,
then the counit $f^*f_*\fl\id_{H(X)}$ is an isomorphism.
\item[(3)] If $f:X\fl Y$ is a smooth morphism in $\Sch/k$, then the
functor $f^*$ admits a left adjoint $f_\sharp$. Moreover, if we have
a cartesian square :
\[\xymatrix{X'\ar[r]^-{g'}\ar[d]_{f'} & X\ar[d]^f \\
X\ar[r]_-{g} & Y}\]
with $f$ smooth, then the exchange morphism $f'_\sharp {g'}^*\fl g^*f_\sharp$
(defined formally using the adjonctions, see \cite{Ay1} 1.4.5) is an
isomorphism.
\item[(4)] If $j:U\fl X$ and $i:Z\fl X$ are complementary open and
closed immersions in $\Sch/k$, then the pair $(j^*,i^*)$ is conservative.
\item[(5)] If $X\in\Ob(\Sch/R)$ and $p:\Aff^1_X\fl X$ is the canonical
projection, then the unit $\id_X\fl p_*p^*$ is an isomorphism.
\item[(6)] With the notation of (5), if $s:X\fl\Aff^1_X$ is the zero section,
then $p_\sharp s_*:H(X)\fl H(X)$ is an equivalence of categories.

\end{itemize}
\label{def_sh2f}
\end{subdef}

\begin{subdef} (See Definition 3.1 of \cite{AyReal1}.)
Let $H_1^*,H_2^*:\Sch/k\fl\TR$ be two stable homotopic
$2$-functors. A \emph{morphism of stable homotopic $2$-functors}
$R:H_1^*\fl H_2^*$ is the data of :
\begin{itemize}
\item[(1)] For every $X\in\Ob(\Sch/k)$, a triangulated functor
$R_X:H_1(X)\fl H_2(X)$.
\item[(2)] For every $f:X\fl Y$ in $\Sch/k$, an invertible natural
transformation
\[\theta_f:f^*\circ R_Y\iso R_X\circ f^*.\]
\end{itemize}
We require that this data satisfy the following compatibility conditions :
\begin{itemize}
\item[(A)] The natural transformations are compatible with the composition
of morphisms in $\Sch/k$.
\item[(B)] If $f$ is smooth, then the natural transformation $f_\sharp\circ
R_X\fl R_Y\circ f_\sharp$ (obtained using the adjonction and $\theta_f^{-1}$)
is invertible.

\end{itemize}
\end{subdef}

\begin{subex} The crossed functors of Example \ref{example_crossed_functor}
define (by forgetting part of the data) three stable homotopic $2$-functors
$H_m^*$, $H_h^*$ and $H_c^*$, and morphisms $H_m^*\fl H_h^*\fl H_c^*$.

\end{subex}

Theorem \ref{thm_main1} now follows immediately from the following two results
(the first one is a consequence of several theorems of Ayoub and is also
used to construct the four operations on the triangulated categories
of Voevodsky motives, and the second one is the main technical result of
this paper).

\begin{subthm} 
\begin{itemize}
\item[(i)] (See Scholie 1.4.2 of \cite{Ay1}.)
Let $H^*:\Sch/k\fl\TR$ be a stable homotopic $2$-functor. Then $H^*$ extends
to a crossed functor $\Sch/k\fl\TR$.

\item[(ii)] (See Theorems 3.4 and 3.7 of \cite{AyReal1}.)
Let $H_1^*,H_2^*:\Sch/k\fl\TR$ be two stable homotopic $2$-functors
and $R:H_1^*\fl H_2^*$ be a morphism. Let $H_1,H_2:\Sch/k\fl\TR$ be crossed
functors extending $H_1^*,H_2^*$ as in (i). Then $R$ extends to a morphism
of crossed functors from $H_1$ to $H_2$.

\end{itemize}

\label{thm_crossed_functors}
\end{subthm}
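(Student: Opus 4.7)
Both parts of the statement are attributed to Ayoub's work, so honestly my plan is to appeal to those references; but let me sketch what one would actually have to do if one were to reprove them in our setting.

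For (i), the plan is to construct $f_!$ and $f^!$ starting from the data $(H^*,H_*, f_\sharp)$ provided by the stable homotopic 2-functor axioms. The key building blocks are: for $f$ a closed immersion, set $f_! = f_*$ (using axiom (2)); for $j$ an open immersion, set $j_! = j_\sharp$ (using axiom (3), which makes sense since open immersions are smooth of relative dimension $0$); and for $p:\Aff^1_X \fl X$ the projection, the Thom equivalence $p_\sharp s_*$ of axiom (6) provides the twist needed to make things coherent for smooth morphisms, namely $f_! \simeq f_\sharp \circ \text{Th}(\Omega_f)^{-1}$ for $f$ smooth. For a general morphism $f:X\fl Y$, one invokes Nagata compactification to write $f = \overline{f}\circ j$ with $\overline{f}$ proper and $j$ an open immersion, then defines $f_! = \overline{f}_* \circ j_\sharp$. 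One must then check this is independent of the factorization (by comparing two compactifications via their fibre product) and extends to a 2-functor. The right adjoint $f^!$ exists by a Brown representability argument once one has $f_!$, since the triangulated categories involved are compactly generated (or because $f_!$ can be described explicitly enough in the building blocks above to produce $f^!$ by hand: proper becomes $f^*$-adjunction on the other side, open becomes restriction). The exchange isomorphisms of type $\swarrow$ reduce, via the building blocks, to proper base change (which is formal from $f_!=f_*$ for closed immersions and the smooth base change of axiom (3)) and smooth base change (axiom (3)).

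For (ii), once (i) is carried out for both $H_1^*$ and $H_2^*$, the transformations $\gamma_f,\rho_f,\xi_f$ extending $\theta_f$ are essentially forced. Concretely, $\gamma_f: R_Y\circ H_{1,*}(f)\iso H_{2,*}(f)\circ R_X$ is defined as the mate of $\theta_f^{-1}$ under the global adjunction $(f^*,f_*)$; one checks it is invertible by reducing via the building blocks to cases where $f^*$ is fully faithful or where some base change is available. Similarly, $\rho_f$ is built from $\theta$ and (on smooth pieces) condition (B) that $f_\sharp R_X\iso R_Y f_\sharp$ is invertible; to handle a general $f = \overline{f}\circ j$, one combines the smooth case for $j=j_\sharp$ with the proper case (where $f_!=f_*$ and $\rho$ comes from $\gamma$), and verifies independence of the compactification. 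Finally $\xi_f$ is the mate of $\rho_f^{-1}$ under $(f_!,f^!)$.

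The hard part is not any single construction but the combinatorial bookkeeping: one needs to check that all the exchange isomorphisms, the associativity constraints for composition, and the mutual compatibilities (spelled out in Definition \ref{def_hom_cf} and in section~3 of \cite{AyReal1}) hold on the nose, and that the construction of $f_!$ is well-defined up to canonical natural isomorphism with respect to the choice of Nagata compactification. This is exactly the content of the work done in chapter~1 of \cite{Ay1} and sections~3 of \cite{AyReal1}, and I would simply cite those references rather than reproduce the verifications. In our present application, the only non-trivial content is therefore to produce the stable homotopic 2-functor $H_{mf}^*$ and the morphism $H_{mf}^*\fl H_m^*$ verifying the axioms of Definition~\ref{def_sh2f}; everything else is then formal from Theorem~\ref{thm_crossed_functors}.
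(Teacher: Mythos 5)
Your proposal takes the same route as the paper: Theorem \ref{thm_crossed_functors} is stated in the paper purely as a citation of Scholie 1.4.2 of \cite{Ay1} and Theorems 3.4 and 3.7 of \cite{AyReal1}, with no proof given, and your sketch of Ayoub's construction (closed immersions via $f_*$, open immersions via $j_\sharp$, Thom twists, Nagata compactification, mates for the comparison transformations) is a fair summary of what those references do. The only caveat is the aside about Brown representability, which would not apply to $\Db\Perv_{mf}(X)$ (it is not compactly generated with coproducts); Ayoub in fact constructs $f^!$ explicitly from the building blocks, as your parenthetical correctly allows, so nothing is lost since the verification is in any case deferred to the cited works.
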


\begin{subthm} There exists a stable homotopic $2$-functor
$H_{mf}^*:\Sch/k\fl\TR$ and a morphism of stable homotopic $2$-functors
$R:H_{mf}^*\fl H_m^*$ such that, for every $X\in\Ob(\Sch/k)$, $H_{mf}(X)=
\Db\Perv_{mf}(X)$ and $R_X:H_{mf}(X)\fl H_m(X)$ is the same functor as in
Theorem \ref{thm_main1}.

\label{thm_H^*}
\end{subthm}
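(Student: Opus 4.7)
The plan is, for every morphism $f:X\to Y$ in $\Sch/k$, to construct lifts of the usual inverse and direct image functors to triangulated functors on the categories $\Db\Perv_{mf}$, and then to verify the six axioms of Definition \ref{def_sh2f}. Following Beilinson and M.~Saito, the idea is to use filtered derived categories as an intermediary to derive functors defined at the perverse level, and to exploit the conservativity of the realization functors $R_X$ (Proposition \ref{prop_real_cons}) to transport identities and isomorphisms from the already-known stable homotopic $2$-functor $H_m^*$ up to $H_{mf}^*$.

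First I would treat the functors which visibly preserve the categories $\Perv_{mf}$. By Proposition \ref{prop_exactness} and the standard behavior of weight filtrations under sheaf operations (\cite{Hu} 3.3--3.6), the shifted inverse and exceptional inverse images $f^*[d]$ and $f^![-d]$ for $f$ smooth of relative dimension $d$, as well as the Tate twist and the Verdier duality, are exact on $\Perv_m$ and send $\Perv_{mf}$ to $\Perv_{mf}$; they therefore extend term-by-term to $\Db\Perv_{mf}$. For an arbitrary morphism $f$, the decisive input is that each perverse cohomology functor $\Hp^i f_*$ and $\Hp^i f_!$ sends $\Perv_{mf}(X)$ into $\Perv_{mf}(Y)$; this is the content of section \ref{stab_Pmf} and rests on Deligne's weight-monodromy theorem. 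Given this stability, I would apply Corollary \ref{cor_der_fil} to lift $f_*$ and $f_!$ to the filtered derived categories, and then Proposition \ref{prop_der_fil} to obtain triangulated lifts $f_*,f_!:\Db\Perv_{mf}(X)\to\Db\Perv_{mf}(Y)$ together with canonical natural isomorphisms $R_Y\circ f_*\iso f_*\circ R_X$ and $R_Y\circ f_!\iso f_!\circ R_X$.

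The inverse image $f^*$ is neither left nor right t-exact for the perverse t-structure, and so cannot be produced by the same mechanism. I would follow Saito's approach in \cite{S}: factor $f$ using Nagata compactification and obtain $f^*$ either as a formal left adjoint to the already-constructed $f_*$ (reducing to a representability statement once $\Db\Perv_{mf}(Y)$ is shown to be suitably generated) or by the explicit filtered construction of \cite{S} built out of smooth hyperenvelopes of $X$. The natural isomorphism $\theta_f:f^*\circ R_Y\iso R_X\circ f^*$ is then forced by uniqueness of adjoints, since $R_X$ respects $f_*$. The exchange morphisms (proper and smooth base change) lift from $H_m^*$: one constructs the candidate transformations on $\Db\Perv_{mf}$ by the same formal recipe and, because $R_X$ is conservative and sends them to the known isomorphisms in $H_m^*$, they are automatically invertible.

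The final step is the verification of axioms (1)--(6) of Definition \ref{def_sh2f} together with the compatibility conditions (A) and (B) for the morphism $R$. Axiom (1) is trivial. Axioms (2), (4), (5) and (6) each assert the invertibility of a natural transformation (the counit along a locally closed immersion, the conservativity of the pair $(j^*,i^*)$, the homotopy invariance, and the Thom isomorphism): in each case the transformation can be defined inside $\Db\Perv_{mf}$ by formal constructions mirroring those in $H_m^*$, and its image under the conservative functor $R_X$ is the corresponding known isomorphism, so conservativity does the job. Axiom (3) is handled similarly using the identification $f_\sharp\simeq f_!(d)[2d]$ for $f$ smooth of relative dimension $d$. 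The main obstacle I anticipate is the clean construction of $f^*$ and of the full package of exchange morphisms in $\Db\Perv_{mf}$ itself: as noted in the introduction, the categories $\Perv_{mf}$ do not satisfy Saito's axioms exactly, so some of the constructions of \cite{Be1} and \cite{S} must be rewritten, with at least one proof to be entirely redone, in order to fit the crossed-functor formalism needed afterwards in Theorem \ref{thm_main1}.
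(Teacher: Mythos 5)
There is a genuine gap at both of the places where the real work happens. First, your construction of $f_*$ on $\Db\Perv_{mf}$ for an \emph{arbitrary} morphism via Proposition \ref{prop_der_fil} does not go through: that proposition requires the functor being derived to be right t-exact and to admit enough acyclic objects, and this only holds for $\Hp^0 f_*$ when the source of $f$ is affine (right exactness from Proposition \ref{prop_exactness}, acyclics from Beilinson's Theorem \ref{basic_lemma}, giving Corollary \ref{cor_affine_direct_images}). For non-affine source, $f_*$ is not the left derived functor of $\Hp^0 f_*$ (you would lose the higher perverse direct images, e.g.\ for proper non-affine $f$), so the stability of $\Perv_{mf}$ under $\Hp^i f_*$ alone does not produce the lift. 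The missing step is the gluing: the paper defines $H_{mf,*}(f)$ by {\v C}ech/simplicial resolutions attached to finite affine covers, taking a limit over rigidified coverings (Definition \ref{def_H_mf}), and only then checks $2$-functoriality and compatibility with $R$.

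Second, and more seriously, the construction of $f^*$ — which is the actual content of Theorem \ref{thm_H^*}, since the axioms are then checked by conservativity much as you say — is not supplied by either of your two suggested routes. A ``formal left adjoint once $\Db\Perv_{mf}(Y)$ is suitably generated'' has no justification: these are bounded derived categories of abelian categories without coproducts, so no Brown-type representability is available; and the hyperenvelope construction of \cite{S} is exactly the part the paper says must be modified, because $\Perv_{mf}$ does not satisfy Saito's axioms (pure objects need not be semisimple). Note also that Nagata compactification is used in the paper to prove that $\Hp^i f_*$ preserves $\Perv_{mf}$ (Corollary \ref{cor_f_*_preserves_Perv_mf}), not to build $f^*$; the relevant factorization for $f^*$ is through the graph, $X\fl X\times Y\fl Y$. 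The paper then constructs the left adjoint explicitly in each factor: for a closed immersion $i$ via the equivalence $\Db\Perv_{mf}(Y)\simeq\Db_Y\Perv_{mf}(X)$ proved with Beilinson's functors $\Phi_f^u$ and $\Omega_f$ and a {\v C}ech-type complex (Corollaries \ref{cor_support} and \ref{cor_i^*}), and for a projection $p:X\times Y\fl Y$ via $K\fle\ungras_X\boxtimes K$, which requires first constructing the unit object $\ungras_X$ in $\Db\Perv_{mf}(X)$ by a Mayer--Vietoris descent from the smooth/affine case (Corollaries \ref{cor_constant_sheaf}, \ref{cor_inverse_image_projection}, \ref{cor_inverse_images}). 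Without some version of these constructions your proof outline does not yield the functor $f^*$ whose properties you then propose to verify.
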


\begin{proof} The construction of the $2$-functor $H_{mf}^*$ is given
in Corollary \ref{cor_inverse_images}. Let's check that it is a stable
homotopic $2$-functor. Property (1) is obvious. The fact that $H_{mf}^*(f)$
admits a right adjoint for every $f$ follows from the definition
of $H_{mf}^*$ as a global left adjoint, and the last part of property (2)
follows from Corollary \ref{cor_i^*} and Proposition \ref{prop_adjunction_open}.
The fact that $f^*$ admits a left adjoint for $f$ smooth is proved in
Proposition \ref{prop_duality}(iv), and the last part of property (3) as
well as properties (4) and (5)
follow from the conservativity of the realization functor. 
Finally, let $Y$ be a $k$-scheme, let $p:\Aff^1_Y\fl Y$ be the canonical
projection and $s:Y\fl\Aff^1_Y$ be the zero section. By Proposition
\ref{prop_duality}(v), we have a natural isomorphism $s_!\iso s_*$.
So we get a natural isomorphism
\[p_\sharp s_*=p_![2](1)s_*\iso p_!s_![2](1)\simeq(ps)_![2](1)=\id[2](1),\]
which shows that $p_\sharp s_*:\Db\Perv_{mf}(Y)\fl\Db\Perv_{mf}(Y)$ is an
equivalence of categories.
\end{proof}

Finally, we show the existence of tensor products and internal $\Hom$s
on the categories $\Db\Perv_{mf}(X)$.

\begin{subdef}
\begin{itemize}
\item[(i)] (See Definition 2.3.1 of \cite{Ay1}.)
A \emph{unitary symmetric mono\"idal} stable
homotopic $2$-functor is a stable homotopic
$2$-functor $H^*$ that takes its values in the $2$-category of symmetric
mono\"idal unitary triangulated categories, that is, that associates to
every $X\in\Ob\Sch/k$ a unitary symmetric mono\"idal
category $(H(X),\otimes_X,\ungras_X)$ and such that :
\begin{itemize}
\item[(a)] For every morphism $f:X\fl Y$ in $\Sch/k$, the functor
$f^*$ is unitary mono\"idal.
\item[(b)] (Projection formula.)
If $f:X\fl Y$ is smooth, $K\in\Ob H(Y)$ and $L\in\Ob H(X)$,
then the functorial map
\[p:f_\sharp(f^*(K)\otimes_Y L)\fl K\otimes_X f_\sharp(L)\]
constructed in Proposition 2.1.97 of \cite{Ay1} is an isomorphism.

\end{itemize}

\item[(ii)] (See Definition 3.2 of \cite{AyReal1}.)
Let $H_1^*$ and $H_2^*$ be two 
symmetric mono\"idal unitary stable homotopic $2$-functors. 
Then a morphism of 
symmetric mono\"idal unitary stable homotopic $2$-functors
from $H_1^*$ to $H_2^*$ is a morphism of stable homotopic $2$-functors
$R:H_1^*\fl H_2^*$ such that :
\begin{itemize}
\item[(a)] For every $X\in\Ob(\Sch/k)$, the functor $R_X$ is mono\"idal
unitary.
\item[(b)] For every morphism of $k$-schemes $f$, the natural
transformation $\theta_f$ is a morphism of mono\"idal unitary
functors.

\end{itemize}

\item[(iii)] (See Definition 2.3.50 of \cite{Ay1}.) If $H^*$ is as in
(i), we say that $H^*$ is \emph{closed} if, for every $X\in\Ob\Sch/k$,
the symmetric mono\"idal category $(H(X),\otimes_X)$ is closed; this
means that, for every object $K$ of $H(X)$, the endofunctor $K\otimes_X\cdot$
of $H(X)$ admits a right adjoint, that will be denoted by $\Homf_X(K,\cdot)$.

\end{itemize}
\label{def_monoidal}
\end{subdef}

\begin{subex} The stable homotopic $2$-functors $H_m^*$, $H_h^*$ and $H_c^*$
are all closed symmetric mono\"idal unitary (for the derived tensor product),
and the morphisms $H_m^*\fl H_h^*\fl H_c^*$ are 
morphisms of symmetric mono\"idal unitary stable homotopic $2$-functors.

\end{subex}

Our last result is the following :

\begin{subthm} There exists a structure of closed symmetric
mono\"idal unitary stable homotopic $2$-functor on $H_{mf}^*$ such that
$R:H_{mf}^*\fl H_m^*$ is a
morphism of symmetric mono\"idal unitary stable homotopic $2$-functors.

Moreover, for every $k$-scheme $X$, the functorial map
\[R_X\Homf_{\Db\Perv_{mf}(X)}(\cdot,\cdot)\fl \Homf_{\Dbm(X)}(R_X(\cdot),R_X(\cdot))\]
of \cite{AyReal1} (3.1) is an isomorphism.

\label{thm_tensor_product}
\end{subthm}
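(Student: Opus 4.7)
The plan is to follow Saito's approach from \cite{S0} and \cite{S}, adapted to our setting: build the exterior tensor product first, define the internal tensor by pulling back along the diagonal via $H_{mf}^*$, and obtain internal $\Homf$ from Verdier duality. The closedness axiom is then a formula, not an abstract adjoint-existence assertion, and the compatibilities with $R$ are deduced from the analogous compatibilities on $\Dbm$ using conservativity of the realization functors.

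\textbf{Exterior tensor product.} I first construct a bi-triangulated functor $\boxtimes:\Db\Perv_{mf}(X)\times\Db\Perv_{mf}(Y)\fl\Db\Perv_{mf}(X\times Y)$. Since $\boxtimes$ is t-exact for the perverse t-structures on the $\Dbc$'s (Proposition \ref{prop_exactness}(vi)) and these induce the perverse t-structures on $\Dbh$ and $\Dbm$, it suffices to check that $\Perv_{mf}(X)\times\Perv_{mf}(Y)\fl\Perv_m(X\times Y)$ lands in $\Perv_{mf}(X\times Y)$; biexactness of $\boxtimes$ on the abelian categories then gives a bi-triangulated extension to the bounded derived categories. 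For the stability, one uses that $\boxtimes$ preserves purity with additive weights (by Huber \cite{Hu} 3.3--3.6, reducing to Deligne's case); given weight filtrations $W_\bullet K$ and $W_\bullet L$, the convolution $W_n(K\boxtimes L)=\sum_{i+j=n}W_iK\boxtimes W_jL$ (meaningful because $\boxtimes$ is exact on perverse sheaves, hence preserves monomorphisms) is then a weight filtration on $K\boxtimes L$.

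\textbf{Internal tensor, unit and constraints.} I set $K\otimes_X L:=\Delta_X^*(K\boxtimes L)$, where $\Delta_X^*=H_{mf}^*(\Delta_X)$ is provided by Theorem \ref{thm_H^*}, and $\ungras_X:=\pi_X^*(\Q_\ell)$ where $\pi_X:X\fl\Spec k$. The associator, symmetry and unit constraints are obtained by lifting to the filtered derived categories of the $\Perv_{mf}(X)$'s in the style of Section \ref{section_fil_der} (applying Proposition \ref{prop_der_fil} to $\boxtimes$ and $\Delta_X^*$ separately), so that each constraint is intertwined by $\real$ with its counterpart in $\Dbm(X)$. Verification of the pentagon and hexagon identities then reduces, by conservativity and faithfulness of $R_X$ (Proposition \ref{prop_real_cons}), to the identities already known on $\Dbm$. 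Axiom (a) of Definition \ref{def_monoidal} (monoidality of $f^*$) is formal from the cartesian identity $\Delta_Y\circ f=(f\times f)\circ\Delta_X$ together with the $2$-functoriality of $H_{mf}^*$.

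\textbf{Projection formula and closedness.} The projection morphism $f_\sharp(f^*K\otimes_X L)\fl K\otimes_Y f_\sharp L$ for $f$ smooth is constructed formally from the adjunction $(f_\sharp,f^*)$ and the monoidal structure (\cite{Ay1} Prop.\ 2.1.97); that it is an isomorphism follows by applying $R_X$ and using conservativity, as the statement holds in $\Dbm$. For closedness I define $\Homf_X(K,L):=D_X(K\otimes_X D_XL)$, where $D_X$ is the Verdier dual constructed from $\pi_X^!$ in Theorem \ref{thm_main1}. Biduality $D_X^2\simeq\id$ on $\Db\Perv_{mf}(X)$ transfers from $\Dbm$ by conservativity of $R_X$, and the adjunction
\[\Hom(K\otimes_X L,M)\simeq\Hom(K,\Homf_X(L,M))\]
follows by the standard manipulation using biduality and the identity $D_X(K\otimes_X L)\simeq\Homf_X(K,D_X L)$. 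The final compatibility statement of the theorem is then immediate, since both $\otimes_X$ and $D_X$ commute with $R_X$ up to canonical isomorphism (the first by construction in the previous step, the second because $R$ is a morphism of crossed functors by Theorem \ref{thm_main1}).

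The main obstacle is genuinely the first step: the proof that $\boxtimes$ preserves $\Perv_{mf}$. One must check that the convolution filtration is an honest filtration by perverse subsheaves and that its graded pieces are pure of the predicted weight; this rests on the horizontal version of Deligne's additivity of weights under exterior tensor product (\cite{Hu} 3.3--3.6) together with the t-exactness of $\boxtimes$. Once this is in place, everything afterwards is essentially a formal transport of structure along the conservative realization $R_X$, within the same formalism of stable homotopic $2$-functors already invoked to prove Theorem \ref{thm_H^*}.
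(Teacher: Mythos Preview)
Your overall strategy---exterior tensor, then $\Delta_X^*$, then $\Homf_X(K,L):=D_X(K\otimes_X D_X L)$---is exactly the paper's. The monoidal part and the projection formula go through essentially as you say (the paper also deduces (i)(b) of Definition~\ref{def_monoidal} from conservativity of $R_X$). But there is a genuine gap in your closedness argument.

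You claim that the adjunction
\[
\Hom_{\Db\Perv_{mf}(X)}(K\otimes_X L,M)\simeq\Hom_{\Db\Perv_{mf}(X)}(L,\Homf_X(K,M))
\]
``follows by the standard manipulation using biduality and the identity $D_X(K\otimes_X L)\simeq\Homf_X(K,D_X L)$''. With your definition $\Homf_X(K,L):=D_X(K\otimes_X D_X L)$, that ``identity'' is a tautology and buys nothing. Unwinding, what you actually need is a natural isomorphism
\[
\Hom_{\Db\Perv_{mf}(X)}(K\otimes_X L,M)\simeq\Hom_{\Db\Perv_{mf}(X)}(K\otimes_X D_X M,D_X L),
\]
and this does \emph{not} follow from $D_X^2\simeq\id$ alone: it requires an evaluation map $\iota_K:K\otimes_X D_X K\fl a^!\ungras_{\Spec k}$ in $\Db\Perv_{mf}(X)$ lifting the usual one in $\Dbm(X)$, together with a proof that the induced map on $\Hom$'s is an isomorphism. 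Conservativity of $R_X$ cannot produce $\iota_K$ for you (it detects isomorphisms, it does not construct morphisms), nor can it identify $\Hom$-sets in $\Db\Perv_{mf}(X)$ with those in $\Dbm(X)$, since $R_X$ is not known to be fully faithful. The paper spends Lemmas~\ref{lemma_tens1}--\ref{lemma_tens9} on exactly this point: Lemma~\ref{lemma_tens4} bootstraps from full faithfulness of $\Perv_{mf}(X)\hookrightarrow\Perv_h(X)$ at the degree-$0$ level, Lemmas~\ref{lemma_tens5}--\ref{lemma_tens8} build $\iota_K$ for complexes via a filtered spectral-sequence argument, and Lemma~\ref{lemma_tens9} proves the resulting comparison map $u_{K,L}$ is an isomorphism by Noetherian induction on $X$, the base case being smooth $X$ with lisse $K,L$ handled by an explicit Yoneda-$\Ext$ computation. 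None of this is formal.

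A smaller remark: in your monoidal-constraint step you invoke ``conservativity and faithfulness of $R_X$'' to check the pentagon/hexagon. Conservativity alone does not give equality of two parallel morphisms. The paper avoids this by building the constraints for $\boxtimes$ directly at the abelian level (where $\Perv_{mf}\subset\Perv_m$ is a full subcategory, so the constraints are inherited) and then deriving the biexact functor; the constraints for $\otimes_X=\Delta_X^*\circ\boxtimes$ then come from $2$-functoriality of $H_{mf}^*$. Your filtered-derived route can be made to work too, but you should say why the lifts are \emph{unique}, not just that they exist.
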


\begin{proof} This theorem is proved in section \ref{tensor}.
More precisely, the bifunctors $\otimes_X$ and $\Homf$ are constructed in
section \ref{tensor}, and all their properties are proved there except
for condition (i)(b) of Definition \ref{def_monoidal}. But this last
condition follows from the fact that the functor $R_X$ is conservative
(and that the analogous result is true in $\Dbm(X)$).
\end{proof}

\section{Easy stabilities}
\label{obvious}

The proof of Theorem \ref{thm_H^*} will require us to show that
the full subcategories $\Perv_{mf}(X)\subset\Perv_m(X)$ are preserved by
a certain number of sheaf operations. Here we list the easier such results.

\begin{prop} Let $f:X\fl Y$ be a morphism of $k$-schemes.
\begin{itemize}
\item[(i)] If $f$ is smooth of relative dimension $d$, then the exact
functor $f^*[d]:\Perv_m(Y)\fl\Perv_m(X)$ sends $\Perv_{mf}(Y)$ to $\Perv_{mf}(X)$.

\item[(ii)] If $f$ is proper, then, for every $k\in\Z$, the
functor $\Hp^k f_*:\Perv_m(X)\fl\Perv_m(Y)$ 
sends $\Perv_{mf}(X)$ to $\Perv_{mf}(Y)$.

\end{itemize}

\label{prop_obvious1}
\end{prop}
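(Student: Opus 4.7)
The plan is to exhibit, in each case, an explicit weight filtration on the output using the given weight filtration on the input, and to check that the graded pieces are pure of the correct weight by invoking the already-established compatibility of the six operations with weights in the horizontal setting (\cite{Hu} 3.5--3.6, which mirrors \cite{BBD} 5.1.14 and 5.4.1).

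For (i), first note that $f^*[d]$ is t-exact on $\Dbm(Y)\to\Dbm(X)$ by Proposition \ref{prop_exactness}(v), so if $K\in\Perv_{mf}(Y)$ has weight filtration $W_\bullet K$, then applying $f^*[d]$ termwise yields a filtration of $f^*[d]K$ in $\Perv_m(X)$ whose graded pieces are the objects $f^*[d]\Gra^W_j K$. Because $f$ is smooth of relative dimension $d$, combining the fact that $f^*$ is of weight $\leq 0$ with the isomorphism $f^!\simeq f^*[2d](d)$ and the property that $f^!$ is of weight $\geq 0$ shows that $f^*[d]$ sends a pure perverse sheaf of weight $j$ to a pure perverse sheaf of weight $j+d$. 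After reindexing the filtration by $j\mapsto j+d$, one therefore obtains a weight filtration on $f^*[d]K$, proving that $f^*[d]K\in\Perv_{mf}(X)$.

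For (ii), use that $f$ proper implies $f_!=f_*$, and that $f_!$ (resp. $f_*$) is of weight $\leq 0$ (resp. $\geq 0$), so $f_*$ sends pure complexes of weight $w$ to pure complexes of the same weight; in particular, for any pure perverse sheaf $L$ of weight $w$ on $X$, $\Hp^k f_*L$ is pure of weight $w+k$. For general $K\in\Perv_{mf}(X)$ with weight filtration $W_\bullet K$, define
\[F_j\Hp^k f_* K:=\Im\bigl(\Hp^k f_*(W_{j-k}K)\to\Hp^k f_* K\bigr).\]
This is an exhaustive separated increasing filtration of $\Hp^k f_* K$ in $\Perv_m(Y)$. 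Applying the cohomological functor $\Hp^\bullet f_*$ to the short exact sequence $0\to W_{j-k-1}K\to W_{j-k}K\to\Gra^W_{j-k}K\to 0$ yields a long exact sequence in which the cokernel of $\Hp^k f_*(W_{j-k-1}K)\to\Hp^k f_*(W_{j-k}K)$ embeds into $\Hp^k f_*\Gra^W_{j-k}K$, which is pure of weight $j$ by the previous remark. A short diagram chase then shows that $F_j/F_{j-1}$ is a quotient of this cokernel, hence a subquotient of a pure perverse sheaf of weight $j$. Since pure perverse sheaves of a given weight form a semisimple subcategory stable under subquotients, each $F_j/F_{j-1}$ is pure of weight $j$, so $F_\bullet$ is a weight filtration on $\Hp^k f_* K$.

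The only delicate point is the bookkeeping in (ii): one has to pass from the naive filtration $\Hp^k f_*(W_\bullet K)$ to its image filtration $F_\bullet$ in order to obtain a strict filtration whose successive quotients are honest subquotients, not merely quotients, of the pure perverse sheaves $\Hp^k f_*\Gra^W_{j-k}K$, together with the index shift by $k$ needed to match the cohomological degree. Everything else is an immediate consequence of the already-known weight behavior of the operations and of the subquotient-stability of the subcategory of pure perverse sheaves of a fixed weight.
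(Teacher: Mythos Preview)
Your argument for (i) is correct and identical to the paper's: $f^*[d]$ is exact and sends pure perverse sheaves to pure perverse sheaves, so the transported filtration (after the shift in indexing) is a weight filtration.

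For (ii) the paper simply cites Proposition 3.9 of \cite{Hu}, and your explicit image-filtration argument is essentially the one underlying that reference, so the strategies agree. There is, however, one genuine error in your write-up. You assert that ``pure perverse sheaves of a given weight form a semisimple subcategory stable under subquotients''. In the horizontal setting over a base field that is not finite, pure horizontal perverse sheaves are \emph{not} semisimple in general; the paper says this explicitly (see the remark preceding the proof of Proposition~\ref{prop_nullite}, where the author notes that Saito's argument ``assumes (and uses) the fact that pure objects are semisimple, which is false in our case''). Fortunately your proof does not actually use semisimplicity: all you need is that a subquotient of a pure perverse sheaf of weight $j$ is again pure of weight $j$, which does hold in this setting (it follows from the characterization of weights via simple subquotients, valid here as in \cite{BBD} 5.3.1, see \cite{Hu} 3.3--3.6). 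So simply delete the word ``semisimple'' and the argument stands.
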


\begin{proof} Point (i) follows from the fact that
the functor $f^*[d]$ is exact (see Proposition \ref{prop_exactness})
and sends pure perverse sheaves to pure perverse sheaves (by \cite{BBD} 5.1.14).
Point (ii) is Proposition 3.9 of Huber's paper \cite{Hu}. (This proposition
is stated for $f$ smooth, but its proof doesn't use the smoothness of $f$.)
\end{proof}

\begin{prop} Let $X,Y\in\Ob(\Sch/k)$.
\begin{itemize}
\item[(i)] 
The Poincar\'e-Verdier duality 
functor $D_X:\Perv_m(X)^\op
\fl\Perv_m(X)$
sends $\Perv_{mf}(X)^\op$ to $\Perv_{mf}(X)$.
\item[(ii)] The external tensor product functor $\boxtimes:\Perv_m(X)\times
\Perv_m(Y)\fl\Perv_m(X\times Y)$
sends $\Perv_{mf}(X)\times\Perv_{mf}(Y)$ to $\Perv_{mf}(X\times Y)$.
\item[(iii)] The Tate twist 
functor $(1):\Perv_m(X)
\fl\Perv_m(X)$, $K\fle K(1)$
sends $\Perv_{mf}(X)$ to $\Perv_{mf}(X)$.

\end{itemize}

\label{prop_obvious2}
\end{prop}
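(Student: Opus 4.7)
The plan is to transport weight filtrations through each of the three functors, using t-exactness from Proposition \ref{prop_exactness} together with the effect on weights of pure perverse sheaves (the horizontal analogue of \cite{BBD} 5.1.14, recorded in \cite{Hu} 3.3--3.6).

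For (i), I would start from $K\in\Ob\Perv_{mf}(X)$ with weight filtration $W_\bullet K$, so each $\Gra^W_k K$ is pure of weight $k$. Since $D_X$ is an exact contravariant functor on $\Perv_m(X)$ (Proposition \ref{prop_exactness}(i)), applying it to $W$ produces a decreasing filtration on $D_X K$; reindexing via $W'_k(D_X K) := D_X(K/W_{-k-1} K)$ gives an ascending filtration whose graded pieces satisfy $\Gra^{W'}_k(D_X K)\simeq D_X(\Gra^W_{-k} K)$. Since $D_X$ sends pure perverse sheaves of weight $w$ to pure perverse sheaves of weight $-w$, these graded pieces are pure of weight $k$, so $D_X K\in\Ob\Perv_{mf}(X)$.

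For (ii), given $K\in\Ob\Perv_{mf}(X)$ with weight filtration $W$ and $L\in\Ob\Perv_{mf}(Y)$ with weight filtration $W'$, I would define the convolution filtration
\[F_n(K\boxtimes L) := \sum_{i+j\le n}\Im\bigl(W_i K\boxtimes W'_j L\fl K\boxtimes L\bigr),\]
the sum being taken in $\Perv_m(X\times Y)$. The t-exactness of $\boxtimes$ (Proposition \ref{prop_exactness}(vi)) guarantees that each $W_i K\boxtimes W'_j L\fl K\boxtimes L$ is a monomorphism in $\Perv_m(X\times Y)$ and that the associated graded of $F$ decomposes as $\bigoplus_{i+j=n}\Gra^W_i K\boxtimes\Gra^{W'}_j L$. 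Each summand is pure of weight $n$ because the external tensor product of pure perverse sheaves is pure with weights adding, so $F$ is a weight filtration on $K\boxtimes L$. For (iii), the Tate twist is exact on $\Perv_m(X)$ (Proposition \ref{prop_exactness}(vii)) and shifts weights by $-2$, so the filtration $W'_j(K(1)) := (W_{j+2} K)(1)$ is a weight filtration on $K(1)$.

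The main obstacle is (ii): one needs to verify carefully that the convolution filtration $F$ is well-defined in the abelian category $\Perv_m(X\times Y)$ and has the claimed associated graded, i.e.\ that subobjects of perverse sheaves behave as in ordinary modules under $\boxtimes$. I expect this to follow formally from the t-exactness of $\boxtimes$ together with the standard computation of the associated graded of a convolution filtration in any abelian category; should any difficulty arise, one can reduce to pure perverse sheaves on a common integral model $\mathcal{X}\times_{\Spec A}\mathcal{Y}$ and invoke the corresponding statement over finite fields from \cite{BBD} chapter 5, via Deligne's generic base change.
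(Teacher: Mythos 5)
Your proposal is correct and follows essentially the same route as the paper, which disposes of all three cases in one line by noting that the functors are exact (Proposition \ref{prop_exactness}) and send pure perverse sheaves to pure ones of the predicted weight (\cite{BBD} 5.1.14, transported via \cite{Hu} 3.3--3.6); your explicit reindexed filtration in (i), convolution filtration in (ii), and shifted filtration in (iii) simply make the paper's implicit filtrations explicit. The worry you raise about (ii) is unproblematic: bi-exactness of $\boxtimes$ on perverse sheaves makes the convolution filtration and its associated graded behave formally as in any abelian category with a bi-exact bifunctor, which is all the paper uses.
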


\begin{proof} 
This follows from the fact all these functors are exact (see Proposition \ref{prop_exactness})
and send pure perverse sheaves to pure perverse sheaves (see \cite{BBD} 5.1.14).
\end{proof}

In particular, by deriving trivially the functors above, we get :
\begin{itemize}
\item[(i)] For every $X\in\Ob(\Sch/k)$, an exact functor
$D_X:\Db\Perv_{mf}^\op(X)\fl \Db\Perv_{mf}(X)$ and an isomorphism $D_X^2\simeq\id$,
and also an exact functor $\Db\Perv_{mf}(X)\fl \Db\Perv_{mf}(X)$, $K\fle K(1)$.
\item[(ii)] For every $X,Y\in\Ob(\Sch/k)$, an exact functor
$\boxtimes:\Db\Perv_{mf}(X)\times \Db\Perv_{mf}(Y)\fl \Db\Perv_{mf}(X\times Y)$,
satisfying the same properties of commutativity and associativity as the
external tensor product on the categories $\Dbc$.

\end{itemize}

Moreover, these functors
correspond to the usual ones on $\Dbm(X)$ by the realization functor
(by Proposition \ref{prop_comp_real1}).

Note that, by Proposition 3.2.2 and Theorem 3.2.4 of \cite{BBD}, the
$2$-functor $X\fle\Perv(X)$ is a stack for the {\'e}tale topology on $X$.
 We have the
following easy result :

\begin{prop} The categories $\Perv_h(U)$ (resp. $\Perv_m(U)$, resp. $\Perv_{mf}(U)$)
define a substack of $X\fle\Perv(X)$.

\label{prop_obvious3}
\end{prop}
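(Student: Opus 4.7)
The plan is to verify the two defining properties of a substack of $X \fle \Perv(X)$: (A) stability under pullback along \'etale morphisms, and (B) \'etale descent, namely that if $K \in \Perv(X)$ satisfies $f_i^* K \in \Perv_?(U_i)$ for an \'etale cover $\{f_i : U_i \fl X\}$, then $K \in \Perv_?(X)$. Here $\Perv_?$ stands for any one of $\Perv_h$, $\Perv_m$, $\Perv_{mf}$.

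For (A), let $f : U \fl X$ be \'etale. If $K \in \Perv_h(X)$ with extension $\widetilde{K}$ on $\mathcal{X}$ over some $(A, \mathcal{X}) \in \Ob\Uf X$, I would use EGA IV.8 to spread out $f$, possibly after enlarging $A$, to an \'etale morphism $\mathcal{F} : \mathcal{U}' \fl \mathcal{X}$; then $\mathcal{F}^* \widetilde{K}$ extends $f^*K$, so $f^*K \in \Perv_h(U)$. Mixedness is preserved because the fibers of $\mathcal{F}$ over closed points of $\Spec A$ are finite \'etale covers of varieties over finite fields, and finite \'etale pullback preserves punctual purity. The subcategory $\Perv_{mf}$ is preserved because $f^*$ is exact on the perverse heart (Proposition \ref{prop_exactness}(v)) and preserves purity (\cite{BBD} 5.1.14), so the pullback of a weight filtration is again a weight filtration.

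For (B), let $\{f_i : U_i \fl X\}_{i \in I}$ be an \'etale cover; by quasi-compactness of $X$ I may take $I$ finite. In the horizontal case, each $f_i^* K$ extends to some $\widetilde{K}_i$ on a spread $\mathcal{U}_i$ over $A_i \in \Uf$. Taking a common $A \supset A_i$ and repeatedly applying EGA IV.8, I can arrange: a spreading $\mathcal{X}$ of $X$; \'etale maps $\mathcal{F}_i : \mathcal{U}_i \fl \mathcal{X}$ spreading the $f_i$; isomorphisms over $\mathcal{U}_i \times_{\mathcal{X}} \mathcal{U}_j$ spreading the descent data on $U_i \times_X U_j$; and, after further enlarging $A$, the cocycle identities on triple fibre products. \'Etale descent for $\Dbc(\mathcal{X}, E)$ (equivalently, for $\Perv(\mathcal{X})$ via \cite{BBD} 3.2.4 applied to the perverse heart) then yields an extension $\widetilde{K}$ of $K$ to $\mathcal{X}$, so $K \in \Perv_h(X)$. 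In the mixed case, once $K \in \Perv_h(X)$ with extension $\widetilde{K}$, each restriction $\widetilde{K}|_{\mathcal{X}_x}$ for $x$ a closed point of $\Spec A$ pulls back via a finite \'etale morphism to a mixed sheaf, and mixedness over a finite field descends along finite \'etale covers (Frobenius eigenvalues on $\mathcal{X}_x$ are roots of those on the cover, so the Weil-number property descends, and a filtration with pure quotients is obtained from one upstairs). In the $\Perv_{mf}$ case, with $K$ already in $\Perv_m(X)$, each $W_\bullet f_i^* K$ exists uniquely (\cite{Hu} 3.8); by uniqueness, the two pullbacks to $U_i \times_X U_j$ agree, so descent for subobjects in the stack $\Perv$ produces a filtration $W_\bullet K$ of $K$ whose graded pieces are pure (purity being \'etale local).

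The main obstacle is (B) for $\Perv_h$: arranging the spreading-out so that the cover, all the $\widetilde{K}_i$, the pairwise descent isomorphisms, and the triple cocycle identities live simultaneously over a single $(A, \mathcal{X}) \in \Uf X$. This is a routine but careful limit argument via EGA IV.8 combined with \'etale descent for constructible complexes on $\mathcal{X}$. Once this is set up, the $\Perv_m$ case follows from the standard behaviour of Frobenius weights under finite \'etale extensions of finite fields, and the $\Perv_{mf}$ case is essentially formal given the uniqueness of the weight filtration.
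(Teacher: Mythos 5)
Your overall structure (pullback stability plus descent of objects) is fine, and your treatment of $\Perv_{mf}$ --- gluing the filtrations $W_\bullet f_i^*K$ by uniqueness of the weight filtration and testing purity \'etale-locally --- is exactly the paper's argument. But for the key descent step for $\Perv_h$ and $\Perv_m$ you take a genuinely different and much heavier route than the paper. The paper never spreads out the descent datum at all: since $\Perv$ is a stack (\cite{BBD} 3.2.4), for a finite affine \'etale cover one has the exact sequence $0\fl K\fl\bigoplus_i u_{i*}u_i^*K\fl\bigoplus_{i,j}u_{ij*}u_{ij}^*K$ in $\Perv(X)$; the middle and right terms lie in $\Perv_h(X)$ (resp.\ $\Perv_m(X)$) because the six operations exist on $\Dbh$ and $\Dbm$ and the $u_{i*}$ are t-exact, and then $K$ lies there too because $\Perv_h(X)$ (resp.\ $\Perv_m(X)$) is a full abelian subcategory of $\Perv(X)$ stable under subobjects (Proposition \ref{prop_eta_fully_faithful}). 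This one-line Čech embedding replaces your entire spreading-out of the cover, the $\widetilde K_i$, the descent isomorphisms and the cocycle identities, and it also disposes of the mixed case immediately (mixedness is stable under subobjects), where your argument about Frobenius eigenvalues and descending a filtration ``from upstairs'' is the sketchiest part of the proposal.

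There is also one step in your route that does not work as written: ``\'etale descent for $\Dbc(\mathcal{X},E)$'' is not available --- triangulated categories of complexes do not satisfy descent, which is precisely why \cite{BBD} 3.2.4 is stated for the perverse heart. Your parenthetical fix is the right one, but to invoke the stack property of perverse sheaves over $\mathcal{X}$ you must first arrange that the spread-out complexes $\widetilde K_i$ are themselves (shifted) perverse on the models $\mathcal{U}_i$; a priori they are only constructible complexes whose restrictions to the generic fibres are perverse. This can be repaired --- after shrinking $\Spec A$ the restrictions become perverse, by the compatibility of the perverse t-structures at finite level with the one on $\Dbh$ (Proposition \ref{prop_restr_fg} and Theorem 2.5 of \cite{Hu}) --- but the shrinking step must be said, and it has to be performed uniformly for all $i$, $j$ together with the surjectivity of the spread-out cover (shrink $\mathcal{X}$ to the union of the images of the $\mathcal{F}_i$). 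With those repairs your argument goes through, but it buys nothing over the paper's subobject argument, which avoids every one of these limit manipulations.
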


\begin{proof} As $\Perv_h(U)$ (resp. $\Perv_m(U)$, resp. $\Perv_{mf}(U)$) is a full subcategory of $\Perv(U)$ for
every $U$, we only need to show the following fact : If $K$ is an
object of $\Perv(X)$ and if there exists an {\'e}tale cover
$(u_i:U_i\fl X)_{i\in I}$ of $X$ such that $u_i^* K$ is in 
$\Perv_h(U_i)$ (resp. $\Perv_m(U_i)$, resp.
$\Perv_{mf}(U_i)$)
for every $i\in I$, then $K$ is in $\Perv_h(X)$ (resp.
$\Perv_m(X)$, resp. $\Perv_{mf}(X)$).

We first treat the case of $\Perv_h$ and $\Perv_m$.
We may assume that $I$ is finite and that the $U_i$ are
affine. For all $i,j\in I$, we denote the fiber product of $u_i$ and $u_j$ by
$u_{ij}:U_i\times_X U_j\fl X$. Then, as $\Perv$ is a stack, we have an exact sequence in
$\Perv(X)$ :
\[0\fl K\fl\bigoplus_{i\in I}u_{i*}u_i^* K\fl\bigoplus_{i,j\in I}u_{ij*}u_{ij}^* K.\]
As the last two terms are in $\Perv_h(X)$ (resp. $\Perv_m(X)$) by assumption, and as
$\Perv_h(X)$ (resp. $\Perv_m(X)$) is a full abelian
subcategory of $\Perv(X)$ by Proposition
\ref{prop_eta_fully_faithful}, the perverse sheaf $K$ is also an object of $\Perv_h(X)$ (resp.
$\Perv_m(X)$).

We now treat the case of
$\Perv_{mf}(X)$. Let $a\in\Z$. We need to construct a subobject $L$
of $K$ such that $L$ is of weight $\leq a$ and $K/L$ is of weight $>a$.
For every $i\in I$, we set $L_i=W_a(u_i^* K)$, where $W$ is the weight
filtration on $K_i$. By the uniqueness of the weight filtration, the $L_i$
glue to a subobject $L$ of $K$. As we can test weights on an {\'e}tale
cover of $X$ (for example by Theorem 5.2.1 and 5.1.14(iii)
of \cite{BBD}), this $L$ satisfies the required conditions. 
\end{proof}

\begin{lemma} Let $i:Y\fl X$ be a closed immersion, and let $K\in\Ob\Perv_m(Y)$.
Then $K$ is in $\Perv_{mf}(Y)$ if and only if $i_* K$ is in $\Perv_{mf}(X)$.

\label{lemma_test_weight_on_i}
\end{lemma}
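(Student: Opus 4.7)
The plan is to exploit the fact that for a closed immersion $i$, the functor $i_* = i_! : \Perv_m(Y) \to \Perv_m(X)$ is t-exact, fully faithful (its essential image being exactly the perverse sheaves on $X$ supported on $Y$), and both preserves and reflects purity. The first two properties are standard and inherited from the corresponding properties on $\Dbh$; the last one -- that $F \in \Perv_m(Y)$ is pure of weight $w$ if and only if $i_*F$ is -- follows from $i^*i_* \cong \id \cong i^!i_*$ together with the fact that $i^*$ preserves weights $\leq w$ and $i^!$ preserves weights $\geq w$ in the horizontal mixed setting (see \cite{Hu} 3.3--3.6). Concretely one also checks this by spreading out to $i:\Y\fl\X$ over some $A\in\Uf$ and applying the classical result on finite fields to the closed fibers, using that $(i_*\F)_{|\X_s} = (i_s)_*\F_{|\Y_s}$.

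The forward direction is then an immediate application of Proposition \ref{prop_obvious1}(ii): $i$ is proper, and as $i_*$ is t-exact on perverse sheaves we have $\Hp^0 i_* = i_*$, which sends $\Perv_{mf}(Y)$ into $\Perv_{mf}(X)$. Equivalently, one can observe that if $W_\bullet K$ is a weight filtration of $K$, then $i_*W_\bullet K$ is a filtration of $i_*K$ whose graded pieces $i_*\Gr^W_k K$ are pure of weight $k$, hence a weight filtration of $i_*K$.

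For the converse, suppose $i_*K \in \Perv_{mf}(X)$ and let $W_\bullet(i_*K)$ be its weight filtration in $\Perv_m(X)$. Every subobject of $i_*K$ in $\Perv_m(X)$ is supported on $Y$ and hence lies in the essential image of the fully faithful functor $i_*$. Consequently, $W_\bullet(i_*K)$ lifts uniquely to a filtration $W_\bullet K$ of $K$ in $\Perv_m(Y)$ with $i_*W_kK = W_k(i_*K)$ for every $k$. By the exactness of $i_*$, we get $i_*\Gr^W_k K \cong \Gr^W_k(i_*K)$, which is pure of weight $k$. Since $i_*$ reflects purity, $\Gr^W_k K$ itself is pure of weight $k$, so $W_\bullet K$ is a weight filtration on $K$, proving $K \in \Perv_{mf}(Y)$.

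The only nontrivial step is the verification that $i_*$ reflects purity, which rests on the weight estimates for $i^*$ and $i^!$ in the horizontal mixed setting; once this is granted, the remainder of the argument is a direct manipulation with the (necessarily unique) weight filtration.
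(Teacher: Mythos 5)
Your proof is correct and follows essentially the same route as the paper: the forward direction via Proposition \ref{prop_obvious1}(ii), and the converse by noting that subobjects of $i_*K$ are killed by $j^*$, hence supported on $Y$, so the weight filtration pulls back along $i^*$ (equivalently lifts along the fully faithful $i_*$), with purity of the graded pieces checked via Huber's weight estimates for $i^*$ and $i^!$. The only cosmetic difference is that the paper performs one weight truncation at a time rather than lifting the whole filtration and invoking "reflection of purity", but the underlying ingredients are identical.
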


\begin{proof} If $K$ is in $\Perv_{mf}(Y)$, then $i_* K$ is in $\Perv_{mf}(X)$ by
Proposition \ref{prop_obvious1}(ii).

Conversely, assume that $i_*K$ is in $\Perv_{mf}(X)$. Let $a\in\Z$. We want to show that
there exists a subobject $K'$ of $K$ (in $\Perv_m(Y)$) such that $K'$ is of weight $\leq a$ and
$K/K'$ is of weight $\geq a+1$. By the assumption, there exists a subobject $L'\subset i_*K$
(in $\Perv_m(X)$) such that $L'$ is of weight $\leq a$ and $L'':=(i_*K)/L'$ is of weight $\geq a+1$.
Let $j$ be the inclusion of the complement of $Y$ in $X$. Then the functor $j^*$ is t-exact, so,
applying $j^*$ to the exact sequence $0\fl L'\fl i_* K\fl L''\fl 0$, we get an exact sequence
$0\fl j^*L'\fl 0\fl j^*L''\fl 0$ of mixed perverse sheaves on $X-Y$. This implies that
$j^*L'=j^*L''=0$, so the adjunction morphisms $i_*i^!L'\fl L'\fl i_*i^*L'$ and
$i_*i^!L''\fl L''\fl i_*i^*L''$ are isomorphisms. In particular, the mixed complexes
$i^*L'=i^!L'$ and $i^*L''=i^!L''$ are perverse. Let $K'=i^*L'$. We have just seen that $K'$ is perverse,
and the weights of $K'$ are $\leq a$ (see the remark after Definition 3.3 of \cite{Hu}). Also,
we have an exact triangle $K'=i^*L'\fl K\fl i^*L''=i^!L''\flnom{+1}$, which is actually an
exact sequence in $\Perv_m(Y)$, so the canonical map $K'\fl K$ is injective, and
$K/K'\simeq i^!L''$, which is of weight $\geq a+1$
(by the same remark in \cite{Hu}).
\end{proof}

\section{Beilinson's construction of unipotent nearby cycles}
\label{section_5}

In this section, we review Beilinson's construction of the unipotent nearby
and vanishing cycles functors from \cite{Be2}. There are two reasons
to do this:
\begin{itemize}
\item[(1)] We will want to define nearby cycles for horizontal
perverse sheaves, and to apply known theorems (about weights for example).
The easiest way to do this is to use Deligne's generic base change theorem,
but this might cause technical problems if we use the original
construction of nearby cycles (from SGA 7 I and XIII), which involves
direct images by morphisms that are not of finite type.
\item[(2)] We will need some of Beilinson's auxiliary functors anyway
to construct a left adjoint of $i_*$ for $i$ a closed immersion.

\end{itemize}

All the proofs of the results in this section can be found in \cite{Be2}
(see also \cite{gl}).

\subsection{Unipotent nearby cycles}

Fix a base field $k$, let $X$ be a $k$-scheme, and let
$f:X\fl\Aff^1_k$ be a morphism. 
We write
$\GD_m=\Aff^1-\{0\}$, 
$U=X\times_{\Aff^1}\GD_m\flnom{j}X$ and
$Y=X\times_{\Aff^1}\{0\}\flnom{i}X$.

We have an exact sequence
\[1\fl\pi_1^\geom(\GD_m,1)\fl\pi_1(\GD_m,1)\fl\Gal(\overline{k}/k)\fl 1,\]
which is split by the morphism coming from the unit section of
$\GD_m$. If $k$ is of characteristic $0$, then
$\pi_1^\geom(\GD_m,1)\simeq\widehat{\Z}(1)$; if $k$ is of characteristic
$p>0$, then $\pi_1^{\geom,(p)}(\GD_m,1)\simeq\widehat{\Z}^{(p)}(1)$. In both
cases, we get a projection $t_\ell:\pi_1^\geom(\GD_m,1)\fl\Z_\ell(1)$. We also
denote by $\chi:\Gal(\overline{k}/k)\fl\widehat{\Z}_\ell$
the $\ell$-adic cyclotomic character.

Let $\Psi_f:\Dbc(U)\fl\Dbc(Y_{\overline{k}})$ and 
$\Phi_f:\Dbc(X)\fl\Dbc(Y_{\overline{k}})$ be the nearby and vanishing cycles
functors defined in SGA 7 Expos{\'e} XVIII, shifted by $-1$ so that they
will be t-exact for the perverse t-structure. (See
Corollary 4.5 of Illusie's \cite{Il}, and
note that the dimension function we use on $U$ is shifted by
$+1$ when compared with Illusie's dimension function.)
We denote by $T$ a topological generator of $\pi_1^\geom(\GD_m,1)$ or
$\pi_1^{\geom,(p)}(\GD_m)$ (depending on the characteristic of $k$).
We have a functorial exact triangle
$\Psi_f\stackrel{T-1}{\fl}
\Psi_f\fl i^*j_*\stackrel{+1}{\fl}$.

\begin{subprop} There exists a functorial $T$-equivariant direct sum
decomposition $\Psi_f=\Psi_f^u\oplus\Psi_f^{nu}$ such that, for every
$K\in\Dbc(U)$, $T-1$ acts nilpotently on $\Psi_f^u(K)$ and invertibly
on $\Psi_f^{nu}(K)$.

\label{prop_Psi_u}
\end{subprop}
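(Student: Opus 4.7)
The plan is to establish the decomposition for a single $K$ via the monodromy theorem and splitting of idempotents, and then promote it to a functorial statement. Fix $K \in \Dbc(U)$ and write $M := \Psi_f(K)$, so $T$ is an endomorphism of $M$ in $\Dbc(Y_{\overline{k}})$. By Grothendieck's $\ell$-adic local monodromy theorem (SGA 7 I), $T$ acts quasi-unipotently on the stalks of each constructible cohomology sheaf $\H^j(M)$, so all eigenvalues of $T$ occurring on them are roots of unity. Because $M$ is bounded and each $\H^j(M)$ is constructible, only finitely many distinct eigenvalues $1 = \zeta_1,\ldots,\zeta_r$ occur (in some finite extension $E'/E$), and $Q(T)^N$ annihilates every $\H^j(M)$ for $N$ large enough, where $Q(t) = \prod_i (t-\zeta_i)$.

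Next, I would promote this to a genuine polynomial identity for $T$ on $M$: for a bounded complex with cohomology concentrated in finitely many degrees, any endomorphism vanishing on all cohomology sheaves satisfies $f^n = 0$ in the derived category for some $n$ bounded by the cohomological amplitude (a standard iterated-truncation argument). This yields an integer $N'$ such that $R(T) := Q(T)^{N'} = 0$ in $\mathrm{End}(M)$. Now factor $R(t) = R_u(t)\, R_{nu}(t)$ with $R_u(t) = (t-1)^{N'}$ and $R_{nu}(t) = \prod_{i \geq 2}(t-\zeta_i)^{N'}$; these are coprime in $E'[t]$, so Bezout yields $A, B \in E'[t]$ with $A R_u + B R_{nu} = 1$, whence $e_u := (BR_{nu})(T)$ and $e_{nu} := (AR_u)(T)$ are orthogonal idempotents in $\mathrm{End}(M)$ summing to $\id_M$. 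The category $\Dbc(Y_{\overline{k}})$ is Karoubian (as a full triangulated subcategory of a derived category, closed under taking direct summands), so these idempotents split and give a decomposition $M = \Psi_f^u(K) \oplus \Psi_f^{nu}(K)$; by construction $R_u(T)$ kills $\Psi_f^u(K)$ (so $T-1$ is nilpotent there) and is invertible on $\Psi_f^{nu}(K)$ (its inverse being a polynomial in $T$, obtained from the Bezout relation modulo $R_{nu}$), forcing $T-1$ itself to be invertible there.

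For functoriality: although the polynomial $R$ and hence the idempotents depend on $K$, the summand $\Psi_f^u(K)$ is intrinsically characterized as the unique direct summand on which $T-1$ acts locally nilpotently (if $M = M_1 \oplus M_2 = N_1 \oplus N_2$ are two such decompositions, the composition $M_1 \hookrightarrow M \twoheadrightarrow N_2$ is zero because $T-1$ is simultaneously nilpotent and invertible on it, and symmetrically, so $M_1 = N_1$). Given a morphism $K \to K'$, one picks any common polynomial $\widetilde R$ annihilating $\Psi_f(K)$ and $\Psi_f(K')$ simultaneously (e.g.\ the product of their individual polynomials) and checks that $\Psi_f$ of the morphism intertwines the corresponding idempotents, which refine those built from the individual polynomials. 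The main obstacle is step~2, the lifting of vanishing on cohomology sheaves to vanishing on the complex: this introduces a $K$-dependent exponent $N'$, so the polynomial identity used is not functorial, and one must rely on the intrinsic characterization of $\Psi_f^u$ to obtain a genuinely functorial decomposition.
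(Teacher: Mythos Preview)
Your overall architecture---find a polynomial $P$ with $P(T)=0$, split via B\'ezout idempotents in $E[t]$, then argue functoriality from the intrinsic characterization of the summands---is sound and matches the paper's strategy, which dismisses all of this as ``standard linear algebra'' once a polynomial identity for $T$ is in hand. The difference, and the gap in your argument, is in how you obtain that polynomial identity.

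You invoke Grothendieck's local monodromy theorem to conclude that $T$ acts quasi-unipotently on the stalks of $\H^j(\Psi_f K)$. But that theorem requires the residue field to satisfy a finiteness condition (no finite extension contains all $\ell$-power roots of unity), and in the section where this proposition sits the base field $k$ is \emph{arbitrary}. Over $k=\C$, for instance, a rank-one lisse sheaf on $\GD_m$ can have monodromy $1+\ell\in\Z_\ell^\times$, which is not a root of unity; its nearby cycle at $0$ inherits that non-quasi-unipotent $T$-action. So the quasi-unipotence step is unjustified in the stated generality. (Even when the theorem does apply, it hands you a polynomial over an extension $E'$, and you never explain why the resulting idempotents descend to $E$; this is easily fixed by taking the product of Galois conjugates of your polynomial, but it is a loose end.)

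The paper bypasses all of this: it simply observes that for any $L\in\Dbc(Y_{\overline{k}})$ the $E$-algebra $\End(L)$ is finite-dimensional (a quick Noetherian induction on the support, reducing to local systems), so $T$ automatically satisfies its minimal polynomial in $E[t]$. No quasi-unipotence, no field extension, no lifting from cohomology sheaves to the complex. Your route can be salvaged along the same lines---drop the monodromy theorem and instead note that an endomorphism of a constructible sheaf satisfies a polynomial over $E$ (stratify into lisse pieces, take the product of characteristic polynomials on the finitely many strata), then lift via your amplitude argument---but the direct finite-dimensionality of $\End$ is both shorter and valid for any $k$.
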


In particular, the functorial exact triangle $\Psi_f\stackrel{T-1}{\fl}
\Psi_f\fl i^*j_*\stackrel{+1}{\fl}$ induces a functorial exact triangle
$\Psi_f^u\stackrel{T-1}{\fl}
\Psi_f^u\fl i^*j_*\stackrel{+1}{\fl}$.

The functor $\Psi_f^u$ is called the \emph{unipotent nearby cycles functor}.

\begin{proof} It suffices to prove that, for every $K\in \Dbc(U)$, there
exists a nonzero polynomial $P$ (with coefficients in the coefficient field
$E$ that we are using for the categories $\Dbc$) such that $P(T)$ acts by $0$
on $\Psi_f(K)$. (The rest is standard linear algebra.)
As we know that $\Psi_f$ sends $\Dbc(X)$ to $\Dbc(Y_{\overline{k}})$ (i.e.
preserves constructibility), this follows from the fact that,
for every $L\in \Dbc(Y_{\overline{k}})$, the ring of endomorphisms of $L$
is finite-dimensional (over the same coefficient field $E$). To prove this
fact, we use induction on the dimension of $X$ to reduce to the case where
the cohomology sheaves of $L$ are local systems, and then it is trivial.
\end{proof}

Let $K\in\Dbc(U)$. Then $T:\Psi_f^u K\fl
\Psi_f^u K$ is unipotent, so there exists a unique nilpotent
$N:\Psi_f^u K\fl\Psi_f^u K(-1)$ such that $T=\exp(t_\ell(T)N)$ on $\Psi_f^u K$.
The operator $N$ is usually called the ``logarithm of the unipotent part of
the monodromy''. 
We get a functorial exact triangle $\Psi_f^u\stackrel{N}{\fl}\Psi_f^u(-1)\fl
i^*j_*\stackrel{+1}{\fl}$.

\subsection{Beilinson's construction}
\label{section_Beilinson_cycles}

Now we introduce the unipotent local systems that are used in Beilinson's
construction of $\Psi_f^u$.

\begin{subdef} For every $i\geq 0$, we define a $E$-local system
$\Lf_i$ on $\GD_m$ in the following way: the stalk
$\Lf_{i,1}$ of $\Lf_i$ at $1\in\GD_m(k)$ is the $E$-vector
space $E^{i+1}$, on which an element $u\rtimes\sigma$ of
$\pi_1(\GD_m,1)\simeq\widehat{\Z}(1)\rtimes\Gal(\overline
{k}/k)$ acts by $\exp(t_\ell(u)N)
\diag(1,\chi(\sigma)^{-1},\dots,\chi(\sigma)^{-i})$,
where
$\diag(x_0,\dots,x_{i})$ is the
diagonal matrix with diagonal entries $x_0,\dots,x_{i}$
and $N$ is the Jordan block
$\begin{pmatrix}0 & 1 & & 0 \\  & \ddots & \ddots & \\ &&\ddots & 1\\
0 &&  & 0\end{pmatrix}$.

If $i\leq j$, we have an obvious injection $\alpha_{i,j}:\Lf_i\fl\Lf_j$ and
an obvious surjection $\beta_{j,i}:\Lf_j\fl\Lf_i(i-j)$.
\label{def_L_a}
\end{subdef}

Note that $\Lf_i^\vee\simeq\Lf_i(i)$, so
(by the calculation at the end of section \ref{l-adic_complexes})
we have $D_U(\Lf_i)\simeq\Lf_i(i-1)[-2]$, and $D_U(\alpha_{i,j})$
corresponds by this isomorphism to $\beta_{j,i}(j-1)[-2]$.

\begin{subnotation} If $\Lf$ is a lisse sheaf on $\GD_m$ and $K$ is a perverse
sheaf on $U$, then the complex $K\Ltimes f^*\Lf$ is also perverse.
We denote it by $K\otimes\Lf$.

\end{subnotation}

We start with the construction of $\Psi_f^u$.

\begin{subprop} Let $K\in\Ob\Perv(U)$. 
\begin{itemize}
\item[(i)] For every $a\in\Nat$, we
have a canonical isomorphism
\[i_*
\Ker(N^{a+1},\Psi_f^u K)\iso\Ker(j_!(K\otimes\Lf_a)
\fl
j_*(K\otimes\Lf_a))=i_*\Hp^{-1}i^*j_*(K\otimes\Lf_a).\]
In particular, if $a$ is big enough, we get an isomorphism
$i_*\Psi_f^u K\iso i_*\Hp^{-1}i^*j_*(K\otimes\Lf_a)$.

\item[(ii)] For every $a\in\Nat$ such that $N^{a+1}=0$ on
$\Psi_f^u K$,
the following diagram is commutative:
\[\xymatrix{0\ar[r] & i_*\Psi_f^u K\ar[r]\ar@{=}[d] & j_!(K\otimes f^*\Lf_a)
\ar[r]\ar[d]^-{\alpha_{a,a+1}} & j_*(K\otimes f^*\Lf_a)
\ar[d]^-{\alpha_{a,a+1}} \\
0\ar[r] & i_*\Psi_f^u K\ar[r]\ar[d]^-{N} & j_!(K\otimes f^*\Lf_{a+1})
\ar[r]\ar[d]^-{\beta_{a,a+1}} & j_*(K\otimes f^*\Lf_{a+1})
\ar[d]^-{\beta_{a,a+1}} \\
0\ar[r] & i_*\Psi_f^u K(-1)\ar[r] & j_!(K\otimes f^*\Lf_a)(-1)\ar[r] &
j_*(K\otimes f^*\Lf_a)(-1) 
}\]

\item[(iii)] Let $a,b\in\Nat$ be such that
$N^{a+1}=N^{b+1}=0$ on $\Psi_f^u K$. Then there is a
canonical isomorphism
\[\Ker(j_!(K\otimes f^*\Lf_b)\fl j_*(K\otimes f^*\Lf_b))(-a-1)\iso
\Coker(j_!(K\otimes f^*\Lf_a)\fl j_*(K\otimes f^*\Lf_a))\]
induced by the connecting map coming from the commutative
diagram with exact rows
\[\xymatrix@C=25pt
{0\ar[r] & j_!(K\otimes f^*\Lf_a)\ar[r]^-{\alpha_{a,a+b+1}}\ar[d] &
j_!(K\otimes f^*\Lf_{a+b+1})\ar[r]^-{\beta_{b,a+b+1}}\ar[d] &
j_!(K\otimes f^*\Lf_b)(-a-1)\ar[r]\ar[d] & 0 \\
0\ar[r] & j_*(K\otimes f^*\Lf_a)\ar[r]^-{\alpha_{a,a+b+1}} &
j_*(K\otimes f^*\Lf_{a+b+1})\ar[r]^-{\beta_{a,a+b+1}} &
j_*(K\otimes f^*\Lf_b)(-a-1)\ar[r] & 0
}\]
Moreover, the morphism
\[\Hp^0 i^*j_*(K\otimes\Lf_a)\fl\Hp^0 i^*j_*(K\otimes\Lf_{a+b+1})\]
induced by $\alpha_{a,a+b+1}$ is zero.

\end{itemize}
\label{prop_Psi_fu}
\end{subprop}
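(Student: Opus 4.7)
For part (i), I would proceed in two stages. First, for any perverse sheaf $K'$ on $U$, the identity $\mathcal{H}^{p,-1}i^*j_*K' = \Ker(j_!K'\to j_*K')$ follows by taking perverse cohomology of the triangle $j_!K'\to j_*K'\to i_*i^*j_*K'\flnom{+1}$ (which comes from the localization triangle, using $i^*j_!K' = 0$), combined with the right (resp.\ left) perverse t-exactness of $j_!$ (resp.\ $j_*$). Second, apply the standard variation triangle $\Psi_f\flnom{T-1}\Psi_f\to i^*j_*(K\otimes f^*\Lf_a)\flnom{+1}$ and take perverse cohomology: since $\Psi_f$ is t-exact, $\Hp^{-1}i^*j_*(K\otimes f^*\Lf_a) = \Ker(T-1)$, and since $T-1$ is invertible on the non-unipotent part, this equals $\Ker(T-1,\Psi_f^u(K\otimes f^*\Lf_a))$. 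Using the natural identification $\Psi_f^u(K\otimes f^*\Lf_a)\simeq\Psi_f^u K\otimes_E W_a$ (where $W_a$ is the stalk of $\Lf_a$ with Jordan block $N_a$), the operator $T-1$ corresponds (up to the invertible $t_\ell$-factor) to $N_K\otimes 1+1\otimes N_a$. An element $\sum_i v_i\otimes e_i$ in the kernel satisfies $v_{i+1}=-N_K v_i$ and $N_K v_a=0$, so it is determined by $v_0\in\Ker(N_K^{a+1})$ via $v_0\mapsto\sum_{i=0}^a(-1)^i N_K^i v_0\otimes e_i$, yielding the desired isomorphism.

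For part (ii), I would verify both squares by direct computation with the explicit formula from (i). Under $\alpha_{a,a+1}$, the image of $\sum_{i=0}^a(-1)^i N^i v_0\otimes e_i$ in the $(a+1)$-model is $\sum_{i=0}^{a+1}(-1)^i N^i v_0\otimes e_i$ minus the extra term $(-1)^{a+1}N^{a+1}v_0\otimes e_{a+1}$, which vanishes on $\Ker(N^{a+1})$; hence $\alpha_{a,a+1}$ corresponds to the inclusion $\Ker(N^{a+1})\hookrightarrow\Ker(N^{a+2})$. Under $\beta_{a+1,a}$ (killing $e_0$ and reindexing $e_{i+1}\mapsto e_i$ with a Tate twist), the same sum is sent, after reindexing, to $(-1)^{a}\sum_{j=0}^{a}(-1)^j N^j(Nv_0)\otimes e_j$, which under the $(a)$-model iso is the image of $\pm Nv_0$; this matches the operator $N:\Ker(N^{a+2})\to\Ker(N^{a+1})(-1)$ up to the correct sign.

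For part (iii), I first verify the rows are short exact: the SES $0\to\Lf_a\to\Lf_{a+b+1}\to\Lf_b(-a-1)\to 0$ of lisse sheaves on $\GD_m$ gives, after applying $K\otimes f^*(\cdot)$, a SES of perverse sheaves on $U$; applying the triangulated functors $j_!$ and $j_*$ produces distinguished triangles whose perverse cohomology yields the two rows, and the higher connecting maps vanish because $j^*$ of each connecting map is already zero and the relevant lower/higher perverse cohomologies of $j_!, j_*$ can be computed via the Jordan-block analysis of (i). Then the snake lemma gives the six-term sequence containing $\partial:\Ker_b(-a-1)\to\Coker_a$. Under the hypothesis $N^{a+1}=N^{b+1}=0$, the map $\Ker_{a+b+1}\to\Ker_b(-a-1)$ induced by $\beta_{b,a+b+1}$ corresponds via (i) to $\pm N^{a+1}$ and hence vanishes, which shows $\partial$ is injective. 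For surjectivity, one checks the last claim of the statement directly in the tensor-product model: the class of $v_0\otimes e_0$ in $\Coker(N_K\otimes 1+1\otimes N_{a+b+1},\,\Psi_f^u K\otimes W_{a+b+1})$ can be rewritten using the image relations as $(-1)^{a+b+1}N_K^{a+b+1}v_0\otimes e_{a+b+1}$, which is zero since $a+b+1\ge a+1$ and $N_K^{a+1}=0$. Thus $\Coker_a\to\Coker_{a+b+1}$ is zero, so $\partial$ is an isomorphism.

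The main obstacle will be part (iii), specifically the verification that the rows are honest short exact sequences of perverse sheaves (rather than just termini of triangles with possible extra connecting terms), together with the careful bookkeeping of Tate twists in matching $\Ker_b(-a-1)$ to $\Coker_a$ via the snake lemma. The other delicate point throughout is the reliance on $\Psi_f^u(K\otimes f^*\Lf_a)\simeq\Psi_f^u K\otimes W_a$ as a Galois-equivariant identification with the stated total monodromy; everything else reduces to linear algebra with Jordan blocks.
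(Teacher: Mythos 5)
The paper itself gives no argument for this proposition: Section 5 opens by stating that all proofs there can be found in Beilinson's paper \cite{Be2} (see also \cite{gl}), so there is no in-text proof to compare against line by line. Your reconstruction follows exactly the standard Beilinson route that those references use: identify $\Ker(j_!K'\to j_*K')$ and $\Coker(j_!K'\to j_*K')$ with $\Hp^{-1}i^*j_*K'$ and $\Hp^0 i^*j_*K'$ via the localization triangle and t-exactness of $j_!,j_*$ for the affine open immersion $j$, then use the variation triangle $\Psi_f\xrightarrow{T-1}\Psi_f\to i^*j_*$ together with the identification $\Psi_f^u(K\otimes f^*\Lf_a)\simeq\Psi_f^uK\otimes W_a$ (diagonal monodromy) and explicit Jordan-block linear algebra for $N_K\otimes 1+1\otimes N_a$. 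Parts (i) and (ii) are correct as sketched (the signs in (ii) should be pinned down by fixing conventions rather than left ``up to the correct sign,'' but that is bookkeeping), and your worry about the rows in (iii) is unfounded: since $0\to\Lf_a\to\Lf_{a+b+1}\to\Lf_b(-a-1)\to 0$ is exact and $j$ is an affine open immersion, $j_!$ and $j_*$ are t-exact, so the rows are honest short exact sequences of perverse sheaves with no connecting terms to discuss.

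The one genuine gap is in the surjectivity step of (iii). You deduce that $\Coker_a\to\Coker_{a+b+1}$ vanishes from the computation that the class of $v_0\otimes e_0$ dies in $\Coker\bigl(N_K\otimes 1+1\otimes N_{a+b+1}\bigr)$. But $\Coker(N,\,V\otimes W_a)$ is not generated by classes of elements of $V\otimes e_0$ (already for $N_K=0$ the cokernel is spanned by the class of $e_a$, while every class coming from $V\otimes e_0$ is zero), so killing those classes does not yet kill the map. The same manipulation does work for every $v\otimes e_i$ with $0\le i\le a$: setting $x=\sum_{j\ge 1}(-1)^{j-1}N_K^{j-1}v\otimes e_{i+j}$ in $V\otimes W_{a+b+1}$ gives $Nx=v\otimes e_i+(-1)^{m-1}N_K^{m}v\otimes e_{a+b+1}$ with $m=a+b+1-i\ge b+1$, and the error term vanishes because $N_K^{b+1}=0$. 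Note that this is precisely where the hypothesis $N^{b+1}=0$ enters; your argument as written never uses it, which is a symptom of the missing cases. With that completion (and the sign conventions fixed in (ii)), your proof is the standard one.
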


Note in particular that we can use this construction to see
$\Psi_f^u$ as a functor from $\Perv(U)$ to $\Perv(Y)$ (and
not just to the category of $\Gal(\overline{k}/k)$-equivariant
objects in $\Perv(Y)$).

\begin{subcor}  For every $K\in \Perv(U)$, we have a canonical
isomorphism $D(\Psi_f^u K)\simeq\Psi_f^u(DK)(-1)$.

\end{subcor}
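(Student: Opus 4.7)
The idea is to dualize the explicit description of $\Psi_f^u$ provided by Proposition \ref{prop_Psi_fu}.

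First I would fix $a$ large enough so that both $N^{a+1}$ vanishes on $\Psi_f^u K$ and $N^{a+1}$ vanishes on $\Psi_f^u(DK)$. By part (i) of Proposition \ref{prop_Psi_fu},
\[i_*\Psi_f^u K \;=\; \Ker\bigl(j_!(K\otimes\Lf_a)\fl j_*(K\otimes\Lf_a)\bigr).\]
Now I apply Verdier duality $D_X$. Since $i$ is a closed immersion, $D_X$ commutes with $i_*$, and since $D_X$ exchanges $j_!$ and $j_*$ while turning kernels of maps of perverse sheaves into cokernels of the dual maps, I obtain
\[i_* D_Y(\Psi_f^u K) \;\simeq\; \Coker\bigl(j_!D_U(K\otimes\Lf_a)\fl j_*D_U(K\otimes\Lf_a)\bigr).\]
Next I use that $\Lf_a$ is lisse of rank $a+1$ with $\Lf_a^\vee\simeq\Lf_a(a)$, so
\[D_U(K\otimes f^*\Lf_a) \;=\; D_U(K)\Ltimes f^*\Lf_a^\vee \;\simeq\; (DK\otimes\Lf_a)(a),\]
and in particular the previous isomorphism becomes
\[i_* D_Y(\Psi_f^u K) \;\simeq\; \Coker\bigl(j_!(DK\otimes\Lf_a)\fl j_*(DK\otimes\Lf_a)\bigr)(a).\]

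Then I would apply part (iii) of Proposition \ref{prop_Psi_fu} to the perverse sheaf $DK$: choosing $b$ with $N^{b+1}=0$ on $\Psi_f^u(DK)$, the connecting morphism identifies the above cokernel with
\[\Ker\bigl(j_!(DK\otimes\Lf_b)\fl j_*(DK\otimes\Lf_b)\bigr)(-a-1) \;=\; i_*\Psi_f^u(DK)(-a-1),\]
using part (i) again on $DK$. Plugging this back and combining the two twists gives $i_*D(\Psi_f^u K)\simeq i_*\Psi_f^u(DK)(-1)$, and since $i_*$ is fully faithful on perverse sheaves we deduce the desired isomorphism $D(\Psi_f^u K)\simeq \Psi_f^u(DK)(-1)$.

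The main obstacle, and the only real work, is to verify that this isomorphism is canonical, i.e.\ independent of the integer $a$ chosen and functorial in $K$. For independence of $a$, one uses the compatibility of the identifications in (i) and (iii) with the transition maps $\alpha_{a,a+1}$ and $\beta_{a+1,a}$ of Proposition \ref{prop_Psi_fu}(ii), together with the fact that under the duality $D_U$ the morphism $\alpha_{i,j}:\Lf_i\fl\Lf_j$ corresponds (after the twist $(i-1)[-2]$) to $\beta_{j,i}$, as already noted just after the definition of the $\Lf_i$. Functoriality in $K$ is automatic because all the constructions in Proposition \ref{prop_Psi_fu} are. Everything else is formal manipulation of adjunctions and Tate twists.
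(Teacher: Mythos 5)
Your argument is correct, and it is essentially the standard one: the paper itself gives no proof of this corollary (deferring to Beilinson's \cite{Be2} and \cite{gl}), where the duality is obtained exactly as you do it, by dualizing the kernel presentation of Proposition \ref{prop_Psi_fu}(i), using $D_U(K\otimes f^*\Lf_a)\simeq (D_U K\otimes f^*\Lf_a)(a)$ and the exchange of $j_!$ with $j_*$, and then converting the resulting cokernel back into a kernel via Proposition \ref{prop_Psi_fu}(iii), the twists $(a)$ and $(-a-1)$ combining to the stated $(-1)$. Your remarks on independence of $a$ (via the compatibility of $\alpha_{i,j}$, $\beta_{j,i}$ under duality) and functoriality are exactly the points that need checking, so there is no gap.
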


\begin{subcor} For every $a\in\Nat$,
we define a functor $C_a^\bullet:\Perv(U)\fl C^{[0,1]}(\Perv(X))$
(where the second category is the category of complexes concentrated in
degrees $0$ and $1$) by
\[C_a^\bullet(K)=(j_!(K\otimes\Lf_a)\fl j_*(K\otimes\Lf_a)).\]
With the transition morphisms given by the $\alpha_{a,b}$, the family
$(C_a)_{a\geq 0}$ becomes an inductive system of functors. 

Then we have canonical isomorphisms
\[i_*\Psi_f^u\simeq\varinjlim_{a\in\Nat}\H^0(C_a^\bullet)\]
and
\[0=\varinjlim_{a\in\Nat}\H^1(C_a^\bullet).\]

\end{subcor}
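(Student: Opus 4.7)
The plan is to compute $\H^0(C_a^\bullet)$ and $\H^1(C_a^\bullet)$ explicitly via the localization triangle and then feed the results into Propositions \ref{prop_Psi_fu}(i) and \ref{prop_Psi_fu}(iii).

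First I would note that since $j_!(K\otimes\Lf_a)$ and $j_*(K\otimes\Lf_a)$ are both perverse, the complex $C_a^\bullet(K)$ lives in the abelian category $\Perv(X)$, and $\H^0(C_a^\bullet)$ and $\H^1(C_a^\bullet)$ are simply its kernel and cokernel. Applying the localization triangle
\[
j_!(K\otimes\Lf_a)\fl j_*(K\otimes\Lf_a)\fl i_*i^*j_*(K\otimes\Lf_a)\flnom{+1}
\]
and taking perverse cohomology (noting that the first two terms are perverse so their $\Hp^{-1}$'s vanish), I get canonical identifications
\[
\H^0(C_a^\bullet(K))\simeq i_*\Hp^{-1}i^*j_*(K\otimes\Lf_a),\qquad \H^1(C_a^\bullet(K))\simeq i_*\Hp^0 i^*j_*(K\otimes\Lf_a),
\]
both functorial in $a$ via the transition maps $\alpha_{a,b}$.

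For the first isomorphism, Proposition \ref{prop_Psi_fu}(i) rewrites $\H^0(C_a^\bullet(K))$ as $i_*\Ker(N^{a+1},\Psi_f^u K)$. Since $N$ is nilpotent on $\Psi_f^u K$, for $a$ sufficiently large this kernel equals all of $\Psi_f^u K$, and Proposition \ref{prop_Psi_fu}(ii) shows that the transition $\alpha_{a,a+1}$ becomes the identity on $\Psi_f^u K$ (through the identifications). Consequently the inductive system $(\H^0(C_a^\bullet))_{a\geq 0}$ stabilizes, yielding a canonical isomorphism $\varinjlim_a\H^0(C_a^\bullet)\simeq i_*\Psi_f^u K$ as desired.

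For the second isomorphism, the key input is the last sentence of Proposition \ref{prop_Psi_fu}(iii): for any $a,b$ large enough that $N^{a+1}=N^{b+1}=0$ on $\Psi_f^u K$, the map
\[
\Hp^0 i^*j_*(K\otimes\Lf_a)\fl\Hp^0 i^*j_*(K\otimes\Lf_{a+b+1})
\]
induced by $\alpha_{a,a+b+1}$ vanishes. Under the identification above, this means that for every fixed $a$, the transition map $\H^1(C_a^\bullet)\to\H^1(C_{a'}^\bullet)$ in the inductive system is zero once $a'$ is sufficiently large. An inductive system whose transition maps are eventually zero has vanishing colimit, so $\varinjlim_a\H^1(C_a^\bullet)=0$. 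I do not expect any real obstacle here — the whole statement is essentially a repackaging of parts (i)--(iii) of the preceding proposition, and the only thing to check with some care is that the identifications are functorial in $a$, which follows from the explicit form given in Proposition \ref{prop_Psi_fu}(ii).
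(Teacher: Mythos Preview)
Your proof is correct and is exactly the intended deduction from Proposition \ref{prop_Psi_fu}; the paper does not spell out a separate argument for this corollary (the section refers globally to \cite{Be2}), so there is nothing further to compare. One small remark: in the $\H^1$ step you jump from ``$\alpha_{a,a+b+1}$ induces $0$ when $N^{a+1}=N^{b+1}=0$'' to ``for every fixed $a$ the transition is eventually zero''; strictly speaking this uses that any $a$ can first be pushed to some $a_0\geq a$ with $N^{a_0+1}=0$, after which Proposition \ref{prop_Psi_fu}(iii) applies, but this factorization is automatic in an inductive system and does not affect the argument.
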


\begin{subrmk}
If we use the Ind-category $\Ind(\Perv(X))$ of $\Perv(X)$ (see for example
Chapter~6 of Kashiwara and Schapira's book
\cite{KS1}, and Theorem 8.6.5 of the same book for the fact
that this category is abelian), 
then we can reformulate this corollary in the
following way: 
We have a canonical isomorphism 
\[i_*\Psi_f^u\iso\varinjlim_{a\in\Nat}C_a^\bullet\]
of functors $\Perv(X)\fl
\Db\Ind(\Perv(X))$. Note that, by Theorem 15.3.1 of \cite{KS1}, the obvious
functor $\Db\Perv(X)\fl\Db\Ind(\Perv(X))$ is fully faithful (and its
essential image is the full subcategory of complexes with all their cohomology objects
in $\Perv(X)$). So $\varinjlim_{a\in\Nat}C_a^\bullet$ factors through the category
$\Db\Perv(X)$.

\label{rmk_Psi}
\end{subrmk}

We now give the definition of the maximal extension functor.

Let $K\in\Ob\Perv(U)$.
For each $a\geq 1$, we have a commutative diagram:
\[\xymatrix{j_!(K\otimes f^*\Lf_a)\ar[r]\ar[d]_{\beta_{a,a+1}} & 
j_*(K\otimes f^*\Lf_a)\ar[d]^{\beta_{a,a+1}} \\
j_!(K\otimes f^*\Lf_{a-1})(-1)\ar[r] & j_*(K\otimes f^*\Lf_{a-1})(-1)}\]
We write $\gamma_{a,a-1}:j_!(K\otimes f^*\Lf_a)\fl j_*(K\otimes
f^*\Lf_{a-1})(-1)$
for the diagonal map in this diagram.

\begin{subprop}
\begin{itemize}
\item[(i)] 
For $a\in\Nat$ big enough, the (injective) map
$\Ker(\gamma_{a,a-1})\fl\Ker(\gamma_{a+1,a})$ induced by
$\alpha_{a,a+1}:j_!(K\otimes f^*\Lf_a)\fl j_!(K\otimes f^*\Lf_{a+1})$ is
an isomorphism.
We write $\Xi_f K$ for the direct limit of the $\Ker(\gamma_{a,a-1})$.
This defines a left exact functor from $\Perv(U)$ to $\Perv(X)$, 
called the \emph{maximal extension functor}.

Moreover, if $a$ and $b$ are big enough, then the map
$\Coker(\gamma_{a,a-1})\fl\Coker(\gamma_{a+b,a+b-1})$ induced by $\alpha_{a-1,
a+b-1}(-1)$ is zero. In particular, we have
\[\varinjlim_a\Coker(\gamma_{a-1,a})=0,\]
and so $\Xi_f$ is exact.

\item[(ii)] We have
a functorial isomorphism $D_X\circ\Xi_f\simeq\Xi_f\circ D_U$ and two
functorial exact sequences
\[0\fl j_!\flnom{\alpha}\Xi_f\fl i_*\Psi_f^u(-1)\fl 0\]
and
\[0\fl i_*\Psi_f^u\fl\Xi_f\flnom{\beta} j_*\fl 0,\]
dual of each other,
in which the maps are the obvious ones. For example, in the first sequence,
the map $j_!K\fl\Xi_f K$ is induced by the injection $\alpha_{0,a}:j_!K=j_!(K
\otimes f^*\Lf_0)\fl j_!(K\otimes f^*\Lf_a)$, and the map
$\Xi_f\fl i_*\Psi_f^u(-1)$ is induced by the commutative square
\[\xymatrix{j_!(\cdot\otimes f^*\Lf_a)\ar[r]^-{\gamma_{a,a-1}}
\ar[d]_{\beta_{a,a-1}} & 
j_*(\cdot\otimes f^*\Lf_{a-1})(-1)\ar[d]^{\id} \\
j_!(\cdot\otimes f^*\Lf_{a-1})(-1)\ar[r] &
j_*(\cdot\otimes f^*\Lf_{a-1})(-1)}\]

\end{itemize}
\label{prop_Xi_f}
\end{subprop}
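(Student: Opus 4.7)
The plan is to exploit the short exact sequence $0\to\Lf_0\xrightarrow{\alpha_{0,a}}\Lf_a\xrightarrow{\beta_{a,a-1}}\Lf_{a-1}(-1)\to 0$ of lisse sheaves on $\GD_m$, together with Proposition \ref{prop_Psi_fu}, to analyze $\Ker(\gamma_{a,a-1})$ and $\Coker(\gamma_{a,a-1})$ in terms of $\Psi_f^uK$. Since $j$ is affine (as the complement in $X$ of the divisor $f^{-1}(0)$), both $j_!$ and $j_*$ are t-exact on $\Perv(U)$ by Proposition \ref{prop_exactness}(iv), so applying $j_!(K\otimes\cdot)$ and $j_*(K\otimes\cdot)$ to the above sequence and to the morphism $\beta_{a,a-1}$ yields exact sequences and commutative diagrams in $\Perv(X)$.

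The key observation for (i) is that $\gamma_{a,a-1}$ factors as
\[ j_!(K\otimes\Lf_a)\xrightarrow{\beta}j_!(K\otimes\Lf_{a-1})(-1)\xrightarrow{c_{a-1}(-1)}j_*(K\otimes\Lf_{a-1})(-1),\]
where $\beta$ denotes the surjection induced by $\beta_{a,a-1}$, with kernel $j_!K$, and $c_b:j_!(K\otimes\Lf_b)\to j_*(K\otimes\Lf_b)$ is the canonical map. This factorization at once produces an exact sequence
\[ 0\to j_!K\to\Ker(\gamma_{a,a-1})\to\Ker(c_{a-1})(-1)\to 0 \]
and an isomorphism $\Coker(\gamma_{a,a-1})\simeq\Coker(c_{a-1})(-1)$. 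By Proposition \ref{prop_Psi_fu}(i), $\Ker(c_{a-1})\simeq i_*\Ker(N^a,\Psi_f^uK)$, which stabilizes to $i_*\Psi_f^uK$ once $a$ exceeds the nilpotency index of $N$ on $\Psi_f^uK$. A five-lemma argument applied to the morphism of short exact sequences above, induced by $\alpha_{a,a+1}$, then shows that $\Ker(\gamma_{a,a-1})\to\Ker(\gamma_{a+1,a})$ is an isomorphism for $a$ large; this defines $\Xi_fK$ and simultaneously provides the first exact sequence of (ii). For the cokernel, the last assertion of Proposition \ref{prop_Psi_fu}(iii)—the vanishing of the map $\Hp^0i^*j_*(K\otimes\Lf_a)\to\Hp^0i^*j_*(K\otimes\Lf_{a+b+1})$ induced by $\alpha_{a,a+b+1}$ for $a,b$ large—translates directly, via the isomorphism $\Coker(\gamma_{a,a-1})\simeq\Coker(c_{a-1})(-1)$, into the vanishing of the transition maps between the $\Coker(\gamma_{a,a-1})$ in the direct system.

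The left exactness of $\Xi_f:\Perv(U)\to\Perv(X)$ is immediate from its presentation as a kernel of a morphism of exact functors of $K$. For the duality isomorphism $D_X\circ\Xi_f\simeq\Xi_f\circ D_U$, I would use $D_U(\Lf_a)\simeq\Lf_a(a-1)[-2]$ (computed in Section \ref{l-adic_complexes}), under which $D_U(\alpha_{i,j})$ corresponds to $\beta_{j,i}(j-1)[-2]$, together with the exchange $D_X\circ j_!\simeq j_*\circ D_U$, to check that, up to appropriate Tate twists and degree shifts, applying $D_X$ to the directed system $(\gamma_{a,a-1})_a$ produces the analogous system built from $D_UK$; the duality then passes to the stabilized kernel. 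The second exact sequence of (ii) follows by dualizing the first one and invoking the duality formula $D(\Psi_f^uK)\simeq\Psi_f^u(D_UK)(-1)$ from the corollary to Proposition \ref{prop_Psi_fu}.

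The main obstacle will be the bookkeeping of compatibilities: one must verify that the identifications $\Ker(c_{a-1})\iso i_*\Ker(N^a,\Psi_f^uK)$ of Proposition \ref{prop_Psi_fu}(i) are natural in $a$ with respect to the transitions induced by $\alpha_{a,a+1}$ and $\alpha_{a-1,a}$ (so that the five-lemma really applies to give stabilization), and to track Tate twists through every identification, so that the explicit description of $j_!K\to\Xi_fK\to i_*\Psi_f^uK(-1)$ and of the dual sequence match the concrete maps spelled out in the statement of (ii).
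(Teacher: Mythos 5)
Your treatment of part (i) and of the first exact sequence in (ii) is correct, and it is essentially the argument of Beilinson that the paper itself relies on (the paper gives no proof for this section, deferring to \cite{Be2} and \cite{gl}): the factorization $\gamma_{a,a-1}=c_{a-1}(-1)\circ j_!(\beta_{a,a-1})$ coming from $0\fl\Lf_0\fl\Lf_a\fl\Lf_{a-1}(-1)\fl 0$, the identification $\Ker(c_{a-1})\simeq i_*\Ker(N^{a},\Psi_f^u K)$ from Proposition \ref{prop_Psi_fu}(i), stabilization by the five lemma (the compatibility you flag is exactly what Proposition \ref{prop_Psi_fu}(ii) provides), and the vanishing of the cokernel transitions via the last assertion of Proposition \ref{prop_Psi_fu}(iii).

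The gap is in the duality statement, and hence in your derivation of the second exact sequence by dualizing the first. Applying $D_X$ to the inductive system $(\gamma_{a,a-1}(K))_a$ does \emph{not} produce ``the analogous system built from $D_UK$'': since the dual of $\alpha_{i,j}$ is (up to twist) $\beta_{j,i}$ and $D_X\circ j_!\simeq j_*\circ D_U$, the dual of $\gamma_{a,a-1}(K)=j_*(\beta_{a,a-1})\circ c_a$ is, up to a twist by $(a)$, the ``other diagonal'' $\widetilde\gamma_{a-1,a}(D_UK):=c_a\circ j_!(\alpha_{a-1,a})=j_*(\alpha_{a-1,a})\circ c_{a-1}$, a map $j_!(D_UK\otimes f^*\Lf_{a-1})\fl j_*(D_UK\otimes f^*\Lf_a)$ with the indices interchanged and $\beta$ replaced by $\alpha$, and the transition maps of the dual (now projective) system are induced by the $\beta$'s. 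Moreover $D_X$ turns your kernels into cokernels, so what you get formally is $D_X(\Xi_f K)\simeq\Coker\bigl(\widetilde\gamma_{a-1,a}(D_UK)\bigr)(a)$ for $a$ large, not $\Xi_f(D_UK)$; the duality does not simply ``pass to the stabilized kernel.'' The substance of the self-duality is precisely the canonical identification of this stabilized cokernel with $\Xi_f$, and it is not formal: it requires the second factorization $\gamma_{a,a-1}=j_*(\beta_{a,a-1})\circ c_a$ together with a second use of Proposition \ref{prop_Psi_fu}(iii) (equivalently, the fact that $j_*K\subset\Im(c_a)$ for $a$ large). The standard route, which is the one in the sources the paper cites, is to prove the second exact sequence directly from that second factorization — for $a$ large one has $\Ker(c_a)=i_*\Psi_f^u K\subset\Ker(\gamma_{a,a-1})$ and $\Ker(\gamma_{a,a-1})/\Ker(c_a)\simeq\Im(c_a)\cap j_*K=j_*K$ — and only then to compare the two resulting four-term sequences under $D_X$ to obtain $D_X\circ\Xi_f\simeq\Xi_f\circ D_U$. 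As written, your duality paragraph asserts the key identification rather than proving it, and the second exact sequence rests on that assertion.
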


\begin{subrmk} As in Remark \ref{rmk_Psi}, we can deduce from
(i) of the proposition a natural isomorphism
\[\Xi_f K\iso \varinjlim_{a\in\Nat}
(j_!(K\otimes f^*\Lf_a)\flnom{\gamma_{a,a-1}}j_*(K\otimes
\Lf_{a-1})(-1))\]
in $\Db\Ind(\Perv(X))$.

\end{subrmk}

The next functor that we construct is the \emph{unipotent vanishing
cycles functor} $\Phi_f^u$. It is not very hard to show that this functor
is isomorphic to the direct summand of the usual vanishing cycles
functor on which the monodromy acts unipotently, but we will not need
this, so we will just use the following proposition as the definition of
$\Phi_f^u$.

\begin{subprop}
\begin{itemize}
\item[(i)] 
The complex of exact endofunctors of $\Perv(X)$ defined by
\[j_!j^*\flnom{\alpha+\eta}\Xi_f j^*\oplus\id\flnom{\beta-\varepsilon}j_*j^*\]
in degrees $-1$, $0$ and $1$, where $\eta:j_!j^*\fl\id$ is the counit of the
adjunction $(j_!,j^*)$ and $\varepsilon:\id\fl j_*j^*$ is the unit of the
adjunction $(j^*,j_*)$, has its cohomology concentrated in degree $0$ and
with support in $Y$.

We define an exact functor $\Phi_f^u:\Perv(X)\fl\Perv(Y)$ by setting
$i_*\Phi_f^u$ to be the $\H^0$ of this complex. 

\item[(ii)]
We denote by
$\can:\Psi_f^u j^*K\fl\Phi_f^u K$ the functorial map defined
by $i_*\Psi_f^u j^*K\fl\Xi_f j^*K$, and by
$\var:\Phi_f^u K\fl
\Psi_f^u j^*K(-1)$ the functorial map defined by
$\Xi_f j^*K\fl\Psi_f^u K(-1)$.

Then $\var\circ \can=N$ and $\can(-1)\circ \var=N$.

\item[(iii)]  We
have a functorial isomorphism $D\circ\Phi_f^u\simeq\Phi_f^u\circ D$, and the
duality exchanges $\can$ and $\var$.

\item[(iv)] There are canonical isomorphisms $\Ker(\can)=\Hp^{-1}i^*K$
and $\Coker(\can)=\Hp^0 i^*K$.

Dually, we have canonical isomorphisms $\Ker(\var)=\Hp^0 i^!K$ and
$\Coker(\var)=\Hp^1 i^!K$.

\end{itemize}
\label{prop_Phi_u}
\end{subprop}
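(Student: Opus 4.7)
The plan is to work explicitly with the three-term complex defining $\Phi_f^u$ and exploit the two fundamental short exact sequences for $\Xi_f$ in Proposition \ref{prop_Xi_f}(ii), namely $0\fl j_!\fl\Xi_f\fl i_*\Psi_f^u(-1)\fl 0$ and $0\fl i_*\Psi_f^u\fl\Xi_f\flnom{\beta}j_*\fl 0$. Write $A=j_!j^*K$, $B=\Xi_f j^*K\oplus K$, $C=j_*j^*K$, with differentials $d_1=\alpha+\eta$ and $d_2=\beta-\varepsilon$. For part (i), I would apply $j^*$ to the complex: using $j^*j_!=\id$, $j^*j_*=\id$ and $j^*i_*=0$, together with $j^*\Xi_f j^*=\id$ read off from the first exact sequence, the complex restricted to $U$ becomes $\id\fl\id\oplus\id\fl\id$ with maps $(\id,\id)$ and $(\id,-\id)$, which is acyclic. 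Hence the cohomology of the complex is supported on $Y$. To see concentration in degree $0$, observe that $d_1$ is injective because $\alpha:j_!\fl\Xi_f$ is (first sequence), and $d_2$ is surjective because already $\beta:\Xi_f\fl j_*$ is (second sequence).

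For part (ii), I would define $\can$ as the composition $i_*\Psi_f^u j^*K\hookrightarrow\Xi_f j^*K\hookrightarrow B\twoheadrightarrow i_*\Phi_f^u K$, noting it lands in $\ker d_2$ since $\beta$ vanishes on $\Psi_f^u j^*K$. Dually, $\var$ is obtained by restricting the projection $\Xi_f j^*K\twoheadrightarrow i_*\Psi_f^u j^*K(-1)$ (from the first sequence) composed with $\ker d_2\hookrightarrow B\twoheadrightarrow\Xi_f j^*K$ to $\ker d_2/\im d_1$; one checks this is well defined because $\im(\alpha:A\to\Xi_f j^*K)$ is precisely the kernel of the projection. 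The identities $\var\circ\can=N$ and $\can(-1)\circ\var=N$ then follow by tracing the definitions through Beilinson's presentation of $\Xi_f$ as $\varinjlim_a\ker(\gamma_{a,a-1})$ and using that on the $\Lf_a$-models the composition $\beta_{a,a-1}\circ\alpha_{a-1,a}$ gives precisely the nilpotent $N$. For (iii), the duality $D_X\circ\Phi_f^u\simeq\Phi_f^u\circ D_X$ is a direct consequence of $D_X\circ\Xi_f\simeq\Xi_f\circ D_U$ (Proposition \ref{prop_Xi_f}(ii)), of the standard identities $D\circ j_!\simeq j_*\circ D$ and $D\circ j^*\simeq j^*\circ D$ (valid since $j$ is open, so $j^*=j^!$), and of the fact that duality swaps $\eta$ with $\varepsilon$ and $\alpha$ with $\beta$; this identifies the defining complex with the dual of itself, shifted so as to exchange kernel and cokernel, and simultaneously exchanges the two compositions defining $\can$ and $\var$.

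For part (iv), I would compute $\ker(\can)$ directly from the definition: an element $x\in\Psi_f^u j^*K$ lies in $\ker\can$ iff $(x,0)\in\im d_1$, i.e., iff there exists $a\in A$ with $\alpha(a)=x$ and $\eta(a)=0$; the subobject $\ker(\eta:j_!j^*K\fl K)$ is identified with $i_*\Hp^{-1}i^*K$ by the perverse long exact sequence of $j_!j^*K\fl K\fl i_*i^*K\flnom{+1}$, and $\alpha$ restricted to it maps injectively into $\Psi_f^u j^*K$ (because $\alpha$ itself is injective), so $\ker(\can)\simeq\Hp^{-1}i^*K$. For $\coker(\can)$, the projection $B=\Xi_f j^*K\oplus K\to(\Xi_f j^*K/\Psi_f^u)\oplus K=j_*j^*K\oplus K$ kills the image of $\can$, and a small diagram chase identifies $\coker(\can)$ with the cokernel of the natural map $A\to K$, i.e., $\Hp^0i^*K$. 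The analogous statements for $\var$ follow from these by applying (iii) to $D_XK$. The main obstacle I anticipate is verifying the identities $\var\circ\can=N$ and $\can(-1)\circ\var=N$ in (ii); unlike the formal manipulations of (i), (iii), (iv), these require explicitly matching the two presentations of $\Psi_f^u$ — as the limit $\varinjlim_a\Hp^{-1}i^*j_*(\cdot\otimes\Lf_a)$ of Proposition \ref{prop_Psi_fu}(i) and as the kernel $\ker\beta$ of the second $\Xi_f$ sequence — in a way that tracks the nilpotent $N$ through the connecting morphism of Proposition \ref{prop_Psi_fu}(iii).
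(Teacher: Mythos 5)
The paper itself does not prove this proposition: as announced at the start of Section 5, all results of that section are quoted from Beilinson, with the proofs delegated to \cite{Be2} (see also \cite{gl}). Your proposal reconstructs precisely that standard argument (acyclicity after applying $j^*$ together with injectivity of $\alpha+\eta$ and surjectivity of $\beta-\varepsilon$ for (i); the two exact sequences of Proposition \ref{prop_Xi_f}(ii) to define $\can$ and $\var$ and to trace $\var\circ\can=N$ through the $\Lf_a$-models; dualization of the defining complex for (iii); a chase plus duality for (iv)), and it is correct. The one step you state too quickly is in (iv): to get $\Ker(\can)\simeq\Hp^{-1}i^*K$ you need $\alpha(\Ker\eta)\subseteq i_*\Psi_f^u j^*K$, and injectivity of $\alpha$ alone does not give this containment; it holds because $\Ker\eta$ is supported on $Y$ while $j_*j^*K$ has no nonzero subobject supported on $Y$, so every subobject of $\Xi_f j^*K$ supported on $Y$ lies in $\Ker\beta=i_*\Psi_f^u j^*K$. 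With that half-line added, your argument agrees with the proof in the sources the paper cites.
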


Finally, we will need the functor that M. Saito calls $\Omega_f$.

\begin{subprop}
The functor $\beta+\varepsilon:\Xi_f j^*\oplus\id\fl j_*j^*$ is
surjective. Its kernel $\Omega_f$ is an exact endofunctor of $\Perv(X)$,
and we have functorial exact sequences
\[0\fl j_!j^*\flnom{\alpha-\eta}\Omega_f\fl i_*\Phi_f^u\fl 0\]
and
\[0\fl i_*\Psi_f^u j^*\fl\Omega_f\fl \id\fl 0,\]
in which the unmarked maps are the obvious ones.

\label{prop_Omega_f}
\end{subprop}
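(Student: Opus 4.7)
The plan is to derive the three claims from the structural exact sequences for $\Xi_f$ in Proposition \ref{prop_Xi_f}(ii) together with the definition of $\Phi_f^u$. First I would observe that the surjectivity of $\beta+\varepsilon$ is immediate: the map $\beta:\Xi_f j^*\fl j_*j^*$ is already surjective by the second exact sequence of Proposition \ref{prop_Xi_f}(ii), so $a\ fortiori$ so is $\beta+\varepsilon$. Hence $\Omega_f$ fits in a pointwise short exact sequence
\[0\fl\Omega_f\fl\Xi_f j^*\oplus\id\fl j_*j^*\fl 0\]
of endofunctors of $\Perv(X)$.

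For the exactness of $\Omega_f$, I would note that $j:U\fl X$ is the complement of the Cartier divisor $f^{-1}(0)$, hence an affine open immersion, so $j_!$ and $j_*$ are exact $\Perv(U)\fl\Perv(X)$; combined with the t-exactness of $j^*$, this makes $j_!j^*$ and $j_*j^*$ exact endofunctors of $\Perv(X)$. The functor $\Xi_f$ is exact by the five lemma applied to the two short exact sequences of Proposition \ref{prop_Xi_f}(ii) (the outer terms being exact). Given a short exact sequence of inputs in $\Perv(X)$, the snake lemma applied to the pointwise surjection $\Xi_f j^*\oplus\id\fl j_*j^*$ (with all relevant cokernels vanishing) yields the analogous sequence on kernels, so $\Omega_f$ is exact.

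For the sequence $0\fl i_*\Psi_f^u j^*\fl\Omega_f\fl\id\fl 0$, I would compose the inclusion $\Omega_f\hookrightarrow\Xi_f j^*\oplus\id$ with the second projection. A pair $(x,y)$ lies in $\Omega_f(K)$ iff $\beta(x)=-\varepsilon(y)$; surjectivity of this projection onto $\id$ then follows from the surjectivity of $\beta$, and its kernel consists of pairs $(x,0)$ with $\beta(x)=0$, i.e., $x\in\ker\beta=i_*\Psi_f^u j^*(K)$, again by the second exact sequence of Proposition \ref{prop_Xi_f}(ii). For the sequence $0\fl j_!j^*\fl\Omega_f\fl i_*\Phi_f^u\fl 0$, I would first verify that $\alpha-\eta:j_!j^*\fl\Xi_f j^*\oplus\id$ lands in $\Omega_f$, which reduces to the equality $\beta\circ\alpha=\varepsilon\circ\eta$ of the two natural maps $j_!j^*\fl j_*j^*$; injectivity is inherited from that of $\alpha$ (first component). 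To identify the quotient with $i_*\Phi_f^u$, I would apply the sign-flip automorphism $\phi:(x,y)\fle(x,-y)$ of $\Xi_f j^*\oplus\id$, which intertwines the complex $j_!j^*\flnom{\alpha-\eta}\Xi_f j^*\oplus\id\flnom{\beta+\varepsilon}j_*j^*$ with the complex $j_!j^*\flnom{\alpha+\eta}\Xi_f j^*\oplus\id\flnom{\beta-\varepsilon}j_*j^*$ defining $i_*\Phi_f^u$, so their middle cohomologies agree. I do not expect a significant obstacle; the only delicate point is keeping the sign conventions coherent across Beilinson's functors, and the sign-flip $\phi$ takes care of this uniformly.
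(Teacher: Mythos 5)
Your argument is correct and is essentially the standard one: the paper gives no proof of its own for the results of this section (it defers to Beilinson \cite{Be2} and \cite{gl}), and your route --- surjectivity of $\beta+\varepsilon$ from that of $\beta$, the kernel/pullback description of $\Omega_f$ giving the second sequence, and the sign-flip $(x,y)\mapsto(x,-y)$ identifying $\Omega_f/j_!j^*$ with the middle cohomology of the complex defining $i_*\Phi_f^u$ --- is the argument found there. The only identity you assert without proof, $\beta\circ\alpha=\varepsilon\circ\eta$, is precisely the statement that the three-term sequence in the proposition defining $\Phi_f^u$ is a complex, so it is already available from that earlier proposition.
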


\section{Nearby cycles and mixed perverse sheaves}
\label{cycles1}

The goal of this section is to show that the functor of unipotent nearby cycles
preserves the categories $\Perv_{mf}(X)$ and to deduce that these categories
are also preserved by the functors $\Hp^k f_*$, for every morphism
$f$ of $\Sch/k$. The main tool is Deligne's weight-monodromy theorem
from \cite{De}.

We also give an application to the direct image functor by
a closed immersion $i$, which then allows us to
construct the functor $i^*$ on the categories $\Db\Perv_{mf}$.

\subsection{Nearby cycles on horizontal perverse sheaves}

We assume again that $k$ is a field that is finitely generated over
its prime field. Let $X$ be a $k$-scheme and $f:X\fl\Aff^1$
be a morphism. We write
$\GD_m=\Aff^1-\{0\}$, 
$U=X\times_{\Aff^1}\GD_m\flnom{j}X$ and
$Y=X\times_{\Aff^1}\{0\}\flnom{i}X$.

%We will use the constructions of section
%\ref{section_Beilinson_cycles} to \emph{define} the functors
%$\Psi_f^u$, $\Phi_f^u$, $\Xi_f^u$ and $\Omega_f^u$ on the category
%$\Perv_h(U)$. As the lisse sheaves $\Lf_a$ on $\GD_m$ are clearly horizontal
%and as we have the six operations on the categories $\Dbh$, this
%makes perfect sense, and it is compatible with the usual constructions
%via the functor $\eta^*:\Dbh\fl\Dbc$. Note also that these functors respect
%the subcategories of mixed perverse sheaves, because all the functors
%used in their definition respect the categories of mixed complexes.

In Section~\ref{section_5}, we have constructed exact functors
$\Psi_f^u:\Perv(U)\ra\Perv(Y)$ (see Proposition~\ref{prop_Psi_u}),
$\Phi_f^u:\Perv(X)\ra\Perv(Y)$ (see Proposition~\ref{prop_Phi_u}(i)),
$\Xi_f:\Perv(U)\ra\Perv(X)$ (see Proposition~\ref{prop_Psi_fu}(i)) and
$\Omega_f:\Perv(X)\ra\Perv(X)$ (see Proposition~\ref{prop_Omega_f}).
The next proposition says that all these functors respect the full subcategories of
horizontal perverse sheaves (resp. mixed perverse sheaves).

\begin{subprop}
For $?\in\{h,m\}$,
the functor $\Psi_f^u$ (resp. $\Phi_f^u$, $\Xi_f$, $\Omega_f$) sends the full
subcategory $\Perv_?(U)$ (resp. $\Perv_?(X)$, $\Perv_?(U)$, $\Perv_?(X)$) of
$\Perv(U)$ (resp. $\Perv(X)$, $\Perv(U)$, $\Perv(X)$) to the full subcategory
$\Perv_?(Y)$ (resp. $\Perv_?(Y)$, $\Perv_?(X)$, $\Perv_?(X)$) of
$\Perv(Y)$ (resp. $\Perv(Y)$, $\Perv(X)$, $\Perv(X)$).

\label{prop_cycles_mixed}
\end{subprop}

\begin{proof}
Let $K\in\Perv_h(U)$ (resp. $\Perv_m(U)$). As the lisse sheaves $\Lf_a$ on $\Gr_m$ of Definition~\ref{def_L_a}
are horizontal and mixed, the perverse
sheaves $K\otimes f^*\Lf_a$ are horizontal (resp. mixed), so the perverse sheaves $\Hp^{-1}i^*j_*(K\otimes\Lf_a)$ are all horizontal
(resp. mixed).
As $\Psi_f^u(K)\iso\Hp^{-1}i^*j_*(K\otimes\Lf_a)$ for $a$ big enough (Proposition~\ref{prop_Psi_fu}(i)), we
deduce that $\Psi_f^u(K)$ is in $\Perv_h(Y)$ (resp. $\Perv_m(Y)$). The statement for $\Xi_f$, $\Phi_f^u$ and $\Omega_f$
follows immediately from their definition, once we know that the $\Lf_a$ are horizontal and mixed.
\end{proof}

As only finite type schemes and
constructible complexes are involved in the definition of $\Psi_f^u$
if we use the alternative definition given by Proposition~\ref{prop_Psi_fu}(i)), we
can use Deligne's generic base change to compare our situation with the situation
over closed points of some ring $A\in\Uf$. We will see an
example of this in the next section.

\subsection{The relative monodromy filtration}
\label{nearby}

We recall the definition of the relative monodromy filtration, due
to Deligne.

\begin{subprop}(See Propositions 1.6.1 and 1.6.13 of \cite{De}.)
Let $K$ be an object
in some abelian category, and suppose that we have a finite increasing
filtration $W$ on $K$ and a nilpotent endomorphism $N$ of $K$. Then there
exists at most one finite increasing filtration $M$ on $K$ such that
$N(M_i)\subset M_{i-2}$ for every $i\in\Z$ and that, for every $k\in\Nat$
and every $i\in\Z$,
the morphism $N^k$ induces isomorphisms
\[\Gra_{i+k}^M\Gra_i^W K\iso \Gra_{i-k}^M\Gra_i^W K.\]

Moreover, if $W$ is trivial (that is, if there exists $i\in\Z$ such that
$\Gra_i^W K=K$), then the filtration $M$ always exists.

\label{prop_monodromy_filtration}
\end{subprop}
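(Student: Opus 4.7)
The plan is to treat uniqueness and existence separately; both reduce to understanding the monodromy filtration attached to a single nilpotent endomorphism $N$ on an object with trivial $W$, and the general uniqueness statement is obtained from the trivial case by an induction on the length of $W$. For uniqueness, I would first assume $W$ is trivial, say $W_a K = K$ and $W_{a-1} K = 0$. After a shift we may take $a = 0$, so the condition becomes $N^k \colon \Gra^M_k K \iso \Gra^M_{-k} K$ for all $k \geq 0$. I would show that this forces
\[
M_j = \sum_{i \geq \max(0,-j)} \bigl(\Ker(N^{i+j+1}) \cap \Im(N^i)\bigr),
\]
the inclusion $\supseteq$ being checked term by term using $N(M_i) \subseteq M_{i-2}$, and the reverse inclusion being obtained by descending induction starting from the top of $M$, where the isomorphism $N^k \colon \Gra^M_k \iso \Gra^M_{-k}$ forces elements of $M_j$ to come from powers of $N$ applied to elements in $M_{|j|}$. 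For general $W$, if $M$ and $M'$ are two candidate filtrations, the trivial case applied to each $\Gra^W_i K$ shows that $M$ and $M'$ induce the same filtration on $\Gra^W_i K$. A diagram chase up the finite filtration $W$, using the strictness of $N$ with respect to both $M$ and $M'$ (itself a consequence of the isomorphism condition on the graded pieces), then yields $M = M'$.

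For existence in the trivial case, I would take the formula above as the definition of $M_j$. Let $r$ be the smallest integer with $N^{r+1} = 0$. It is straightforward to check that $M$ is a finite increasing filtration concentrated in degrees $[-r, r]$ and that $N(M_j) \subseteq M_{j-2}$. The heart of the argument is then the verification that $N^k$ induces an isomorphism $\Gra^M_k K \iso \Gra^M_{-k} K$. I would proceed by induction on $r$: the subobject $\Ker N \subseteq K$ and the quotient $K/\Im N$ both carry induced nilpotent operators of strictly smaller index, and the formula defining $M$ restricts to and descends from analogous filtrations on these subquotients. The induction hypothesis gives the required Lefschetz-type isomorphisms on these subquotients, and a five-lemma chase on the short exact sequences relating $M_j K$, its intersection with $\Ker N$, and its image in $K/\Im N$ assembles them into the required isomorphisms on $K$.

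The main obstacle will be this last step: in an abstract abelian category one cannot decompose $K$ into $\mathfrak{sl}_2$-isotypic components, so the supporting exact sequences and the compatibility of the formula for $M$ with passage to $\Ker N$ and $K/\Im N$ must be set up carefully so that the induction on the nilpotency index $r$ actually goes through. The statement is classical and due to Deligne, and a self-contained argument along the above lines is carried out in \cite{De} 1.6; in practice I would simply invoke that reference.
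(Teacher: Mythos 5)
The paper offers no proof of this proposition at all: it is quoted from Deligne, with the citation to \cite{De} 1.6.1 and 1.6.13 standing in for the argument, which is exactly what you fall back on in your final sentence, so your proposal matches the paper's approach. Your sketch is the standard classical route (the explicit formula $M_j=\sum_{i\geq\max(0,-j)}\Ker(N^{i+j+1})\cap\Im(N^i)$ for the trivial-$W$ case, then reduction of relative uniqueness to the graded pieces $\Gra^W_i K$), and the one genuinely delicate step you gloss over --- upgrading agreement of the two filtrations on each $\Gra^W_i K$ to $M=M'$, which needs Deligne's inductive argument (or Steenbrink--Zucker) rather than a routine diagram chase --- is precisely what the cited 1.6.13 supplies, so deferring to the reference is appropriate here.
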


The filtration $M$ is called the \emph{monodromy filtration on $K$ relative
to the filtration $W$}. If $W$ is trivial, it is simply called the
\emph{monodromy filtration on $K$}.

We will use the following theorem, which is a close relative
of Theorem 1.8.4 of Deligne's Weil II paper \cite{De}.

\begin{subthm} Let $K\in\Ob\Perv_{mf}(U)$, and let $W$ be the weight filtration
on $K$. Then the monodromy filtration $M$
on $\Psi^u_f K$ relative to the filtration
$\Psi^u_f W$ exists, and $\Gra_i^M\Psi^u_f K$ is pure of weight $i-1$ for every
$i\in\Z$. In particular, $\Psi^u_f K$ is an object of $\Perv_{mf}(Y)$.

\label{thm_weight_filtration_psi}
\end{subthm}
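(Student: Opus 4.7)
The strategy is reduction to the finite-field case via generic base change, with Deligne's Theorem 1.8.4 of \cite{De} as the main input. First, I would spread out the data: since $K\in\Perv_{mf}(U)$ carries a weight filtration $W$, I would fix $(A,\X)\in\Ob\Uf X$ together with a filtered horizontal perverse sheaf $(\mathcal{K},\mathcal{W})$ on $\mathcal{U}:=\X\times_{\Aff^1_A}\GD_{m,A}$ extending $(K,W)$, and a morphism $\tilde f:\X\fl\Aff^1_A$ extending $f$; after shrinking $A$ if needed, the filtration $\mathcal{W}_s$ is the weight filtration of $\mathcal{K}_s$ for every closed point $s\in\Spec A$, by the very definition of mixedness recalled in section~\ref{def_mixed}. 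Crucially, Beilinson's construction of $\Psi_f^u$ in section~\ref{section_Beilinson_cycles} uses only the operations $j_!$, $j_*$ and tensoring with the horizontal lisse sheaves $\Lf_a$ on $\GD_m$, so $\Psi_f^u$ makes sense in the horizontal setting and commutes with restriction to the fibers of $\Spec A$.

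At every closed point $s\in\Spec A$, the residue field $\kappa(s)$ is finite and $\mathcal{K}_s$ is mixed in the sense of \cite{BBD}. Deligne's weight-monodromy theorem (Theorem 1.8.4 of \cite{De}) then yields the existence of the relative monodromy filtration $M_s$ on $\Psi_{\tilde f_s}^u\mathcal{K}_s$ with respect to $\Psi_{\tilde f_s}^u\mathcal{W}_s$, together with the purity of $\Gra_i^{M_s}\Psi_{\tilde f_s}^u\mathcal{K}_s$ of weight $i-1$ (the shift by $-1$ reflecting our perverse-normalized convention for $\Psi$).

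To produce $M$ globally on $\Psi_f^u K$, I would proceed by dévissage along $W$. Exactness of $\Psi_f^u$ makes $\Psi_f^u W$ into a finite filtration on $\Psi_f^u K$ whose graded pieces are $\Psi_f^u\Gra_j^W K$; since each $\Gra_j^W K$ is pure of weight $j$, the absolute monodromy filtration on $\Psi_f^u\Gra_j^W K$ exists by the purely algebraic Proposition~\ref{prop_monodromy_filtration}, and the purity of its graded pieces (of weight $j+i-1$) can be tested fiberwise, where it follows from the previous paragraph. This shows $\Psi_f^u\Gra_j^W K\in\Perv_{mf}(Y)$ for every $j$. The existence of the relative monodromy filtration $M$ on $\Psi_f^u K$ itself then follows from Deligne's criterion (Proposition 1.6.13 of \cite{De}) applied inside the abelian category $\Perv_h(Y)$; and the purity statement $\Gra_i^M\Psi_f^u K$ pure of weight $i-1$ is again verified fiberwise against Deligne's theorem, giving in particular $\Psi_f^u K\in\Perv_{mf}(Y)$.

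The main obstacle I expect is verifying, in the third paragraph, that Deligne's existence criterion for the relative monodromy filtration applies in $\Perv_h(Y)$ rather than only fiberwise, i.e., that the subobjects appearing in the algebraic construction of $M$ assemble to subobjects in the horizontal category. This requires combining the uniqueness of $M$ (Proposition~\ref{prop_monodromy_filtration}) with the exactness and conservativity of restriction to closed fibers of $\Spec A$; together these reduce both the existence of $M$ and the purity of its graded pieces to a statement at closed points of $\Spec A$, which is exactly what Deligne's theorem provides.
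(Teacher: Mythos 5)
Your treatment of the pure case is essentially the paper's Lemma \ref{lemma_weight_filtration_psi}: spread out, use generic base change so that the objects entering Beilinson's construction restrict well to closed fibers, and quote the weight--monodromy theorem over finite fields (the paper cites Gabber's theorem, Theorem 5.1.2 of \cite{BB}, rather than 1.8.4 of \cite{De}, but the content is the same). The genuine gap is in your mixed case: you assert that the existence of the monodromy filtration on $\Psi_f^u K$ \emph{relative to} $\Psi_f^u W$ ``follows from Deligne's criterion (Proposition 1.6.13 of \cite{De}) applied inside $\Perv_h(Y)$,'' but that proposition (restated as Proposition \ref{prop_monodromy_filtration}) only gives \emph{uniqueness}, plus existence in the absolute case where $W$ is trivial. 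In the relative case existence is a nontrivial condition that can fail, and no criterion is actually verified in your argument.

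Your fallback --- reduce existence to the closed fibers using uniqueness, exactness and conservativity of restriction --- does not work either, and for a structural reason: a filtration that exists on every closed fiber need not come from a filtration of the horizontal perverse sheaf. This is exactly the phenomenon the whole paper is about (over finite fields every mixed perverse sheaf has a weight filtration, yet $\Perv_{mf}(X)\subsetneq\Perv_m(X)$), so ``existence can be tested fiberwise'' is precisely what one is not allowed to assume. Weight \emph{bounds} are fiberwise-checkable; existence of a filtration is not. The paper's proof supplies the missing idea: induct on the length of $W$, and for the inductive step invoke the Steenbrink--Zucker criterion (Theorem 2.20 of \cite{SZ}), which converts existence of the relative monodromy filtration into the containment $N^i(L)\cap F_{a-1}L(-i)\subset N^i(F_{a-1}L)+M_{a-i-1}F_{a-1}L(-i)$, i.e.\ into a weight estimate on the subquotient $(N^i(L)\cap F_{a-1}L(-i))/N^i(F_{a-1}L)$. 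That subquotient is then dominated, via the snake lemma, by $\Ker(N^i)$ on $\Psi_f^u\Gra_a^W K$, whose weights are bounded by $a+i-2$ by (1.6.4) of \cite{De} (transferred to the horizontal setting by generic base change); purity of $\Gra_i^M$ then follows from the induction hypothesis and the exact sequences $0\fl\Gra_i^M F_{a-1}L\fl\Gra_i^M L\fl\Gra_i^M\Gra_a^F L\fl 0$. Without this (or an equivalent) argument, your proof establishes the theorem only for pure $K$.
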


Before giving the proof of the theorem, we state and prove a lemma.

\begin{sublemma} In the situation of the theorem, suppose that $K$ is
pure. Then the monodromy filtration $M$ on $\Psi_f^u K$ (which always exists)
is such that $\Gr_i^M\Psi_f^u K$ is pure of weight $i-1$ for every
$i\in\Z$.

\label{lemma_weight_filtration_psi}
\end{sublemma}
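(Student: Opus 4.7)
The plan is to reduce the statement to Deligne's weight-monodromy theorem (Theorem~1.8.4 of \cite{De}) over finite fields by means of Deligne's generic base change theorem.

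First I would choose a horizontal model. Since $K \in \Perv_{mf}(U)$ is pure of some weight $w$, the definition of purity for horizontal perverse sheaves provides $(A,\U) \in \Ob(\Uf U)$ such that $K$ extends to a perverse sheaf $\til K$ on $\U$ with $\til K_{|\U_x}$ punctually pure of weight $w$ at every closed point $x \in \Spec A$. After shrinking $\Spec A$, I may extend $f : U \fl \Aff^1_k$ to a morphism $\til f : \X \fl \Aff^1_A$, where $\X$ is a horizontal model of $X$ containing $\U$ as an open subscheme and with closed complement a horizontal model $\Y$ of $Y$.

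Next I would check that Beilinson's construction of $\Psi_f^u$ is compatible, after further shrinking of $\Spec A$, with specialization to closed points. Indeed, the entire construction recalled in section~\ref{section_Beilinson_cycles} uses only the six operations, tensor products with the lisse sheaves $\Lf_a$ (which are naturally defined over $\GD_{m,A}$), and passage to kernels and filtered colimits in the Ind-category of perverse sheaves. By Deligne's generic base change theorem (SGA 4~1/2 [Th.~finitude]), shrinking $\Spec A$ further ensures that each of these operations commutes with restriction to any closed fiber, yielding canonical isomorphisms
\[(\Psi_{\til f}^u \til K)_{|\Y_x} \simeq \Psi^u_{\til f_x}(\til K_{|\U_x})\]
for every closed point $x \in \Spec A$. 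Finally I would invoke Deligne's theorem fiberwise: for each closed point $x$, $\X_x$ is a variety over the finite field $k(x)$ and $\til K_{|\U_x}$ is pure of weight $w$, so Theorem~1.8.4 of \cite{De} furnishes the monodromy filtration on $\Psi^u_{\til f_x}(\til K_{|\U_x})$ (whose existence is in any case automatic from the nilpotency of $N$) together with the required purity of its graded pieces. Since punctual purity of a horizontal sheaf is tested on closed fibers of $\Spec A$, the corresponding statement for $\Gr_i^M \Psi_f^u K$ follows.

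The main obstacle is the compatibility asserted in the second step: one must verify that each ingredient of Beilinson's construction, and in particular the filtered colimit in $\Db \Ind(\Perv(\X))$ appearing in Remark~\ref{rmk_Psi}, specializes correctly to the analogous construction on each closed fiber after appropriate shrinking of $\Spec A$. Once this compatibility is in hand, the conclusion is a direct application of Deligne's theorem over finite fields.
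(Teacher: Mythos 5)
Your overall strategy --- spread out to a horizontal model, shrink $\Spec A$ using Deligne's generic base change so that the construction can be compared with its analogue on closed fibers, and then quote the weight-monodromy theorem over finite fields --- is the same as the paper's, but as written there are two genuine gaps.

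First, the finite-field input you invoke does not cover the case you need. Theorem 1.8.4 of \cite{De} is a statement about a pure \emph{lisse} sheaf on an open curve; what your fiberwise reduction requires is the statement for an arbitrary pure \emph{perverse} sheaf on $\U_x$ relative to $\til f_x:\X_x\fl\Aff^1_{k(x)}$, namely that the monodromy filtration on $\Psi^u_{\til f_x}(\til K_{|\U_x})$ has graded pieces pure of the prescribed weights. That is Gabber's theorem, which the paper quotes as Theorem 5.1.2 of \cite{BB}; it is a genuine generalization of Weil II 1.8.4 and cannot simply be ``applied fiberwise'' from Deligne's statement. (The paper only calls its own Theorem a ``close relative'' of 1.8.4, and is careful to cite Gabber's result at exactly this point.)

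Second, the compatibility you single out as the main obstacle --- an isomorphism $(\Psi^u_{\til f}\til K)_{|\Y_x}\simeq\Psi^u_{\til f_x}(\til K_{|\U_x})$ after shrinking --- is left unproved, and it is not an immediate consequence of generic base change: Beilinson's construction involves a colimit over $a$ (equivalently a choice of $a$ with $N^{a+1}=0$), and the nilpotency order of $N$ on the closed fibers is not controlled a priori by its order on the generic fiber. The paper's proof is arranged precisely so as not to need this full statement: it fixes one $a$ with $N^{a+1}=0$ on $\Psi^u_fK$, works with the single perverse sheaf $\mathcal{K}'=\Hp^{-1}I^*J_*(\mathcal{K}\otimes\Lf_a)$ on the model, applies generic base change only to the finitely many complexes $J_!(\mathcal{K}\otimes\Lf_b)$ and $J_*(\mathcal{K}\otimes\Lf_b)$ with $b\in\{a,a+1\}$, and then uses Proposition \ref{prop_Psi_fu}(i) to identify $\mathcal{K}'_{|\X_x}$ with the subobject $\Ker(N^{a+1})$ of the fiberwise unipotent nearby cycles, together with its monodromy filtration, which is enough to transport Gabber's purity statement back to the horizontal situation. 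To repair your argument, either prove the asserted specialization compatibility (e.g. by uniformizing the bound $a$ over the closed fibers) or adopt the paper's fixed-$a$ subobject device, and in either case replace the citation of Weil II 1.8.4 by Gabber's theorem.
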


\begin{proof} Let $w$ be the weight of $K$.
Let $(A,\X,u)$ be an object of $\Uf X$ such that
$K$ comes by restriction from a shifted perverse sheaf $\mathcal{K}[-d]$ on
$\X$, where $d=\dim\Spec A$.
Fix $a\in\Nat$ such that $N^{a+1}=0$ on $\Psi_f^u K$.
After shrinking $\Spec A$ and $\X$ if necessary, we may assume that :
\begin{itemize}
\item[-] The morphism $f:X\fl\Aff^1_k$ extends to a morphism
$F:\X\fl\Aff^1_A$.
We write
$\mathcal{U}=\X\times_{\Aff^1_A}\GD_{A,m}\flnom{J}\X$ and
$\Y=\X\times_{\Aff^1_A}\{0\}\flnom{I}\X$.
\item[-] The lisse sheaves $\Lf_0,\ldots,\Lf_{a+1}$ all extend to
$\GD_{m,A}$. (In fact we can get all the $\Lf_b$ as soon as we have
$\Lf_1$, because they are
the symmetric powers of $\Lf_1$.)
\item[-] For every closed point $x$ of $\Spec A$, the restriction of
$\mathcal{K}$ to $\X_x$ is still perverse, and it is pure of weight $w+d$.
\item[-] The formation of the complexes
$J_!(\mathcal{K}\otimes\Lf_b)$ and $J_*(\mathcal{K}\otimes\Lf_b)$, 
for $b\in\{a,a+1\}$,
is compatible with every base change $x\fl\Spec A$, where $x$ is a closed
point of $\Spec A$. Moreover, if $\Lf$ is any subquotient
of $\Hp^{-1}I^*J_*(\mathcal{K}\otimes\Lf_a)$ (in the category
$\Perv(\X,E)$), then its restrictions to the fibers of
$\X$ above all the closed points of $\Spec A$ are still perverse.

\end{itemize}
Indeed, the first two points are standard, and the last two follow
from Deligne's generic base change theorem (see SGA 4 1/2,
[Th. finitude], Th\'eor\`eme 1.9) and from the purity theorem.

Let $\mathcal{K}'=\Hp^{-1}I^*J_*(\mathcal{K}\otimes\Lf_a)$, and
let $M$ the monodromy filtration on $\mathcal{K}'$ induced by $N$.
By the conditions above (and (i) of Proposition \ref{prop_Psi_fu}),
for every closed point $x$ of $\Spec A$, the restriction of
$\mathcal{K'}$ to $\X_x$ is a subobject of $\Psi_f^u\mathcal{K}_x$,
and the restriction of $M$ is the monodromy filtration. The result
about the weights of the graded pieces of the monodromy filtration
over the spectrum of a finite field (such as $x$) is known
by Theorem 5.1.2 of Beilinson and Bernstein's paper \cite{BB} (where it is
attributed to Gabber). So we get the conclusion by definition of the weights
on horizontal sheaves.
\end{proof}

\begin{proof}[Proof of Theorem~\ref{thm_weight_filtration_psi}]
We reason by induction on the length of the filtration $W$.
If $K$ is pure (i.e., if $W$ is trivial), then the conclusion of
the theorem is proved in Lemma \ref{lemma_weight_filtration_psi}.

Now assume that $W$ is of length $\geq 2$, and that we know the result
for every object of $\Perv_{mf}(U)$ with a shorter weight filtration.
Let $a\in\Z$ be such that $W_a K=K$ and $\Gra_a^W K\not=0$. By the induction
hypothesis, we know the theorem for $W_{a-1}K$ and $\Gra_a^W K$.
Write $L=\Psi^u_f K$, and let $F$ be the filtration $\Psi^u_f W$ on $L$.
By Theorem 2.20 of Steenbrink and Zucker's paper \cite{SZ}, the filtration
$M$ exists if and only if, for every $i\geq 1$, we have :
\[N^i(L)\cap F_{a-1}L(-i)\subset N^i(F_{a-1}L)+M_{a-i-1}F_{a-1}L(-i).\]
This is equivalent to saying that
\[(N^i(L)\cap F_{a-1}L(-i))/N^i(F_{a-1}L)\subset [M_{a-i-1}F_{a-1}L/(M_{a-i-1}
F_{a-1}L\cap N^i(F_{a-1}L))](-i).\]
As the filtration $M$ on $F_{a-1}L$ is the weight filtration up to a shift,
the inclusion above is also equivalent to the fact that
$(N^i(L)\cap F_{a-1}L(-i))/N^i(F_{a-1}L)$ is of weight $\leq a+i-2$. Observe
that $(N^i(L)\cap F_{a-1}L(-i))/N^i(F_{a-1}L)$ is the kernel of the
map $F_{a-1}L(-i)/N^i(F_{a-1}L)\fl L(-i)/N^i(L)$, so applying the snake lemma to
the commutative diagram with exact rows :
\[\xymatrix{0\ar[r] & F_{a-1}L\ar[r]\ar[d]^{N^i} & L\ar[r]\ar[d]^{N^i} &
\Gra_a^F L\ar[r]\ar[d]^{N^i} & 0 \\
0 \ar[r] & F_{a-1}L(-i)\ar[r] & L(-i)\ar[r] & \Gra_a^F L(-i)\ar[r] & 0}\]
gives a surjection
\[\Ker(N^i:\Gra_a^F L\fl\Gra_a^F L(-i))\fl
(N^i(L)\cap F_{a-1}L(-i))/N^i(F_{a-1}L).\]
But as $\Gra_a^F L=\Psi^u_f\Gra_a^W K$, we know by (1.6.4) of \cite{De} that
$\Ker(N^i:\Gra_a^F L\fl\Gra_a^F L(-i))$ is of weight $\leq a+i-2$
(or more correctly, we can deduce
this from the result we cited and Deligne's generic base change theorem, as in the proof
of Lemma \ref{prop_monodromy_filtration}),
and hence
all its quotients are. This proves the existence of the filtration $M$
on $L$.

Finally, we prove that $\Gra_i^M L$ is pure of weight $i-1$ for every
$i\in\Z$. The two properties defining $M$ in Proposition
\ref{prop_monodromy_filtration} stay true if we intersect
$M$ with $F_{a-1}L$ or take the quotient filtration in $\Gra_a^F L$, so this
gives the relative monodromy filtration on $F_{a-1}L$ and $\Gra_a^F L$
(by the
uniqueness statement). Hence we get exact sequences
\[0\fl\Gra_i^M F_{a-1}L\fl\Gra_i^M L\fl\Gra_i^M\Gra_a^F L\fl 0,\]
and so the fact that $\Gra_i^M L$ is pure of weight $i-1$ follows from the
induction hypothesis.
\end{proof}

\subsection{Cohomological direct image functors and weights}
\label{stab_Pmf}

\begin{subcor} Let $f:X\fl Y$ be a morphism of $k$-schemes. Then the
functors $\Hp^i f_*,\Hp^i f_!
:\Perv_m(X)\fl\Perv_m(Y)$ send $\Perv_{mf}(X)$ to $\Perv_{mf}(Y)$.

\label{cor_f_*_preserves_Perv_mf}
\end{subcor}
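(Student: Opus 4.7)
The plan is to reduce the corollary to Theorem \ref{thm_weight_filtration_psi} via Verdier duality, Nagata compactification, and Beilinson's unipotent nearby-cycles machinery.

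By Poincar\'e-Verdier duality, which preserves $\Perv_{mf}$ (Proposition \ref{prop_obvious2}(i)), the relation $\Hp^{-i}(f_! K) \simeq D_Y \Hp^i(f_* D_X K)$ reduces the statement for $f_!$ to the one for $f_*$. Using Nagata compactification, write $f = \overline{f} \circ j$ with $\overline{f}$ proper and $j : U \hookrightarrow X$ an open immersion; the proper case is Proposition \ref{prop_obvious1}(ii), so it suffices to handle $\Hp^k j_*$. Invoking the stack property (Proposition \ref{prop_obvious3}) and Noetherian induction on the complement $Y := X \setminus U$, I reduce to the case where $Y = g^{-1}(0)$ for some regular function $g : X \fl \Aff^1$. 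Then $j$ is affine, so $j_*$ is t-exact by Proposition \ref{prop_exactness}(iv), and the task becomes: show $j_* K \in \Perv_{mf}(X)$ for every $K \in \Perv_{mf}(U)$.

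At this point, I apply Beilinson's short exact sequence from Proposition \ref{prop_Xi_f}(ii),
\[0 \fl i_* \Psi_g^u K \fl \Xi_g K \fl j_* K \fl 0.\]
By Theorem \ref{thm_weight_filtration_psi} together with Proposition \ref{prop_obvious1}(ii) applied to the closed immersion $i$, the left-hand term lies in $\Perv_{mf}(X)$; since $\Perv_{mf}$ is closed under quotients in $\Perv_m$, the problem reduces to showing $\Xi_g K \in \Perv_{mf}(X)$. The dual exact sequence
\[0 \fl j_! K \fl \Xi_g K \fl i_* \Psi_g^u K(-1) \fl 0\]
shows, using Proposition \ref{prop_obvious2}(iii), that $\Xi_g K \in \Perv_{mf}(X)$ is equivalent to $j_! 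K \in \Perv_{mf}(X)$.

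To construct the weight filtration on $j_! K$, I induct on the length of the weight filtration $W$ on $K$. For pure $K$ of weight $a$, the intermediate extension $j_{!*} K$ is pure of weight $a$ and hence lies in $\Perv_{mf}$; by Beilinson's formula (Proposition \ref{prop_Psi_fu}(i)), the kernel of $j_! K \twoheadrightarrow j_{!*} K$ is a subquotient of $i_* \Psi_g^u K$, hence lies in $\Perv_{mf}$ with weights strictly below $a$, and the two pieces assemble into a weight filtration on $j_! K$. For general mixed $K$, apply the exact functor $j_!$ to the short exact sequence $0 \fl W_{a-1} K \fl W_a K \fl \Gra_a^W K \fl 0$ and glue the weight filtrations already constructed on the outer terms. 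The main technical obstacle is precisely this gluing step: $\Perv_{mf}$ is not closed under extensions in $\Perv_m$, so one must verify that the extension of $j_! W_{a-1} K$ by $j_! \Gra_a^W K$ does admit a weight filtration. This uses the full strength of Theorem \ref{thm_weight_filtration_psi} --- namely the existence of the \emph{relative} monodromy filtration on $\Psi_g^u K$ with respect to $\Psi_g^u W$, which controls exactly how the weight filtrations on the pieces fit together --- together with the strict compatibility of morphisms in $\Perv_m$ with weight filtrations (Lemma 3.8 of \cite{Hu}).
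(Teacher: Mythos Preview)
Your argument has two genuine gaps, both rooted in the fact that $\Perv_{mf}$ is \emph{not} closed under extensions in $\Perv_m$.

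First, the claimed equivalence ``$\Xi_g K \in \Perv_{mf}(X) \Leftrightarrow j_!K \in \Perv_{mf}(X)$'' is false in the direction you need. From $0 \to j_!K \to \Xi_g K \to i_*\Psi_g^u K(-1) \to 0$, knowing the two outer terms lie in $\Perv_{mf}$ does not force the middle term to; you only get $\Xi_g K \in \Perv_{mf} \Rightarrow j_!K \in \Perv_{mf}$ via subobject closure. This is easily bypassed (prove $j_!K \in \Perv_{mf}$ and dualize directly to $j_*$), but as written the chain of reductions is broken.

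Second, and more seriously, your induction on the length of $W$ runs into the same extension problem and you do not resolve it. Given $0 \to j_!W_{a-1}K \to j_!W_aK \to j_!\Gra_a^W K \to 0$ with outer terms in $\Perv_{mf}$, you must produce $W_b(j_!W_aK)$ for each $b<a-1$; but the preimage of $W_b(j_!\Gra_a^W K)$ contains all of $j_!W_{a-1}K$, which has weights up to $a-1$, not $\leq b$. Your appeal to the relative monodromy filtration is a gesture, not an argument: that filtration lives on $\Psi_g^u K$, and you give no mechanism transporting it to a weight filtration on $j_!K$. The paper sidesteps this by working with $j_*$ and constructing, for each fixed $a$, a subobject $L\subset j_*K$ of weight $\leq a$ with quotient of weight $>a$ --- not by induction on the length of $W$. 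The key asymmetry is that $j_*$ does not decrease weights, so $j_*(K/W_aK)$ automatically has weight $>a$; for $K':=W_aK$ one uses that $j_{!*}K'$ has weight $\leq a$ and that $j_*K'/j_{!*}K'$ is a quotient of $i_*\Psi_g^u K'(-1)\in\Perv_{mf}$, from which the required $L$ is read off. (The dual of this works equally well for $j_!$, but it is structurally different from your length-of-$W$ induction.)
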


\begin{proof} As Poincar\'e-Verdier duality exchanges $\Hp^i f_*$ and
$\Hp^{-i}f_!$ and preserves the categories $\Perv_{mf}$, it suffices to treat
the case of $\Hp^i f_*$.

By Nagata's compactification theorem
(see for example Conrad's paper \cite{Co}), we can write $f=gj$, with
$j:X\fl X'$ an open embedding and $g:X'\fl Y$ proper. After replacing $X'$ by
the blowup of
$X'-X$ in $X'$, we may assume that the ideal of $X'-j(X)$ is
invertible. Then $j$ is affine, so $j_*$ is t-exact, so
we have $\Hp^i f_*=(\Hp^i g_*)\circ j_*$ for every $i\in\Z$. By Proposition
\ref{prop_obvious1}(ii), it suffices to prove the corollary for $j$. 
By Proposition
\ref{prop_obvious3}, we may assume that $X'$ is affine, and hence that there
exists $h\in\Of(X')$ generating the ideal of $X'-j(X)$.

So we see that it is enough to prove the corollary in the following situation :
there exists $h:Y\fl\Aff^1_k$ such that $f=j$ is the inclusion of $X:=h^{-1}(\GD_m)$
in $Y$. Let $i:Y-X\fl X$ be the inclusion of the complement. Let $K$ be
an object of $\Perv_{mf}(X)$, and denote by $W$ its weight filtration.
Let $a\in\Z$. We want to find a subobject $L$ of $j_* K$ such that
$L$ is of weight $\leq a$ and $j_* K/L$ is of weight $>a$. (This clearly
implies that $j_* K$ has a weight filtration.) 

If $W_a K=0$, then $K$ is of
weight $>a$, so $j_*K$ is of weight $>a$, and we take $L=0$.

If $W_a K=K$,
then $K$ is of weight $\leq a$, so $j_{!*}K$ is of weight $\leq a$
by Corollary 5.4.3 of \cite{BBD}. So it is enough to find a subobject
$L'$ of weight $\leq a$ of $j_*K/j_{!*}K$ such that $(j_*K/j_{!*}K)/L'$ is
of weight $>a$. But we know that $j_*K/j_{!*}K=i_*\Hp^0 i^*j_*K$
(by (4.1.11.1) of \cite{BBD}), which
is a quotient of $i_*\Psi^u_f K(-1)$. As $\Psi^u_f K$ has a weight filtration
by Theorem \ref{thm_weight_filtration_psi}, so does $j_*K/j_{!*}K$, and
we can find a $L'$ with the desired properties.

Suppose that $0\not=W_a K\not=K$, and let $K'=W_a K$ and $K''=K/W_a K$.
By the previous paragraph, there exists a subobject $L'$ of weight $\leq a$
of $j_* K'$ such that $j_* K'/L'$ is of weight $>a$. As $K''$ is of weight
$>a$, so is $j_* K''$. Using the exact sequence
\[0\fl j_* K'\fl j_* K\fl j_* K''\fl 0,\]
we see that $j_* K/L'$ is also of weight $>a$, so we can take $L=L'$.
\end{proof}

\begin{subcor} Let $j:U\fl X$ be an affine open embedding. 
Denote by 
$j^*:D^b\Perv_{mf}(X)\fl D^b\Perv_{mf}(U)$ and
$j_*:D^b\Perv_{mf}(U)\fl D^b\Perv_{mf}(X)$ the
derived functors of the exact functors
$j^*:\Perv_{mf}(X)\fl \Perv_{mf}(U)$ and
$j_*:\Perv_{mf}(U)\fl\Perv_{mf}(X)$.

Then this derived functors $(j^*,j_*)$ form a pair of adjoint functors.

\label{cor_adjunction_j}
\end{subcor}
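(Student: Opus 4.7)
The plan is to deduce the adjunction from the standard principle that an adjoint pair of exact functors between abelian categories induces an adjoint pair between their bounded derived categories.

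First I would verify that the two functors on the abelian categories are well-defined and exact. On the one hand, since $j$ is an open immersion, $j^{*}$ is t-exact for the perverse t-structure by Proposition \ref{prop_exactness}(v); and $j^{*}$ preserves the property of admitting a weight filtration, because if $W$ is a weight filtration on $K\in\Perv_m(X)$ then $j^{*}W$ is a weight filtration on $j^{*}K$ (pullback along an open immersion preserves pure weights, as purity is tested on closed fibers of models and these restrictions commute with shrinking the base). Hence $j^{*}$ restricts to an exact functor $\Perv_{mf}(X)\to\Perv_{mf}(U)$. On the other hand, since $j$ is an affine open immersion, it is affine and quasi-finite, so $j_{*}$ is t-exact by Proposition \ref{prop_exactness}(iv); and $j_{*}$ preserves $\Perv_{mf}$ by Corollary \ref{cor_f_*_preserves_Perv_mf}. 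Hence $j_{*}$ restricts to an exact functor $\Perv_{mf}(U)\to\Perv_{mf}(X)$.

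Second, the adjunction $(j^{*},j_{*})$ already available on $\Dbm$ restricts to an adjunction on the abelian subcategories: the unit $\id\to j_{*}j^{*}$ and counit $j^{*}j_{*}\to\id$ are morphisms in $\Dbm$, but by t-exactness both sides lie in the heart when applied to a mixed perverse sheaf, so we obtain natural transformations of exact functors between $\Perv_{mf}(X)$ and $\Perv_{mf}(U)$ satisfying the triangle identities (inherited verbatim from those in $\Dbm$).

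Third, I would invoke the formal fact that if $F\colon\mathcal{A}\to\mathcal{B}$ and $G\colon\mathcal{B}\to\mathcal{A}$ are exact functors between abelian categories with $F$ left adjoint to $G$, then their termwise-derived versions on bounded derived categories form an adjoint pair. Indeed, the unit and counit extend termwise to morphisms of complexes, descend to the homotopy categories, and pass to the derived categories because $F$ and $G$, being exact, preserve quasi-isomorphisms; the triangle identities hold termwise. Applied to our situation, this yields the desired adjunction between $j^{*}$ and $j_{*}$ on $\Db\Perv_{mf}$. The only non-formal input is the preservation of $\Perv_{mf}$ under $j_{*}$, which is exactly Corollary \ref{cor_f_*_preserves_Perv_mf}; everything else is routine derived-category manipulation, so the main obstacle has already been cleared in the preceding section.
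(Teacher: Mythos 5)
Your proof is correct and follows essentially the same route as the paper: both reduce to the adjunction between the underived exact functors on $\Perv_{mf}$ (using Corollary \ref{cor_f_*_preserves_Perv_mf} for $j_*$, exactness for an affine open immersion, and restriction of the adjunction from the mixed categories), and then pass to the bounded derived categories. The only difference is cosmetic: where you verify by hand the formal fact that an adjunction between exact functors derives termwise, the paper cites Corollary 8.12 of Shulman's paper for the same step.
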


\begin{proof} By Corollary 8.12 of \cite{Shul}, it suffices to prove that
the underived functors form a pair of adjoint functors. But, once we know
that both functors preserve the full subcategories $\Perv_{mf}\subset\Perv_m$,
this follows from the adjunction for the categories $\Perv_m$.
\end{proof}

\begin{subcor} The exact functors $\Psi_f^u$, $\Phi_f^u$, $\Xi_f$ and
$\Omega_f$ of section \ref{section_Beilinson_cycles}
preserve the full subcategories of mixed
perverse sheaves with weight filtrations.

\label{cor_nearby_etc}
\end{subcor}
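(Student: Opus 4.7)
The case of $\Psi_f^u$ is already Theorem \ref{thm_weight_filtration_psi}. For the three remaining functors my plan is to use the explicit descriptions recalled in section \ref{section_Beilinson_cycles}: by Proposition \ref{prop_Xi_f}(i), $\Xi_f K = \Ker(\gamma_{a,a-1}\colon j_!(K\otimes f^*\Lf_a)\to j_*(K\otimes f^*\Lf_{a-1})(-1))$ for $a$ large; $i_*\Phi_f^u K$ is the degree-$0$ cohomology of the three-term complex $j_!j^*K\to\Xi_f j^*K\oplus K\to j_*j^*K$; and $\Omega_f K = \Ker(\Xi_f j^*K\oplus K\to j_*j^*K)$. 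Since $\Perv_{mf}$ is a full subcategory of $\Perv_m$ stable under subquotients, each of these objects lies in $\Perv_{mf}$ as soon as the building blocks $j_!(K\otimes f^*\Lf_a)$, $j_*(K\otimes f^*\Lf_a)(-1)$, $K$, $j_!j^*K$ and $j_*j^*K$ do.

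The one nontrivial input is the stability of $\Perv_{mf}$ under $K\mapsto K\otimes f^*\Lf_a$. The local system $\Lf_a$ is itself a mixed horizontal sheaf on $\GD_m$ admitting a weight filtration $V$: the subspaces $\langle e_0,\dots,e_k\rangle$ of its stalk at $1$ are stable under both the unipotent monodromy $\exp(t_\ell N)$ and the diagonal Galois action, and the graded quotients $V_{2k}\Lf_a/V_{2k-2}\Lf_a$ are the pure characters $E(-k)$ of weight $2k$. Given the weight filtration $W$ on $K$, I form the convolution
\[
W'_n(K\otimes f^*\Lf_a)\;=\;\sum_{i+j\leq n} W_i K\otimes f^* V_j\Lf_a.
\]
A direct bi-filtration calculation shows that $\Gra_n^{W'}(K\otimes f^*\Lf_a)$ is a successive extension of the tensor products $\Gra_i^W K\otimes f^*\Gra_j^V\Lf_a$ for $i+j=n$, each of which is a pure perverse sheaf tensored with a pure lisse sheaf and hence pure of weight $n$. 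Thus $W'$ is a weight filtration on $K\otimes f^*\Lf_a$, and this perverse sheaf lies in $\Perv_{mf}(U)$.

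Now $j\colon U\to X$ is an affine open immersion (the base change of $\GD_m\to\Aff^1$ along $f$), so $j_!$ and $j_*$ are t-exact and preserve $\Perv_{mf}$ by Corollary \ref{cor_f_*_preserves_Perv_mf}; the Tate twist preserves $\Perv_{mf}$ by Proposition \ref{prop_obvious2}(iii); and $j^*$ preserves $\Perv_{mf}$ by Proposition \ref{prop_obvious1}(i). Combining these with the tensor result gives the required building blocks in $\Perv_{mf}(X)$, whence $\Xi_f K$ and $\Omega_f K$ lie in $\Perv_{mf}(X)$ and $i_*\Phi_f^u K\in\Perv_{mf}(X)$; Lemma \ref{lemma_test_weight_on_i} then yields $\Phi_f^u K\in\Perv_{mf}(Y)$. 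The main obstacle in this approach is precisely the tensor claim: since $\Perv_{mf}$ is not stable under extensions in $\Perv_m$ (which is the very motivation for this paper), a naive induction on the length of $V$ fails, and one really needs the convolution/bi-filtration construction to produce a filtration whose graded pieces are pure in the correct order.
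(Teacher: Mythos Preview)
Your proof is correct and follows essentially the same route as the paper's: reduce to showing $K\otimes f^*\Lf_a\in\Perv_{mf}(U)$, then apply Corollary~\ref{cor_f_*_preserves_Perv_mf} to the building blocks and use that $\Perv_{mf}$ is closed under subquotients (plus Lemma~\ref{lemma_test_weight_on_i} for $\Phi_f^u$). Your convolution filtration $W'_n=\sum_{i+j\leq n}W_iK\otimes f^*V_j\Lf_a$ is exactly the paper's filtration $\sum_{0\leq k}(W_{n-2k}K)\otimes f^*\Lf_k$ once one identifies $V_{2k}\Lf_a$ with the image of $\alpha_{k,a}$.
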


\begin{proof} We already know the result for $\Psi_f^u$, by Theorem
\ref{thm_weight_filtration_psi}.

Suppose that $K\in\Perv_{mf}(U)$. Then $K\otimes f^*\Lf_i$
is in $\Perv_{mf}(U)$ for every $i\geq 0$. Indeed, if we denote by $W$ the
weight filtration on $K$, then we get a weight filtration on $K\otimes f^*\Lf_i$
by setting
\[W_a(K\otimes f^*\Lf_i)=\sum_{0\leq j\leq i}(W_{a-2j} K)\otimes f^*\Lf_j.\]
By Corollary \ref{cor_f_*_preserves_Perv_mf}, we see that
$j_!(K\otimes f^*\Lf_i)$ and $j_*(K\otimes f^*\Lf_i)$ are in $\Perv_{mf}(X)$
for every $i\geq 0$. By definition of $\Xi_f$, this implies that
$\Xi_f K\in\Perv_{mf}(X)$. The conclusion for  $\Omega_f$ then
follows from its construction in Proposition 
\ref{prop_Omega_f}.
Finally, by the construction in
Propositions \ref{prop_Xi_f}, the functor $i_*\Phi_f$ is a subquotient of
$\Xi_f j^*\oplus\id$. As $\Perv_{mf}(X)$ is stable by subquotients in $\Perv_m(X)$, the
functor $i_*\Phi_f$ sends $\Perv_{mf}(X)$ to itself. By Lemma \ref{lemma_test_weight_on_i},
this implies that $\Phi_f$ sends $\Perv_{mf}(X)$ to $\Perv_{mf}(Y)$.
\end{proof}

\subsection{Direct and inverse image by a closed immersion}

Let $X$ be a $k$-scheme and $Y\flnom{i}X$
be a closed subscheme of $X$.
We denote by $D^b_Y\Perv_{mf}(X)$ the full subcategory of $D^b\Perv_{mf}(X)$
whose objects are the complexes $K$ such that the support of
$\H^i K\in\Perv_{mf}(X)$
is contained in $Y$ for every $i\in\Z$. The exact functor $i_*:\Perv_{mf}(Y)
\fl\Perv_{mf}(X)$ induces a functor $i_*:D^b\Perv_{mf}(Y)\fl D^b\Perv_{mf}(X)$,
whose image is obviously in contained in $D^b_Y\Perv_{mf}(X)$.

\begin{subcor} With notation as above, the functor
$i_*:D^b\Perv_{mf}(Y)\fl D^b_Y\Perv_{mf}(X)$ is an equivalence of
categories.

We have a similar equivalence $\Dbm(Y)\iso\Dbm_{,Y}(X)$, where
$\Dbm_{,Y}(X)$ is the full subcategory of objects $K$ of $\Dbm(X)$ such that $\Hp^i K$ is in
$i_*\Perv_m(Y)$ for every $i\in\Z$.

Moreover, we can choose inverses $(i_*)^{-1}$
of these equivalences such that the
following diagram commutes :
\[\xymatrix{D^b\Perv_{mf}(Y)\ar[r]^-{i_*}\ar[d]_{R_Y} & 
D^b_Y\Perv_{mf}(X)\ar[r]^-{(i_*)^{-1}}\ar[d]_{R_X} & 
D^b\Perv_{mf}(Y)\ar[d]_{R_Y} \\
\Dbm(Y)\ar[r]_-{i_*} & \Dbm_{,Y}(X)\ar[r]_-{(i_*)^{-1}} & 
\Dbm(Y) \\
}\]
where the functors $R_X$ and $R_Y$ are defined in Theorem \ref{thm_main1}.

\label{cor_support}
\end{subcor}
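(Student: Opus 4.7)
The plan is to reduce the statement to a standard categorical fact: if $\mathcal{C}'$ is a full abelian subcategory of an abelian category $\mathcal{C}$ stable under extensions in $\mathcal{C}$, then the natural functor $D^b(\mathcal{C}') \to D^b_{\mathcal{C}'}(\mathcal{C})$ is an equivalence (see e.g.\ \cite{Be1}, Appendix). Applying this to the inclusion $i_* : \Perv_{mf}(Y) \hookrightarrow \Perv_{mf}(X)$ (and separately to $i_* : \Perv_m(Y) \hookrightarrow \Perv_m(X)$) will give the desired equivalences.

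First I would verify that $i_* : \Perv_{mf}(Y) \to \Perv_{mf}(X)$ is fully faithful with essential image equal to the full subcategory of objects in $\Perv_{mf}(X)$ whose support is contained in $Y$. Full faithfulness is inherited from the statement for $\Perv$, and the support description comes from the classical fact that a perverse sheaf on $X$ with support in $Y$ is of the form $i_*M'$ for some $M'\in\Perv(Y)$.

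Next I would check that this essential image is stable under extensions in $\Perv_{mf}(X)$. Given a short exact sequence
\[
0 \fl i_*A \fl M \fl i_*B \fl 0
\]
in $\Perv_{mf}(X)$ with $A,B\in\Perv_{mf}(Y)$, applying $j^*$ (for $j$ the complementary open immersion) shows $j^*M=0$, so $M = i_*M'$ with $M'\in\Perv_m(Y)$. Now Lemma \ref{lemma_test_weight_on_i} yields $M'\in\Perv_{mf}(Y)$. This is the heart of the argument and is where Lemma \ref{lemma_test_weight_on_i} is indispensable, since $\Perv_{mf}$ is \emph{not} stable under extensions in $\Perv_m$ in general. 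The analogous statement for $\Perv_m$ is easier because $\Perv_m(Y) \hookrightarrow \Perv_m(X)$ is stable under extensions directly (extensions of mixed perverse sheaves are mixed). Stability under subobjects and quotients is immediate from the corresponding property of $\Perv_{mf}$ (resp.\ $\Perv_m$) inside $\Perv_m$ (resp.\ $\Perv$), combined with the $j^*$ criterion for support in $Y$.

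Applying the categorical result to both situations gives the two equivalences $i_* : D^b\Perv_{mf}(Y)\iso D^b_Y\Perv_{mf}(X)$ and $i_* : \Dbm(Y)\iso \Dbm_{,Y}(X)$. For the commutative square, note that a closed immersion is finite, hence quasi-finite and affine, so Proposition \ref{prop_comp_real1}(i) provides a natural isomorphism $R_X\circ i_* \simeq i_*\circ R_Y$. Choose any quasi-inverse $(i_*)^{-1}$ of the lower equivalence; pre- and post-composing with $(i_*)^{-1}$ on both sides yields a canonical isomorphism $R_Y\circ (i_*)^{-1}\simeq (i_*)^{-1}\circ R_X$. We may then choose the quasi-inverse in the upper row to be the (essentially unique) functor making the right square commute. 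The main obstacle is the extension-stability in the $\Perv_{mf}$ case, and it is precisely resolved by Lemma \ref{lemma_test_weight_on_i}.
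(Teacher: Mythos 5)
The step on which everything rests -- ``if $\mathcal{C}'\subset\mathcal{C}$ is a full abelian subcategory stable under subquotients and extensions, then $D^b(\mathcal{C}')\fl D^b_{\mathcal{C}'}(\mathcal{C})$ is an equivalence'' -- is not a theorem, and this is where your proof breaks down. For such a subcategory, $D^b_{\mathcal{C}'}(\mathcal{C})$ is indeed a triangulated subcategory, and by d\'evissage the functor is an equivalence \emph{if and only if} the comparison maps $\Ext^n_{\mathcal{C}'}(A,B)\fl\Ext^n_{\mathcal{C}}(A,B)$ are bijective for all $A,B\in\mathcal{C}'$ and all $n$; stability under extensions only gives this for $n\leq 1$. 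So your argument settles exactly the part that is easy and assumes the part that is the actual content of the corollary, namely the agreement of the higher $\Ext$'s between $\Db\Perv_{mf}(Y)$ and $\Db\Perv_{mf}(X)$ (and between $\Dbm(Y)$ and $\Dbm(X)$). Nor can one invoke the usual sufficient d\'evissage criteria (every surjection onto an object of the subcategory is dominated by a subobject lying in the subcategory, or the dual condition): both fail here. For $X=\Aff^1$, $Y=\{0\}$, $U=\GD_m$, the perverse sheaf $j_*E_U[1]$ surjects onto $i_*E_Y$ but has no nonzero subobject supported on $Y$ (since $\Hom(i_*M,j_*E_U[1])=\Hom(j^*i_*M,E_U[1])=0$), and dually $i_*E_Y\hookrightarrow j_!E_U[1]$ cannot be prolonged injectively into any object supported on $Y$ (since $\Hom(j_!E_U[1],i_*M)=\Hom(E_U[1],j^*i_*M)=0$). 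This is why the statement is a genuine theorem of Beilinson/Saito type (cf. 2.2.1 of \cite{Be1}, Theorem 5.6 of \cite{S}) rather than a formal consequence of extension-closure.

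What the paper does instead is prove precisely this $\Ext$-comparison: it views $(L\fle\Hom_{\Db\Perv_{mf}(X)}(i_*K,i_*L[n]))_{n\geq 0}$ as a $\delta$-functor, shows the source $\delta$-functor $(L\fle\Hom_{\Db\Perv_{mf}(Y)}(K,L[n]))_{n\geq 0}$ is universal via the Yoneda description, and then proves effaceability of the target: after embedding $L$ into $\bigoplus_a j'_{a*}{j'_a}^*L$ for a finite affine cover one reduces to $X$ affine with $Y=f^{-1}(0)$ for a single function $f$, where Beilinson's functors (Proposition \ref{prop_Omega_f} together with Corollary \ref{cor_nearby_etc}) give exact sequences $0\fl j_!j^*\fl\Omega_f\fl i_*\Phi_f^u\fl 0$ and $0\fl i_*\Psi_f^u j^*\fl\Omega_f\fl\id\fl 0$ of exact endofunctors of $\Perv_{mf}(X)$, hence an exact retraction $i_*\Phi_f^u\simeq\id$ on $D^b_Y\Perv_{mf}(X)$; this retraction is the missing ingredient, and it also furnishes the inverse $(i_*)^{-1}$ and, via $R\circ\Phi_f^u\simeq\Phi_f^u\circ R$ and $R\circ\Omega_f\simeq\Omega_f\circ R$, the commutativity of the diagram with $R_X$, $R_Y$ (note that your appeal to Proposition \ref{prop_comp_real1}(i) concerns the realization into $\Dbc$, not the functors $R$ of Theorem \ref{thm_main1}, and in any case does not produce the required compatibility for a chosen quasi-inverse). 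Your use of Lemma \ref{lemma_test_weight_on_i} to identify the heart of $D^b_Y\Perv_{mf}(X)$ with $i_*\Perv_{mf}(Y)$ is correct and is indeed needed, but by itself it only controls $\Ext^0$ and $\Ext^1$.
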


\begin{proof} We prove the first statement.
It suffices to prove that, for all $K,L\in\Ob\Perv_{mf}(Y)$ and every $n\in\Z$, the functor
$i_*$ induces an isomorphism $\Hom_{\Db\Perv_{mf}(Y)}(K,L[n])\iso
\Hom_{\Db\Perv_{mf}(X)}(i_*K,i_*L[n])$. Note that both of these $\Hom$ groups are $0$ for
$n<0$, so we only need to consider the case $n\geq 0$.
Fix $K\in\Ob\Perv_{mf}(Y)$. The families of functors
$(L\fle\Hom_{\Db\Perv_{mf}(Y)}(K,L[n]))_{n\geq 0}$ 
and $(L\fle\Hom_{\Db\Perv_{mf}(X)}(i_*K,i_*L[n]))_{n\geq 0}$
are $\delta$-functors from $\Perv_{mf}(Y)$
to the category of abelian groups (in the sense of Definition 
\cite[\href{https://stacks.math.columbia.edu/tag/010Q}{Tag 010Q}]{SP}), and $i_*$ induces a
morphism between these $\delta$-functors
(see Definition
\cite[\href{https://stacks.math.columbia.edu/tag/010R}{Tag 010R}]{SP}). We want to show that
this morphism is an isomorphism.
We know that $i_*:\Hom_{\Perv_{mf}(Y)}(K,L)\iso\Hom_{\Perv_{mf}(X)}(i_*K,i_*L)$ is
an isomorphism for every $L\in\Ob\Perv_{mf}(Y)$ (because this is true in the categories
$\Perv_m$).
 Moreover, it follows easily from the Yoneda description of the extension groups in the
 derived category (see Section 3.2 of Chapter III of Verdier's book \cite{Ve} or
 Lemma \cite[\href{https://stacks.math.columbia.edu/tag/06XU}{Tag 06XU}]{SP}) that
the first of the two $\delta$-functors introduced above is effacable, i.e. satisfies the
hypothesis of Lemma \cite[\href{https://stacks.math.columbia.edu/tag/010T}{Tag 010T}]{SP}), and
hence is a universal $\delta$-functor
(see Definition
 \cite[\href{https://stacks.math.columbia.edu/tag/010S}{Tag 010S}]{SP}).
 By Lemma \cite[\href{https://stacks.math.columbia.edu/tag/010U}{Tag 010U}]{SP} 
(and Lemma \cite[\href{https://stacks.math.columbia.edu/tag/010T}{Tag 010T}]{SP} again), it
suffices to prove that the second $\delta$-functor is also effacable. So we want to prove that,
for all $K,L\in\Ob\Perv_{mf}(Y)$, every $n\geq 1$
and every $u\in\Hom_{\Db\Perv_{mf}(Y)}(K,L[n])$, there exists an injective
morphism
$L\fl L'$ in $\Perv_{mf}(Y)$ such that the image of $u$ in
$\Hom_{D^b\Perv_{mf}(X)}(i_* K,i_*L'[n])$ is $0$.

Let $(U_a)_{a\in A}$ be a finite affine cover of $X$.
For every $a\in A$, we have a cartesian diagram of immersions
\[\xymatrix{Y\ar[r]^-i & X \\ Y\cap U_\alpha\ar[r]_-{i_a}\ar[u]^{j'_a} & 
U_\alpha\ar[u]_{j_a}}\]
As $j'_\alpha$ and $j_\alpha$ are affine, the functors $j'_{\alpha*}$ and $j_{\alpha*}$ are
t-exact.
Let $L\in\Ob\Perv_{mf}(Y)$. By Corollary \ref{cor_f_*_preserves_Perv_mf},
the isomorphisms $i_* j'_{a*}{j'_a}^*L\simeq j_{a*}j_a^* i_*L$ and
$j_a^* i_*L\simeq i_{a*}{j'_a}^*L$ in
$\Perv_m(X)$ and $\Perv_m(U_\alpha)$
are isomorphisms of objects of $\Perv_{mf}(X)$ and $\Perv_{mf}(U_\alpha)$.
Using this and Corollary \ref{cor_adjunction_j}
we get for
$K,L\in\Ob\Perv_{mf}(Y)$ and $n\in\Z$ a canonical isomorphism
\[\Hom_{D^b\Perv_{mf}(X)}(i_* K,i_*j'_{a*}{j'_a}^*L[n])=
\Hom_{D^b\Perv_{mf}(U_a)}(i_{a*}{j'_a}^*K,i_{a*}{j'_a}^*L[n]).\]
As we have an injective morphism $L\fl\bigoplus_{a\in A}
j_{a*}j_a^* L$ in $\Perv_{mf}(Y)$ (by Corollary 
\ref{cor_f_*_preserves_Perv_mf} again), this reduces the corollary to the case
where $X$ is affine.

Now suppose that $X$ is affine. By an easy induction on the number of generators
of the ideal of $Y$, we may assume that this ideal only has one generator, i.e.,
that there exists a function $f:X\fl\Aff^1$ such that $Y=X\times_{\Aff^1}\{0\}$.
The exact functor $\Phi_f^u:\Perv_{mf}(X)\fl\Perv_{mf}(Y)$ induces a functor
$\Phi_f^u:D^b_Y\Perv_{mf}(X)\fl D^b\Perv_{mf}(Y)$, and we have $\Phi_f^u\circ
i_*\simeq\id_{D^b\Perv_{mf}(Y)}$. Let's show that $i_*\circ\Phi_f^u\simeq
\id_{D^b_Y\Perv_{mf}(X)}$, which will finish the proof.
By Proposition \ref{prop_Omega_f}
and Corollary \ref{cor_nearby_etc}
we have two exact sequences
of exact endofunctors of $\Perv_{mf}(X)$ :
\[0\fl j_!j^*\fl\Omega_f\fl i_*\Phi_f^u\fl 0\]
and
\[0\fl i_*\Psi_f^u j^*\fl\Omega_f\fl\id\fl 0,\]
where $j:X-Y\fl X$ is the inclusion.
Note that the restriction of the functor $j^*:D^b\Perv_{mf}(X)\fl
D^b\Perv_{mf}(U)$ to the full subcategory $D^b_Y\Perv_{mf}(X)$ is zero.
Hence the exact sequences above induces isomorphisms of endofunctors
of $D^b_Y\Perv_{mf}(X)$ :
\[i_*\Phi_f^u\stackrel{\sim}{\longleftarrow}\Omega_f\iso\id.\]

The proof of the second equivalence of categories is similar, except that we don't need
to use the Yoneda description  to show that the $\Ext$ groups in
$\Dbm(Y)$ define a $\delta$-functor.

The last statement of the Corollary
follows from the fact that we have isomorphisms
\[R_Y\circ\Phi_f^u\simeq\Phi_f^u\circ R_X\]
and
\[R_X\circ\Omega_f\simeq\Omega_f\circ R_X.\]
\end{proof}

% \subsection{Inverse image by a closed immersion}
%\label{closed}

\begin{subcor} Let $i:X\fl Y$ be a closed immersion. Denote by
$i_*:D^b\Perv_{mf}(X)\fl D^b\Perv_{mf}(Y)$ the derived functor of
the exact functor $i_*:\Perv_{mf}(X)\fl\Perv_{mf}(Y)$.

Then this functor $i_*$ admits a left adjoint
$i^*:D^b\Perv_{mf}(Y)\fl D^b\Perv_{mf}(X)$, and the counit $i^*i_*\fl\id$ of this
adjunction is an isomorphism.
Moreover, we have
an invertible natural transformation $\theta_i:i^*\circ R_Y\iso R_X\circ
i^*$. 

Finally, if $i':Y\fl Z$ is another closed immersion, then the following
diagram is commutative :
\[\xymatrix{R_X\circ i^*{i'}^*\ar[d]^\wr\ar[r]^-{\theta_i} & i^*\circ R_Y\circ{i'}
\ar[r]^-{\theta_{i'}} & i^*{i'}^*\circ R_Z\ar[d]^\wr \\
R_X\circ(i'i)^*\ar[rr]_-{\theta_{i'i}} & & (i'i)^*\circ R_Z}\]
where the vertical maps come from the composition isomorphisms $i'_*i_*\simeq
(i'i)_*$ and the uniqueness of the adjoint.

\label{cor_i^*}
\end{subcor}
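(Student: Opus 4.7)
The strategy is to reduce the construction of $i^*$ to that of a left adjoint to an inclusion of a full subcategory, which can be obtained via a standard triangle. By Corollary \ref{cor_support}, $i_* : D^b\Perv_{mf}(X) \to D^b_X\Perv_{mf}(Y)$ is an equivalence of categories; fix a quasi-inverse $(i_*)^{-1}$. The functor $i_* : D^b\Perv_{mf}(X) \to D^b\Perv_{mf}(Y)$ is thus the composition of this equivalence with the fully faithful inclusion $\iota : D^b_X\Perv_{mf}(Y) \hookrightarrow D^b\Perv_{mf}(Y)$. Hence $i_*$ admits a left adjoint $i^*$ if and only if $\iota$ admits a left adjoint $\pi$, in which case I set $i^* = (i_*)^{-1} \circ \pi$. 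The counit $i^* i_* \to \id$ is then an isomorphism because the counit $\pi \iota \to \id$ is (automatic for any left adjoint of a fully faithful functor).

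To construct $\pi$, I start with the local case where $Y$ is affine and $X$ is cut out by a single function $f : Y \to \Aff^1_k$. Let $j : U = Y \setminus X \to Y$ be the complementary open immersion; it is affine (as $U = Y[f^{-1}]$) and quasi-finite, so by Proposition \ref{prop_exactness}(iv) the functor $j_!$ is $t$-exact for the perverse $t$-structure. Combined with Corollary \ref{cor_f_*_preserves_Perv_mf}, this gives an exact functor $j_! : \Perv_{mf}(U) \to \Perv_{mf}(Y)$ which I derive trivially to a triangulated $j_! : D^b\Perv_{mf}(U) \to D^b\Perv_{mf}(Y)$. Setting $\pi(K) = \mathrm{cone}(j_! j^* K \to K)$, one checks that $j^* \pi(K) = 0$ so $\pi(K) \in D^b_X\Perv_{mf}(Y)$, and that for any $L \in D^b_X\Perv_{mf}(Y)$, the identities $\Hom(j_! j^* K, L) = \Hom(j^* K, j^* L) = 0$ combine with the defining triangle to give $\Hom(\pi(K), L) \iso \Hom(K, L)$, establishing the adjunction.

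For a general closed immersion, I globalize in two stages. Still assuming $Y$ affine, write $X$ as the zero locus of finitely many functions $f_1, \ldots, f_n$, factor $i$ as a chain of hypersurface immersions $X = Y_n \hookrightarrow \cdots \hookrightarrow Y_0 = Y$, and compose the pullback functors from the base case; uniqueness of adjoints ensures independence of the factorization up to canonical isomorphism. For arbitrary $Y$, choose an affine open cover $(Y_\alpha)$, apply the previous construction over each $Y_\alpha$, and glue using uniqueness of adjoints on overlaps, together with Corollary \ref{cor_adjunction_j} for the affine open restriction functors and Proposition \ref{prop_obvious3} to control descent at the abelian-category level. This yields the global left adjoint $\pi$, hence $i^*$.

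The natural transformation $\theta_i : i^* R_Y \iso R_X i^*$ and the composition compatibility both follow from uniqueness of adjoints. Since $R_Y$ commutes with $i_*$ (Proposition \ref{prop_comp_real1}) and with $j_!$ and $j^*$ in the affine hypersurface situation, applying $R_Y$ to the defining triangle for $\pi$ produces the analogous triangle in $\Dbm(Y)$, which on one hand represents the usual $i^*_{\Dbm} R_Y K$ (by the known adjunction in $\Dbm$) and on the other unravels to $R_X i^* K$ via $(i_*)^{-1}$. The commutative diagram for $i'i$ is then forced by the fact that $(i'i)_* = i'_* i_*$ admits a unique left adjoint up to canonical isomorphism. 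The principal difficulty lies in the second step of the globalization: assembling the locally constructed left adjoints into a global one requires a genuine descent argument for constructions in the triangulated categories $D^b\Perv_{mf}$, which is why the stack property of the abelian category $\Perv_{mf}$ from Proposition \ref{prop_obvious3} must be invoked as an auxiliary input to verify cocycle conditions on triple overlaps.
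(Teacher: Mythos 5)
Your reduction via Corollary \ref{cor_support} to finding a left adjoint of the inclusion of the subcategory of complexes supported on $X$, and your local construction as a cone on the counit $j_!j^*K\to K$, are exactly the paper's idea: since $j_!j^*$ is an exact endofunctor of $\Perv_{mf}(Y)$ when the complement is affine, the cone can be taken at the chain level and is functorial, and your adjunction check is correct. The genuine gap is your globalization over an affine open cover $(Y_\alpha)$ of $Y$. ``Uniqueness of adjoints on overlaps'' together with Proposition \ref{prop_obvious3} does not produce a global functor: Proposition \ref{prop_obvious3} is a stack property of the abelian hearts $\Perv_{mf}(-)$ only, whereas what you need is to glue \emph{objects} (the local corepresenting objects $\pi_\alpha(K|_{Y_\alpha})$) and functors in the triangulated categories $\Db\Perv_{mf}(-)$, which do not satisfy descent — cones are not functorial, cocycle data on triple overlaps cannot be exploited without an enhancement, and the existence of the left adjoint amounts to corepresentability of $L\mapsto\Hom_{\Db\Perv_{mf}(Y)}(K,\alpha(L))$, for which a global object must actually be exhibited. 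So, as written, $i^*$ is only constructed for $Y$ affine, and consequently $\theta_i$ and the composition compatibility are not established in general.

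The fix — and this is the paper's proof — is to make your local construction global directly, with no localization and no hypersurface chain: choose a finite affine open cover $(U_i)_{i\in I}$ of $U=Y-X$ (all intersections $\bigcap_{i\in J}U_i$ are affine since $Y$ is separated, so each inclusion $j_J$ is an affine open immersion), and replace $j_!j^*K$ by the \v{C}ech-type complex with terms $\bigoplus_{|J|=r}j_{J!}j_J^*K$. Each $j_{J!}j_J^*$ is an exact endofunctor of $\Perv_{mf}(Y)$ (Proposition \ref{prop_exactness}(iv) and Corollary \ref{cor_f_*_preserves_Perv_mf}), so the augmented complex $D^\bullet(K)$ with $D^0(K)=K$ and $D^{-r}(K)=\bigoplus_{|J|=r}j_{J!}j_J^*K$ is functorial at the level of complexes, lies in the subcategory of complexes supported on $X$, and realizes $i_*i^*R_Y(K)$ in $\Dbm(Y)$; totalization then gives the left adjoint $\beta$ of the inclusion in one step, the unit being $K\to D^\bullet(K)$ and the counit an isomorphism because $K\to D^\bullet(K)$ is a quasi-isomorphism when $K$ is supported on $X$. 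The isomorphism $\theta_i$ then comes from $R_Y(D^\bullet(K))\simeq i_*i^*R_Y(K)$ together with the last statement of Corollary \ref{cor_support}, exactly as you sketch in the local case, but now globally, so that the compatibility diagram for $i'\circ i$ also makes sense without any gluing.
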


\begin{proof} By Corollary \ref{cor_support}, we have an equivalence of
categories $i_*:D^b\Perv_{mf}(X)\iso D^b_X\Perv_{mf}(Y)$, where
$D^b_X \Perv_{mf}(Y)$ is the full subcategory of $D^b\Perv_{mf}(Y)$ whose
objects are complexes $K$ such that the support of
$H^i K\in\Perv_{mf}(Y)$ is contained in $X$
for every $i\in\Z$. So, to show that $i_*:D^b\Perv_{mf}(X)\fl D^b\Perv_{mf}(Y)$
admits a left adjoint, it suffices to show that the inclusion $\alpha:D^b_X
\Perv_{mf}(Y)\fl D^b\Perv_{mf}(Y)$ admits a left adjoint. 
Let $j:Y-X\fl Y$ be the inclusion. Then we have an exact triangle
$j_!j^*\fl\id\fl i_*i^*\flnom{+1}$ of endofunctors of $D^b_m(Y)$, and we can
make sense of the first two terms in $D^b\Perv_{mf}(Y)$, so we will try to
construct the left adjoint of $\alpha$ as their cone.

More precisely, 
let $(U_i)_{i\in I}$ be a finite open affine cover of $U:=Y-X$. For every
$J\subset I$, we denote by $j_J:\bigcap_{i\in J}U_i\fl X$ the inclusion.
As $X$ is separated, all the finite intersections of $U_i$'s are affine,
so the morphism $j_J$ is affine for every $J\subset I$.
If $K\in\Ob\Perv_{mf}(X)$, we denote by $D^\bullet(K)$ the complex
of $\Perv_{mf}(X)$ defined by
$D^{-r}(K)=\bigoplus_{|J|=r}j_{J!}j_J^*K$ if $r\geq 1$, $D^0(K)=K$ and $D^r(K)=0$
if $r\geq 1$, where the maps 
$D^{-r-1}(K)\fl D^{-r}(K)$, $r\geq 0$,
are alternating sums of adjunction morphisms. Note that we have a morphism
of complexes $K\fl D^\bullet(K)$, where $K$ is in degree $0$. 
Also, there is a canonical morphism $D^{-1}(K)\fl j_!j^*K$, which induces an
isomorphism $D^{\leq -1}(K)\iso j_!j^*K[1]$ in $D^b\Perv_{mf}(Y)$, so we get
a quasi-isomorphism $R_Y(D^\bullet(K))\iso i_*i^* K$ in $D^b_m(Y)$.
In particular, $D^\bullet(K)$ is in $D^b_X\Perv_{mf}(Y)$.
Note that the construction of $D^\bullet(K)$ is functorial in $K$, so
we can define a functor $\beta:D^b\Perv_{mf}(Y)\fl D^b_X\Perv_{mf}(Y)$ by sending
a complex $K$ to the total complex of the double complex $D^\bullet(K)$.

Let's show that $\beta$ is left adjoint to $\alpha$. 
For every complex $K$ of objects of $\Perv_{mf}(Y)$,
the morphism of double complexes $K\fl D^\bullet(K)$ induces a morphism
$\varepsilon_K:
K\fl\alpha\beta(K)$ in $D^b\Perv_{mf}(Y)$. If moreover $K$ is in $D^b_X\Perv_{mf}(X)$,
then $K\fl D^\bullet(K)$ is a quasi-isomorphism, so we get an
isomorphism $\eta_K:\beta\alpha(K)\iso K$. Moreover, the morphism
$\alpha(K)\flnom{\varepsilon_K \alpha} \alpha\beta\alpha(K)\flnom{\alpha\eta_K}\alpha(K)$
is clearly the
identity of $\alpha(K)$. So we have constructed the unit and counit of the
adjunction, and shown that the counit is an isomorphism.

To construct the isomorphism $\theta_i$, we use the isomorphism
$R_Y\circ V\iso i_*i^*\circ R_X$ constructed above and the last statement
of Corollary \ref{cor_support}. The last statement is also easy
to check.
\end{proof}

\section{Construction of the stable homotopic $2$-functor $H_{mf}$}

\subsection{Direct images}
\label{direct_images}

If $f:X\fl Y$ is a morphism of $k$-schemes, we write ${}^0 f_*$ for
$\Hp^0 f_*$. Remember that if $f$ is affine, then $f_*$ is right
t-exact for the perverse t-structure by \cite{BBD} Theorem 4.1.1.

In this section, we want to prove the following result.

\begin{subprop} 
There exists a $2$-functor $H_{mf,*}:\Sch/k\fl\TR$
with $H_{mf,*}(X)=D^b\Perv_{mf}(X)$ for every $X\in\Ob(\Sch/k)$ and
a natural transformation $R:H_{mf,*}\fl H_{m,*}$ (with the notation
of Example \ref{example_crossed_functor}) such that :
\begin{itemize}
\item[(a)] for every $X\in\Ob(\Sch/k)$, the functor $R_X:\Db\Perv_{mf}(X)\fl
\Dbm(X)$ is the composition of the obvious functor
$\Db\Perv_{mf}(X)\fl\Db\Perv_m(X)$ and of the realization functor
$\Db\Perv_m(X)\fl\Dbm(X)$ (see
Section \ref{def_mixed});
\item[(b)] for every morphism
$f:X\fl Y$, the natural transformation 
$\gamma_f:R_Y\circ H_{mf,*}(f)\fl
H_{m,*}(f)\circ R_X$ is an isomorphism.
\end{itemize}

\label{prop_direct_images}
\end{subprop}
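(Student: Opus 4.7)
\emph{Strategy.} I will define $H_{mf,*}(f)$ case by case and then verify $2$-functoriality. The main tool is Proposition \ref{prop_der_fil}, which derives a right exact functor between hearts to a functor between bounded derived categories, compatibly with realization, provided that a filtered lift and enough acyclics are available. I will apply this directly to affine morphisms, use Corollary \ref{cor_support} for closed immersions, reduce proper morphisms to the projective case via Chow and then use a \v{C}ech decomposition over the standard affine charts of projective space, and combine these pieces via Nagata compactification for arbitrary $f$.

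\emph{Affine morphisms.} Let $f:X\fl Y$ be affine. Proposition \ref{prop_exactness}(ii) gives that $f_*$ is right $t$-exact on $\Dbm$, and Corollary \ref{cor_f_*_preserves_Perv_mf} gives that $\Hp^0 f_*$ preserves $\Perv_{mf}$; hence $\Hp^0 f_*:\Perv_m(X)\fl\Perv_m(Y)$ is right exact and restricts to a right exact functor $\Perv_{mf}(X)\fl\Perv_{mf}(Y)$. The filtered lift required by Proposition \ref{prop_der_fil} is supplied by Corollary \ref{cor_der_fil} applied to $f_*$ on the pro-\'etale site. Its other hypothesis is a $\Hp^0 f_*$-projective subcategory, i.e.\ enough $\Hp^0 f_*$-acyclic objects in $\Perv_m(X)$. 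These I would exhibit by Noetherian induction on the support, using that $j_!$ is $t$-exact for $j$ an affine open immersion and that the composition of such a $j$ with $f$ remains affine, together with the weight filtration to ensure the acyclics can be chosen inside $\Perv_{mf}$. Granting this, Proposition \ref{prop_der_fil} produces $L\Hp^0 f_*:\Db\Perv_m(X)\fl\Db\Perv_m(Y)$ commuting with realization and with $f_*$, and since both $\Hp^0 f_*$ and the chosen acyclics lie in $\Perv_{mf}$, the functor restricts to $H_{mf,*}(f):\Db\Perv_{mf}(X)\fl\Db\Perv_{mf}(Y)$ satisfying conditions (a) and (b).

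\emph{General morphisms.} For arbitrary $f:X\fl Y$, apply Nagata compactification as in the proof of Corollary \ref{cor_f_*_preserves_Perv_mf} to factor $f=g\circ j$ with $j$ an affine open immersion and $g$ proper; the case of $j$ is then Corollary \ref{cor_adjunction_j}. For proper $g$, Chow's lemma reduces us to the projective case, and then a very ample line bundle gives a closed immersion $i:X\hookrightarrow\Proj^n_Y$ with $g=\pi\circ i$, where $i$ is handled by Corollary \ref{cor_support}. For the structural projection $\pi:\Proj^n_Y\fl Y$, I cover $\Proj^n_Y$ by its $n+1$ standard affine charts and build $\pi_*$ as the totalization of the \v{C}ech complex whose terms are the direct images under the affine morphisms from these charts (and their intersections) to $Y$, each given by the affine step. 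Define $H_{mf,*}(f)$ by composing these, and verify independence of the various choices (compactification, projective embedding, affine cover) using the conservativity of $R_X$ from Proposition \ref{prop_real_cons}, which reduces each independence statement to the analogous one for $H_{m,*}$.

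\emph{Main obstacle.} The most serious technical hurdle is the construction of enough $\Hp^0 f_*$-acyclic objects inside $\Perv_{mf}(X)$ for affine $f$, since this category lacks nice global projective objects; one must rely on the recollement structure of perverse sheaves, on the $t$-exactness of $j_!$ for affine open $j$, and on the stability results of Section \ref{obvious} together with Corollary \ref{cor_nearby_etc} to stay inside $\Perv_{mf}$ throughout. A secondary difficulty is verifying $2$-functoriality of the assembled $H_{mf,*}$: this is formally reduced to the known $2$-functoriality of $H_{m,*}$ by the conservativity of $R_X$, so what really must be checked is only the existence of a compatible natural transformation at each step of the construction.
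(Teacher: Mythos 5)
There are genuine gaps in your plan, and both are located exactly where you flag difficulties. First, the existence of enough $\Hp^0 f_*$-acyclic objects inside $\Perv_{mf}$ is not something you can dispose of with a ``Noetherian induction on the support'' sketch: it is the real content of the affine step, and the paper obtains it from Beilinson's key lemma (Lemma 3.3 of \cite{Be1}, restated as Theorem \ref{basic_lemma}), whose proof is a nontrivial generic-hyperplane argument, not a routine recollement induction. The paper then only observes that the acyclic cover produced there is a direct sum of objects $j_!j^*K$ with $j$ an affine open immersion, hence stays in $\Perv_{mf}$ by Corollary \ref{cor_f_*_preserves_Perv_mf}, and feeds this into Proposition \ref{prop_der_fil} to get Corollary \ref{cor_affine_direct_images}. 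Note moreover that Beilinson's lemma gives acyclics for $f_{|U_i}$ with $U_i$ an \emph{affine scheme}; your route needs the statement for an affine \emph{morphism} with non-affine source (the charts $\Aff^n_Y\fl Y$ of $\Proj^n_Y$ when $Y$ is not affine), which is a stronger assertion that you neither prove nor reduce to the affine-scheme case.

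Second, the reduction of proper morphisms to projective ones via Chow's lemma does not work as stated: Chow's lemma produces a projective birational modification $p:X'\fl X$, but an arbitrary object of $\Db\Perv_{mf}(X)$ is not recovered from its pullback to $X'$, so there is no evident way to define $g_*$ from $(g\circ p)_*$ without a descent formalism (proper hypercovers, or cohomological descent in $\Db\Perv_{mf}$) that is not available at this stage of the construction. The paper avoids compactification, Chow and projective embeddings altogether: for an arbitrary $f:X\fl Y$ it \v{C}ech-covers the \emph{source} by finitely many affine open subschemes, so that each level of the associated simplicial scheme has affine-scheme source, defines $H_{mf,*}(f)=\varprojlim_{\Uf}\varepsilon_*f_{\Uf\bullet *}\varepsilon_\Uf^*$ over the filtered poset of rigidified covers, constructs the transition and composition morphisms explicitly from adjunction units, and only then uses conservativity of $R$ (Proposition \ref{prop_real_cons}) to see that these morphisms are isomorphisms. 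Your remark that conservativity ``reduces each independence statement to $H_{m,*}$'' hides the same point: conservativity can only be applied after the comparison morphisms have been constructed, and your construction multiplies the choices (compactification, Chow modification, projective embedding, cover) for which such morphisms would have to be produced and checked compatible.
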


The proof of the proposition will occupy most of this section.
The main ingredients are :
\begin{itemize}
\item Beilinson's version of the ``basic lemma'' that provides $f_*$-acyclic perverse
sheaves for $f:X\ra Y$ an affine morphism.
(See Theorem \ref{basic_lemma}.)

\item The fact that the functors $\Hp^k f_*$ (and in particular ${}^0 f_*$)
preserve the categories $\Perv_{mf}$.
(See Corollary \ref{cor_f_*_preserves_Perv_mf}.)

\item {\v C}ech resolutions for finite open affine coverings.

\end{itemize}

We first review Beilinson's basic lemma.

\begin{subthm} Let $f:X\fl Y$ be a morphism of $k$-schemes, let $(a_i:U_i\ra X)_{i\in I}$ be a finite
family of open affine subschemes of $X$ and let $K\in\Ob\Perv(X)$. Then there exists 
a finite family $(b_j:V_j\ra X)_{j\in J}$ of open affine subschemes of $X$ such that:
\begin{itemize}
\item[(i)] The canonical morphism $\bigoplus_{j\in J}b_{j!}b_j^*K\ra K$ is surjective.
\item[(ii)] For every $i\in I$, the object $\bigoplus_{j\in J}a_i^*b_{j!}b_j^*K$ is $(f\circ a_i)_*$-acyclic.

\end{itemize}

\label{basic_lemma}
\end{subthm}

The theorem follows from the proof of Lemma 3.3 of \cite{Be1}. 

Note also that, as the categories $\Perv_h$, $\Perv_m$ and $\Perv_{mf}$ are stable by
the functors $j^*$ and $j_!$ for $j$ an open affine embedding (by Corollary \ref{cor_f_*_preserves_Perv_mf} for $\Perv_{mf}$), if $K$ is
in one of these categories, so is $\bigoplus_{j\in J}b_{j!}b_j^*K$.

We now turn to the proof of Proposition~\ref{prop_direct_images}.

Following Section 3.4 of \cite{Be1}, we explain how to reconstruct the functor $f_*$
from ${}^0f_*$. More precisely, we want to apply Proposition~\ref{prop_der_fil} to
$\Df=\Dbm(X)$ and $\Df'=\Dbm(Y)$ with the perverse t-structures, $\DF=\DFbm(X,E)$ and $\DF'=\DFbm(Y,E)$
(see Section~\ref{def_mixed}) with the unique t-structures compatible with the perverse t-structures
(see Proposition~\ref{prop_t_DF}),
and $T=f_*$. 

To check condition~(a) of that proposition, remember that $\DFbm(X,E)$ is by definition a full subcategory of
the triangulated category $\DFbh(X,E)$ constructed in Section~\ref{section_def_hor}; we have
$\DFbh(X,E)=2-\varinjlim_{(A,\X)\in\Ob\Uf X}\DFbc(\X,E)$, and each $\DFbc(\X,E)$ is a full
subcategory of $\DFb(\X_\proet,E)$. Of course, we have similar statements for $\DFbm(Y,E)$.
If we choose $(A,\X)\in\Ob\Uf X$ and $(A,\Y)\in\Ob\Uf Y$ such that $f$ extends to an $A$-morphism
$f:\X\ra\Y$, then 
Proposition~\ref{prop_fil_der} and Section~\ref{f-horizontal}
give an f-lifting $f_{F,*}:\DFb(\X_\proet,E)\ra\DFb(\Y_\proet,E)$ of
$f_*:\Db(\X_\proet,E)\ra\Db(\Y_\proet,E)$. Taking the limit of these f-liftings, we get an f-lifting
$\DFbh(X,E)\ra\DFbh(Y,E)$ of $f_*:\Dbh(X,E)\ra\Dbh(Y,E)$, and it is easy to check that it sends
$\DFbm(X,E)$ to $\DFbm(Y,E)$.

We check condition~(b). Let $\If_m(f)$ be the full subcategory of $f_*$-acyclic objects in $\Perv_m(X)$.
Let $\Uf=(a_i:U_i\ra X)_{i\in I}$ be a finite covering of $X$ by open affine subschemes. We denote by
$\If_m(f,\Uf)$ the full subcategory of $\Perv_m(X)$ whose objects are mixed perverse sheaves that are
$(f\circ a_i)_*a_i^*$-acyclic for every $i\in I$; equivalently, $\If_m(f,\Uf)$ is the full subcategory of
$\bigoplus_{i\in I}(f\circ a_i)_* a_i^*$-acyclic objects. As all the functors $a_{i*}a_i^*$ are t-exact,
we have $\If_m(f)\subset\If_m(f,\Uf)$. By Remark~\ref{rmk_der_fil}(2) (applied to
the triangulated categories $\Kb(\Perv_m(X))\supset\Kb(\If_m(f,\Uf))\supset\Kb(\If_m(f))$), to check condition~(b) of
Proposition~\ref{prop_der_fil}, it suffices to prove the following two statements:
\begin{itemize}
\item[(1)] For every $A^\bullet\in\Ob\Cb(\Perv_m(X))$, there exists a quasi-isomorphism $B^\bullet\ra A^\bullet$ with
$B^\bullet$ in $\Cb(\If_m(f,\Uf))$.

\item[(2)] For every $B^\bullet\in\Ob\Cb(\If_m(f,\Uf))$, there exists a quasi-isomorphism $B^\bullet\ra C^\bullet$ with
$C^\bullet$ in $\Cb(\If_m(f))$.

\end{itemize}

We prove (1).
Let $A^\bullet\in\Ob\Cb(\Perv_m(X))$, and let $N\in\Nat$ such that $A^r=0$ for $r\not\in[-N,N]$.
We choose a finite family $(b_j:V_j\ra X)_{j\in J}$ of open affine subschemes of $X$ as in Theorem~\ref{basic_lemma},
for the fixed $f$, the family $(U_{i_0}\cap\ldots\cap U_{i_p})_{p\in\Nat,\ i_0,\ldots,i_p\in I}$ of open affine subschemes of $X$ and the perverse sheaf
$K=\bigoplus_{-N\leq r\leq N}A^r$. Let $A_0^\bullet\in\Ob\Cb(\Perv_m(X))$ be the complex obtained by applying the exact functor $\bigoplus_{j\in J}b_{j!}b_j^*$ to
$A^\bullet$. Then $A_0^\bullet$ is a complex of objects of $\If_m(f,\Uf)$ such that $A_0^r=0$ for $r\not\in[-N,N]$, and we have a morphism of complexes $A_0^\bullet\ra A^\bullet$
that is surjective in each degree. By iterating this procedure, we get an exact sequence $\ldots\ra A_2^\bullet\ra A_1^\bullet\ra A_0^\bullet\ra A^\bullet\ra 0$ in
$\Cb(\Perv_m(X))$ such that each $A_i^\bullet$ is a complex of objects of $\If_m(f,\Uf)$ concentrated in degrees $[-N,N]$. Moreover, by Lemma~\ref{lemma_der_fil}(v) (whose
hypothesis is satisfied thanks to 4.2.3 and 4.2.4 of~\cite{BBD}), we may without losing these properties assume that $A_i^\bullet=0$ for $i$ big enough.
We can take for $B^\bullet$ the total complex of the double complex $A_\bullet^\bullet$.

We prove (2). For every
$I'\subset I$, we denote by $a_{I'}:U_{I'}:=\bigcap_{i\in I'}U_i\fl X$ the inclusion; if $I'=\{i_0,\ldots,i_r\}$, we also write
$U_{I'}=U_{i_0,\ldots,i_r}$ and $a_{I'}=a_{i_0,\ldots,i_r}$.
As $X$ is separated, all the finite intersections of $U_i$'s are affine.
We first define the functor $\Cech_\Uf^\bullet:\Perv_m(X)\ra\Cb(\Perv_m(X))$ sending a 
mixed perverse sheaf to its {\v C}ech resolution. If $K\in\Ob\Perv_m(X)$, we set
\[\Cech_\Uf^r(K)=\bigoplus_{i_0,\ldots,i_r\in I}a_{i_0,\ldots,i_r*}a_{i_0,\ldots,i_r}^*K.\]
The differential $d^r:\Cech_\Uf^r(\F)\ra\Cech_\Uf^{r+1}(\F)$ is an alternating sum of adjunction morphisms. 
More precisely, for $i_0,\ldots,i_r\in I$, the restriction to $a_{i_0,\ldots,i_r*}a_{i_0,\ldots,i_r}^*K$ of $d^r$ 
is the sum over $i\in I$ and $0\leq s\leq r+1$ of $(-1)^s$ times the adjunction morphism $a_{i_0,\ldots,i_r*}a_{i_0,\ldots,i_r}^*K\ra 
a_{i_0,\ldots,i_{s-1},i,i_s,\ldots,i_r*}a_{i_0,\ldots,i_{s-1},i,i_s,\ldots,,i_r}^*K$
coming from the inclusion of $U_{i_0,\ldots,i_r,i}$ into $U_{i_0,\ldots,i_r}$.
This defines an exact functor $\Cech_\Uf^\bullet:\Perv_m(X)\ra\Cpl(\Perv_m(X))$. We also have a natural morphism
$K\ra\Cech_\Uf^\bullet(K)$, for every $K\in\Ob\Perv_m(K)$, given by the sum of the adjunction morphisms.

\label{alt_Cech}
We need a variant that takes its values in the category of bounded complexes,
called the \emph{alternating {\v C}ech complex} (see \cite[\href{https://stacks.math.columbia.edu/tag/01FG}{Section 01FG}]{stacks-project}).
Let $K\in\Ob\Perv_m(X)$.
For every $r\in\Nat$, we have an action of the symmetric group $\Sgoth_{r+1}$, seen as the group of permutations of $\{0,1,\ldots,r\}$,
on the perverse sheaf $\Cech^r_\Uf(K)$: if $\sigma\in\Sgoth_{r+1}$ and $i_0,\ldots,i_r\in I$,
then this action sends the component $a_{i_0,\ldots,i_r*}a_{i_0,\ldots,i_r}^*K$ to $a_{i_{\sigma^{-1}(0)},\ldots,i_{\sigma^{-1}(r)}*}a_{i_{\sigma^{-1}(0)},\ldots,i_{\sigma^{-1}(r)}}^*K=
a_{i_0,\ldots,i_r*}a_{i_0,\ldots,i_r}^*K$,
and it acts by $(-1)^\sigma\id$. We denote by $\Calt{r}{\Uf}(K)\subset\Cech_\Uf^r(K)$ the invariants of this action. 
This defines a subcomplex $\Calt{\bullet}{\Uf}(K)$ of $\Cech^\bullet(K)$. Also, as a perverse sheaf on $X$,
$\Calt{r}{\Uf}(K)$ is isomorphic to $\bigoplus_{I'\subset I,\ |I'|=r+1}a_{I'*}a_{I'}^*K$; in particular, we have $\Calt{1}{\Uf}(K)=\Cech^1_\Uf(K)$, so the morphism
$K\ra\Cech^\bullet_\Uf(K)$ factors through a morphism $K\ra\Calt{1}{\Uf}(K)$.
Another consequence of this observation is that the subfunctor $\Calt{\bullet}{\Uf}$ of $\Cech_\Uf^\bullet$ is
still exact, and that it takes its values in the full subcategory $\Cb(\Perv_m(X))$ of $\Cpl(\Perv_m(X))$. 

\begin{sublemma}
Let $K\in\Ob\Perv_m(X)$.
\begin{itemize}
\item[(i)] The morphism of complexes $K\ra\Cech^\bullet_\Uf(K)$ is a quasi-isomorphism.

\item[(ii)] The inclusion $\Calt{\bullet}{\Uf}(K)\ra\Cech^\bullet_\Uf(K)$ is a homotopy equivalence.

\item[(iii)] The morphism of complexes $K\ra\Calt{\bullet}{\Uf}(K)$ is a quasi-isomorphism.

\end{itemize}
\label{lemma_Cech}
\end{sublemma}

\begin{proof}
Point (iii) follows from (i) and (ii), and (ii) is proved as \cite[\href{https://stacks.math.columbia.edu/tag/01FM}{Lemma 01FM}]{stacks-project}.
To prove (i), we can restrict the complexes to every open subset of an open covering of $X$, for example the covering $\Uf$. So we may assume that
$X$ is equal to one of the $U_i$. In that case the complex $K\ra\Cech^\bullet_\Uf(K)$ (with $K$ in degree $-1$) is homotopy equivalent to $0$, see
for example the proof of \cite[\href{https://stacks.math.columbia.edu/tag/0G6S}{Lemma 0G6S}]{stacks-project}.
\end{proof}

Now let $B^\bullet\in\Ob\Cb(\If_m(f,\Uf))$. The double complex $\Calt{\bullet}{\Uf}(B^\bullet)$ is bounded, and we denote its total complex by $C^\bullet$. By the
definition of $\If_m(f,\Uf)$, this is a complex of $f_*$-acyclic objects, and by Lemma~\ref{lemma_Cech}(iii), the canonical morphism $B^\bullet\ra C^\bullet$ is a
quasi-isomorphism. This gives the conclusion of (2).

Proposition~\ref{prop_der_fil} now says that, if $T:\Db\Perv_m(X)\ra\Db\Perv_m(Y)$ is the composition of a quasi-inverse of the equivalence
$\Kb(\If_m(f))/\Nb(\If_m(f))$ (where, for every additive subcategory $\Cf$ of $\Perv(X)$, we denote by $\Nb(\Cf)$ the full category of exact complexes in
$\Kb(\Cf)$) and of the functor $\Kb(\If_m(f))/\Nb(\If_m(f))\flnom{\Kb({}^0 f_*)}\Db\Perv(Y)$, then $T$ is well-defined and we have $\real\circ T\simeq f_*\circ\real$.

Note also that the proofs of statements (1) and (2), and the construction and properties of the {\v C}ech complex and alternating {\v C}ech complex, would work just as well
for the categories $\Perv_{mf}$, because we know that, for $g$ a morphism of $k$-schemes, the functors $\Hp^k g_*$ preserve the subcategories
$\Perv_{mf}$ (Corollary~\ref{cor_f_*_preserves_Perv_mf}). Denote by $\If_{mf}(f)$ the full subcategory of $f_*$-acyclic objects in $\Perv_{mf}(X)$. We get in particular
that the obvious functor $\Kb(\If_{mf}(f))/\Nb(\If_{mf}(f))\ra\Db\Perv_{mf}(X)$ is an equivalence of categories and that $\Kb({}^0f_*)$ sends
objects of $\Nb(\If_{mf}(X))$ to exact complexes, so we can make the following definition.

\begin{subdef} Let $f:X\fl Y$. We define the functor
\[H_{mf,*}(f):\Db\Perv_{mf}(X)\fl\Db_{mf}\Perv(Y)\]
to be the composition
\[\Db\Perv_{mf}(X)\ra\Kb(\If_{mf}(f))/\Nb(\If_{mf}(f))\flnom{\Kb({}^0f_*)}\Db\Perv_{mf}(Y),\]
%\[H_{mf,*}(f)=\varprojlim_{\Uf\in\Cov(X)}\varepsilon_{*}f_{\Uf\bullet*}\varepsilon_\Uf^*.\]
where the first functor is right adjoint to the obvious functor $\Kb(\If_{mf}(f))/\Nb(\If_{mf}(f))\ra\Db\Perv_{mf}(X)$.
\footnote{If a functor $F$ is an equivalence of categories, then it admits a right adjoint, and any right adjoint is a quasi-inverse. We use
a right adjoint instead of an arbitrary quasi-inverse, because a right adjoint is unique up to unique isomorphism.}

\label{def_H_mf}
\end{subdef}

It remains to show that Definition \ref{def_H_mf}
does give a $2$-functor from $\Sch/k$ to $\TR$, that is, to construct connection morphisms.
So suppose that we have two morphisms of $k$-schemes $f:X\fl Y$ and $g:Y\fl Z$. We want
to lift the connection isomorphism $g_*\circ f_*\simeq(g\circ f)_*:\Dbm(X)\fl\Dbm(Z)$ to a
natural isomorphism $H_{mf,*}(g)\circ H_{mf,*}(f)\iso H_{mf,*}(g\circ f)$.
Let us write $\If_1=\If_{mf}(f)$, $\If_2=\If_{mf}(g\circ f)$, $\If_3=\If_1\cap\If_2=\If_{mf}(f\times(g\circ f))$ and $\Jf=\If_{mf}(g)$.
We consider the following diagram:
\[\xymatrix{\Kb(\If_2)/\Nb(\If_2)\ar@/_5pc/[ddd]_{\Phi_2}\ar@/^3pc/[rrddd]^-{\Kb({}^0(g\circ f)_*)} &{}\ar@{}[dd]|(.6){(*)} & \\
\Kb(\If_3)/\Nb(\If_3)\ar[u]^{\Phi_{32}}\ar[d]_{\Phi_{31}}\ar[rd]^-{\Kb({}^0 f_*)} & & \\
\Kb(\If_1)/\Nb(\If_1)\ar[d]_{\Phi_1}\ar[rd]_-{\Kb({}^0 f_*)} & \Kb(\Jf)/\Nb(\Jf)\ar[d]_\Phi\ar[rd]^-{\Kb({}^0 g_*)} & \\
\Db\Perv_{mf}(X) & \Db\Perv_{mf}(Y) & \Db\Perv_{mf}(Z)}\]
All the vertical maps in it are equivalences, and the diagram commutes, except for triangle (*) that only commutes up to a natural isomorphism
coming from the connection isomorphism $(g\circ f)_*\simeq g_*\circ f_*$.

We also set $\Phi_3=\Phi_1\circ\Phi_{31}=\Phi_2\circ\Phi_{32}$. For every arrow $\Phi_?$, we denote by $\Psi_?$ a right adjoint of $\Phi_?$.
Then we have natural transformations
\begin{align*}
H_{mf,*}(g\circ f)=\Kb({}^0(g\circ f)_*)\circ\Psi_2 & \leftarrow \Kb({}^0(g\circ f)_*)\circ\Phi_{32}\circ\Psi_{32}\circ\Psi_2\\
& \simeq\Kb({}^0(g\circ f)_*)\circ\Phi_{32}\circ\Psi_3\\
& \stackrel{(*)}{\simeq} \Kb({}^0g_*)\circ\Kb({}^0f_*)\circ\Psi_3\\
& \rightarrow\Kb({}^0g_*)\circ\Psi\circ\Phi\circ\Kb({}^0f_*)\circ\Psi_3\\
& = H_{mf,*}(g)\circ\Kb({}^0 f_*)\circ\Phi_{31}\circ\Psi_3\\
& \simeq H_{mf,*}(g)\circ\Kb({}^0 f_*)\circ\Phi_{31}\circ\Psi_{31}\circ\Psi_1\\
& \rightarrow H_{mf,*}(g)\circ\Kb({}^0 f_*)\circ\Psi_1\\
& = H_{mf,*}(g)\circ H_{mf,*}(f).
\end{align*}
Moreover, all these natural transformations are isomorphisms: the ones marked $\leftarrow$ and $\rightarrow$ are units or counits of
adjunctions between equivalences of categories, isomorphism (*) comes from the connection isomorphism $(g\circ f)_*\simeq g_*\circ f_*$, and
the two remaining isomorphisms come from the uniqueness of right adjoints.
This gives the desired connection isomorphism, and we check the cocycle condition in a similar way.
This finishes the proof of Proposition~\ref{prop_direct_images}.

\begin{subrmk}
Suppose that $f:X\ra Y$ is an affine morphism of $k$-schemes. By Theorem~\ref{basic_lemma} (and Corollary~\ref{cor_f_*_preserves_Perv_mf})),
the category $\If_{mf}(f_*)$ of $f_*$-acyclic objects in $\Perv_{mf}(X)$ is cogenerating. So, by Remark~\ref{rmk_der_fil}(1), the functor
${}^0f_*:\Perv_{mf}(X)\ra\Perv_{mf}(Y)$ has a left derived functor, and the functor
$H_{mf,*}(f)$ is the restriction of this derived functor to $\Db\Perv_{mf}(X)$. In particular, if $f$ is quasi-finite and affine,
then $f_*:\Perv_{mf}(X)\ra\Perv_{mf}(Y)$ is exact, and $H_{mf,*}(f)$ is its obvious extension to the bounded derived categories.

\label{rmk_direct_images}
\end{subrmk}

From now on, for $f:X\ra Y$ a morphism of $k$-schemes,
we will also denote the functor $H_{mf,*}(f)$ by $f_*$ if there is no risk of confusion.

\begin{subprop} The functor $\boxtimes$ from Proposition \ref{prop_obvious2}
induces a natural isomorphism between the $2$-functors $H_{mf,*}\times H_{mf,*}:
\Sch_k\times\Sch_k\fl\TR$ and $\Sch_k\times\Sch_k\flnom{\times}
\Sch_k \flnom{H_{mf,*}}\TR$ (where the first arrow sends $(X,Y)$ to $X\times Y$).

In other words, if $f_1:X_1\fl Y_1$ and $f_2:X_2\fl Y_2$ are morphisms
in $\Sch/k$ and $K_1\in D^b\Perv_{mf}(X_1)$, $K_2\in D^b\Perv_{mf}(X_2)$, then
we have an isomorphism
\[(f_1\times f_2)_*(K_1\boxtimes K_2)\iso (f_{1*}K_1)\boxtimes(f_{2*}K_2)\]
functorial in $K_1$ and $K_2$ and compatible with the composition of
arrows in $\Sch/k$.

\label{prop_boxtimes_direct_images}
\end{subprop}

\begin{proof} On the categories $\Dbc$, we have canonical isomorphisms
(see SGA 5 III 1.6). These induce similar isomorphisms in the categories $\Dbh$ and
$\Dbm$.

By the construction of $f_*$ (see Definition \ref{def_H_mf}),
we only need to show the statement of the proposition for
the functors ${}^0 f_*$ between the categories $\Perv_{mf}(X)$.
But then it is an immediate consequence of the similar result for the
categories $\Perv(X)$, which follows from the result recalled at the beginning of the proof and from
the t-exactness of the external tensor
product (see Proposition \ref{prop_exactness}).
\end{proof}

\begin{subprop} Let $j:U\fl X$ be an open embedding. 
Denote by $j^*:D^b\Perv_{mf}(X)\fl D^b\Perv_{mf}(U)$ the derived functor
of the exact functor $j^*:\Perv_{mf}(X)\fl\Perv_{mf}(U)$.

Then this functor $j^*$ is left adjoint to the functor
$j_*:D^b\Perv_{mf}(U)\fl D^b\Perv_{mf}(X)$, and
the counit map
$j^*j_*\fl\id$ is an isomorphism.

\label{prop_adjunction_open}
\end{subprop}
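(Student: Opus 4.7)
The plan is to reduce to the affine case (Corollary \ref{cor_adjunction_j}), exploiting the Čech description of $j_*$ from Definition \ref{def_H_mf}. As preliminary observations, first, $j^*$ is $t$-exact for the perverse $t$-structure by Proposition \ref{prop_exactness}(v) (since $j$ is étale) and carries weight filtrations to weight filtrations (restriction does not alter pointwise weights on strata), so $j^*:\Perv_{mf}(X)\fl\Perv_{mf}(U)$ is a well-defined exact functor whose trivial derivation is the $j^*$ of the statement. Second, once the adjunction itself is established, the invertibility of the counit $j^*j_*\fl\id$ will follow from the conservativity of $R_U$ (Proposition \ref{prop_real_cons}) and the compatibility of $R$ with $j^*$ and $j_*$ (Proposition \ref{prop_direct_images}(b) together with the obvious compatibility of $R$ with $j^*$), combined with the classical fact that $j^*j_*\iso\id$ in $\Dbm(U)$ for any open embedding.

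For the adjunction itself, I would fix a rigidified finite open affine covering $\Uf=(U_i)_{i\in I}$ of $U$. Since $X$ is separated and each $U_i$ is affine, every iterated intersection $U_{i_0}\cap\cdots\cap U_{i_n}$ is affine and its inclusion into $X$ is an affine open embedding, to which Corollary \ref{cor_adjunction_j} applies at each simplicial level. Let $\varepsilon_\Uf:\Uf_\bullet\fl U$ be the augmentation of the associated simplicial scheme and $j_{\Uf\bullet}:\Uf_\bullet\fl X_\bullet$ the morphism induced by $j$ into the constant simplicial scheme on $X$. Since the system of refinements of $\Uf$ is cofinal in $\Cov(U)$, Definition \ref{def_H_mf} yields a natural isomorphism
\[j_*K\simeq\varepsilon_*\,j_{\Uf\bullet*}\,\varepsilon_\Uf^*K\]
functorial in $K\in\Db\Perv_{mf}(U)$. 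I would then assemble the componentwise units coming from Corollary \ref{cor_adjunction_j} into a candidate unit $\eta_L:L\fl j_*j^*L$ for $L\in\Db\Perv_{mf}(X)$ (and dually for the counit), check naturality and independence of $\Uf$ (using compatibility with refinements, as in the construction of the connection isomorphisms in section \ref{direct_images}), and verify the triangle identities on the Čech level, where they reduce to the triangle identities already known in the affine case.

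The main obstacle I anticipate is precisely this simplicial bookkeeping: gluing the level-by-level adjunction data into honest natural transformations of functors on $\Db\Perv_{mf}$ and checking compatibility with refinements of $\Uf$. Once a candidate adjunction pair has been produced by formal manipulation, however, the conservativity of the realization functors (Proposition \ref{prop_real_cons}) together with the compatibility of $R$ with all the functors involved reduces both the triangle identities and the invertibility of the counit to their classical analogues in $\Dbm$, which hold for any open embedding.
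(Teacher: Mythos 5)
Your overall skeleton (Čech resolution with respect to a finite affine cover, the affine case from Corollary \ref{cor_adjunction_j}, and the description of $j_*$ from Definition \ref{def_H_mf}) is the same as the paper's, but the step on which you ultimately rely to close the argument does not work. You claim that, once candidate unit and counit maps have been produced, the conservativity of the realization functors reduces ``both the triangle identities and the invertibility of the counit to their classical analogues in $\Dbm$''. Conservativity only detects isomorphisms; it does not let you check an \emph{equality} of morphisms, such as the identity
\[j_* K\flnom{\ } j_*j^*j_*K\flnom{\ } j_*K = \id_{j_*K},\]
after applying $R$. For that you would need $R_U$ and $R_X$ to be \emph{faithful}, which is not known (and is essentially the open comparison problem the whole paper is designed to work around: the paper only proves injectivity of $\eta^*$ on $\Ext^1$, nothing about higher $\Ext$'s or about the realization functor being faithful). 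So the triangle identities cannot be outsourced to $\Dbm$; they must be verified inside $\Db\Perv_{mf}$.

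The paper's proof avoids this by making the unit and counit completely explicit at the level of complexes: after identifying $j_*K$ with the total complex of $j_*C^\bullet(K)$ (each term perverse because the composites $jj_J$ are affine open immersions), one gets $j^*j_*K\simeq C^\bullet(K)$, and the counit is \emph{defined} as the inverse of the canonical quasi-isomorphism $K\fl C^\bullet(K)$ — so it is an isomorphism by construction, not by conservativity; the unit is the augmentation $L\fl j_*C^\bullet(j^*L)$, which exists in $\Perv_{mf}$ because it exists in $\Perv_m$ and the subcategory is full. Then one triangle composite, $j_*K\fl j_*j^*j_*K\fl j_*K$, is checked to be the identity by direct computation with these explicit descriptions, and conservativity of $R_U$ is used only for the other composite $j^*L\fl j^*j_*j^*L\fl j^*L$, where it suffices to know it is an isomorphism (the standard half-adjunction criterion then gives the adjunction). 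If you replace your final reduction-to-$\Dbm$ step by this explicit Čech-level verification of one triangle identity, your argument becomes essentially the paper's proof; as written, the key verification is missing.
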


\begin{proof} 
Let $\Uf=(U_i)_{i\in I}$ be a finite open affine cover of $U$. For every
$J\subset I$, we denote by $a_J:\bigcap_{i\in J}U_i\fl U$ the inclusion.
As $U$ is separated, all the finite intersections of $U_i$'s are affine,
so the morphisms $a_J$ and $j\circ a_J$ are affine for every $J\subset I$.
If $K\in\Ob\Perv_{mf}(U)$, we denote by $\Calt{\bullet}{\Uf}(K)$ the alternating {\v C}ech complex
of $K$ associated to the covering $(U_i)_{i\in I}$ (defined on page \ref{alt_Cech}), so that
$\Calt{r}{\Uf}(K)=\bigoplus_{|J|=r+1}a_{J*}a_J^*K$.
%and the maps $C^r(K)\fl C^{r+1}(K)$ are alternating sums of adjunction morphisms. 
The canonical morphism
$K\fl\Calt{\Uf}{\bullet}(K)$ is a quasi-isomorphism (Lemma~\ref{lemma_Cech}(iii)), and all the $\Calt{r}{\Uf}(K)$ are
$j_*$-acyclic (indeed, as $j\circ a_J$ is affine for every $J\subset I$, the complex
$j_*(a_{J*}a_J^*K)$ is perverse, and so $j_* \Calt{r}{\Uf}(K)$ is perverse).

As the alternating {\v C}ech complex is a functor $\Perv_{mf}(U)\ra\Cb(\Perv_{mf}(U))$,
we have by Definition \ref{def_H_mf} an isomorphism of functors between $j_*$ and the functor
sending $K$ to the the total complex of the double complex
$j_*\Calt{\bullet}{\Uf}(K)$. So $j^* j_*$ is isomorphic to the functor sending $K$
to the total complex of $\Calt{\bullet}{\Uf}(K)$. Let
$\alpha:\id\ra j^*j_*$ the map corresponding to the natural transformation
$K\ra\Tot\Calt{\bullet}{\Uf}(K)$. This is an isomorphism of endofunctors
of $\Db\Perv_{mf}(U)$ by Lemma~\ref{lemma_Cech}(iii)), and we set
$\varepsilon=\alpha^{-1}$; this is the counit of the adjunction.

Now we construct the unit $\eta:\id\ra j_*j^*$.
As $j^*$ is exact, the functor $j^*j_*$ is isomorphic to the functor sending
$L\in\Ob\Db\Perv_{mf}(X)$ to the total complex of the double complex $j_*\Calt{\bullet}{\Uf}(j^*L)$.
If $L$ is a finite complex of objects of $\Perv_{mf}(X)$, then $j_*\Calt{0}{\Uf}(j^*L)=\bigoplus_{i\in I}(j\circ a_i)_*(j\circ a_i)^*L$;
taking the sum of the unit morphisms of the adjunctions $((j\circ a_i)^*,(j\circ a_i)_*)$,
we get a morphism $L\ra j_*\Calt{0}{\Uf}(j^*L)$ in $\Cb(\Perv_m(X))$, hence in its full subcategory $\Cb(\Perv_{mf}(X))$.
Also, it is easy to see that the composition $L\ra j_*\Calt{0}{\Uf}(j^*L)\ra j_*\Calt{1}{\Uf}(j^*L)$ is $0$ in
$\Cb(\Perv_{mf}(X))$. So we get a morphism from $L$ to the total complex of the double complex $j_*\Calt{\bullet}{\Uf}(j^*L)$, which
induces $\eta:\id\ra j_*j^*$.

%Moreover, by Corollary
%\ref{cor_f_*_preserves_Perv_mf}, if $K$ is in $\Perv_{mf}(U)$, then
%$C^\bullet(K)$ is a complex of objects of $\Perv_{mf}(U)$, and
%$j_*C^\bullet(K)$ is a complex of objects of $\Perv_{mf}(X)$, which is
%quasi-isomorphic to $j_* K$ by definition of the functor $j_*$. Note
%also that this construction is functorial in $K$.

%Now we want to define a unit map $\varepsilon:\id\fl j_*j^*$ and a counit
%map $j^*j_*\fl\id$. Let $K$ be a finite complex of objects of $\Perv_{mf}(U)$.
%As the construction of the alternating {\v C}ech complex is functorial, we have by Definition \ref{def_H_mf}
%an isomorphism between $j_*K$ and the total complex of the double complex
%$j_*\Calt{\bullet}{\Uf}(K)$, so
%$j^* j_* K$ is isomorphic to the total complex of $\Calt{\bullet}{\Uf}(K)$, and we can take
%for $\eta$ the inverse of the canonical
%quasi-isomorphism $K\fl C^\bullet(K)$. (Note in particular that $\eta$ is
%an isomorphism, which gives the last statement of the proposition.)
%If $L$ is a complex of objects
%of $\Perv_{mf}(X)$, then $j_*j^*L$ is quasi-isomorphic to the total complex
%of the double complex $j_* C^\bullet(j^* L)$. But we have a canonical
%morphism $L\fl j_* C^0(j^*L)$ (because it exists in $C^b\Perv_m(X)$, and
%$C^b\Perv_{mf}(X)$ is a full subcategory of $C^b\Perv_m(X)$), and it
%is easy to see that this induces a morphism $L\fl j_* C^\bullet(j^* L)$, which
%is the desired morphism $\varepsilon$. 

To finish the proof, it suffices to
show that, for $K\in\Ob D^b\Perv_{mf}(U)$ and $L\in\Ob D^b\Perv_{mf}(X)$,
the composition
\[j_* K\flnom{\eta j_*}j_*j^* j_* K\flnom{j_*\varepsilon}j_*K\]
is the identity of $j_*K$ and the composition
\[j^* L\flnom{j^*\eta}j^*j_*j^*L\flnom{\varepsilon j^*}j^*L\]
is the identity of $j^*L$.

%The first statement is clear from the explicit descriptions
%of $j_*$, $\varepsilon$ and $\eta$, and the second statement follows from
%the conservativity of the functor $R_U$.
Let $K\in\Cb(\Perv_{mf}(U))$. We denote by $s$ the canonical morphism from $K$ to the total complex of $\Calt{\bullet}{\Uf}(K)$. 
Using the canonical isomorphism $j^*\Tot j_*\Calt{\bullet}{\Uf}(K)=\Tot\Calt{\bullet}{\Uf}(K)$, we
get a commutative diagram
\[\xymatrix@C=70pt{j_* K\ar[r]^-{\eta j_*}\ar[d]^\wr & j_*j^*j_* K \ar[r]^-{j_*\varepsilon}\ar[d]^\wr& j_* K\ar[d]^\wr \\
\Tot j_*\Calt{\bullet}{\Uf}(K)\ar[r]_-{\Tot j_*\Calt{\bullet}{\Uf}(s)} & \Tot j_*\Calt{\bullet}{\Uf}(\Tot\Calt{\bullet}{\Uf}(K)) &
\Tot j_*\Calt{\bullet}{\Uf}(K)\ar[l]^-{\Tot j_*\Calt{\bullet}{\Uf}(s)}
}\]
This gives the first statement.

Let $L\in\Cb(\Perv_{mf}(X))$. We denote by $s$ the canonical morphism $j^*L\ra \Calt{\bullet}{\Uf}(j^*L)$, and by
$s'$ the morphism from $L$ to $\Tot j_*\Calt{\bullet}{\Uf}(j^*L)$ from the construction of $\eta$. We have a canonical isomorphism
$j^*\Tot j_*\Calt{\bullet}{\Uf}(j^*L)=\Tot\Calt{\bullet}{\Uf}(j^*L)$, and this identifies $j^*s'$ and $s$. So we get a commutative
diagram
\[\xymatrix{j^*L \ar[r]^-{j^*\eta}\ar[rd]_-s & j^*j_*j^* L \ar[r]^-{\varepsilon j^*}\ar[d]^\wr & j^* L\ar[ld]^-s \\
& \Tot\Calt{\bullet}{\Uf}(j^*L) &
}\]
This gives the second statement.
\end{proof}

\subsection{Inverse images}
\label{inverse_images}

In this section, we construct the inverse images functors as the left
adjoints of the direct image functors of Proposition \ref{prop_direct_images}.

First we treat a particular case. For every smooth equidimensional
$k$-scheme $X$, we denote by $\ungras_X$ the constant sheaf on $X$,
seen as an object of $\Perv_{mf}(X)[-\dim X]$.

\begin{subprop} Let $X,Y\in\Ob(\Sch/k)$, and suppose that $X$ is smooth
equidimensional. 
Let $p:X\times Y\fl Y$ be the second projection. 

Then the functor $p_*:D^b\Perv_{mf}(X\times Y)\fl D^b\Perv_{mf}(Y)$ 
admits a left adjoint $p^*$, which is given
by $K\fle\ungras_X\boxtimes K$.

\label{prop_inverse_image_projection}
\end{subprop}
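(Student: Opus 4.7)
The plan is to take $\phi(K):=\ungras_X\boxtimes K$ as the candidate for $p^*$, verify it lands in $D^b\Perv_{mf}(X\times Y)$, construct the unit of adjunction explicitly using a K\"unneth computation for $p_*\phi$, and establish the adjunction by showing the induced map on $\Hom$ groups is bijective. The main ingredients are Proposition \ref{prop_obvious2}(ii) (for well-definedness of $\phi$), Proposition \ref{prop_boxtimes_direct_images} (for the K\"unneth identification of $p_*\phi$), and Proposition \ref{prop_real_cons} (conservativity of the realization), combined with d\'evissage to the case of simple perverse sheaves.

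First, $\phi$ is a well-defined triangulated functor $D^b\Perv_{mf}(Y)\to D^b\Perv_{mf}(X\times Y)$ because $\ungras_X\in D^b\Perv_{mf}(X)$ and external tensor product preserves $\Perv_{mf}$ by Proposition \ref{prop_obvious2}(ii); moreover $R_{X\times Y}\circ\phi\simeq p^*\circ R_Y$, since $p^*K\simeq\underline{E}_X\boxtimes K$ in $\Dbm$ and $\boxtimes$ commutes with realization (discussion after Proposition \ref{prop_obvious2}). Viewing $p=a_X\times\id_Y$ with $a_X:X\to\Spec k$ and applying Proposition \ref{prop_boxtimes_direct_images} yields $p_*\phi(K)\simeq(a_{X*}\ungras_X)\boxtimes K$, with $a_{X*}\ungras_X\in D^b\Perv_{mf}(\Spec k)$ by Corollary \ref{cor_f_*_preserves_Perv_mf}.

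To construct the unit $\eta:\id\to p_*\phi$, I would lift the classical unit $\ungras_{\Spec k}\to a_{X*}\ungras_X$ from $\Dbm(\Spec k)$ to $D^b\Perv_{mf}(\Spec k)$. A standard degenerate-spectral-sequence argument gives
\[\Hom_{D^b\Perv_{mf}(\Spec k)}(\ungras_{\Spec k},a_{X*}\ungras_X)=\Hom_{\Perv_{mf}(\Spec k)}(\ungras_{\Spec k},\H^0(a_{X*}\ungras_X)),\]
and since $\Perv_{mf}(\Spec k)\subset\Perv_m(\Spec k)$ is a full subcategory (Proposition \ref{prop_eta_fully_faithful}), this equals the corresponding $\Hom$ in $\Perv_m$, so the canonical class of the constant section $1$ lifts to a morphism $c$ in $D^b\Perv_{mf}(\Spec k)$. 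Then $\eta_K:=c\boxtimes\id_K$, via $K\simeq\ungras_{\Spec k}\boxtimes K$, gives a natural transformation whose realization recovers the $\Dbm$ unit.

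Using $\eta$, define the candidate adjunction map $\Phi_{K,L}:\Hom_{D^b\Perv_{mf}(X\times Y)}(\phi K,L)\to\Hom_{D^b\Perv_{mf}(Y)}(K,p_*L)$ by $g\mapsto p_*(g)\circ\eta_K$. It fits into a commutative square with the $\Dbm$ adjunction isomorphism $\Phi^{\Dbm}$, so $R(\Phi)$ is an isomorphism. The main obstacle will be to show that $\Phi$ itself is bijective, since the conservativity of $R$ (Proposition \ref{prop_real_cons}) is on objects, not on $\Hom$ groups, and $\Perv_{mf}$ is not extension-closed in $\Perv_m$. I expect to handle this by d\'evissage: both sides are cohomological bifunctors in $(K,L)$ and $\Perv_{mf}$ is Artinian and Noetherian, so triangles reduce to $K$ and $L$ (shifted) simple. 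Using the description of simples as intermediate extensions $j_{!*}\mathcal{L}[\dim Z]$ from smooth connected locally closed $Z$, and applying Corollary \ref{cor_adjunction_j} (open embeddings) and Corollary \ref{cor_i^*} (closed immersions) to pass to the smooth strata, reduce to the case of lisse sheaves on smooth open subschemes, where the relevant $\Hom$ groups become computations of continuous Galois cohomology of local systems and the adjunction follows from the K\"unneth identity already packaged in Proposition \ref{prop_boxtimes_direct_images}.
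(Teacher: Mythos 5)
Your definition of the candidate functor and your construction of the unit are exactly right and match the paper: the morphism $\ungras_{\Spec k}\fl a_{X*}\ungras_X$ is obtained, as you say, from $\Hom_{\Db\Perv_{mf}(\Spec k)}(\ungras_{\Spec k},a_{X*}\ungras_X)=\Hom_{\Perv_{mf}(\Spec k)}(\ungras_{\Spec k},\H^0a_{X*}\ungras_X)$ and fullness of $\Perv_{mf}\subset\Perv_m$, and Proposition \ref{prop_boxtimes_direct_images} identifies $p_*(\ungras_X\boxtimes K)$ with $(a_{X*}\ungras_X)\boxtimes K$. The gap is in how you then establish the adjunction. You propose to show directly that the map $\Phi_{K,L}:\Hom(\ungras_X\boxtimes K,L)\fl\Hom(K,p_*L)$ induced by the unit is bijective, by d\'evissage to simple objects and ultimately to lisse sheaves, where you claim the Hom groups "become computations of continuous Galois cohomology". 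That last step is false in $\Db\Perv_{mf}$: the whole point of working with $\Perv_{mf}$ is that its Ext groups differ from those of $\Dbm$ (for instance, Proposition \ref{prop_nullite} forces $\Ext^1_{\Db\Perv_{mf}(\Spec k)}(\ungras(1),\ungras)=0$, while the corresponding group in $\Dbm(\Spec k)$ is a nontrivial Galois $\H^1$), so the realization functor is not an isomorphism on Ext groups and no K\"unneth/Galois-cohomology computation in $\Dbm$ can be transported back. Moreover, the intermediate reduction "to the smooth strata" would need localization triangles for $R\Hom$ in $\Db\Perv_{mf}$ involving $j_!$, $i^!$ and base-change compatibilities that are not available at this stage of the construction (only $i_*,i^*$ for closed immersions and $j^*,j_*$ for affine open immersions exist so far); producing them is essentially what the four-operations formalism being built is for, so the argument would be circular.

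The missing idea, which is how the paper proceeds, is to sidestep any comparison of Hom groups by constructing a counit $\eta:p^*p_*\fl\id$ explicitly and checking the triangle identities. One factors $p$ through $X\times X\times Y$: with $q_1=\id_X\times p$, $q_2=a_X\times\id_{X\times Y}$ and $i$ the closed immersion given by the diagonal of $X$ times $\id_Y$ (so $q_1i=q_2i=p$), Proposition \ref{prop_boxtimes_direct_images} gives $p^*p_*K\simeq q_{1*}q_2^*K$, and the left adjoint $i^*$ of $i_*$ already constructed in Corollary \ref{cor_i^*} yields
\[p^*p_*K\simeq q_{1*}q_2^*K\fl q_{1*}i_*i^*q_2^*K\simeq q_{2*}i_*i^*q_2^*K\simeq K,\]
the last arrow being invertible because its realization is. One then verifies $p_*\flnom{\varepsilon p_*}p_*p^*p_*\flnom{p_*\eta}p_*$ is the identity by an explicit diagram chase, and the other composite $p^*\flnom{\eta p^*}p^*p_*p^*\flnom{p^*\varepsilon}p^*$ is an isomorphism because its image under the conservative functor $R_{X\times Y}$ is; this suffices for the adjunction. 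So conservativity is used only on (endo)morphisms of objects, never to compare Hom groups, which is precisely what your route would require and cannot deliver.
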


In particular, we get a natural isomorphism $\theta_p:p^*\circ R_Y\iso
R_{X\times Y}\circ p^*$.

\begin{proof} Let $p^*$ be as in the statement.
It suffices to construct
natural morphisms $\eta:\id\fl p_*p^*$
and $\varepsilon:p^* p_*\fl\id$ whose images by $R$ are the unit and counit of
the adjonction in the categories $D^b\Perv_m$, and such that
$p_*\flnom{\eta p_*}p_*p^*p_*\flnom{p_*\varepsilon}p_*$ is the identity.
(As $R_{X\times Y}$ is conservative, we'll automatically get the fact
that $p^*\flnom{\varepsilon p^*}p^*p_*p^*\flnom{p^*\eta}p^*$ is an
isomorphism.)

Let $a_X:X\fl\Spec k$ be the structural map.
Note that, as $a_{X*}\ungras_X\in\Dpos\Perv_{mf}(\Spec k)$, we have
\[\begin{array}{rcl}
\Hom_{\Db\Perv_{mf}(\Spec k)}(\ungras_{\Spec k},a_{X*}\ungras_X)&=&
\Hom_{\Perv_{mf}(\Spec k)}(\ungras_{\Spec k},\H^0 a_{X*}\ungras_X)\\
&=&
\Hom_{\Perv(\Spec k)}(E,\H^0 a_{X*}\underline{E}_X).
\end{array}\]
So the canonical morphism
$E\fl\H^0 a_{X*}\underline{E}_X$ (coming from the unit of the
adjunction $(a_X^*,a_{X*})$ gives a morphism $u_X:\ungras_{\Spec k}\fl
a_{X*}\ungras_X$ in $\Db\Perv_{mf}(\Spec k)$.

If $K\in\Ob(\Db\Perv_{mf}(Y))$, then we have a morphism
\[K=\ungras_{\Spec k}\boxtimes K\flnom{u_X\boxtimes\id}(a_{X*}\ungras_X)\boxtimes K
\simeq p_*(\ungras_X\boxtimes K)=p_*p^* K,\]
where the third arrow is the isomorphism of Proposition
\ref{prop_boxtimes_direct_images}. This morphism is an isomorphism because
its image by $R_Y$ is an isomorphism, and we denote it
by $\eta$.

Now we want to construct $\varepsilon$. Consider the commutative diagram
\[\xymatrix{X\times Y\ar[d]_p & X\times X\times Y\ar[l]_-{q_2}
\ar[d]^{q_1} & X\times Y\ar[l]_-{i} \\
Y & X\times Y\ar[l]^-p &}\]
where $q_1=\id_X\times p$, $q_2=a_X\times\id_{X\times Y}$ and $i$ is the
product of the diagonal embedding of $X$ and of $\id_Y$. Note that
$q_1 i=q_2 i=p$.
Using Proposition \ref{prop_boxtimes_direct_images}, we get an isomorphism
\[p^*p_*K=\ungras_X\boxtimes(p_* K)\simeq q_{1*}(\ungras_X\boxtimes K)=
q_{1*}q_2^*K.\]
As $i$ is a closed immersion, we know (by Corollary \ref{cor_i^*}) that
the functor $i_*$ has a left adjoint $i^*$. This and the functoriality of
$H_{mf,*}$ gives a morphism
\[q_{1*}q_2^*K\fl q_{1*}i_*i^*q_2^*K\simeq q_{2*}i_*i^*q_2^*K.\]
Note also that using 
the unit of $(i^*,i_*)$ and the analogue of the natural transformation
$\eta$ for
$q_2$ instead of $p$, we get a morphism
\[K\iso q_{2*}q_2^*K\fl q_{2*}i_*i^*q_2^*K,\]
which is an isomorphism because its image by $R_{X\times Y}$ is an isomorphism.
Putting all these together gives
\[\varepsilon:p^*p_*K\iso q_{1*}q_2^*K\fl q_{2*}i_*i^*q_2^*K\simeq K.\]

It is clear from the construction
that the images of $\eta$ and $\varepsilon$ by $R$ are the unit
and the counit of the adjonction $(p^*,p_*)$ in $\Dbm$.
So we just need to show that $p_*\flnom{\eta p_*}p_*p^*p_*
\flnom{p_*\varepsilon}p_*$ is the identity. 
This follows from the fact that
we get this composition by following the
outside of the commutative diagram below in the clockwise direction (where the
two arrows marked ``$\adj$'' come from the unit of the adjunction
$(i^*,i_*)$) :
\[\xymatrix{&&& p_*(\ungras_X\boxtimes(p_*K))\ar@{-}[d]^\wr\ar@{=}[r]  & 
p_*p^*p_*K\\
&\ungras_{\Spec k}\boxtimes (p_* K)\ar[r]^-{u_X}
 & (a_{X*}\ungras_X)\boxtimes(p_*K)\ar@{-}[r]^-\sim &
p_*q_{1*}(\ungras_X\boxtimes K)\ar@{=}[r]\ar@{-}[ddl]_-\sim
 & p_*q_{1*}q_2^*K\ar[d]^\adj \\
p_*K\ar@{-}[ur]^-\sim\ar@{-}[r]_-\sim &
p_*(\ungras_{\Spec k}\boxtimes K)\ar[d]_{u_X}
&&&p_*q_{1*}i_*i^*q_2^* K\ar[d]_\wr \\
&p_*((a_{X*}\ungras_X)\boxtimes K)\ar[r]^-\sim & 
p_*q_{2*}(\ungras_X\boxtimes K)\ar@{=}[r] &
p_*q_{2*}q_2^*K\ar[r]_-\adj &
p_*q_{2*}i_*i^*q_2^*K}\]
\end{proof}

Having at our disposal
the constant sheaf on $X$ was very important when constructing
the inverse image of the second projection $X\times Y\fl Y$.
Now, in order to generalize this construction to the case when $X$
is not necessarily smooth,
we want to construct (and characterize) the analogue
in $\Db\Perv_{mf}(X)$ of the constant sheaf $\underline{E}_X$.
Note that this is not totally obvious
in this context because, if $X$ is not smooth,
then the constant sheaf is not perverse (or shifted perverse) in general.

For every $k$-scheme $X$, we denote by $a_X:X\fl\Spec k$ the structural
morphism. We also denote by $\ungras_{\Spec k}$ the constant sheaf with
value $E$ on $\Spec k$, seen as an object of $\Perv_{mf}(\Spec k)$.

\begin{subcor} For every $k$-scheme $X$, the functor $\Db\Perv_{mf}(X)\fl
\Sets$ (where $\Sets$ is the category of sets),
$K\fle\Hom_{\Db\Perv_{mf}(\Spec k)}(\ungras_{\Spec k},a_{X*} K)$, is representable.

Moreover, if $(\ungras_X,u_X:\ungras_{\Spec k}\fl
a_{X*}\ungras_X)$ represents this functor, then there is an isomorphism
$R_X(\ungras_X)\simeq\underline{E}_X$ that makes the following
diagram commute :
\[\xymatrix{R_{\Spec k}(\ungras_{\Spec k})\ar[d]_{R_{\Spec k}(u_X)}\ar@{=}[r] & 
\underline{E}_{\Spec k}\ar[r]^-{\adj}&
a_{X*}\underline{E}_X\ar[d]^\wr & \\
R_{\Spec k}(a_{X*}\ungras_X)\ar[rr]_-{\gamma_{a_X}} && a_{X*}R_X(\ungras_X)}\]
where the arrow marked ``$\adj$'' is the unit of the adjunction $(a_X^*,a_{X*})$.

\label{cor_constant_sheaf}
\end{subcor}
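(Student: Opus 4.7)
The proof naturally splits into two cases.

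When $X$ is smooth and equidimensional, I would apply Proposition \ref{prop_inverse_image_projection} with $Y=\Spec k$, so that the projection $p$ becomes $a_X$ itself. The resulting left adjoint $a_X^*$ sends $\ungras_{\Spec k}$ to $\ungras_X\boxtimes\ungras_{\Spec k}\simeq\ungras_X$, and representability is immediate, with $u_X$ being the unit of this adjunction. The compatibility diagram with the realization is a direct consequence of the natural isomorphism $\theta_{a_X}$ provided by Proposition \ref{prop_inverse_image_projection}, together with the equality $R_X(\ungras_X)=\underline{E}_X$ that holds by definition of $\ungras_X$ on a smooth scheme.

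For general $X$, I would use de Jong's alteration theorem, combined with the standard construction of truncated proper smooth hypercoverings, to obtain an augmented simplicial scheme $\pi_\bullet:X_\bullet\fl X$ of length $N\geq\dim X+1$ with each $X_n$ a disjoint union of smooth $k$-schemes and each $\pi_n$ proper. By the smooth case, each $\ungras_{X_n}$ exists in $\Db\Perv_{mf}(X_n)$. Corollary \ref{cor_f_*_preserves_Perv_mf}, applied to the proper morphism $\pi_n$, places $(\pi_n)_*\ungras_{X_n}$ in $\Db\Perv_{mf}(X)$, and the structure maps of $X_\bullet$ induce, by functoriality of $H_{mf,*}$ (Proposition \ref{prop_direct_images}), a finite cosimplicial diagram in $\Db\Perv_{mf}(X)$. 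I would then define $\ungras_X$ as its totalization, realized as an iterated mapping cone of finitely many arrows. The compatible maps $u_{X_n}:\ungras_{\Spec k}\fl a_{X_n*}\ungras_{X_n}$ assemble into a cosimplicial morphism and yield, after totalization, the desired $u_X$.

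To verify the representability and the identification $R_X(\ungras_X)\simeq\underline{E}_X$, I would invoke the conservativity of $R_X$ (Proposition \ref{prop_real_cons}): after realization, both identifications reduce to the standard cohomological descent statement $\underline{E}_X\simeq\Tot(\pi_{n*}\underline{E}_{X_n})$ in $\Dbm(X)$ for a proper smooth hypercovering, combined with the smooth-case adjunction applied levelwise. The commutativity of the final diagram follows by tracking through the construction, again using $\theta_{a_{X_n}}$ at each level.

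The main obstacle is making sense of the totalization inside the triangulated category $\Db\Perv_{mf}(X)$, which does not possess arbitrary homotopy limits. Working with a truncated hypercovering of bounded length keeps the totalization a finite iterated mapping cone, and this is what makes the construction possible. A secondary delicate point is the verification of representability, since one cannot directly identify the two $\Hom$-groups as limits over the hypercovering inside $\Db\Perv_{mf}$; here the conservativity of $R_X$, combined with classical cohomological descent for $\underline{E}_X$, is what allows the comparison to be carried out.
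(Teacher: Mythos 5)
Your smooth case is the same as the paper's (apply Proposition \ref{prop_inverse_image_projection} with $Y=\Spec k$), but your treatment of general $X$ has two genuine gaps. First, the totalization: a bounded cosimplicial object in the bare triangulated category $\Db\Perv_{mf}(X)$ has no canonical totalization --- a finite iterated mapping cone can always be \emph{formed}, but it is in general neither unique up to canonical isomorphism nor functorial unless one proves suitable vanishing of negative $\Hom$'s between the terms (a Postnikov-system argument as in \cite{BBD} 1.1.9 ff.). Truncating the hypercovering only guarantees finiteness, not well-definedness; in particular the assertion that the levelwise maps $u_{X_n}$ ``assemble'' into a morphism $u_X$ into $a_{X*}$ of the totalization is exactly the point that needs proof and is not addressed.

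Second, and more seriously, the universal property cannot be verified ``after realization''. Conservativity of $R_X$ only lets you test whether a \emph{given} morphism of $\Db\Perv_{mf}(X)$ is an isomorphism; representability is the statement that for every $L$ the map $\Hom_{\Db\Perv_{mf}(X)}(\ungras_X,L)\fl\Hom_{\Db\Perv_{mf}(\Spec k)}(\ungras_{\Spec k},a_{X*}L)$ is a bijection, and these $\Hom$-sets are not controlled by their images in $\Dbm$: since $\Perv_{mf}$ is not stable under extensions in $\Perv_m$, already $\Hom_{\Db\Perv_{mf}(X)}(K,L[1])=\Ext^1_{\Perv_{mf}(X)}(K,L)$ injects into but differs from $\Hom_{\Dbm(X)}(R_XK,R_XL[1])$ in general. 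So cohomological descent for $\underline{E}_X$ in $\Dbm(X)$ cannot be transported back to give the required bijection in $\Db\Perv_{mf}$. The paper avoids both problems by an argument carried out entirely inside $\Db\Perv_{mf}$: one first observes (via Corollary \ref{cor_i^*} and Proposition \ref{prop_adjunction_open}) that the statement passes to open and closed subschemes, which together with the smooth case settles affine $X$ (a closed subscheme of some $\Aff^n$); for general $X$ one glues two opens at a time, defining $\ungras_X$ by a Mayer--Vietoris triangle on $j_{1*}\ungras_{U_1}\oplus j_{2*}\ungras_{U_2}\fl j_{12*}\ungras_{U_1\cap U_2}$ and checking the universal property by the five lemma after treating the cases where $L\fl j_{1*}j_1^*L$, $L\fl j_{2*}j_2^*L$ or $L\fl j_{12*}j_{12}^*L$ is an isomorphism, using only the adjunctions already available in $\Db\Perv_{mf}$. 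Some such intrinsic verification is indispensable in your approach as well (and, as a lesser point, producing proper \emph{smooth} hypercoverings over an imperfect finitely generated field from de Jong's theorem requires extra care).
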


Note that the couple $(\ungras_X,u_X)$ is unique up
to unique isomorphism if it exists.

\begin{proof} First note that, thanks to Corollary
\ref{cor_i^*} and Proposition
\ref{prop_adjunction_open}, if $h:Z\fl X$ is an open embedding or
a closed embedding and
the result is true for $X$, then it is also true for $Z$, and moreover we
have a canonical isomorphism $\ungras_Z\simeq h^*\ungras_X$.
Moreover, if $X$ is smooth, then the result follows immediately
from Proposition \ref{prop_inverse_image_projection}. In particular,
we get the result for $X$ affine, because in that case $X$ is a closed
subscheme of some $\Aff^n$.

For a general $k$-scheme $X$, we chose a finite open cover $X=\bigcup_{i=1}^n
U_i$ such that the result is known for every $U_i$. (For example, we can
take a finite affine open cover.)
We want to show that this implies the result for
$X$. We reduce to the case $n=2$ by an easy induction on $n$. Let $j_1:U_1\fl
X$, $j_2:U_2\fl X$ and $j_{12}:U_1\cap U_2\fl X$ be the inclusions. By the
uniqueness statement of the corollary,
we have canonical isomorphisms $\ungras_{U_i\vert U_1
\cap U_2}\simeq\ungras_{U_1\cap U_2}$ for $i=1,2$ that identify $u_{U_i}$ and
$u_{U_1\cap U_2}$,
so, using Proposition
\ref{prop_adjunction_open}, we get morphisms $v_i:j_{i*}\ungras_{U_i}\fl
j_{12,*}\ungras_{U_1\cap U_2}$, $i=1,2$. Complete $v:=v_1\oplus(-v_2)$ into
an exact triangle
\[(*)\qquad K\fl j_{1*}\ungras_{U_1}\oplus j_{2*}\ungras_{U_2}\flnom{v}j_{12*}\ungras_{U_1\cap
U_2}\flnom{+1}\]
Applying $a_{X*}$, we get a triangle
\[(**)\qquad a_{X*}K\fl a_{U_1,*}\ungras_{U_1}\oplus a_{U_2,*}\ungras_{U_2}\flnom{a_{X*}v} 
a_{U_1\cap U_2,*}\ungras_{U_1\cap U_2}\flnom{+1}\]
Consider the morphism $u_{U_1}\oplus u_{U_2}:\ungras_{\Spec k}\fl
a_{U_1,*}\ungras_{U_1}\oplus a_{U_2,*}\ungras_{U_2}$. Composing it by $a_{X*}v$ gives
$0$, by definition of $v$, so it comes from a map $u_X:\ungras_{\Spec k}\fl
a_{X*}K$. Also, as $a_{U_1\cap U_2,*}\ungras_{U_1\cap U_2}$ is concentrated in degree
$\geq 0$, we have $\Hom_{D^b\Perv_{mf}(\Spec k)}(\ungras_{\Spec k},a_{U_1\cap U_2,*}
\ungras_{U_1\cap U_2}[-1])=0$, and so the map $u_X$ is uniquely determined.

Now we show that $(K,u_X)$ represents the functor of the statement.
For every $L\in\Ob (D^b\Perv_{mf}(X))$, the map $u_X:\ungras_{\Spec k}\fl a_{X*}K$
induces a morphism
\[\Hom_{D^b\Perv_{mf}(X)}(K,L)\rightarrow\Hom_{D^b\Perv_{mf}(\Spec k)}(
a_{X*}K,a_{X*}L)\rightarrow\Hom_{D^b\Perv_{mf}(\Spec k)}(\ungras_{\Spec k},a_{X*}L),\]
and we must
show that this is an isomorphism. Suppose that we can prove this
if one of the adjunction maps
$L\fl j_{1*}j_1^*L$, $L\fl j_{2*}j_2^*L$ or $L\fl j_{12*}j_{12}^*L$ is an
isomorphism, then we are done. Indeed,
for a general $L$, we have an exact triangle
$L\fl j_{1*}j_1^*L\oplus j_{2*}j_2^*L\fl j_{12*}j_{12}^*L\flnom{+1}$, and we use
the five lemma.

Suppose that the adjunction map $L\fl j_{1*}j_1^*L$ is an isomorphism. 
Applying $j_1^*$ to the triangle $(*)$ and noting that $j_1^*j_{2*}\ungras_{U_2}\fl
j_1^*j_{12*}\ungras_{U_1\cap U_2}$ is an isomorphism, we get an isomorphism
$j_1^*K\iso\ungras_{U_1}$.
We denote by $c$ the
base change morphism $a_{X*}\fl a_{U_1*}j_1^*$. Applying $c$ to the entries of the triangle
$(**)$, we get a commutative diagram
\[\xymatrix{a_{U_1*}j_1^*K\ar[r] &  a_{U_1,*}j_1^*j_{1*}\ungras_{U_1}\oplus a_{U_1,*}j_1^*j_{2*}
\ungras_{U_2} \ar[r]&
a_{U_1*}j_1^* j_{12*}\ungras_{U_1\cap U_2}\ar[r]^-{+1} & \\
a_{X*}K\ar[r]\ar[u] &  a_{U_1,*}\ungras_{U_1}\oplus a_{U_2,*}\ungras_{U_2}\ar[r]\ar[u] &  
a_{U_1\cap U_2,*}\ungras_{U_1\cap U_2}\ar[r]^-{+1}\ar[u] &
}\]
The morphism $a_{U_1,*}j_1^*j_{2*}
\ungras_{U_2} \fl
a_{U_1*}j_1^* j_{12*}\ungras_{U_1\cap U_2}$ in the first row of this diagram is an isomorphism,
so we get an isomorphism
$a_{U_1*}j_1^*K\fl  a_{U_1,*}j_1^*j_{1*}\ungras_{U_1}\simeq a_{U_1*}\ungras_{U_1}$ (which is
just the image by $a_{U_1*}$ of the isomorphism $j_1^*K\iso\ungras_{U_1}$ of the beginning of
this paragraph.
By this isomorphism, the map
$c_K\circ u_X:\ungras_{\Spec k}\fl a_{U_1*}j_1^*K$ corresponds to the composition
of
\[(c_{j_{1*}\ungras_{U_1}}\oplus c_{j_{2*}\ungras_{U_2}})\circ (u_{U_1}\oplus u_{U_2}):
\ungras_{\Spec k}\fl 
a_{U_1,*}j_1^*j_{1*}\ungras_{U_1}\oplus a_{U_1,*}j_1^*j_{2*}
\ungras_{U_2} \]
and of the first projection. In other words, we get a commutative diagram :
\[\xymatrix{\ungras_{\Spec k}\ar[r]^-{u_X}\ar[d]_{u_{U_1}} & a_{X*}K\ar[d]^{c_K} \\
a_{U_1*}\ungras_{U_1} & a_{U_1*}j_1^*K\ar[l]^-\sim}\]
Consider the following diagram (where all the $\Hom$ groups are taken in the appropriate
$\Db\Perv_{mf}$ category) :
\[\xymatrix{
\Hom(K,j_{1*}j_1^*L) \ar[r]^-{a_{X*}}\ar[d]^{j_1^*}_*[@]{\sim}
& \Hom(a_{X*}K,a_{X*}j_{1*}j_1^*L)\ar[r]^-{(-)\circ u_X}
 & \Hom(\ungras_{\Spec k},
a_{U_1*}j_1^*L)\ar@{=}[dd] \\
\Hom(j_1^*,j_1^*L)\ar[r]^-{a_{U_1*}} 
& \Hom(a_{U_1*}j_1^*K,a_{U_1*}j_1^*L)\ar[u]_{(-)\circ c_K} & \\
\Hom(\ungras_{U_1},j_1^*L)\ar[r]_-{a_{U_1*}}\ar[u]^*[@]{\sim}
& \Hom(a_{U_1*}\ungras_{U_1},a_{U_1*}j_1^*L)\ar[u]^*[@]{\sim}\ar[r]_-{(-)\circ u_{U_1}} &
\Hom(\ungras_{\Spec k},a_{U_1*}j_1^*L)
}\]
We have just seen that the right rectangle of this diagram is commutative. It is also easy to see
that the two squares on the left are commutative, so the whole diagram commutes. As
the composition of the two bottom horizontal
arrows is an isomorphism by assumption, the composition of the
two top horizontal arrows is also an isomorphism, which is what we wanted to prove.

The case where $L\fl j_{2*}j_2^*L$ (resp. $L\fl j_{12*}j_{12}^*L$) is an
isomorphism is similar. This finishes the proof of the first statement of the corollary.
The second statement of the corollary follows easily from the explicit definition of $u_X$.
\end{proof}

Now that we have the object $\ungras_X$, the proof of the following
corollary is exactly the same as the proof of Proposition
\ref{prop_inverse_image_projection}.

\begin{subcor} Let $X,Y\in\Ob(\Sch/k)$, and let                                 
$p:X\times Y\fl Y$ be the second projection. 

Then the functor 
$p_*:\Db\Perv_{mf}(X\times Y)\fl\Db\Perv_{mf}(Y)$ 
admits a left adjoint $p^*$, which is given
by $K\fle\ungras_X\boxtimes K$.

\label{cor_inverse_image_projection}
\end{subcor}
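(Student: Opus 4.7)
The plan is to imitate the proof of Proposition \ref{prop_inverse_image_projection}, replacing the constant sheaf $\ungras_X$ on a smooth $X$ (which lived inside $\Perv_{mf}(X)[-\dim X]$ as an honest shift of a sheaf) by the abstract object $\ungras_X\in\Ob\Db\Perv_{mf}(X)$ and the canonical morphism $u_X:\ungras_{\Spec k}\fl a_{X*}\ungras_X$ furnished by Corollary \ref{cor_constant_sheaf}. Setting $p^*K=\ungras_X\boxtimes K$, everything in the previous proof goes through formally; the only input that was specific to the smooth case was the existence of $(\ungras_X,u_X)$, and this is now available for arbitrary $X$.

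More precisely, to construct the unit $\varepsilon:\id\fl p_*p^*$, I would take $K\in\Ob\Db\Perv_{mf}(Y)$ and form the composition
\[K=\ungras_{\Spec k}\boxtimes K\flnom{u_X\boxtimes\id_K}(a_{X*}\ungras_X)\boxtimes K\iso p_*(\ungras_X\boxtimes K)=p_*p^*K,\]
where the last isomorphism is the K\"unneth-type compatibility of Proposition \ref{prop_boxtimes_direct_images}. Its image under $R_Y$ is (by the commutativity of the diagram in Corollary \ref{cor_constant_sheaf} together with the morphism of crossed functors $R$ for $H_m$) the usual unit of $(p^*,p_*)$ on $\Dbm$, so conservativity of $R_Y$ (Proposition \ref{prop_real_cons}) guarantees it has all the formal properties required of a unit.

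To construct the counit $\eta:p^*p_*\fl\id$, I would use the same commutative diagram as before, namely
\[\xymatrix{X\times Y\ar[d]_p & X\times X\times Y\ar[l]_-{q_2}\ar[d]^{q_1} & X\times Y\ar[l]_-{i} \\ Y & X\times Y\ar[l]^-p &}\]
with $q_1=\id_X\times p$, $q_2=a_X\times\id_{X\times Y}$, $i$ the product of the diagonal of $X$ with $\id_Y$, and $q_1i=q_2i=p$. Then $p^*p_*K\simeq q_{1*}q_2^*K$ by Proposition \ref{prop_boxtimes_direct_images}, and the adjunction $(i^*,i_*)$ from Corollary \ref{cor_i^*} (which exists for any closed immersion, no smoothness hypothesis needed) together with the analogue of $\varepsilon$ for $q_2$ produces
\[p^*p_*K\iso q_{1*}q_2^*K\fl q_{1*}i_*i^*q_2^*K\simeq q_{2*}i_*i^*q_2^*K\stackrel{\sim}{\longleftarrow} q_{2*}q_2^*K\stackrel{\sim}{\longleftarrow} K,\]
the last two isomorphisms being isomorphisms because their images under $R_Y$ are.

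Finally, the triangle identity $(p_*\eta)\circ(\varepsilon p_*)=\id_{p_*}$ follows from exactly the same commutative diagram displayed at the end of the proof of Proposition \ref{prop_inverse_image_projection}, whose commutativity depends only on the functoriality of $\boxtimes$, of $H_{mf,*}$, and of the defining property of $u_X$. The other triangle identity $(p^*\varepsilon)\circ(\eta p^*)=\id_{p^*}$ then follows automatically from the conservativity of $R_{X\times Y}$ applied to the already-established identity in $\Dbm(X\times Y)$. I expect no real obstacle here: the arguments are purely formal once $(\ungras_X,u_X)$ is in hand, which is the content of Corollary \ref{cor_constant_sheaf}.
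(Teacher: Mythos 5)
Your proposal is correct and is essentially the paper's own argument: the paper proves this corollary by remarking that, once the pair $(\ungras_X,u_X)$ from Corollary \ref{cor_constant_sheaf} is available for arbitrary $X$, the proof of Proposition \ref{prop_inverse_image_projection} carries over verbatim, which is precisely the unwinding you give (unit via $u_X\boxtimes\id$ and Proposition \ref{prop_boxtimes_direct_images}, counit via the $q_1,q_2,i$ diagram and Corollary \ref{cor_i^*}, triangle identities checked using conservativity of the realization functors).
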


\begin{subcor} The $2$-functor $H_{mf,*}:\Sch/k\fl\TR$ of Proposition
\ref{prop_direct_images} admits a global left adjoint in the sense
of Definition 1.1.18 of \cite{Ay1}. 

In particular, we get a uniquely determined
$2$-functor $H_{mf}^*:\Sch/k\fl\TR$ such, for every morphism
$f:X\fl Y$ in $\Sch/k$, the functor 
$H_{mf}^*(f):\Db\Perv_{mf}(Y)\fl
\Db\Perv_{mf}(X)$ is a left adjoint of 
$H_{mf,*}(f):\Db\Perv_{mf}(X)\fl
\Db\Perv_{mf}(Y)$.

Moreover, for every morphism of $k$-schemes $f:X\fl Y$, we have an
invertible natural transformation $\theta_f:H_m^*(f)\circ R_Y\iso R_X\circ
H_{mf}^*(f)$, and this is compatible with the composition of morphisms
in $\Sch/k$.

\label{cor_inverse_images}
\end{subcor}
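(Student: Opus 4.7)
The plan is to use the factorization of morphisms through a graph followed by a projection. For $f:X\fl Y$ in $\Sch/k$, let $\gamma_f:X\fl X\times Y$ be the graph of $f$; since $X$ is separated of finite type over $k$, this is a closed immersion. Let $p_Y:X\times Y\fl Y$ be the second projection, so that $f=p_Y\circ\gamma_f$. By Corollaries \ref{cor_i^*} and \ref{cor_inverse_image_projection}, the functors $H_{mf,*}(\gamma_f)$ and $H_{mf,*}(p_Y)$ admit left adjoints $\gamma_f^*$ and $p_Y^*$. Using the $2$-functoriality of $H_{mf,*}$ (Proposition \ref{prop_direct_images}), the composite $H_{mf}^*(f):=\gamma_f^*\circ p_Y^*$ is then a left adjoint of $H_{mf,*}(f)$. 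Once each $H_{mf,*}(f)$ admits a left adjoint, the uniqueness of adjoints up to unique isomorphism formally assembles these pointwise left adjoints into a $2$-functor $H_{mf}^*:\Sch/k\fl\TR$, which is precisely Ayoub's construction of a global left adjoint (Definition 1.1.18 of \cite{Ay1}); in particular the definition is independent of the chosen factorization, and for composable morphisms $X\flnom{f}Y\flnom{g}Z$ one obtains canonical connection isomorphisms $H_{mf}^*(g\circ f)\iso H_{mf}^*(f)\circ H_{mf}^*(g)$ satisfying the expected cocycle condition.

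To produce $\theta_f$, I apply the standard exchange construction to the isomorphism $\gamma_f:R_Y\circ H_{mf,*}(f)\iso H_{m,*}(f)\circ R_X$ of Proposition \ref{prop_direct_images}: pre- and post-composition with the units and counits of the adjunctions $(H_{mf}^*(f),H_{mf,*}(f))$ and $(H_m^*(f),H_{m,*}(f))$ produce a canonical natural transformation $\theta_f:H_m^*(f)\circ R_Y\fl R_X\circ H_{mf}^*(f)$, and its compatibility with composition follows from the functoriality of this exchange in $f$. To see that $\theta_f$ is invertible, I again invoke $f=p_Y\circ\gamma_f$: by naturality of the exchange construction it suffices to treat $\gamma_f$ (handled by Corollary \ref{cor_i^*}) and $p_Y$. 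In the latter case, Corollary \ref{cor_inverse_image_projection} gives $H_{mf}^*(p_Y)K=\ungras_X\boxtimes K$, and the exchange $\theta_{p_Y}$ identifies on an object $K$ with the isomorphism $\underline{E}_X\boxtimes R_Y(K)\iso R_{X\times Y}(\ungras_X\boxtimes K)$ obtained from Corollary \ref{cor_constant_sheaf} and the compatibility of $R$ with $\boxtimes$ recorded after Proposition \ref{prop_obvious2}.

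All the substantive work has been done by the preceding corollaries; what remains is formal exchange-morphism bookkeeping. The main obstacle, if any, is verifying that the canonical $\theta_f$ built by mate calculus really does agree with the explicit isomorphism coming from the factorization, but any such identification can be tested after applying the realization, which is conservative by Proposition \ref{prop_real_cons}, reducing it to the corresponding (and known) identification in $\Dbm$.
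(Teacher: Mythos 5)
Your construction coincides with the paper's: factor $f$ as the closed immersion $X\fl X\times Y$ given by the graph followed by the second projection, obtain pointwise left adjoints from Corollaries \ref{cor_i^*} and \ref{cor_inverse_image_projection}, and invoke Ayoub (Proposition 1.1.17 of \cite{Ay1}) to assemble them into the $2$-functor $H_{mf}^*$; the formula $\gamma_f^*\circ p_Y^*$ is exactly the paper's $i^*p^*$. The only organizational difference lies in $\theta_f$: you define it as the mate of the isomorphism $\gamma_f$ of Proposition \ref{prop_direct_images}, which makes compatibility with composition essentially formal but leaves invertibility to be checked on the two factors, whereas the paper defines $\theta_f:=\theta_p\circ\theta_i$ from the transformations already produced in those two corollaries (so invertibility is free) and then verifies the composition compatibilities (its conditions (a)--(d)) by hand. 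Both routes involve the same residual bookkeeping, just distributed differently.

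The one step you must repair is the closing appeal to conservativity. Proposition \ref{prop_real_cons} lets you test whether a \emph{single} morphism is invertible; it does not let you test whether two \emph{parallel} morphisms agree, which is exactly what is needed to identify the mate $\theta_f$ with the explicit isomorphism coming from the factorization (or, at an object $K$, to identify $\theta_{p_Y,K}$ with the isomorphism $\underline{E}_X\boxtimes R_Y(K)\iso R_{X\times Y}(\ungras_X\boxtimes K)$). Moreover these comparison morphisms already live in $\Dbm(X)$, so there is no further realization to apply to them. The identifications have to be read off directly from the explicit units and counits: for the projection, the $\varepsilon$ and $\eta$ of Proposition \ref{prop_inverse_image_projection} and Corollary \ref{cor_inverse_image_projection} are built precisely so that their images under $R$ are the unit and counit of the adjunction in $\Dbm$, which yields the identification you want; for the closed immersion one unwinds the unit and counit constructed in Corollary \ref{cor_i^*}. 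Alternatively, avoid the comparison altogether, as the paper does, by taking $\theta_f:=\theta_p\circ\theta_i$ as the definition (invertible by construction) and checking its compatibility with composition of morphisms directly on the factorization diagram.
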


\begin{proof} 
By Proposition 1.1.17 of \cite{Ay1}, to show the first statement,
it suffices to show
that, for every $f:X\fl Y$ in $\Sch/k$, the functor
$H_{mf,*}(f):\Db\Perv_{mf}(X)\fl
\Db\Perv_{mf}(Y)$ admits a left adjoint. 
We factor $f$ as $X\flnom{i} X\times Y\flnom{p}Y$,
where $i=\id_X\times f$ and $p$ is the second projection.
The first map is a closed embedding, so it admits a left adjoint by
Corollary \ref{cor_i^*}, and the second map admits a left adjoint by
Corollary \ref{cor_inverse_image_projection}.
The natural transformation $\theta_i$ and $\theta_p$ are also constructed in
these corollaries, and we take $\theta_f$ equal to :
\[R_X\circ f^*=R_X\circ i^*p^*\flnom{\theta_i}i^*\circ R_{X\times Y}\circ
p^*\flnom{\theta_p}i^*p^*\circ R_Y\simeq f^*\circ R_Y.\]
By a slight abuse of notation, we will write that $\theta_f=\theta_p\circ
\theta_i$.

Suppose that we are given a second morphism $g:Y\fl Z$, and that we are
trying to prove the compatibility between $\theta_f$, $\theta_g$ and
$\theta_{gf}$. Consider the commutative diagram :
\[\xymatrix{&&& X\times Z\ar[dddl]^-{p'''} \\
X\ar[rrru]^-{i'''}\ar[r]_-{i}\ar[dr]_-{f} & X\times Y\ar[r]_-{i''}\ar[d]^p
 & X\times Y\times Z\ar[ru]_-q\ar[d]_{p''} & \\
& Y\ar[r]_-{i'}\ar[dr]_-g & Y\times Z\ar[d]_{p'} & \\
&& Z &}\]
where $i'=\id_Y\times g$, $i''(x,y)=(x,y,g(y))$, $i'''=\id_X\times(gf)$ and
$p',p'',p''',q$ are the obvious projections.
Then $\theta_g=\theta_{p'}\circ\theta_{i'}$ and $\theta_{gf}=\theta_{p'''}\circ
\theta_{i'''}$. So it suffices to prove that :
\begin{itemize}
\item[(a)] $\theta_q\circ\theta_{i''i}=\theta_{i'''}$;
\item[(b)] $\theta_{i''i}=\theta_{i''}\theta_{i}$;
\item[(c)] $\theta_{p''}\circ\theta_q=\theta_{p'}\circ\theta_{p''}$;
\item[(d)] $\theta_{p''}\circ\theta_{i''}=\theta_{i'}\circ\theta_p$.

\end{itemize}

Point (b) follows from Corollary \ref{cor_i^*} and point (c) from the
explicit formula for the inverse image of a projection in
Corollary \ref{cor_inverse_image_projection}.
The other two compatibilities can easily be proved directly.
\end{proof}

Finally, we have :

\begin{subprop} The functor $\boxtimes$ of Proposition \ref{prop_obvious2}
induces a natural isomorphism between the $2$-functors $H_{mf}^*\times H_{mf}^*:
\Sch_k\times\Sch_k\fl\TR$ and $\Sch_k\times\Sch_k\flnom{\times}
\Sch_k \flnom{H_{mf}^*}\TR$, where the first arrow sends $(X,Y)$ to $X\times Y$.

In other words, if $f_1:X_1\fl Y_1$ and $f_2:X_2\fl Y_2$ are morphisms
in $\Sch/k$ and $L_1\in \Db\Perv_{mf}(Y_1)$, $L_2\in \Db\Perv_{mf}(Y_2)$, then
we have an isomorphism
\[(f_1\times f_2)^*(L_1\boxtimes L_2)\iso (f_{1}^*L_1)\boxtimes(f_{2}^*L_2)\]
functorial in $L_1$ and $L_2$ and compatible with the composition of
arrows in $\Sch/k$.

\label{prop_boxtimes_inverse_images}
\end{subprop}

\begin{proof} By the construction of the functors $f^*$ above,
we only need to show the statement when $f_1$ and $f_2$ are both closed
immersions, or when they are both projections. If $f_1$ and $f_2$ are both
projections, the result is obvious. If they are both closed
immersions, the result follows from the construction in the proof of
Corollary \ref{cor_i^*} and from Proposition \ref{prop_boxtimes_direct_images}.
\end{proof}

\subsection{Poincar\'e-Verdier duality}

Just as in sections \ref{direct_images} and \ref{inverse_images},
we can prove the
following result.

\begin{subprop} 
There exists a $2$-functor $H_{mf,!}:\Sch/k\fl\TR$
with $H_{mf,!}(X)=\Db\Perv_{mf}(X)$ for every $X\in\Ob(\Sch/k)$ and
a natural transformation $R:H_{mf,!}\fl H_{m,!}$ (with the notation
of Example \ref{example_crossed_functor}) such that :
\begin{itemize}
\item[(a)] for every $X\in\Ob(\Sch/k)$, the functor $R_X:\Db\Perv_{mf}(X)\fl
\Dbm(X)$ is the functor of Theorem \ref{thm_main1};
\item[(b)] for every morphism
$f:X\fl Y$, the natural transformation 
$\rho_f:R_Y\circ H_{mf,!}(f)\fl
H_{m,!}(f)\circ R_X$ is an isomorphism.
\end{itemize}

This functor satisfies the same compatibility with $\boxtimes$ as in
Proposition \ref{prop_boxtimes_direct_images}, and it admits a global
right adjoint $H_{mf}^!$.

\label{prop_!}
\end{subprop}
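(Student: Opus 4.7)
The plan is to obtain $H_{mf,!}$, its compatibility with $R$ and $\boxtimes$, and its right adjoint $H_{mf}^!$ all at once by conjugating the constructions of Sections~\ref{direct_images} and \ref{inverse_images} with the Poincar\'e--Verdier duality functor, exploiting Proposition~\ref{prop_obvious2} (which gives $D_X$ as an exact involution on $\Db\Perv_{mf}(X)$) and Proposition~\ref{prop_comp_real1}(iii) (which says that the realization functor intertwines $D_X$ on $\Db\Perv_{mf}(X)$ with $D_X$ on $\Dbm(X)$).

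More precisely, I set $H_{mf,!}(X)=\Db\Perv_{mf}(X)$ for every $X\in\Ob(\Sch/k)$, and for every $f:X\fl Y$ in $\Sch/k$ I define
\[
H_{mf,!}(f)=D_Y\circ H_{mf,*}(f)\circ D_X.
\]
Using the natural isomorphism $D^2\simeq\id$ (which derives trivially from the analogous statement in $\Perv_{mf}$, already noted after Proposition~\ref{prop_obvious2}) together with the $2$-functoriality of $H_{mf,*}$ from Proposition~\ref{prop_direct_images}, one gets canonical connection isomorphisms $H_{mf,!}(g)\circ H_{mf,!}(f)\simeq H_{mf,!}(g\circ f)$, and the cocycle identity for these isomorphisms follows formally from the corresponding identity for $H_{mf,*}$ and the involutivity of $D$. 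The natural transformation $\rho_f:R_Y\circ H_{mf,!}(f)\fl H_{m,!}(f)\circ R_X$ is then obtained as the composition
\[
R_Y\circ D_Y\circ H_{mf,*}(f)\circ D_X
\iso D_Y\circ R_X\circ H_{mf,*}(f)\circ D_X
\flnom{\gamma_f}D_Y\circ H_{m,*}(f)\circ R_X\circ D_X
\iso H_{m,!}(f)\circ R_X,
\]
where the outer isomorphisms come from Proposition~\ref{prop_comp_real1}(iii) and the middle one is $D_Y\circ\gamma_f\circ D_X$; since $\gamma_f$ is an isomorphism by Proposition~\ref{prop_direct_images}, so is $\rho_f$. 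Compatibility of $\rho$ with composition of morphisms reduces to the already-known compatibilities for $\gamma$ and for the realization-duality isomorphism, which is a routine diagram chase.

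For the compatibility with $\boxtimes$, I use that for $K_i\in\Db\Perv_{mf}(X_i)$ there is a canonical isomorphism $D_{X_1\times X_2}(K_1\boxtimes K_2)\simeq D_{X_1}(K_1)\boxtimes D_{X_2}(K_2)$ (which at the level of $\Perv_{mf}$ is part of the list of obvious stabilities, then derives trivially). Combining this with the analogous statement for $H_{mf,*}$ (Proposition~\ref{prop_boxtimes_direct_images}) and applying $D$ on the outside yields the desired K\"unneth isomorphism
\[
(f_1\times f_2)_!(K_1\boxtimes K_2)\iso(f_{1,!}K_1)\boxtimes(f_{2,!}K_2).
\]
Finally, to produce the global right adjoint I set $H_{mf}^!(f)=D_X\circ H_{mf}^*(f)\circ D_Y$, using Corollary~\ref{cor_inverse_images}. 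The unit and counit of the adjunction $(H_{mf,!}(f),H_{mf}^!(f))$ are defined by conjugating those of $(H_{mf}^*(f),H_{mf,*}(f))$ by the appropriate duality functors, and the triangle identities follow formally from those of the latter adjunction together with $D^2\simeq\id$.

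The only real subtlety is that the 2-functorial structure on $H_{mf,!}$ defined by conjugation must be shown to be the one that is naturally compatible with $\rho$; this is a pure diagram chase involving the involutivity of $D$, the 2-functoriality of $H_{mf,*}$ and $R$, and the naturality of $D^2\simeq\id$ (so no new ideas are needed, just patience). By Proposition~\ref{prop_real_cons}, conservativity of $R_X$ ensures that any morphism in $H_{mf,!}$-land that one might worry about is automatically an isomorphism as soon as its image under $R$ is, which shortcuts most verifications.
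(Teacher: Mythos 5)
Your proposal is correct, but it takes a different route from the paper. The paper disposes of this proposition in one line, saying it is proved ``just as in'' the sections on direct and inverse images: that is, it (implicitly) rebuilds $H_{mf,!}$ from scratch --- for $f$ with affine source one right-derives $\Hp^0 f_!$ (using the dual form of Beilinson's Theorem~\ref{basic_lemma} to produce $f_!$-acyclic coresolutions), glues via \v{C}ech-type complexes built from the $j_{J!}$, and then constructs the global right adjoint $H_{mf}^!$ as in Section~\ref{inverse_images}; the relation $f_*\simeq D_Y f_! D_X$ is only established afterwards, as Proposition~\ref{prop_duality}(i). You instead \emph{define} $H_{mf,!}(f)=D_Y\circ H_{mf,*}(f)\circ D_X$ and $H_{mf}^!(f)=D_X\circ H_{mf}^*(f)\circ D_Y$, so that Proposition~\ref{prop_duality}(i)--(ii) become tautologies and the $2$-functoriality, the comparison $\rho_f$, the $\boxtimes$-compatibility and the adjunction all follow formally from the already established properties of $H_{mf,*}$, $H_{mf}^*$, the exact involution $D_X$ on $\Db\Perv_{mf}(X)$ (Proposition~\ref{prop_obvious2} and the remarks following it) and Proposition~\ref{prop_comp_real1}(iii), with conservativity of $R_X$ shortcutting the verifications. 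This is legitimate and shorter; the only inputs you use that the paper does not state explicitly are the standard identities in $\Dbm$ that $f_!\simeq D_Y f_* D_X$ and $D(K_1\boxtimes K_2)\simeq D(K_1)\boxtimes D(K_2)$ (the latter is not literally on the list in Proposition~\ref{prop_obvious2}, though it holds in $\Perv_h$, restricts to the full subcategories $\Perv_{mf}$, and derives trivially by exactness) --- both are standard and are in any case implicitly invoked by the paper when it reduces Proposition~\ref{prop_duality}(i) to the case of $\Dbc$. What the paper's construction buys in exchange is an ``intrinsic'' colimit description of $f_!$ parallel to Definition~\ref{def_H_mf}, with the duality relation then a genuine comparison statement rather than a definition; in content the two constructions agree precisely because of Proposition~\ref{prop_duality}(i).
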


Moreover, by
Proposition \ref{prop_obvious2}, we have an exact contravariant endofunctor
$D_X$ of $\Db\Perv_{mf}(X)$ together with an isomorphism $D_X^2\simeq\id$,
for every $X\in\Ob(\Sch/k)$.

\begin{subprop}
Let $f:X\fl Y$ be a morphism of $k$-schemes.
\begin{itemize}
\item[(i)] We have a natural isomorphism $\alpha_f:f_*\iso D_Y\circ f_!\circ D_X$
such that, if $g:Y\fl Z$ is another morphism of $k$-schemes, then the
isomorphism $\alpha_{gf}:(gf)_*\simeq D_Z(gf)_! D_X$ is equal to the isomorphism
\[(gf)_*\simeq
g_* f_*\flnom{\alpha_g\alpha_f}
D_Z g_!D_YD_Yf_!D_X\simeq D_Z g_!f_!D_X\simeq
D_Z(gf)_!D_X\]
where the first and fourth arrows are given by the composition isomorphisms
of the $2$-functors $H_{mf,*}$ and $H_{mf,!}$, and the third arrow is given
by the isomorphism $D_Y^2\simeq\id$.

\item[(ii)]  We have a natural isomorphism $\beta_f:f^*\iso D_X\circ f^!
\circ D_Y$, where $f^!=H_{mf}^!(f)$,
such that, if $g:Y\fl Z$ is another morphism of $k$-schemes, then the
isomorphism $\beta_{gf}(gf)^*\simeq D_X(gf)^! D_Z$ is equal to the isomorphism
\[(gf)^*\simeq
f^*g^* \flnom{\beta_f\beta_g}
D_X f^!D_YD_Yg^!D_Z\simeq D_X f^!g^! D_Z\simeq
D_X(gf)^!D_Z\]
where the first and fourth arrows are given by the composition isomorphisms
of the $2$-functors $H_{mf}^*$ and $H_{mf}^!$, and the third arrow is given
by the isomorphism $D_Y^2\simeq\id$.

\item[(iii)] If $f$ is smooth and purely of relative dimension $d$, then
we have an natural isomorphism $f^![-d]\simeq f^*[d](d)$ of functors
$\Db\Perv_{mf}(Y)\fl \Db\Perv_{mf}(X)$.

\item[(iv)] If $f$ is smooth and purely of relative dimension $d$, then
the functor $f^*:\Db\Perv_{mf}(Y)\fl \Db\Perv_{mf}(X)$ admits a left
adjoint $f_\sharp$.

\item[(v)] If $i:X\fl Y$ is a closed immersion, then we have a natural
isomorphism $i_!\iso i_*$.

\end{itemize}

\label{prop_duality}
\end{subprop}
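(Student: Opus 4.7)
The overall strategy is to leverage the conservativity of $R_X$ (Proposition \ref{prop_real_cons}) and the fact that the functors $H_{mf}^*, H_{mf,*}, H_{mf,!}, H_{mf}^!$ are all constructed to commute with $R$ up to specified natural isomorphisms, so that each identity in the statement reduces to constructing the relevant natural transformation in $\Db\Perv_{mf}$ and invoking the corresponding classical identity in $\Dbm$ plus conservativity.

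For (v), both $i_*$ and $i_!$, for a closed immersion $i : X \to Y$, restrict to exact functors $\Perv_{mf}(X) \to \Perv_{mf}(Y)$ (Proposition \ref{prop_exactness}(iv), Proposition \ref{prop_obvious1}(ii), and the classical identity $i_! \simeq i_*$ applied perverse-sheaf-wise, combined with Corollary \ref{cor_f_*_preserves_Perv_mf} for $i_!$). By Corollary \ref{cor_affine_direct_images} and its $!$-analogue from Proposition \ref{prop_!}, both $H_{mf,*}(i)$ and $H_{mf,!}(i)$ coincide with the trivial derivations of these exact functors, which are canonically isomorphic since $i_! = i_*$ already on $\Perv_{mf}$. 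Turning to (ii), I would factor $f = p \circ \Gamma_f$ via the graph, a closed immersion $\Gamma_f : X \to X \times Y$ followed by the second projection $p$. For $\Gamma_f$, the iso $\Gamma_f^* \iso D_X \Gamma_f^! D_{X \times Y}$ again comes from trivial derivation of exact functors, both sides agreeing classically. For $p$, the functor $p^*$ is explicitly $\ungras_X \boxtimes (\cdot)$ (Corollary \ref{cor_inverse_image_projection}), and I would identify $D_{X \times Y} p^! D_Y$ as the same box-product functor using the compatibility of $\boxtimes$ with duality (Proposition \ref{prop_obvious2}) and with $f_!$ (from Proposition \ref{prop_!}). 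The composition of the two gives $\beta_f$, and then (i) is formal by uniqueness of adjoints: both $f_*$ and $D_Y f_! D_X$ are right adjoint to $f^*$ (using (ii)), hence canonically isomorphic, giving $\alpha_f$; the composition compatibility in (i) is inherited from that of $\beta$ and of the adjunction data.

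For (iii), using (ii) we write $f^! \simeq D_X f^* D_Y$; for $f$ smooth of relative dimension $d$, the functor $f^*[d]$ is t-exact on $\Perv_{mf}$ (Proposition \ref{prop_exactness}(v) together with Proposition \ref{prop_obvious1}(i)), so the classical identity $D_X f^* D_Y \simeq f^*[2d](d)$ lifts to $\Db\Perv_{mf}$ via the full faithfulness of $\eta^*$ on horizontal perverse sheaves (Proposition \ref{prop_eta_fully_faithful}) applied to exact functors on the abelian level, giving $f^![-d] \simeq f^*[d](d)$. Part (iv) follows immediately: set $f_\sharp := f_![2d](d)$; the adjunction $(f_!, f^!)$ combined with (iii) yields $\Hom(f_\sharp K, L) \simeq \Hom(K, f^!L[-2d](-d)) \simeq \Hom(K, f^*L)$ functorially. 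The main obstacle is the construction of $\beta_p$ for projections in (ii): one must identify $D_{X \times Y} p^! D_Y$ with $\ungras_X \boxtimes (\cdot)$ and verify that the adjunction $(p_!, D_{X \times Y}\ungras_X \boxtimes (\cdot))$ matches the canonical one, which requires the dual of Proposition \ref{prop_boxtimes_direct_images} for $f_!$ together with the compatibility of $\boxtimes$ with $D$. The subsequent compatibility of $\beta_f$ with composition of morphisms in $\Sch/k$ is then a routine but lengthy chase through the exchange and adjunction diagrams of the $2$-functors $H_{mf}^*$ and $H_{mf}^!$.
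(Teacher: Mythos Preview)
Your treatments of (iv) and (v) are essentially the paper's: the paper also takes $f_\sharp=f_![2d](d)$ and, for (v), observes that both $i_*$ and $i_!$ are t-exact for a closed immersion, so they are the trivial derivations of their restrictions to $\Perv_{mf}$, which already coincide.

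The genuine gap is in your argument for (ii), specifically for the closed immersion $\Gamma_f$. You write that the isomorphism $\Gamma_f^*\iso D_X\Gamma_f^!D_{X\times Y}$ ``comes from trivial derivation of exact functors.'' But for a closed immersion $i$, neither $i^*$ nor $i^!$ is t-exact on perverse sheaves: $i^*$ is only right t-exact and $i^!$ only left t-exact (Proposition~\ref{prop_exactness}(iii)). In $\Db\Perv_{mf}$ the functor $i^*$ is not derived from anything on $\Perv_{mf}$; it is constructed in Corollary~\ref{cor_i^*} as a left adjoint to $i_*$ via the auxiliary {\v C}ech-type complex $D^\bullet(K)$. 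So you cannot compare $\Gamma_f^*$ and $D_X\Gamma_f^! D_{X\times Y}$ on the abelian level. You could instead argue by adjunction: $D_X i^! D_{X\times Y}$ is left adjoint to $D_{X\times Y} i_! D_X$, so it suffices to identify $D_{X\times Y} i_! D_X\simeq i_*$. Since $i_!$ and $i_*$ are t-exact, this \emph{is} a comparison of exact functors on $\Perv_{mf}$ --- but notice that this is precisely statement (i) for $i$. So your route to (ii) actually passes through (i), at least for closed immersions.

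This is why the paper inverts your order. It proves (i) first, directly from the {\v C}ech construction of $H_{mf,*}(f)$ (Definition~\ref{def_H_mf}) and its dual for $H_{mf,!}(f)$: both are built out of the functors $\Hp^0 g_*$ and $\Hp^0 g_!$ for affine morphisms $g$, together with restrictions to open subsets, and the duality $D_Y\circ\Hp^0 g_!\circ D_X\simeq\Hp^0 g_*$ holds on $\Perv_{mf}$ since it holds on $\Perv_m$ and $\Perv_{mf}$ is a full subcategory. Then (ii) follows from (i) by uniqueness of adjoints, exactly as you derive (i) from (ii). The paper also proves (iii) directly (both $f^![-d]$ and $f^*[d](d)$ are t-exact via the $R$-compatibilities of Proposition~\ref{prop_!} and conservativity of $R_X$, and the isomorphism on $\Perv_{mf}$ follows from full faithfulness), rather than via (ii); this avoids the obstacle for projections that you flag at the end.
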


\begin{proof} 
Point (iii) follows from the fact that both functors are t-exact and that
such an isomorphism exists in the category of functors
$\Perv_{mf}(Y)\fl\Perv_{mf}(X)$ (because it does for mixed perverse sheaves
and the categories $\Perv_{mf}$ are full subcategories of the categories of
mixed perverse sheaves).

Point (iv) follows from point (iii) : take $f_\sharp=f_![2d](d)$.

Point (v) is proved like point (iii) : both functors are
$t$-exact, and the natural isomorphism exists when we see $i_!$ and $i_*$ as
functors from $\Perv_m(X)$ to $\Perv_m(Y)$.

Let's prove (i).
By the construction of $f_*$ in section \ref{direct_images}
(and point (iii) applied to inverse images by open immersions)
it suffices to prove the analogous result
for the functors $\Hp^0 f_*$ and $\Hp^0 f_!$ if $f$ is affine. But then this
follows from the case of the categories $\Dbc(X)$.

Point (ii) now follows from (i) and from the uniqueness of adjoint functors.
\end{proof}

\section{Tensor products and internal Homs}
\label{tensor}

\begin{defi}  Let $X$ be a $k$-scheme. We denote by $\Delta_X:X\fl X\times X$
the diagonal embedding. We define a functor $\otimes_X:(\Db\Perv_{mf}(X))^2
\fl \Db\Perv_{mf}(X)$ by $K\otimes_X L=\Delta_X^*(K\boxtimes L)$. 

\end{defi}

Note that it follows from Proposition \ref{prop_boxtimes_inverse_images}
that, for every morphism of $k$-schemes $f:X\fl Y$ and
all $K,L\in\Ob \Db\Perv_{mf}(Y)$, we have a canonical isomorphism
\[f^*(K\otimes_Y L)\simeq (f^* K)\otimes_X(f^*L).\]

\begin{prop} The operation $\otimes_X$ defined above makes
 $\Db\Perv_{mf}(X)$ into a symmetric mono\"idal
triangulated category. Also, the object $\ungras_X$ constructed
in Corollary \ref{cor_constant_sheaf} is a unit for $\otimes_X$,
and the functor $R_X:\Db\Perv_{mf}(X)\fl \Dbm(X)$
is symmetric mono\"idal unitary.

\end{prop}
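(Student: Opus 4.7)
The plan is to promote the ``external'' structure already carried by $\boxtimes$ — the associativity, commutativity, and unit $\ungras_{\Spec k}$ inherited from $\Dbc$ and remarked after Proposition~\ref{prop_obvious2} — to an internal symmetric monoidal structure on $\Db\Perv_{mf}(X)$ via the diagonal, using Proposition~\ref{prop_boxtimes_inverse_images} to commute $\Delta_X^*$ past $\boxtimes$.

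For the unit, let $p_2: X \times X \fl X$ be the second projection, so that $p_2 \circ \Delta_X = \id_X$ and, by Corollary~\ref{cor_inverse_image_projection}, $p_2^* K \simeq \ungras_X \boxtimes K$. Then
\[\ungras_X \otimes_X K = \Delta_X^*(\ungras_X \boxtimes K) \simeq \Delta_X^* p_2^* K \simeq (p_2 \circ \Delta_X)^* K = K,\]
using the $2$-functoriality of $H_{mf}^*$ from Corollary~\ref{cor_inverse_images}; the right unitor is symmetric. For the associator, let $\Delta_X^{(3)}: X \fl X \times X \times X$ be the triple diagonal. Its two factorizations through $\Delta_X \times \id$ and $\id \times \Delta_X$, combined with Proposition~\ref{prop_boxtimes_inverse_images} and the external associator of $\boxtimes$ on $\Db\Perv_{mf}(X \times X \times X)$, yield a chain
\[(K \otimes_X L) \otimes_X M \simeq (\Delta_X^{(3)})^*(K \boxtimes L \boxtimes M) \simeq K \otimes_X (L \otimes_X M).\]
The braiding is built similarly from the identity $\sigma \circ \Delta_X = \Delta_X$ for the swap $\sigma$ on $X \times X$ and the external braiding of $\boxtimes$.

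The compatibility of $R_X$ with these data will then be automatic: $R_X$ commutes with $\boxtimes$ (noted after Proposition~\ref{prop_obvious2}) and with inverse images (via the $\theta$'s of Corollary~\ref{cor_inverse_images}), and sends $\ungras_X$ to $\underline{E}_X$ by Corollary~\ref{cor_constant_sheaf}; naturality of all these isomorphisms forces $R_X$ to intertwine the associators, braidings, and unitors. What remains is to check the coherence axioms (pentagon, hexagon, triangle), which, by the construction above, reduce to the corresponding external coherence of $\boxtimes$ on $\Db\Perv_{mf}(X^{\times n})$; this external coherence is inherited from that of $\boxtimes$ on $\Dbc$ via the full embedding $\Perv_{mf} \subset \Perv$ and the exactness of $\boxtimes$ in Proposition~\ref{prop_obvious2}. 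The main obstacle I anticipate is precisely this coherence bookkeeping, i.e.\ carefully unwinding the definitions to see that the external pentagon and hexagon really do imply the internal ones after pulling back by $\Delta_X^{(n)}$; conservativity of $R_X$ provides a safety net for confirming that the constructed structural maps are isomorphisms, though not for deducing equalities of morphisms from $\Dbm(X)$.
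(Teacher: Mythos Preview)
Your proposal is correct and follows essentially the same approach as the paper: the unit computation via $p_2\Delta_X=\id_X$ is verbatim the paper's argument, and the associator/braiding are built from the external structure of $\boxtimes$ together with the compatibility of $\Delta_X^*$ with $\boxtimes$, exactly as the paper indicates (though the paper dispatches this in a single sentence, deferring to ``the commutativity and associativity of $\boxtimes$''). Your treatment of the coherence axioms and the caveat that conservativity of $R_X$ verifies only isomorphy, not equality of morphisms, is in fact more careful than the paper's own proof.
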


\begin{proof} The first statement follows easily from the commutativity
and associativity of $\boxtimes$ (which in turn follows from the similar
statement in $\Dbm(X)$, as $\boxtimes$ is exact).
Moreover,
for every $K\in\Ob \Db\Perv_{mf}(X)$, if $p:X\times X\fl X$ is the
second projection, then :
\[K\otimes_X\ungras_X=\Delta_X^*(K\boxtimes\ungras_X)=\Delta_X^* p^*K\simeq
(p\Delta_X)^*K\simeq K\]
because $p\Delta_X=\id_X$. 
This proves the second statement. Finally, the fact that $R_X$ is
mono\"idal follows from the fact that it preserves $\boxtimes$, and
the last statement of Corollary \ref{cor_constant_sheaf} 
(i.e., the isomorphism $R_X(\ungras_X)\simeq\underline{E}_X$)
implies that $R_X$ is
unitary.
\end{proof}

The main result of this section in the following :

\begin{prop} For every $k$-scheme $X$ and every $K\in\Ob \Db\Perv_{mf}(X)$,
the endofunctor $K\otimes_X\cdot$ of $\Db\Perv_{mf}(X)$ has a right adjoint
$\Homf(K,\cdot)$,
given by $L\fle D_X(K\otimes_X D_X(L))$.
Moreover, for all $K,L,M\in\Ob\Db\Perv_{mf}(X)$, we have a commutative
diagram
\[\xymatrix{R\Hom_{\Db\Perv_{mf}(X)}(K\Ltimes_X L,M)\ar[r]^-\sim\ar[d]_{R_X}& 
R\Hom_{\Db\Perv_{mf}(X)}
(L,D_X(K\Ltimes_X D_X M))\ar[d]^{R_X}\\
R\Hom_{\Dbm(X)}(R_X(K)\Ltimes_X R_X(L),R_X(M))\ar[r]_-\sim& R\Hom_{\Dbm(X)}
(R_X(L),D_X(R_X(K)\Ltimes_X D_X R_X(M)))
}\]
where the horizontal arrows are the adjunction isomorphisms
(see Lemma \ref{lemma_tens1} for the identification
$D_X(R_X(K)\Ltimes_X D_X R_X(M))=\Homf_X(R_X(K),R_X(M))$).

\label{prop_internal_Hom}
\end{prop}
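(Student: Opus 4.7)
The plan is to first reformulate the proposed internal Hom. By Proposition \ref{prop_duality}(ii) applied to $\Delta_X: X \to X\times X$ one has $D_X \circ \Delta_X^* \simeq \Delta_X^! \circ D_{X\times X}$; combined with the compatibility $D_{X\times X}(K \boxtimes D_X L) \simeq D_X K \boxtimes L$ (established in $\Dbm(X\times X)$ by standard duality and transported to $\Db\Perv_{mf}(X\times X)$ by conservativity of $R_{X\times X}$ from Proposition \ref{prop_real_cons}), this yields the alternative expression
\[
\Homf_X(K, L) \simeq \Delta_X^!(D_X K \boxtimes L).
\]
The adjunctions $\Delta_X^* \dashv \Delta_{X,*}$ (Corollary \ref{cor_inverse_images}) and $\Delta_{X,!} = \Delta_{X,*} \dashv \Delta_X^!$ (Proposition \ref{prop_!} together with Proposition \ref{prop_duality}(v), valid since $\Delta_X$ is a closed immersion) then reduce the desired adjunction to the construction of a natural bijection
\[
(*)\quad \Hom_{\Db\Perv_{mf}(X\times X)}(K\boxtimes L, \Delta_{X,*}M) \simeq \Hom_{\Db\Perv_{mf}(X\times X)}(\Delta_{X,*}L, D_XK\boxtimes M),
\]
functorial in $K$, $L$, $M$.

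The heart of the proof is the construction of $(*)$. Since the realization $R_X$ is only conservative and not fully faithful (indeed, $\Perv_{mf}(X)$ is not stable by extensions in $\Perv_m(X)$, so even the inclusion $\Db\Perv_{mf}(X) \to \Db\Perv_m(X)$ fails to be fully faithful), one cannot simply lift morphisms from $\Dbm(X\times X)$. The strategy is to build, as the essential input, a canonical ``coevaluation'' morphism
\[
\mathrm{coev}_K : \Delta_{X,*}\ungras_X \to D_XK \boxtimes K \qquad \text{in } \Db\Perv_{mf}(X\times X),
\]
lifting the classical Verdier coevaluation. By the adjunction $\Delta_{X,!}\dashv \Delta_X^!$, constructing $\mathrm{coev}_K$ amounts to producing a canonical ``tautological'' morphism $\ungras_X \to \Homf_X(K, K)$ for each $K$, functorial in $K$. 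This is the main technical step and proceeds by d\'evissage: it is immediate for $K = \ungras_X$, and for a simple object $K = j_{!*}L[\dim Z]$ (with $j: Z\hookrightarrow X$ a connected regular locally closed immersion and $L$ a simple lisse sheaf on $Z$) one builds the pairing from the standard duality pairing $L \otimes L^\vee \to \underline{E}_Z$ combined with the intermediate extension and the t-exactness of $D_X$; the general case follows by induction on the length of the weight filtration together with the stability of $\Perv_{mf}$ by subquotients.

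Given $\mathrm{coev}_K$, the natural transformation $(*)$ is defined by the following composition: starting from $\phi: K\boxtimes L \to \Delta_{X,*}M$, pass to its adjoint $\bar\phi: K \otimes_X L \to M$, then compose $\mathrm{coev}_K$ (suitably tensored with $L$ on the external factor, via $X\times X\times X$) with $D_XK \boxtimes \bar\phi$; the inverse is built symmetrically from the dual ``evaluation'' obtained from $\mathrm{coev}_K$ by $D_{X\times X}$. By Yoneda in the variable $M$, giving the bijection $(*)$ is equivalent to exhibiting a unit morphism $\eta_{K,L}: L \to \Homf_X(K, K\otimes_X L)$, which is obtained from $\mathrm{coev}_K$ by functoriality. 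To verify that $(\Homf_X(K, \cdot), \eta)$ is indeed an adjoint pair, one constructs the corresponding counit from the evaluation and checks the triangle identities; both triangle identities can be checked after applying $R_X$, where they reduce to the classical ones, and one then promotes the conclusion to $\Db\Perv_{mf}$ by conservativity applied to the cones of the candidate identities.

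The final commutative diagram in the statement is automatic, since $(*)$ was constructed compatibly with $R_{X\times X}$ and the classical adjunction in $\Dbm$ by design. The principal obstacle throughout is the intrinsic construction of $\mathrm{coev}_K$: all other steps are formal manipulations of adjunctions or verifications via conservativity, but $\mathrm{coev}_K$ must be produced by genuine construction since, as noted above, it cannot be lifted formally from $\Dbm$.
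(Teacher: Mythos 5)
Your overall shape (lift a canonical (co)evaluation morphism to $\Db\Perv_{mf}$, then use it to write down the adjunction map) is the same as the paper's, but two steps that you treat as routine are exactly where the real work lies, and as written they do not go through. First, the construction of $\mathrm{coev}_K$ ``by d\'evissage'' is not justified: extensions and cones in a triangulated category are not functorial, so an induction on the length of the weight filtration produces, at best, some morphism $\ungras_X\fl\Homf_X(K,K)$ after non-canonical choices, and your claim of functoriality in $K$ (which is indispensable later) has no mechanism behind it. Note also that $\ungras_X$ is in general a genuine complex, so there is no concentration statement making $\Hom_{\Db\Perv_{mf}}(\ungras_X,\Homf_X(K,K))$ accessible. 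The paper resolves precisely this canonicity problem, but for the \emph{evaluation} $\iota_K:K\Ltimes_X D_X(K)\fl a^!\ungras_{\Spec k}$ rather than the coevaluation: for $K,L$ perverse one has $\Ext^{<0}(K\Ltimes_X L,a^!\ungras_{\Spec k})=0$ (Lemmas \ref{lemma_tens2}--\ref{lemma_tens3}) and, crucially, $R_X$ is \emph{bijective} on $\Hom(K\Ltimes_X L,D_X(\ungras_X))$ (Lemma \ref{lemma_tens4}); a spectral sequence in the filtered derived category (Lemmas \ref{lemma_tens5}--\ref{lemma_tens8}) then lifts $\iota_K$ to arbitrary complexes \emph{uniquely}, and uniqueness is what gives functoriality. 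Your coevaluation does not sit in a Hom group enjoying such vanishing or such partial full faithfulness, so the analogous lifting is not available by your argument.

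Second, your verification of the adjunction is circular where it matters. You propose to check the triangle identities ``after applying $R_X$'' and promote them ``by conservativity applied to the cones''. Conservativity lets you upgrade ``$R_X(f)$ is an isomorphism'' to ``$f$ is an isomorphism''; it does not let you upgrade an equality of two morphisms (such as a composite being the identity), because $R_X$ is not faithful --- and it cannot give bijectivity of the comparison map on Hom groups, because $\Hom_{\Db\Perv_{mf}}$ does not map isomorphically to $\Hom_{\Dbm}$ (this failure of full faithfulness is the whole point of the paper). This is why the paper does not argue formally here: it defines $u_{K,L}$ directly from $\iota_K$ (Lemma \ref{lemma_tens9}) and proves it is an isomorphism by a genuine computation --- an explicit Yoneda--$\Ext$ argument for (shifted) lisse sheaves on a smooth scheme, followed by d\'evissage via exact triangles, open--closed decompositions and Noetherian induction. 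Some substitute for that computation is needed in your approach; checking things after $R_X$ cannot replace it.
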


In the lemmas that follow, we will denote the structural morphism
$X\fl\Spec k$ by $a$.
Remember that we write $K_X=a^!\underline{E}_{\Spec k}$ for the dualizing
complex in $\Dbh(X)$. This is an object of $\Dbm(X)$ (because
$\underline{E}_{\Spec k}$ clearly is a mixed complex, and $a^!$
preserves mixed complexes).

\begin{lemma} In the category $\Dbh(X)$,
we have a canonical isomorphism, functorial in $K$ and $L$ :
\[\Homf_X(K\Ltimes_X L,K_X)\simeq
\Homf_X(K,D_X(L)).\]
Moreover, these complexes are concentrated in perverse degree $\geq 0$
if $K$ and $L$ are perverse.

\label{lemma_tens1}
\end{lemma}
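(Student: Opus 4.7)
The plan is to establish each half separately; both are routine consequences of standard yoga once we verify they are compatible with the horizontal structure.

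For the first isomorphism, I would simply invoke tensor-Hom adjunction. In the ambient category $\D(X_\proet,E)$ the operations $\otimes$ and $\Homf_X$ are honest derived bifunctors (see \ref{l-adic_complexes}), so we have a functorial adjunction isomorphism
\[\Homf_X(K\Ltimes L,M)\simeq\Homf_X(K,\Homf_X(L,M)).\]
Specializing to $M=K_X$ and using $D_X=\Homf_X(\cdot,K_X)$ yields
\[\Homf_X(K\Ltimes_X L,K_X)\simeq\Homf_X(K,D_X(L)).\]
Since $K$, $L$ and $K_X$ are all horizontal and the six operations preserve $\Dbh$ (section \ref{section_def_hor}, following \cite{Hu}), the isomorphism takes place in $\Dbh(X)$.

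For the perversity statement, I would rewrite the left-hand side as $D_X(K\Ltimes_X L)$ (either by the adjunction above with $K$ replaced by $K\Ltimes L$ and $L$ by the unit, or just by the definition of $D_X$ together with the tensor-Hom adjunction). By construction $K\Ltimes_X L=\Delta_X^*(K\boxtimes L)$, where $\Delta_X\colon X\fl X\times X$ is the diagonal. Now $K\boxtimes L$ is perverse on $X\times X$ by Proposition \ref{prop_exactness}(vi), and $\Delta_X$ is a closed immersion, whose fibers have dimension zero; so by Proposition \ref{prop_exactness}(iii) the functor $\Delta_X^*$ has perverse cohomological amplitude $\leq 0$. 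Hence $K\Ltimes_X L$ sits in perverse degree $\leq 0$, and applying the t-exact duality functor $D_X$ (Proposition \ref{prop_exactness}(i)) flips this to perverse degree $\geq 0$.

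There is no substantive obstacle here: every ingredient is either formal (tensor-Hom adjunction) or already recorded in Proposition \ref{prop_exactness}. The only point worth flagging is that the adjunction takes place in $\Dbh(X)$ rather than merely in $\Dbc(X)$, but this is immediate from the fact that each functor involved already preserves the horizontal subcategory.
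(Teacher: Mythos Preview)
Your proposal is correct and follows essentially the same route as the paper: tensor--Hom adjunction with $M=K_X$ for the first part, and $K\Ltimes_X L=\Delta_X^*(K\boxtimes L)\in\Dp^{\leq 0}$ followed by duality for the second. One small wobble: you establish the adjunction in $\D(X_\proet,E)$ and then assert it lives in $\Dbh(X)$ because the objects and operations are horizontal, but $\eta^*$ is not fully faithful on all of $\Dbh(X)$, so this step needs a word more---the paper instead cites Ekedahl's Theorem 6.3(ii) directly and invokes the remark after \cite{Hu} Definition 1.2 (already recorded in section \ref{section_def_hor}) that these properties pass to the $2$-colimit defining $\Dbh(X)$.
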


In particular, if we replace $L$ by $D_X(L)$, we get a natural
isomorphism
\[\Homf_X(K,L)\simeq D_X(K\Ltimes_X D_X(L)),\]
which explains the definition of the internal Hom given in
Proposition \ref{prop_internal_Hom}.

\begin{proof} By Theorem 6.3(ii) of \cite{Ek} (see also the
remark following Definition 1.2 of \cite{Hu} for the extension of this to the
category $\Dbh(X)$),
we have a natural isomorphism
\[\Homf_X(K\Ltimes_X L,M)=\Homf_X(K,\Homf_X(L,M))\]
for all $K,L,M\in\Ob\Dbh(X)$.
Applying this to $M=K_X$
gives the desired isomorphism.

If $K$ and $L$ are perverse, then the complex $K\Ltimes_X L$ is
concentrated in perverse degree $\leq 0$ (because it is equal
by definition to $\Delta_X^*(K\boxtimes L)$, where $\Delta_X:X\fl X\times X$
is the diagonal morphism, and $\Delta_X^*$ is right t-exact), so
its dual $\Homf_X(K\Ltimes_X L,K_X)$ is concentrated in perverse
degree $\geq 0$.
\end{proof}

\begin{lemma} If $K,L\in\Ob\Perv_h(X)$, then
the complex $a_!(K\Ltimes_X L)\in\Dbh(\Spec k)$ is concentrated in
degree $\leq 0$, and so the adjunction $(a_!,a^!)$ gives a canonical
isomorphism
\[\Hom_{\Dbh(X)}(K\Ltimes_X L,K_X)\simeq\Hom_{\Perv_h(\Spec k)}(\H^0(a_{!}
(K\Ltimes_X L)),\underline{E}_{\Spec k})\]
and equalities
\[\Ext^i_{\Dbh(X)}(K\Ltimes_X L,K_X)=0\]
for every $i<0$.

\label{lemma_tens2}
\end{lemma}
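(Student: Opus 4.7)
The plan is to reduce everything to the concentration claim $a_!(K\otimes_X L)\in\Dp^{\leq 0}(\Dbh(\Spec k))$. Once this is proved, the adjunction $(a_!,a^!)$ with $K_X=a^!\underline E_{\Spec k}$, combined with the fact that $\underline E_{\Spec k}\in\Perv_h(\Spec k)$ lies in degree $0$, gives by a standard truncation argument both the $\Ext^{<0}$-vanishing and the identification $\Hom_{\Dbh(X)}(K\otimes_X L,K_X)\simeq\Hom_{\Perv_h(\Spec k)}(\Hp^0 a_!(K\otimes_X L),\underline E_{\Spec k})$.

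I would prove the concentration by duality. Since $\eta^*$ is t-exact and conservative (Proposition \ref{prop_real_cons}), and on $\Dbc(\Spec k)$ the perverse t-structure coincides with the ordinary one (as $\delta\equiv 0$ there), it suffices to show $H^i a_!(K\otimes_X L)=0$ for $i>0$ in the ordinary t-structure. By Poincar\'e-Verdier duality on $X$, expressed as $D_{\Spec k}\circ a_!\simeq a_*\circ D_X$, this cohomology sheaf is Verdier-dual to $H^{-i}(X,D_X(K\otimes_X L))$. Applying Lemma \ref{lemma_tens1} (with $L$ replaced by $D_X L$, using $D_X^2\simeq\id$) yields $D_X(K\otimes_X L)\simeq\Homf_X(K,D_X L)$. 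The standard identification of the hypercohomology of an internal $R\Hom$ with the corresponding Ext in the ambient triangulated category then gives $H^{-i}(X,\Homf_X(K,D_X L))\simeq\Ext^{-i}_{\Dbh(X)}(K,D_X L)$, which vanishes for $-i<0$ because $K$ and $D_X L$ both lie in the heart $\Perv_h(X)$ of the perverse t-structure on $\Dbh(X)$.

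The main technical point to verify is that Poincar\'e-Verdier duality and the hypercohomology-Ext identification transport correctly from $\Dbc$ to $\Dbh$; both should follow from the filtered $2$-colimit description of $\Dbh$ and the t-exactness of the restriction functors $u^*$ that appear in it.
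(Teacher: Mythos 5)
Your proposal is correct and follows essentially the same route as the paper: you dualize $a_!(K\otimes_X L)$ to $a_*D_X(K\otimes_X L)$, identify $D_X(K\otimes_X L)\simeq\Homf_X(K,D_X L)$ via Lemma \ref{lemma_tens1}, and conclude from the vanishing of negative Ext groups between the perverse objects $K$ and $D_X L$, exactly as in the paper's proof. The only cosmetic difference is your explicit reduction through $\eta^*$ and the ordinary t-structure over $\Spec k$, which the paper leaves implicit.
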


We could also have deduced the vanishing of $\Ext^i_{\Dbh(X)}(K
\Ltimes_X L,K_X)$ for $i<0$ from the adjunction isomorphism
$\Ext^i_{\Dbh(X)}(K\Ltimes_X L,K_X)=\Ext^i(K,D_X(L))$. (But we
won't be able to do this in the next lemma, which is the analogous
statement in $\Db\Perv_{mf}(X)$.)

\begin{proof} We have
\[a_!(K\Ltimes_X L)\simeq D_{\Spec k}(a_* D_X(K\Ltimes_X L))\simeq
D_{\Spec k}(a_* \Homf_X(K,D_X(L))),\]
where the second isomorphism comes from Lemma \ref{lemma_tens1}.
So it suffices to show that $a_* \Homf_X(K,D_X(L)))
=R\Hom_{\Dbh(X)}(K,D_X(L))$ is concentrated in degree $\geq 0$.
As $K$ and $D_X(L)$ are perverse, this just follows from the definition
of a t-structure.

Now, using the adjunction $(a_!,a^!)$ and the fact that $K_X=a^!\underline{E}
_{\Spec k}$, we get a canonical isomorphism
\[\Hom_{\Dbh(X)}(K\Ltimes_X L,K_X)=\Hom_{\Dbh(\Spec k)}(a_!(K\Ltimes
_X L),\underline{E}_{\Spec k}).\]
The second statement follows from this and from the fact that
$a_!(K\Ltimes_X L)$ is concentrated in degree $\leq 0$.
\end{proof}

\begin{lemma} If $K,L\in\Ob\Perv_{mf}(X)$, then
the complex $a_!(K\Ltimes_X L)\in\Db\Perv_{mf}(\Spec k)$ is concentrated in
degree $\leq 0$, and so the adjunction $(a_!,a^!)$ gives a canonical
isomorphism
\[\Hom_{\\Db\Perv_{mf}(X)}(K\Ltimes_X L,a^!\ungras_{\Spec k})\simeq
\Hom_{\Perv_{mf}(\Spec k)}(\H^0(a_{!}
(K\Ltimes_X L)),\ungras_{\Spec k})\]
and equalities
\[\Ext^i_{\Db\Perv_{mf}(X)}(K\Ltimes_X L,a^!\ungras_X)=0\]
for every $i<0$.

\label{lemma_tens3}
\end{lemma}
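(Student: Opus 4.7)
The plan is to reduce the statement to Lemma \ref{lemma_tens2} by exploiting the adjunction $(a_!,a^!)$ already available on $\Db\Perv_{mf}$ together with the conservativity and $t$-exactness of the realization functor $R$.

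First, since $H_{mf,!}$ admits a global right adjoint $H_{mf}^!$ by Proposition \ref{prop_!}, the adjunction yields
\[
\Ext^i_{\Db\Perv_{mf}(X)}(K\Ltimes_X L, a^!\ungras_{\Spec k})\simeq \Hom_{\Db\Perv_{mf}(\Spec k)}(a_!(K\Ltimes_X L),\ungras_{\Spec k}[i]).
\]
So everything is controlled by the position of $M:=a_!(K\Ltimes_X L)$ relative to the canonical $t$-structure on $\Db\Perv_{mf}(\Spec k)$, and I claim that $M\in\Db^{\leq 0}\Perv_{mf}(\Spec k)$.

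To see this, I would unwind the definition $K\Ltimes_X L=\Delta_X^*(K\boxtimes L)$ and chain the compatibilities of $R$ with the sheaf operations: Proposition \ref{prop_boxtimes_direct_images} (or \ref{prop_boxtimes_inverse_images}) identifies $R$ with $\boxtimes$, Corollary \ref{cor_i^*} provides the isomorphism $\theta_{\Delta_X}\colon R_X\circ\Delta_X^*\iso\Delta_X^*\circ R_{X\times X}$, and Proposition \ref{prop_!} gives the isomorphism $\rho_a\colon R_{\Spec k}\circ a_!\iso a_!\circ R_X$. Stringing these together produces an isomorphism
\[
R_{\Spec k}(M)\simeq a_!\bigl(R_X(K)\Ltimes_X R_X(L)\bigr)\quad\text{in }\Dbm(\Spec k).
\]
Since $R_X$ takes $\Perv_{mf}(X)$ to $\Perv_m(X)\subset\Perv_h(X)$ (placed in degree $0$), Lemma \ref{lemma_tens2} applies to show that this complex lies in perverse degree $\leq 0$. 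The realization functor $R_{\Spec k}\colon\Db\Perv_{mf}(\Spec k)\fl\Dbm(\Spec k)$ is $t$-exact (with the canonical $t$-structure on the source and the perverse one on the target) and conservative by Proposition \ref{prop_real_cons}, so the family of canonical cohomology functors detects the $t$-structure on $\Db\Perv_{mf}(\Spec k)$. Hence $M\in\Db^{\leq 0}\Perv_{mf}(\Spec k)$.

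Given this, the $t$-structure axioms finish the argument: for $i<0$ the object $\ungras_{\Spec k}[i]$ is in strictly positive degree, so $\Hom_{\Db\Perv_{mf}(\Spec k)}(M,\ungras_{\Spec k}[i])=0$; and for $i=0$ the standard formula for morphisms from a complex in non-positive degrees into a heart object gives
\[
\Hom_{\Db\Perv_{mf}(\Spec k)}(M,\ungras_{\Spec k})\simeq\Hom_{\Perv_{mf}(\Spec k)}(\H^0 M,\ungras_{\Spec k}),
\]
which is the asserted isomorphism.

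There is no real obstacle, only the mild bookkeeping issue that (contrary to Lemma \ref{lemma_tens2}) we cannot invoke the identification $\Homf(K\Ltimes_X L,\cdot)\simeq\Homf(K,D_X(\cdot))$ to obtain the Ext vanishing for free, because that internal-Hom formula is precisely what we are trying to construct in Proposition \ref{prop_internal_Hom}. Using the adjunction $(a_!,a^!)$ directly sidesteps this circularity, and all the required compatibilities of $R$ with $\boxtimes$, closed pullback and $a_!$ have already been established in the preceding sections.
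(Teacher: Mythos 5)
Your proposal is correct and follows essentially the same route as the paper: identify $R_{\Spec k}(a_!(K\Ltimes_X L))$ with $a_!(R_X(K)\Ltimes_X R_X(L))$ via the compatibilities of the realization functor, invoke Lemma \ref{lemma_tens2} and conservativity to place $a_!(K\Ltimes_X L)$ in degrees $\leq 0$, and then conclude by the adjunction $(a_!,a^!)$ in $\Db\Perv_{mf}$ together with the $t$-structure axioms. Your explicit remark about avoiding the internal-Hom adjunction (to prevent circularity with Proposition \ref{prop_internal_Hom}) matches the paper's own comment after Lemma \ref{lemma_tens2}.
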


\begin{proof} We have
\[R_X(a_!(K\Ltimes_X L))\simeq a_!(R_X(K)\Ltimes_X R_X(L)),\]
so $R_X(a_!(K\Ltimes_X L))$ is concentrated in degree
$\leq 0$ by Lemma \ref{lemma_tens2}. The first statement follows from
the conservativity of $R_X$. The second statement is proved exactly
as the second statement of Lemma \ref{lemma_tens2}, using the
adjunction $(a_!,a^!)$ in the categories $\Db\Perv_{mf}$.
\end{proof}

\begin{lemma} Let $K,L\in\Ob\Perv_{mf}(X)$, write $K'=R_X(K)$,
$L'=R_X(L)$. Then 
the morphism
\[R_X:\Hom_{\Db\Perv_{mf}(X)}(K\Ltimes_X L,D_X(\ungras_X))\fl
\Hom_{\Dbh(X)}(K'\Ltimes_X L',K_X)\]
is an isomorphism. In particular,
there exists a unique isomorphism
$\alpha_{K,L}:\Hom_{\Db\Perv_{mf}(X)}(K\Ltimes_X L,D_X(\ungras_X))\iso
\Hom_{\Perv_{mf}(X)}(K,D_X(L))$, making the following diagram
commute
\[\xymatrix{\Hom_{\Db\Perv_{mf}(X)}(K\Ltimes_X L,D_X(\ungras_X))\ar[r]^-{
\alpha_{K,L}}
\ar[d]_{R_X} &
\Hom_{\Perv_{mf}(X)}(K,D_X(L))\ar[d]^{R_X} \\
\Hom_{\Dbh(X)}(K'\Ltimes_X L',K_X)\ar[r]_-{\sim} &
\Hom_{\Perv_h(X)}(K',D_X(L'))
}\]
where the bottom isomorphism comes from applying the functor $\H^0(X,.)$ to the
isomorphism of Lemma \ref{lemma_tens1}.

\label{lemma_tens4}
\end{lemma}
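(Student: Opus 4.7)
The plan is to prove the first assertion (that the downward arrow $R_X$ on the left is a bijection) directly by chasing both Hom groups through the adjunction $(a_!,a^!)$, where $a:X\fl\Spec k$ is the structural morphism; the second assertion then follows formally because the right vertical arrow in the diagram is also a bijection (the functor $R_X$ restricts to the fully faithful embedding $\Perv_{mf}(X)\hookrightarrow\Perv_h(X)$), and the bottom horizontal arrow is an isomorphism by Lemma \ref{lemma_tens1}, so there is a unique $\alpha_{K,L}$ making the square commute.

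First I would identify the target of the map in the left column. Since $D_{\Spec k}(\ungras_{\Spec k})\simeq\ungras_{\Spec k}$ (because $\Spec k$ is a point, so $K_{\Spec k}=\underline{E}_{\Spec k}$), applying Proposition \ref{prop_duality}(ii) to the structure morphism $a$ and the object $\ungras_{\Spec k}$ gives a canonical isomorphism $D_X(\ungras_X)\simeq a^!\ungras_{\Spec k}$ in $\Db\Perv_{mf}(X)$, compatible via $R_X$ with the standard identification $D_X(\underline{E}_X)\simeq K_X = a^!\underline{E}_{\Spec k}$ in $\Dbh(X)$. Then, using the adjunction $(a_!,a^!)$ provided by Proposition \ref{prop_!} on the left and the usual adjunction on the right, I would rewrite
\[
\Hom_{\Db\Perv_{mf}(X)}(K\otimes_X L,\,D_X(\ungras_X))\iso
\Hom_{\Db\Perv_{mf}(\Spec k)}(a_!(K\otimes_X L),\,\ungras_{\Spec k})
\]
and similarly for the right column. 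These adjunctions are compatible with $R$ (it is a morphism of crossed functors on these pieces), so the map $R_X$ of the lemma is identified with the map induced by $R_{\Spec k}$ on Hom groups on $\Spec k$.

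Next I would apply Lemma \ref{lemma_tens3} on the left and Lemma \ref{lemma_tens2} on the right to see that both Hom groups collapse to Hom groups in the heart:
\[
\Hom_{\Perv_{mf}(\Spec k)}(\Hp^0 a_!(K\otimes_X L),\,\ungras_{\Spec k})\qquad\text{and}\qquad
\Hom_{\Perv_h(\Spec k)}(\Hp^0 a_!(K'\otimes_X L'),\,\underline{E}_{\Spec k}).
\]
To compare them, I would use that $R$ is t-exact, is monoidal (so compatible with $\otimes_X$, by the proposition proved before Lemma \ref{lemma_tens1}), commutes with $a_!$ (Proposition \ref{prop_!}), and sends $\ungras_{\Spec k}$ to $\underline{E}_{\Spec k}$ (Corollary \ref{cor_constant_sheaf}). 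These compatibilities yield a canonical isomorphism $R_{\Spec k}(\Hp^0 a_!(K\otimes_X L))\simeq\Hp^0 a_!(K'\otimes_X L')$ identifying the source of the relevant Hom on both sides. The map in question is then induced by the fully faithful embedding $\Perv_{mf}(\Spec k)\hookrightarrow\Perv_h(\Spec k)$, which is a bijection on Hom sets; this establishes the first claim.

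Once the first claim is proved, the second is immediate: in the diagram of the lemma the bottom map is an isomorphism by Lemma \ref{lemma_tens1} (applied on $\Spec k$ after taking $\H^0(X,-)$, using that both complexes are concentrated in perverse degree $\geq 0$), the right vertical is a bijection by full faithfulness of $\Perv_{mf}(X)\hookrightarrow\Perv_h(X)$, and the left vertical is a bijection by the first claim; hence there is a unique $\alpha_{K,L}$ making the square commute, and it is automatically an isomorphism. The main obstacle will be bookkeeping, i.e.\ checking that all the structural isomorphisms used (the identification $D_X(\ungras_X)\simeq a^!\ungras_{\Spec k}$, the compatibility of $R$ with $\otimes_X$, $a_!$, $a^!$, and $D$, and the identification of the two adjunctions via $R$) are coherent, so that the composite bijection really does sit in the commutative square of the statement. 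All of these compatibilities have been constructed earlier in the paper, so no new geometric input is needed.
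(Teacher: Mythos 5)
Your proposal is correct and follows essentially the same route as the paper: reduce via the fully faithful embedding $\Perv_{mf}(X)\subset\Perv_h(X)$ to the left vertical map, then use Lemmas \ref{lemma_tens2} and \ref{lemma_tens3} together with the adjunction $(a_!,a^!)$ to rewrite both $\Hom$ groups as $\Hom$s in the hearts over $\Spec k$, where $R_{\Spec k}$ is a bijection by full faithfulness of $\Perv_{mf}(\Spec k)\subset\Perv_h(\Spec k)$. Your extra bookkeeping (the identification $D_X(\ungras_X)\simeq a^!\ungras_{\Spec k}$ and the compatibility of $R$ with $a_!$, $\otimes_X$ and the adjunctions) just makes explicit what the paper's commutative diagram uses implicitly.
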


\begin{proof} As $\Perv_{mf}(X)$ is a full subcategory of
$\Perv_h(X)$, the morphism $R_X:\Hom_{\Perv_{mf}(X)}(K,D_X(L))\fl
\Hom_{\Perv_h(X)}(K',D_X(L'))$ is an isomorphism. So we just need
to show that
\[R_X:\Hom_{\Db\Perv_{mf}(X)}(K\Ltimes_X L,D_X(\ungras_X))\fl
\Hom_{\Dbh(X)}(K'\Ltimes_X L',K_X)\]
is an isomorphism. 
By Lemmas \ref{lemma_tens2} and \ref{lemma_tens3}, we have a commutative
diagram
\[\xymatrix{\Hom_{\Db\Perv_{mf}(X)}(K\Ltimes_X L,a^!\ungras_{\Spec k})
\ar[r]^-\sim\ar[d]_{R_X}&\Hom_{\Perv_{mf}(\Spec k)}(\H^0(a_{!}
(K\Ltimes_X L)),\ungras_{\Spec k})\ar[d]^{R_{\Spec k}}\\
\Hom_{\Dbh(X)}(K'\Ltimes_X L',K_X)\ar[r]_-\sim &
\Hom_{\Perv_{h}(\Spec k)}(\H^0(a_{!}
(K'\Ltimes_X L')),\underline{E}_{\Spec k})
}\]
The right vertical map in this diagram is an isomorphism because $\Perv_{mf}(
\Spec k)$ is a full subcategory of $\Perv_h(\Spec k)$, so the left vertical
map is also an isomorphism.
\end{proof}

We use the formalism of filtered derived categories, which is recalled at the beginning
of Section~\ref{section_f_cat}.
Let $\Af$ be an abelian category, and let $\DFb(\Af)$ be its bounded filtered
derived category. Let us recall the spectral sequence of \cite{BBD}
(3.1.3.4) : If $K$ and $L$ are two objects of $\DFb(A)$, then we have a spectral sequence
\[E_1^{pq}=\bigoplus_{j-i=p}\Ext^{p+q}_{\D(\Af)}
(\Gr_F^i K,\Gr_F^j L)\Longrightarrow
\Ext^{p+q}_{\D(\Af)}(\omega(K),\omega(L)).\]
Remember that $\omega:\DFb(\Af)\fl\Db(\Af)$ is the functor that forgets the
filtration.

\begin{lemma} 
Let $K_1^\bullet,K_2^\bullet$ be bounded complexes
of objects of $\Perv_h(X)$, and let $K_1,K_2$ be their images by
$\real:\Db\Perv_h(X)\fl\Dbh(X)$. Then, for every object $L$ of $\Dbh(X)$,
we have a spectral sequence
\[E_1^{pq}=\bigoplus_{a-b=-p}\Ext^q_{\Dbh(X)}(K_1^a\Ltimes_X D_X(K_2^b),L)
\Longrightarrow\Ext^{p+q}_{\Dbh(X)}(K_1\Ltimes_X D_X(K_2),L).\]

\label{lemma_tens5}
\end{lemma}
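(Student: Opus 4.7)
The strategy is to lift $K^\bullet$ to the filtered derived category and apply the standard BBD spectral sequence $(3.1.3.4)$ of \cite{BBD}.

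By Theorem \ref{thm_real} applied to the subcategory $\Dbh(X) \subset \D(X_\proet, E)$ with its perverse t-structure, the bounded complex $K^\bullet$ of horizontal perverse sheaves lifts to a filtered object $\widetilde{K} \in \DF$ (with $\DF$ the corresponding filtered subcategory of $\DF(X_\proet, E)$) such that $\omega(\widetilde{K}) = K$ and $\Gr^i \widetilde{K} \simeq K^i[-i]$. Since the duality functor $D_X = \Homf_X(\cdot, K_X)$ extends to $\DF(X_\proet, E)$ (as recalled in Section \ref{section_perverse}), the filtered dual $D_X(\widetilde{K})$ satisfies the standard reversal formula
\[\Gr^i D_X(\widetilde{K}) \simeq D_X(\Gr^{-i}\widetilde{K}) \simeq D_X(K^{-i})[-i].\]

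Next, I would form $\widetilde{M} := \widetilde{K} \Ltimes_X D_X(\widetilde{K})$, where $\Ltimes_X$ is the filtered derived tensor product (which extends to $\DF(X_\proet, E)$ via Corollary \ref{cor_der_fil} and the discussion in V.2 of \cite{CT1}). The K\"unneth-type formula for graded pieces of a filtered tensor product gives
\[\Gr^n \widetilde{M} \simeq \bigoplus_{i+j=n}\Gr^i\widetilde{K} \Ltimes_X \Gr^j D_X(\widetilde{K}) \simeq \bigoplus_{a-b=n}(K^a \Ltimes_X D_X(K^b))[-n],\]
after reindexing by $a = i$ and $b = -j$.

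Finally, I would apply the BBD spectral sequence $(3.1.3.4)$ to $\widetilde{M}$ paired against $L$ equipped with the trivial filtration concentrated in degree $0$. Only the contribution $j = 0$, and hence $i = -p$, survives, so
\[E_1^{pq} = \Ext^{p+q}(\Gr^{-p}\widetilde{M}, L) = \bigoplus_{a-b=-p}\Ext^{p+q}((K^a \Ltimes_X D_X(K^b))[p], L) = \bigoplus_{a-b=-p}\Ext^{q}_{\Dbh(X)}(K^a \Ltimes_X D_X(K^b), L),\]
converging to $\Ext^{p+q}_{\Dbh(X)}(\omega\widetilde{M}, L) = \Ext^{p+q}_{\Dbh(X)}(K \Ltimes_X D_X(K), L)$. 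The main technical point to verify is the behavior of the filtered derived tensor product and duality functor on graded pieces (the K\"unneth identity and index reversal above); both reduce to standard computations on filtered complexes extended to the derived category, for which the necessary formalism is provided by \cite{CT1}(V.2) and \cite{BBD}(3.1).
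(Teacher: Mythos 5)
Your core idea --- lift the complex to the filtered derived category and apply the spectral sequence (3.1.3.4) of \cite{BBD} against a trivially filtered second argument --- is the same as in the paper, but your setup has a genuine gap: you work inside $\D(X_\proet,E)$ over $X$ itself. The category $\Dbh(X,E)$ is \emph{not} a full triangulated subcategory of $\D(X_\proet,E)$; it is by definition a $2$-colimit of the categories $\Dbc(\X,E)$ over horizontal models $(A,\X)$, and the functor $\eta^*:\Dbh(X,E)\fl\Dbc(X,E)$ is only known to be fully faithful on hearts (and conservative), not on higher $\Ext$'s. Consequently, the spectral sequence your argument produces has $E_1$-terms and abutment given by $\Ext_{\Dbc(X,E)}$ (equivalently $\Ext_{\D(X_\proet,E)}$), not by the groups $\Ext_{\Dbh(X)}$ appearing in the statement; also, Theorem \ref{thm_real} cannot be ``applied to $\Dbh(X)\subset\D(X_\proet,E)$'' as you assert, since its hypotheses require a full triangulated subcategory of the derived category of an abelian category. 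The missing step is the one the paper starts and ends with: spread $K^\bullet$ and $L$ out to a model $\X$ over some $A\in\Uf$, run the filtered argument in $\Dbc(\X,E)\subset\D(\X_\proet,E)$, and then take the filtered colimit over $A\subset A'\in\Uf$, using exactness of filtered colimits to get a spectral sequence of horizontal $\Ext$-groups.

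A secondary difference, not fatal but worth noting: you form an internal filtered object $\widetilde{K}\Ltimes_X D_X(\widetilde{K})$ and invoke a K\"unneth identity for graded pieces of the filtered derived tensor product together with the index-reversal formula $\Gr^i D_X(\widetilde K)\simeq D_X(\Gr^{-i}\widetilde K)$. These are plausible and standard for the convolution filtration, but they are exactly the pieces of filtered bifunctor formalism the paper does not develop, and the paper's proof deliberately avoids them: it rewrites $R\Hom(K\Ltimes_X D_X(K),L)$ as $R\Hom_{X\times X}(K\boxtimes D_X(K),\Delta_*L)$ via the $(\Delta^*,\Delta_*)$ adjunction, and then uses that $\boxtimes$ is t-exact, so the total complex of $K^\bullet\boxtimes D_X(K^\bullet)$ is an honest bounded complex of perverse sheaves on $\X\times\X$ whose filtered lift is simply $G^{-1}$ of it; no filtered K\"unneth or filtered duality computation is needed, only Proposition \ref{prop_comp_real1}(iii). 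If you keep your internal route, you must actually justify the filtered K\"unneth and duality statements (e.g.\ via V.2 of \cite{CT1}) and their compatibility with $\omega$; switching to the external route removes that burden, but in either case the reduction to a horizontal model and the passage to the colimit are indispensable.
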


\begin{proof} By definition of the category $\Dbh(X)$, it suffices to
prove the statement in $\Dbc(\X)$, where $(A,\X,u)$ is an object of
$\Uf X$ such that all the $K^i_r$ (resp. $L$) extend to shifts of objects of
$\Perv(\X)$ (resp. $\Dbc(\X)$), that we will denote by the same letters.

Remember the construction of the
realization functor $\Db\Perv(\X)\fl\Dbc(\X)$ in Section~\ref{section_f_real}:
We consider the full subcategory
$\Dbete(\X)$ of objects $A$ of $\DFb(\X_\proet)$ such that $\Gr_F^iA[i]$ is
in $\Perv(\X)$ for every $i\in\Z$ and $0$ for $|i|$ big enough.
We have an equivalence $G:\Dbete(\X)\fl C^b(\Perv(\X))$ (see
\cite{BBD} 3.1.7 or Theorem~\ref{thm_real}), and $\real$ is induced by
$\omega\circ G^{-1}:C^b(\Perv(\X))\fl\Dbc(\X)$.

Let $\Delta:X\fl X\times X$ be the diagonal morphism. As
$K_1\Ltimes_X D_X(K_2)=\Delta^*(K_1\boxtimes D_X(K_2))$, we have a canonical
isomorphism
\[R\Hom_{\Dbc(\X)}(K_1\Ltimes_X D_X(K_2),L)=
R\Hom_{\Dbc(\X\times\X)}(K_1\boxtimes D_X(K_2),\Delta_* L).\]

Let $M=G^{-1}(K_1^\bullet\boxtimes D_X(K_2^\bullet))\in\Ob\Dbete(\X\times
\X)$. 
We can also see $\Delta_* L$ as an
object of $\DF((\X\times\X)_\proet)$ (because, for any abelian
category $\Af$, the category $\Db(\Af)$ is canonically
equivalent to the full subcategory of $A\in\Ob\DFb(\Af))$ such that
$\Gr_F^i A=0$ for $i\not=0$). Using the spectral sequence recalled
before the statement of the lemma (and (iii) of Proposition
\ref{prop_comp_real1}), we get a spectral sequence
\[E_1^{pq}=\bigoplus_{-i=p}\Ext^{p+q}_{\Dbc(\X\times\X)}(\Gr_F^i(M),
\Delta_* L)\Longrightarrow\Ext^{p+q}_{\Dbc(\X)}(K_1\Ltimes_X D_X(K_2),L).\]
For every $i\in\Z$, we have
\[\Gr_F^i M=\bigoplus_{a+b=i}K_1^a[-a]\boxtimes D_X(K_2^{-b})[-b]=
\bigoplus_{a-b=i}(K^a\boxtimes D_X(K^b))[-i].\]
So
\[E_1^{pq}=\bigoplus_{a-b=-p}\Ext^{p+q}_{\Dbc(\X\times\X)}(K_1^a\boxtimes
D_X(K_2^b),\Delta_*[-p])=\bigoplus_{a-b=-p}\Ext^p_{\Dbc(\X)}
(K^a\Ltimes_X D_X(K^b),L).\]
The statement of the lemma now follows by taking the limit over $A'$, with $A\subset A'\in\Uf$.
\end{proof}

\begin{lemma} Let $K_1^\bullet,K_2^\bullet$ be bounded complexes
of objects of $\Perv_{mf}(X)$, and let $K_1,K_2$ be their images by the canonical
functor
$\Cb\Perv_{mf}(X)\fl\Db\Perv_{mf}(X)$. Then, for every object $L$ of
$\Db\Perv_{mf}(X)$,
we have a spectral sequence
\[E_1^{pq}=\bigoplus_{a-b=-p}\Ext^q_{\Db\Perv_{mf}(X)}(K_1^a\Ltimes_X D_X(K_2^b),L)
\Longrightarrow\Ext^{p+q}_{\Db\Perv_{mf}(X)}(K_1\Ltimes_X D_X(K_2),L).\]
Moreover, the functor $R_X$ induces a morphism of spectral sequences
from this spectral sequence to the one of Lemma \ref{lemma_tens5}.

\label{lemma_tens6}
\end{lemma}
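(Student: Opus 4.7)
The plan is to imitate the proof of Lemma \ref{lemma_tens5}, but work entirely inside the category $\Db\Perv_{mf}(X\times X)$ rather than $\Dbh(X\times X)$. First I would use the adjunction $(\Delta_X^*,\Delta_{X*})$ in $\Db\Perv_{mf}$ (which exists by Corollary \ref{cor_i^*}, with $\Delta_{X*}$ from Proposition \ref{prop_direct_images}, and noting that $\Delta_{X!}\simeq\Delta_{X*}$ by Proposition \ref{prop_duality}(v) since $\Delta_X$ is a closed immersion) to rewrite
\[\Ext^n_{\Db\Perv_{mf}(X)}(K\Ltimes_X D_X(K),L)\simeq\Ext^n_{\Db\Perv_{mf}(X\times X)}(K\boxtimes D_X(K),\Delta_{X*}L),\]
and likewise for each term $\Ext^q(K^a\Ltimes_X D_X(K^b),L)$ appearing in the $E_1$ page.

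Next I would apply the filtered derived category formalism of Section~\ref{section_fil_der} to the abelian category $\Af:=\Perv_{mf}(X\times X)$, taking $\Df=\Db\Perv_{mf}(X\times X)$ with its canonical $t$-structure. By Theorem~\ref{thm_real} this produces a filtered derived category $\DF$, its subcategory $\Dbete$, and an equivalence $G:\Dbete\iso\Cb(\Perv_{mf}(X\times X))$. Setting $M=G^{-1}(K^\bullet\boxtimes D_X(K^\bullet))\in\Ob\Dbete$ and viewing $\Delta_{X*}L$ as an object of $\DF$ with trivial filtration, I would invoke the spectral sequence of \cite{BBD}~(3.1.3.4) on the pair $(M,\Delta_{X*}L)$. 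The computations at the end of the proof of Lemma \ref{lemma_tens5} transfer word for word: $\omega(M)\simeq K\boxtimes D_X(K)$, and $\Gr^i M\simeq\bigoplus_{a-b=i}(K^a\boxtimes D_X(K^b))[-i]$. Combined with the adjunction of step~1, this yields the spectral sequence in the stated form.

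For the morphism with the spectral sequence of Lemma~\ref{lemma_tens5}, I would lift $R_{X\times X}$ to a functor between the filtered derived categories of $\Perv_{mf}(X\times X)$ and of $\Perv_h(X\times X)$ via Corollary~\ref{cor_der_fil}. The key point is that $R_{X\times X}$ intertwines $G^{-1}$ with the analogous equivalence used in Lemma~\ref{lemma_tens5}, because both filtered lifts are characterized by the canonical $t$-structure, which $R_{X\times X}$ preserves. The naturality isomorphisms of $R$ with $\boxtimes$, $D_X$ and $\Delta_{X*}$ (Propositions \ref{prop_boxtimes_direct_images}, \ref{prop_comp_real1} and \ref{prop_direct_images}) then identify the image under $R_{X\times X}$ of our $M$ with the one used in Lemma~\ref{lemma_tens5}; combined with the naturality of \cite{BBD}~(3.1.3.4), this delivers the morphism of spectral sequences.

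The main obstacle I expect is bookkeeping: ensuring that all compatibilities between $R_X$ and the six operations match coherently at the level of filtered lifts. This is not a conceptual difficulty---each individual compatibility has already been established in Sections~\ref{section_perverse}--\ref{direct_images}---but the chase through the filtered derived categories and through the adjunction of step~1 must be done carefully. The conservativity of $R_X$ (Proposition~\ref{prop_real_cons}) together with the uniqueness of $G^{-1}$ on $\Dbete$ should make most of the potentially delicate coherences automatic.
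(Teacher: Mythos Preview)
Your proposal is correct and follows essentially the same approach as the paper: the paper's proof simply states that one repeats the argument of Lemma~\ref{lemma_tens5} verbatim, working in the filtered derived category $\DF(\Perv_{mf}(X\times X))$ instead of $\DF((\X\times\X)_\proet)$, and declares the last statement obvious. Your write-up spells out considerably more detail than the paper does (the adjunction step, the construction of $M$, and the compatibility via $R_{X\times X}$), but the underlying strategy is identical.
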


\begin{proof} The proof is exactly the same as for Lemma \ref{lemma_tens5},
except that we work in the filtered derived category
$\DFb(\Perv_{mf}(X\times X))$. The last statement is obvious.
\end{proof}

\begin{nota} Let $K\in\Ob\Dbh(X)$. We denote
by $\iota_K$ the evaluation
morphism 
\[K\Ltimes_X D_X(K)=K\Ltimes_X \Homf_X(K,K_X)\fl K_X.\]

\label{nota_iota_K}
\end{nota}

This morphism satisfies the following naturality property: If $u:K\ra L$ is a morphism in
$\Dbh(X)$, then we get a commutative square
\[\xymatrix@C=45pt{K\otimes_X D_X(L)\ar[r]^-{\id\otimes_X D_X(u)}\ar[d]_{u\otimes\id} & K\otimes_X D_X(K)\ar[d]^{\iota_K} \\
L\otimes_X D_X(L)\ar[r]_-{\iota_L} & K_X}\]

\begin{lemma} 
Let $K_1^\bullet,K_2^\bullet$ be bounded complexes
of objects of $\Perv_h(X)$, and let $K_1,K_2$ be their images by the functor
$\real:\Db\Perv_h(X)\fl\Dbh(X)$. Let
\[E_1^{pq}\Longrightarrow\Ext^{p+q}_{\Dbh(X)}(K_1\Ltimes_X
D_X(K_2),K_X)\]
be the spectral sequence of Lemma \ref{lemma_tens5} for $L=K_X$.

Then $E_1^{pq}=0$ if $q<0$. Moreover, if $K_1^\bullet=K_2^\bullet=K^\bullet$ and we write $K=K_1$,
then the element $\sum_{a\in\Z}\iota_{K^a}
$ of $E_1^{00}$ is in $\Ker(E_1^{00}\fl E_1^{10})=E_2^{00}$,
and the element $\iota_K$
of $\Hom_{\Dbh(X)}(K\Ltimes_X D_X(K),K_X)\supset E_\infty^{00}$ is the image
of $\sum_{a\in\Z}\iota_{K^a}$
by the map $E_2^{00}\fl E_\infty^{00}$.

\label{lemma_tens7}
\end{lemma}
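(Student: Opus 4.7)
Plan: I will prove all three claims together by identifying the spectral sequence of Lemma \ref{lemma_tens5} (with $L = K_X$), via Lemma \ref{lemma_tens1}, with the analogous spectral sequence computing $\Ext^\bullet_{\Dbh(X)}(K,K)$; under this identification, $\iota_K$ corresponds to $\id_K$ and each $\iota_{K^a}$ to $\id_{K^a}$, so the three claims become well-known facts about a hypercohomology-type spectral sequence.

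First, the derived form of Lemma \ref{lemma_tens1} provides, for all $a,b,q\in\Z$, canonical natural isomorphisms
\[\Ext^q_{\Dbh(X)}(K^a\Ltimes_X D_X(K^b),K_X)\iso \Ext^q_{\Dbh(X)}(K^a,K^b),\]
functorial in $K^a$ and $K^b$. Since $K^a,K^b\in\Perv_h(X)$ lie in the heart of the perverse $t$-structure on $\Dbh(X)$, the right-hand side vanishes for $q<0$, which gives claim~(1).

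Next, using the naturality of Lemma \ref{lemma_tens1} in both arguments, this term-wise identification upgrades to an isomorphism between the spectral sequence of Lemma \ref{lemma_tens5} and the spectral sequence ${E'}_1^{pq}=\bigoplus_{b-a=p}\Ext^q_{\Dbh(X)}(K^a,K^b)\Rightarrow \Ext^{p+q}_{\Dbh(X)}(K,K)$ arising from the stupid filtration on $K^\bullet$. Taking $L=K^b$ in Lemma \ref{lemma_tens1} shows that $\iota_{K^a}$ corresponds to $\id_{K^a}$ and $\iota_K$ to $\id_K$. The differential $d_1$ on the row $q=0$ becomes the standard differential $(f_a)_a\mapsto (d_K^a\circ f_a - f_{a+1}\circ d_K^a)_a$ of the internal Hom complex, which obviously annihilates $(\id_{K^a})_a$; this gives claim~(2). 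For claim~(3), the fact that $\id_K\in\Ext^0_{\Dbh(X)}(K,K)$ is represented at $E_\infty^{0,0}$ by $(\id_{K^a})_a$ is a standard property of this kind of spectral sequence, verifiable by lifting $\id_K$ to the evident filtered identity of the filtered object $M=G^{-1}(K^\bullet\boxtimes D_X(K^\bullet))$ used in the proof of Lemma \ref{lemma_tens5}, whose graded pieces are precisely the identities on each summand.

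The main obstacle is making rigorous the identification of the two spectral sequences, i.e., verifying that the isomorphism of Lemma \ref{lemma_tens1} is compatible with the filtered structures. This reduces to the observation that the filtration on $M$ is built term-wise from the stupid filtrations of $K^\bullet$ and $D_X(K^\bullet)$, so that Lemma \ref{lemma_tens1}, applied functorially with both arguments filtered, transports this filtration to the expected one on the Hom complex computing $R\Hom_{\Dbh(X)}(K,K)$; once this naturality is in hand, the passage between the two $E_1$-pages and their $d_1$ differentials is automatic.
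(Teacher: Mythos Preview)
Your argument is correct and reaches the same conclusions, but it takes a more roundabout path than the paper's proof.

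The paper argues directly: for the vanishing $E_1^{pq}=0$ when $q<0$, it invokes Lemma~\ref{lemma_tens2} (not Lemma~\ref{lemma_tens1}), which already records exactly the statement $\Ext^q_{\Dbh(X)}(K^a\Ltimes_X D_X(K^b),K_X)=0$ for $q<0$ whenever $K^a$ and $D_X(K^b)$ are perverse. From this row-vanishing it reads off $E_2^{00}=\Ker(d_1)$ and $E_\infty^{00}\subset\Hom(K\Ltimes_X D_X(K),K_X)$ formally, and then disposes of the identification of $\iota_K$ with the image of $\sum_a\iota_{K^a}$ in one sentence, by appealing to the very construction of the spectral sequence in the filtered derived category (together with the compatibility of $D_X$ with $\real$ from Proposition~\ref{prop_comp_real1}(iii)).

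Your route instead passes everything through the adjunction of Lemma~\ref{lemma_tens1} and biduality, so that the whole spectral sequence becomes the familiar one $\bigoplus_{b-a=p}\Ext^q(K^a,K^b)\Rightarrow\Ext^{p+q}(K,K)$, and $\iota_K$, $\iota_{K^a}$ become $\id_K$, $\id_{K^a}$. This buys you a very transparent reason for claims~(2) and~(3): that $(\id_{K^a})_a$ is a $d_1$-cocycle and represents $\id_K$ at $E_\infty$ is a textbook fact. The cost is the extra verification you flag yourself, namely that the adjunction isomorphism is compatible with the filtered structures so that the two spectral sequences really coincide (including differentials). This is true, but to make it rigorous you should, as the paper does in Lemma~\ref{lemma_tens5}, work at finite level on some model $\X$ in order to have an honest filtered derived category at your disposal; once there, the adjunction $\Homf(A\Ltimes B,C)\simeq\Homf(A,\Homf(B,C))$ lifts to filtered objects by Corollary~\ref{cor_der_fil}, and the identification of differentials follows. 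So your approach is more conceptual for the final step, while the paper's is shorter and avoids the filtered-compatibility check by staying on the $K\Ltimes_X D_X(K)$ side throughout.
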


\begin{proof} 
We have
\[E_1^{pq}=\bigoplus_{a-b=-p}\Ext^q_{\Dbh(X)}(K^a_1\Ltimes_X D_X(K^b_2),
K_X).\]
As all the $K^a_1$ and $D_X(K^b_2)$ are perverse, this is $0$ for
$q<0$ by Lemma \ref{lemma_tens2}.
This implies that $E_2^{00}=\Ker(E_1^{00}\fl E_1^{10})$ and
that $E_r^{pq}=0$ for any $r\geq 1$ and any $q<0$, so
$E_\infty^{pq}=0$ for $q<0$. In particular, we get that
$E_{\infty}^{00}$ is a quotient of $E_2^{00}$ and that
$E_\infty^{00}\subset \Hom_{\Dbh(X)}(K_1\Ltimes_X D_X(K_1),K_X)$.
The last statement now follows from the construction of the spectral
sequence (and (iii) of Proposition \ref{prop_comp_real1}).
\end{proof}

\begin{lemma} Let $K^\bullet$ be a bounded complex of
objects of $\Perv_{mf}(X)$, and let $K\in\Ob\Db\Perv_{mf}(X)$ be its
image by the obvious functor $C^b\Perv_{mf}(X)\fl\Db\Perv_{mf}(X)$.
Then there exists a unique morphism $\iota_{K^\bullet}:K\Ltimes_X D_X(K)\fl
a^!\ungras_{\Spec k}$ satisfying the following
conditions :
\begin{itemize}
\item[(a)] The image of $\iota_{K^\bullet}$ by $R_X$ is the morphism
$\iota_{R_X(K)}$ of \ref{nota_iota_K}.
\item[(b)] The analogue of Lemma \ref{lemma_tens7} holds if we use
the spectral sequence of Lemma \ref{lemma_tens6}.

\end{itemize}

Moreover, if $u^\bullet:K^\bullet\ra L^\bullet$ is a morphism in $\Cb\Perv_{mf}(X)$ and if
$u:K\ra L$ is its image in $\Db\Perv_{mf}(X)$, then
the following square commutes:
\begin{equation*}\tag{*}\xymatrix@C=45pt{K\otimes_X D_X(L)\ar[r]^-{\id\otimes_X D_X(u)}\ar[d]_{u\otimes\id} & K\otimes_X D_X(K)\ar[d]^{\iota_{K^\bullet}} \\
L\otimes_X D_X(L)\ar[r]_-{\iota_{L^\bullet}} & K_X}\end{equation*}

\label{lemma_tens8}
\end{lemma}

\begin{proof} 
Let ${K'}^\bullet=R_X(K^\bullet)$ and $K'=R_X(K)$.
If $K^\bullet$ is concentrated in degree $0$, then, by
Lemma~\ref{lemma_tens4}, the morphism
\[R_X:\Hom_{\Db\Perv_{mf}(X)}(K\Ltimes_X D_X(K),D_X(\ungras_X))\fl
\Hom_{\Dbh(X)}(K'\Ltimes_X D_X(K'),K_X)\]
is an isomorphism. So condition (a) forces us to take $\iota_K=
R_X^{-1}(\iota_{K'})$, and condition (b) is trivial in this case.

We now construct $\iota_{K^\bullet}$ in the general case. The spectral sequence of Lemma
\ref{lemma_tens6} for $L=a^!\ungras_X$ is
\[E_1^{pq}=\Ext^q_{\Db\Perv_{mf}(X)}(K^a\Ltimes_X D_X(K^b),a^!\ungras_X)
\Longrightarrow\Ext^{p+q}_{\Db\Perv_{mf}(X)}(K\Ltimes_X D_X(K),a^!\ungras_X).\]
We have $E_1^{pq}=0$ for $q<0$ by Lemma \ref{lemma_tens3}.
As in the proof of Lemma \ref{lemma_tens7}, this implies that
$E_2^{00}=\Ker(E_1^{00}\fl E_1^{10})$ surjects to $E_\infty^{00}$
and that $E_\infty^{00}\subset\Hom_{\Db\Perv_{mf}(X)}(K\Ltimes_X D_X(K),a^!\ungras_X)$.
By condition (b), the element $\iota_K\in\Hom_{\Db\Perv_{mf}(X)}(K\Ltimes_X D_X(K),
a^!\ungras_X)$ that we want to construct must be the image of
$\sum_{a\in\Z}\iota_{K^a}\in E_1^{00}$. As $\iota_{K^a}$ exists and is
uniquely determined by the first case, it suffices to show
that $\sum_{a\in\Z}\iota_{K^a}\in\Ker(E_1^{00}\fl E_1^{10})$. Indeed,
condition (a) will then follow from the fact that $R_X$ induces a morphism
between the spectral sequences of Lemmas \ref{lemma_tens5}
and \ref{lemma_tens6} (and from Lemma \ref{lemma_tens7}).
We denote by
$E_1^{pq}(K')$ the spectral sequence of Lemma \ref{lemma_tens5} for ${K'}^\bullet$.
Then we have a commutative diagram
\[\xymatrix{E_1^{00}\ar[r]\ar[d]_{R_X} &E_1^{10}\ar[d]^{R_X} \\
E_1^{00}(K')\ar[r] & E_1^{10}(K')}\]
By Lemma \ref{lemma_tens4}, the vertical maps in this diagram are
isomorphisms. By Lemma \ref{lemma_tens7}, the image
by $R_X$ of $\sum_{a\in\Z}\iota_{K^a}\in E_1^{00}$, which is
$\sum_{a\in\Z}\iota_{{K'}^a}$ by construction of the $\iota_{K^a}$, is
in $\Ker(E_1^{00}(K')\fl E_1^{10}(K'))$. So
$\sum_{a\in\Z}\iota_{K^a}$ is in $\Ker(E_1^{00}\fl E_1^{10})$, and we are
done.

We prove the last statement. If $K^\bullet$ and $L^\bullet$ are both concentrated in degree $0$, then $K$ and $L$
are perverse, so the commutativity of (*) follows from the commutativity of the square we get applying $R_X$ and
from Lemma~\ref{lemma_tens4}. We treat the general case. For $K_1^\bullet,K_2^\bullet\in\Ob\Cb\Perv_{mf}(X)$, we denote
by ${}_{K_1^\bullet,K_2^\bullet}E_\bullet^{\bullet,\bullet}$ the spectral sequence of Lemma~\ref{lemma_tens6} for $K_1^\bullet$, $K_2^\bullet$ and $K_X$.
The morphisms $u^\bullet$ and $D_X(u^\bullet)$ induce morphisms of spectral sequence
${}_{L^\bullet,L^\bullet}E_\bullet^{\bullet,\bullet}\ra{}_{K^\bullet,L^\bullet}E_\bullet^{\bullet,\bullet}\longleftarrow{}_{K^\bullet,K^\bullet}E_\bullet^{\bullet,\bullet}$
that are compatible with the morphisms $\Ext^\bullet_{\Db\Perv_{mf}(X)}(L,L)\flnom{u^*}\Ext^\bullet_{\Db\Perv_{mf}(X)}(K,L)\stackrel{u_*}\longleftarrow\Ext^\bullet_{\Db\Perv_{mf}(X)}(K,K)$.
So we get the commutative diagram (**), where $\H(\cdot)$ means $\Hom_{\Db\Perv_{mf}(X)}(\cdot,K_X)$.
We have $\iota_{K^\bullet}\in\H(K\otimes_X D_X(K))$, and its image in $\H(K\otimes_X D_X(L))$ by the arrow in the last row of (**) is $\iota_{K^\bullet}\circ(\id\otimes_X D_X(u))$. Similarly,
we have $\iota_{L^\bullet}\in\H(L\otimes_X D_X(L))$, and its image in $\H(K\otimes_X D_X(L))$ by the arrow in the last row of (**) is $\iota_{L^\bullet}\circ(u\otimes_X\id)$.
On the other hand, during the construction of $\iota_{K^\bullet}$, we proved that the element $\sum_{a\in\Z}\iota_{K^a}$ of ${}_{K^\bullet,K^\bullet}E_1^{00}$ comes from a (unique)
element $e_{K^\bullet}$ of ${}_{K^\bullet,K^\bullet}E_2^{00}$, and that $\iota_{K^\bullet}$ is the image of $e_{K^\bullet}$ in $H(K\otimes_X D_X(K))$; we have a similar statement for $\iota_{L^\bullet}$.
By the commutativity of (**), it suffices to prove that the images of $\sum_{a\in\Z}\iota_{K^a}\in{}_{K^\bullet,K^\bullet}E_1^{00}$ and $\sum_{a\in\Z}\iota_{L^a}\in{}_{L^\bullet,L^\bullet}E_1^{00}$ in
${}_{K^\bullet,L^\bullet}E_1^{00}$ by the maps of the second row of (**) are equal. But the image of the first element is $\sum_{a\in\Z}\iota_{K^a}\circ(\id\otimes_X D_X(u^a))$,
and the image of the second element is $\sum_{a\in\Z}\iota_{L^a}\circ(u^a\otimes_X\id)$. So the equality of these images follows from the case where $K^\bullet$ and $L^\bullet$ are
concentrated in degree $0$, which we already treated.

\begin{equation*}\tag{**}\xymatrix@C=10pt{
\bigoplus\limits_{a\in\Z}\H(L^a\otimes_X D_X(L^a))\ar[r] & \bigoplus\limits_{a\in\Z}\H(K^a\otimes_X D_X(L^a)) & \bigoplus\limits_{a\in\Z}\H(K^a\otimes_X D_X(K^a))\ar[l] \\
{}_{L^\bullet,L^\bullet}E_1^{00}\ar[r]\ar@{=}[u] & {}_{K^\bullet,L^\bullet}E_1^{00}\ar@{=}[u] & {}_{K^\bullet,K^\bullet}E_1^{00}\ar[l]\ar@{=}[u] \\
{}_{L^\bullet,L^\bullet}E_2^{00}\ar[r]\ar@{^{(}->}[u]\ar@{->>}[d] & {}_{K^\bullet,L^\bullet}E_2^{00}\ar@{^{(}->}[u]\ar@{->>}[d]   & {}_{K^\bullet,K^\bullet}E_2^{00}\ar[l]\ar@{^{(}->}[u]\ar@{->>}[d]   \\
{}_{L^\bullet,L^\bullet}E_\infty^{00}\ar[r]\ar@{^{(}->}[d]& {}_{K^\bullet,L^\bullet}E_\infty^{00}\ar@{^{(}->}[d] & {}_{K^\bullet,K^\bullet}E_\infty^{00}\ar[l]\ar@{^{(}->}[d] \\
\H(L\otimes_X D_X(L))\ar[r] & \H(K\otimes_X D_X(L)) & \H(K\otimes_X D_X(K))\ar[l]
}\end{equation*}
\end{proof}

The last statement of Lemma~\ref{lemma_tens8} implies in particular that, for
$K\in\Ob\Db\Perv_{mf}(X)$, the morphism $\iota_{K^\bullet}:K\otimes_X D_X(K)\ra K_X$ does not
depend on the complex $K^\bullet\in\Ob\Cb\Perv_{mf}(X)$ representing $K$. We denote this
morphism by $\iota_K$. 

\begin{lemma} For $K,L\in\Ob\Db\Perv_{mf}(X)$, we define
a morphism
\[u_{K,L}:R\Hom_{\Db\Perv_{mf}(X)}(L,D_X(K))\fl R\Hom_{\Db\Perv_{mf}(X)}
(K\Ltimes_X L,a^!\ungras_{\Spec k})\]
as the composition of
\[K\Ltimes_X (.):R\Hom_{\Db\Perv_{mf}(X)}(L,D_X(K))\fl
R\Hom_{\Db\Perv_{mf}(X)}
(K\Ltimes_X L,K\Ltimes_X D_X(K))\]
and of
\[\iota_{K*}:R\Hom_{\Db\Perv_{mf}(X)}
(K\Ltimes_X L,K\Ltimes_X D_X(K))\fl
R\Hom_{\Db\Perv_{mf}(X)}
(K\Ltimes_X L,a^!\ungras_{\Spec k}).\]

Then this morphism is natural in $K$ and $L$, its image
by $R_X$ is the adjunction morphism
\[R\Hom_{\Dbh(X)}(R_X(L),D_X(R_X(K)))=R\Hom_{\Dbh(X)}
(R_X(K)\Ltimes_X R_X(L),K_X),\]
and it is an isomorphism.

\label{lemma_tens9}
\end{lemma}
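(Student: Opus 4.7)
The plan is to verify the three conclusions of the lemma in turn. Functoriality of $u_{K,L}$ in $K$ and $L$ is immediate from its construction as a composition of the manifestly functorial operations $K \Ltimes_X (\cdot)$ and $\iota_{K*}$, where the functoriality of $\iota_K$ in $K$ is provided by Lemma \ref{lemma_tens8}.

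For the identification of $R_X(u_{K,L})$ with the adjunction morphism, I would use two ingredients: first, that $R_X$ is monoidal (so commutes with $\Ltimes_X$ up to canonical isomorphism, coming from Propositions \ref{prop_boxtimes_direct_images} and \ref{prop_boxtimes_inverse_images} applied to $\Delta_X^*(\cdot \boxtimes \cdot)$), and second, that $R_X(\iota_K) = \iota_{R_X(K)}$ by Lemma \ref{lemma_tens8}(a). Applying $R_X$ step-by-step to the composition defining $u_{K,L}$ then produces the morphism $f \mapsto \iota_{R_X(K)} \circ (\id_{R_X(K)} \Ltimes_X f)$ in $\Dbh(X)$, which is by construction the inverse of the standard tensor-Hom adjunction isomorphism $R\Hom_{\Dbh(X)}(R_X(K) \Ltimes_X R_X(L), K_X) \simeq R\Hom_{\Dbh(X)}(R_X(L), D_X R_X(K))$ coming from Theorem 6.3 of \cite{Ek}.

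The isomorphism statement is the substantial claim, and the strategy is a dévissage. Both $R\Hom(L, D_X K)$ and $R\Hom(K \Ltimes_X L, a^! \ungras_{\Spec k})$ are triangulated functors in each of $K$ and $L$, and $u_{K,L}$ is a morphism of such bifunctors. Representing $K$ and $L$ by bounded complexes of objects of $\Perv_{mf}(X)$ and applying the filtration spectral sequence of Lemma \ref{lemma_tens6} to the source (and its evident analogue to the target) reduces the problem to the case where $K, L \in \Perv_{mf}(X)$, both concentrated in degree zero. In that case, at the $\Ext^0$-level the comparison is provided by Lemma \ref{lemma_tens4}, which identifies both $\Hom$-groups with the corresponding $\Hom$-groups in $\Dbh(X)$ where the adjunction morphism is a genuine isomorphism. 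For higher $\Ext^n$, one takes a further resolution whose terms have the form $j_! j^* (\cdot)$ for $j$ an open affine immersion (as in Theorem \ref{basic_lemma}); the associated $E_1$-terms of the resulting spectral sequence then again reduce to $\Hom$-computations for perverse sheaves, which fall within the scope of Lemma \ref{lemma_tens4}.

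The main obstacle lies in executing this higher-$\Ext$ dévissage: the realization map $\Ext^n_{\Db\Perv_{mf}(X)} \to \Ext^n_{\Dbh(X)}$ is not an isomorphism for $n \geq 1$ in general, since $\Perv_{mf}$ is not stable under extensions in $\Perv_h$. Consequently, one cannot simply push the problem into $\Dbh(X)$ and invoke the conservativity of $R_X$. Instead, the spectral sequences must be organized so that every nontrivial comparison happens at the level of $\Hom$-groups between single perverse sheaves, which is the only regime where Lemma \ref{lemma_tens4} applies. Once this is arranged, the combination of the $E_1$-level isomorphisms from Lemma \ref{lemma_tens4} with the fact, established in the second step, that $R_X(u_{K,L})$ is the $\Dbh(X)$-adjunction isomorphism yields an isomorphism at the $E_\infty$-level, and hence the desired isomorphism of complexes.
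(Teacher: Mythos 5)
Your first two steps (functoriality, and the identification of $R_X(u_{K,L})$ with the adjunction morphism via Lemma \ref{lemma_tens8}(a) and the monoidality of $R_X$) agree with the paper. The gap is in the third step, and it is exactly at the point you flag yourself: since $\Perv_{mf}$ is not stable under extensions, the maps $\Ext^n_{\Db\Perv_{mf}(X)}\fl\Ext^n_{\Dbh(X)}$ are not isomorphisms for $n\geq 1$, so knowing that $R_X(u_{K,L})$ is the adjunction isomorphism in $\Dbh(X)$ gives no control on $u_{K,L}$ in positive degrees. Your proposed remedy --- a further resolution by objects of the form $j_!j^*(\cdot)$ and a rearrangement of spectral sequences ``so that every nontrivial comparison happens at the level of $\Hom$-groups'' --- is not an argument: the resolutions of Theorem \ref{basic_lemma} are built for acyclicity with respect to direct images, not for computing $\Ext$-groups against $D_X(K)$ or $a^!\ungras_{\Spec k}$, and no mechanism is given that expresses $\Ext^n_{\Db\Perv_{mf}(X)}$ ($n\geq 1$) in terms of $\Hom$'s of perverse sheaves, where Lemma \ref{lemma_tens4} applies. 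The concluding inference (``$E_1$-level isomorphisms plus the identification of $R_X(u_{K,L})$ yield an isomorphism at $E_\infty$'') therefore does not follow; it would again require the realization map on higher $\Ext$'s to be injective or surjective, which is precisely what fails.

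The paper's proof handles the positive-degree part by a completely different, explicit device. The d\'evissage is pushed further than yours: by exact triangles, by the open--closed decomposition (using $i^*u_{K,L}=u_{i^*K,i^*L}$ and $j^*u_{K,L}=u_{j^*K,j^*L}$ together with Proposition \ref{prop_duality}(ii)) and by Noetherian induction, one reduces not merely to $K,L$ perverse but to $X$ smooth and $K=\Lf$, $L=\Mf$ shifted lisse sheaves. In that case $a^!\ungras_{\Spec k}=\ungras_X[2d](d)$ and $D_X(\Lf)=\Lf^*[2d](d)$, and --- crucially --- tensoring with $\Lf$ or $\Lf^*$ is exact on $\Perv_{mf}(X)$. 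One then writes down an inverse of $u_{\Lf,\Mf}$ directly on Yoneda extension classes inside $\Perv_{mf}(X)$: tensor a representing exact sequence with $\Lf$ and push out along $\Lf\Ltimes_X\Lf^*\fl\ungras_X$ in one direction, tensor with $\Lf^*$ and pull back along the coevaluation $\Mf\fl\Lf^*\Ltimes_X\Lf\Ltimes_X\Mf$ in the other. This produces an isomorphism of the $\Perv_{mf}$-$\Ext$ groups themselves, with no comparison to $\Dbh(X)$ in degrees $\geq 1$ ever needed. If you want to repair your proof, you need to carry the reduction down to this lisse case and supply such an explicit inverse (or an equivalent argument internal to $\Db\Perv_{mf}(X)$); the spectral sequences of Lemmas \ref{lemma_tens5}--\ref{lemma_tens8} are used in the paper only to construct the morphism $\iota_K$, not to prove that $u_{K,L}$ is an isomorphism.
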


\begin{proof} The naturality in $L$ is obvious and the second statement
follows from property (a) of Lemma \ref{lemma_tens8}.

We prove the naturality in $K$. Let $u:K\ra K'$ be a morphism in $\Db\Perv_{mf}(X)$.
We consider the diagram, where we write $\H$ for $R\Hom_{\Db\Perv_{mf}\Perv(X)}$
\[\xymatrix@C=30pt{
\H(L,D_X(K))\ar[r]^-{K\otimes_X(.)}\ar@{}[rd]_(.35){(1)} & \H(K\otimes_X L,K\otimes_X D_X(K))\ar[r]^-{\iota_{K*}} & \H(K\otimes_X L,K_X)\\
&  \H(K\otimes_X L,K\otimes_X D_X(K'))\ar[u]_{(\id\otimes_X D_X(u))_*}\ar[d]^{(u\otimes_X\id)_*}\ar@{}[ru]_(.6){(3)} & \\
& \H(K\otimes_X L,K'\otimes_X D_X(K'))\ar@/_2pc/[ruu]_-{\iota_{K'*}}\ar@{}[rd]^(.6){(4)} & \\
\H(L,D_X(K'))\ar[uuu]^-{D_X(u)_*}\ar[r]_-{K'\otimes_X(.)}\ar@/^2pc/[ruu]^-{K\otimes_X(.)}\ar@{}[ru]^(.35){(2)} 
& \H(K'\otimes_X L,K'\otimes_X D_X(K'))\ar[r]_-{\iota_{K'*}}\ar[u]_{(u\otimes_X\id)^*} & \H(K'\otimes_X L,K_X)\ar[uuu]_-{(u\otimes_X\id)^*}
}\]
Squares (1), (2) and (4) in this diagram clearly commute, and square (3) commutes
by the last statement of Lemma~\ref{lemma_tens8}. So the exterior rectangle commutes, which is what we needed to prove.

We turn to the proof of the third statement, i.e. that $u_{K,L}$ is an isomorphism.
We first prove it in the
case where $X$ is smooth and connected and $K=\Lf$, $L=\Mf$ are
lisse sheaves on $X$.
Let $d=\dim(X)$. Then we have $a^!\ungras_{\Spec k}=\ungras_X[2d](d)$ 
(by Proposition \ref{prop_1.3}(i))
and
$D_X(\Lf)=\Lf^*[2d](d)$, where $\Lf^*=\Homf(\Lf,\underline{E}_X)$ is the
dual locally constant sheaf (by the calculation at the end of
section \ref{l-adic_complexes} and Proposition
\ref{prop_eta_fully_faithful}).
So $u_{\Lf,\Mf}$ is a morphism
\[R\Hom_{\Db\Perv_{mf}(X)}(\Mf,\Lf^*)\fl R\Hom_{\Db\Perv_{mf}(X)}(\Lf\Ltimes_X
\Mf,\ungras_X),\]
and the morphism $\iota_\Lf:\Lf\Ltimes D_X(\Lf)\fl a^!\ungras_{\Spec k}$ of 
Lemma \ref{lemma_tens8} is
just the the canonical morphism $\Lf\Ltimes_X\Lf^*\fl\ungras_X$, shifted by $2d$
and twisted by $d$ (we see this easily from conditions (a) and (b)
of Lemma \ref{lemma_tens8}, as $\Lf$ is perverse up
to a shift). 
We will use the Yoneda description of the $\Ext^k$ groups, as in
section 3.2 of chapter III of Verdier's book \cite{Ve}.
The definition of $u_{\Lf,\Mf}$ gives the following
formula for the image of a class $c$ in
\[\Ext^i_{\Db\Perv_{mf}(X)}(\Mf,\Lf^*)=
\Ext^i_{\Db\Perv_{mf}(X)}(\Mf[d],\Lf^*[d])\ :\]
Choose an 
exact sequence in $\Perv_{mf}(X)$ representing $c$, say :
\[0\fl\Lf^*[d]\fl K_{i-1}\fl\dots\fl K_0\fl\Mf[d]\fl 0.\]
Tensoring this sequence by $\Lf$, we still get an exact sequence in $\Perv_{mf}(X)$ :
\[0\fl\Lf\Ltimes_X\Lf^*[d]\fl \Lf\Ltimes_X K_{i-1}\fl\dots\fl 
\Lf\Ltimes_X K_0\fl
\Lf\Ltimes_X\Mf[d]\fl 0.\]
Then $u_{\Lf,\Mf}(c)$ is represented by the exact sequence
\[0\fl\ungras_X[d]\fl K'_{i-1}\fl\Lf\Ltimes_X K_{i-2}\fl
\dots\fl \Lf\Ltimes_X K_0\fl
\Lf\Ltimes_X\Mf[d]\fl 0,\]
where $K'_{i-1}$ is the amalgamated sum
\[\ungras_X[d]\oplus_{\Lf\Ltimes_X\Lf^*[d]}(\Lf\Ltimes_X K_{i-1})\]
with the morphism $\Lf\Ltimes_X\Lf^*[d]\fl\ungras_X[d]$ 
being the shift of the obvious one.
We want to show that $u_{\Lf,\Mf}$ is bijective, so it suffices to construct its
inverse. Suppose that $c'$ is an element of
\[\Ext^i_{\Db\Perv_{mf}(X)}(\Lf\Ltimes_X\Mf,\ungras_X)=
\Ext^i_{\Db\Perv_{mf}(X)}(\Lf\Ltimes_X\Mf[d],\ungras_X[d]),\]
and choose an exact sequence in $\Perv_{mf}(X)$ representing $c'$, say :
\[0\fl\ungras_X[d]\fl L_{i-1}\fl\dots\fl L_0\fl\Lf\Ltimes_X\Mf[d]\fl 0.\]
Tensoring this sequence by $\Lf^*$, we still get an exact sequence in 
$\Perv_{mf}(X)$ :
\[0\fl\Lf^*[d]\fl \Lf^*\Ltimes_X L_{i-1}\fl\dots\fl \Lf^*\Ltimes_X L_0\fl
\Lf^*\Ltimes_X\Lf\Ltimes_X\Mf[d]\fl 0.\]
We send $c'$ to the element of $\Ext^i_{\Perv_{mf}(X)}(\Mf[d],\Lf^*[d])$
represented by the exact sequence
\[0\fl\Lf^*[d]\fl \Lf^*\Ltimes_X L_{i-1}\fl\dots\fl
\Lf^*\Ltimes_X L_1\fl L'_0\fl
\Mf[d]\fl 0,\]
where $L'_{0}$ is the fiber product
\[(\Lf^*\Ltimes_X L_0)\times_{\Mf[d]}(\Lf^*\Ltimes_X\Lf\Ltimes_X\Mf[d])\]
with the morphism $\Lf^*\Ltimes_X\Lf\Ltimes_X\Mf[d]\fl\Mf[d]$ 
coming from $\Lf^*\Ltimes_X\Lf\fl\ungras_X$ by tensoring by $\Mf[d]$. This is
clearly the inverse of $u_{\Lf,\Mf}$.

Now we show that the morphism $u_{K,L}$ is an
isomorphism for all $K,L\in\Ob\Db\Perv_{mf}(X)$.
Note the following two reductions : First,
using the fact that all the functors are triangulated and the five lemma,
we see that if we have an
exact triangle
\[K'\fl K\fl K''\flnom{+1}\]
such that the result is true for $(K',L)$ and $(K'',L)$, then the result
if true for $(K,L)$. There is a similar statement for the second variable $L$.
So it suffices to prove the result for $K$ and $L$ concentrated in 
perverse degree $0$,
and we may also assume that $K$ and $L$ are simple perverse sheaves.
Second, suppose that we have a closed immersion $i:Y\fl X$, and let
$j:U:=X-Y\fl X$ be the complementary open immersion. Then we have
a commutative diagram whose columns are distinguished triangles (all the
$R\Hom$s are taken in the appropriate category $\Db\Perv_{mf}(Z)$, with $Z\in\{X,U,Y\}$) :
\[\xymatrix{R\Hom(i^*L,i^!D_X K)\ar[d]\ar[r]^-{i^* u_{K,L}} & 
R\Hom(i^*(K\Ltimes_X L),i^!a^!\ungras_{\Spec k})\ar[d]
\\
 R\Hom(L,D_X K)\ar[d]\ar[r]^-{u_{K,L}}  & 
R\Hom(K\Ltimes_X L,a^!\ungras_{\Spec k})\ar[d] 
\\
R\Hom(j^*L,j^* D_X K)\ar[d]_{+1}\ar[r]^-{j^*u_{K,L}}
& R\Hom(j^*(K\Ltimes_X L),j^* a^!\ungras_{\Spec k})\ar[d]_{+1} \\
&
}\]
Moreover, using the compatibility of $\Ltimes_X$ with inverse images
and point (ii) of Proposition \ref{prop_duality}, we get isomorphisms :
\[R\Hom(i^*(K\Ltimes_X L),i^! a^!\ungras_{\Spec k})\simeq
R\Hom((i^*K)\Ltimes_Y (i^* L),a_Y^!\ungras_{\Spec k}) \]
and
\[R\Hom(j^*(K\Ltimes_X L),j^* a^!\ungras_{\Spec k})
\simeq R\Hom((j^* K)\Ltimes_U(j^*L),a_U^!\ungras_{\Spec k}),\]
where $a_Y=a\circ i$ and $a_U=a\circ j$.
It is easy to see that these isomorphisms identify $i^* u_{K,L}$ 
(resp. $j^* u_{K,L}$) with $u_{i^*K,i^*L}$ (resp. $u_{j^*K,j^*L}$).
So the result for $X$ follows from the result for $Y$ and $U$.

Using the two reductions above and Noetherian induction on $X$, we can reduce
to the case where $X$ is smooth and $K$ and $L$ are both shifts of locally
constant sheaves on $X$. But this case has already been treated in the first
part of the proof.
\end{proof}

\begin{proof}[Proof of Proposition \ref{prop_internal_Hom}]
 We have to construct an isomorphism
\[R\Hom_{\Db\Perv_{mf}(X)}(K\Ltimes_X L,M)\iso R\Hom_{\Db\Perv_{mf}(X)}
(L,D_X(K\Ltimes_X D_X M))\]
functorial in $K,L,M\in\Ob \Db\Perv_{mf}(X)$ and compatible (via $R_X$)
with the adjunction morphism in $\Dbh(X)$.
But such an isomorphism is
given by
\[u_{L,K\Ltimes_X D_X M}\circ u_{K\Ltimes_X L,D_X M}^{-1},\]
where $u_{.,.}$ is constructed in Lemma \ref{lemma_tens9}.
\end{proof}

\section{Weight filtration on complexes}

The goal of this section is to generalize the results of section 3
of \cite{M}, and in particular the formula for the intermediate extension
of a pure perverse sheaf, to the categories $\Perv_{mf}(X)$ and their derived
categories.
This was the original
motivation for considering the categories $\Db\Perv_{mf}(X)$.

\begin{defi} Let $X$ be a $k$-scheme. For every $a\in\Z\cup\{\pm\infty\}$,
we denote by $\DP^{\leq a}(X)$ (resp. $\DP^{\geq a}(X)$) the full subcategory
of $\Db\Perv_{mf}(X)$ whose objects are the complexes $K$ such that,
for every $i\in\Z$, $\H^iK\in\Perv_{mf}(X)$ is of weight $\leq a$ (resp.
$\geq a$).

\end{defi}

Note that $\DP^{\leq a}(X)$ and $\DP^{\geq a}(X)$ are triangulated subcategories
of $\Db\Perv_{mf}(X)$.

\begin{prop} Let $K,L\in\Ob\Perv_{mf}(X)$. Suppose that there exists
$a\in\Z$ such that $K$ is of weight $\leq a$ and $L$ is of weight $\geq a+1$.
Then we have, for every $i\in\Z$,
\[\Ext^i_{\Perv_{mf}(X)}(K,L)=0.\]

\label{prop_nullite}
\end{prop}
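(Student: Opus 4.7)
The plan is to induct on $i$, using the Yoneda description of $\Ext^i_{\Perv_{mf}(X)}(K,L)$ as equivalence classes of $i$-step exact sequences in $\Perv_{mf}(X)$. The two key facts I will use repeatedly are that $\Perv_{mf}(X)$ is stable by subquotients in $\Perv_m(X)$ (so every object carries a weight filtration), and that morphisms in $\Perv_{mf}(X)$ are strictly compatible with these filtrations (Lemma~3.8 of \cite{Hu}); in particular, $W_a$ is an exact functor on $\Perv_{mf}(X)$, and both the properties \emph{weight $\leq a$} and \emph{weight $\geq a+1$} are inherited by subobjects and quotients.

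For $i=0$, the image of any morphism $K\fl L$ is both a quotient of $K$ and a subobject of $L$, so it has weight both $\leq a$ and $\geq a+1$, hence vanishes. For $i=1$, let $\xi\colon 0\fl L\fl M\fl K\fl 0$ be an extension in $\Perv_{mf}(X)$. The intersection $W_aM\cap L$ has weight both $\leq a$ and $\geq a+1$ and so is zero, hence the composite $W_aM\hookrightarrow M\twoheadrightarrow K$ is injective. Its cokernel $M/(L+W_aM)$ is simultaneously a quotient of $K$ and of $M/W_aM$, again of incompatible weights, so it vanishes. Thus $W_aM\iso K$, and the inverse isomorphism composed with $W_aM\hookrightarrow M$ provides a section of $\xi$.

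For the inductive step ($i\geq 2$), let $\xi\in\Ext^i_{\Perv_{mf}(X)}(K,L)$ be represented by $0\fl L\fl M_1\fl\cdots\fl M_i\fl K\fl 0$, set $N:=\Ker(M_i\fl K)\in\Perv_{mf}(X)$, and splice at $N$ to write $\xi=\beta\cdot\alpha$, with $\alpha\in\Ext^1(K,N)$ the short extension $0\fl N\fl M_i\fl K\fl 0$ and $\beta\in\Ext^{i-1}(N,L)$ the remaining $(i{-}1)$-extension. The short exact sequence $0\fl W_aN\fl N\fl N/W_aN\fl 0$ in $\Perv_{mf}(X)$ induces a long exact sequence
\[\cdots\fl\Ext^1(K,W_aN)\flnom{j_*}\Ext^1(K,N)\fl\Ext^1(K,N/W_aN)\fl\cdots,\]
and $\Ext^1(K,N/W_aN)=0$ by the $i=1$ case just proved (since $N/W_aN$ has weight $\geq a+1$). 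So $\alpha=j_*\eta$ for some $\eta\in\Ext^1(K,W_aN)$, where $j\colon W_aN\hookrightarrow N$. Standard functoriality of Yoneda composition then gives $\xi=(j^*\beta)\cdot\eta$ with $j^*\beta\in\Ext^{i-1}(W_aN,L)$, and this vanishes by the inductive hypothesis applied to the pair $(W_aN,L)$ (weight $\leq a$ and $\geq a+1$). Hence $\xi=0$.

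The conceptual heart of the proof is the base case $i=1$, where the weight filtration on $M$ supplies a splitting directly; the higher degree vanishings then follow by a formal splicing induction. The only point to verify at each step is that the intermediate objects ($N$, $W_aN$, and the pulled-back $(i{-}1)$-extension obtained by the fiber product $M_{i-1}\times_N W_aN$) stay in $\Perv_{mf}(X)$, which is ensured by closure under subquotients.
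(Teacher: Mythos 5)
Your proof is correct, and it rests on exactly the same fundamental input as the paper's: strict compatibility of morphisms with weight filtrations (Lemma 3.8 of \cite{Hu}), which makes $W_a$ an exact endofunctor of $\Perv_{mf}(X)$ with $W_aK=K$ and $W_aL=0$, together with the Yoneda description of the $\Ext$ groups. The organization, however, is genuinely different. The paper kills the class for all $i\geq 1$ in a single stroke: it applies $W_a$ to the chosen representative $0\to L\to M_{i-1}\to\cdots\to M_0\to K\to 0$ and writes down an explicit morphism of exact sequences (identity on $L$ and $K$) from the visibly trivial extension $0\to L\to L\oplus W_aM_{i-1}\to W_aM_{i-2}\to\cdots\to W_aM_0\to K\to 0$, so no induction and no auxiliary machinery beyond a morphism of exact sequences is needed. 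You instead prove the $i=1$ case by hand (your splitting argument is in effect the statement that $W_a$ applied to $0\to L\to M\to K\to 0$ yields $W_aM\iso K$, hence a section) and then induct by splicing at $N=\Ker(M_i\to K)$, using the long exact sequence for $\Ext(K,-)$ applied to $0\to W_aN\to N\to N/W_aN\to 0$ and the identity $\beta\cdot(j_*\eta)=(j^*\beta)\cdot\eta$. This is a clean modular argument, but it quietly invokes that Yoneda $\Ext$ in the abelian category $\Perv_{mf}(X)$ (equivalently $\Hom_{\Db\Perv_{mf}(X)}(K,L[i])$) has long exact sequences in both variables and an associative pairing even though $\Perv_{mf}(X)$ has no obvious supply of injectives or projectives; these are standard facts about Yoneda $\Ext$ in any abelian category, but they are extra ingredients the paper's one-shot diagram avoids. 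Your closure checks (that $W_aN$, kernels, and the fiber product $M_{i-1}\times_N W_aN$ remain in $\Perv_{mf}(X)$) are justified by stability of $\Perv_{mf}(X)$ under subquotients, as you note.
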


For categories like that of mixed Hodge modules,
this result follows from Lemma 6.9 of \cite{S}, but M. Saito assumes (and uses)
the fact that pure objects are semisimple, which is false in our case.

\begin{proof} We obviously have $\Ext^i_{\Perv_{mf}(X)}(K,L)=0$ if $i<0$,
and $\Hom_{\Perv_{mf}(X)}(K,L)=0$ because the weights of $K$ and $L$ are disjoint.
We denote by $W$ the weight filtration on objects of $\Perv_{mf}(X)$. 
For every $b\in\Z$, we get an endofunctor $W_b$ of $\Perv_{mf}(X)$, which
is exact because weight filtrations are
strictly compatible with morphisms in $\Perv_{mf}(X)$ (by Lemma 3.8 of
\cite{Hu}).

As in the proof of Proposition \ref{prop_internal_Hom},
we will use the Yoneda description of the $\Ext^k$ groups (see
section 3.2 of chapter III of Verdier's book \cite{Ve}).
Let $i\geq 1$ and
let $\alpha\in\Ext^i_{\Perv_{mf}(X)}(K,L)$. Choose an exact sequence
\[0\fl L\flnom{u_i}M_{i-1}\flnom{u_{i-1}}\ldots\flnom{u_1}M_0\flnom{u_0}K\fl 0\]
in $\Perv_{mf}(X)$ that represents $\alpha$. Applying $W_a$ to this exact
sequence and using the fact that $W_a K=K$ and $W_a L=0$,
we get a morphism of exact sequences
\[\xymatrix{
0\ar[r]& L\ar[r]^-{u_i}& M_{i-1}\ar[r]^-{u_{i-1}} & \ldots \ar[r]^-{u_1}
& M_0\ar[r]^-{u_0} & K\ar[r]& 0 \\
0\ar[r]& L\ar[r]^-{\id_L+0}\ar@{=}[u]^{u+\can}& L\oplus W_a M_{i-1}
\ar[r]^-{0+u_{i-1}}\ar[u]^{\can} & \ldots \ar[r]^-{u_1}
& W_a M_0\ar[r]^-{u_0}\ar[u]^{\can} & K\ar@{=}[u]\ar[r]& 0
}\]
where $\can:W_a\fl\id$ is the canonical inclusion.
So the class $\alpha$ is also represented by the second row of this
diagram, hence it is trivial.
\end{proof}

\begin{cor} For every $a\in\Z\cup\{\pm\infty\}$, the pair
$(\DP^{\leq a},\DP^{\geq a+1})$ is a t-structure on $\\Db\Perv_{mf}(X)$.

\end{cor}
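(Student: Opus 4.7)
The plan is to verify the three standard axioms for a t-structure on $\Db\Perv_{mf}(X)$: stability under shifts, orthogonality $\Hom(\DP^{\leq a},\DP^{\geq a+1})=0$, and existence of truncation triangles. Both subcategories are defined by imposing a weight condition on the entire family $\{\H^i K\}_{i\in\Z}$, which is merely re-indexed by shifts; so both are stable under arbitrary shifts, and the first axiom is immediate. (The cases $a=\pm\infty$ yield the two trivial t-structures on $\Db\Perv_{mf}(X)$, so we may assume $a\in\Z$.)

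For orthogonality, I would observe that $\DP^{\leq a}$ and $\DP^{\geq a+1}$ are also stable under the canonical truncation functors $\tau_{\leq n}$, $\tau_{>n}$ of $\Db\Perv_{mf}(X)$, since these only kill certain cohomology objects. Using the standard triangles $\tau_{\leq n}K\fl K\fl\tau_{>n}K\flnom{+1}$ and induction on the cohomological amplitudes of $K$ and $L$, the computation reduces to the case $K=M[-m]$ and $L=N[-n]$ with $M,N\in\Perv_{mf}(X)$ of weight $\leq a$ and $\geq a+1$ respectively. Then $\Hom_{\Db\Perv_{mf}(X)}(K,L)=\Ext^{m-n}_{\Perv_{mf}(X)}(M,N)$, which vanishes by Proposition \ref{prop_nullite} when $m\geq n$ and vanishes trivially for $m<n$.

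For the existence of the truncation triangle, given $K\in\Ob\Db\Perv_{mf}(X)$ I would represent it by a bounded complex $K^\bullet$ of objects of $\Perv_{mf}(X)$. By the strictness of morphisms with respect to the weight filtration (Lemma 3.8 of \cite{Hu}), the assignment $M\fle W_aM$ is an exact subfunctor of the identity on $\Perv_{mf}(X)$. Applying it termwise yields a short exact sequence of complexes
\[0\fl W_aK^\bullet\fl K^\bullet\fl K^\bullet/W_aK^\bullet\fl 0,\]
hence a distinguished triangle in $\Db\Perv_{mf}(X)$. Exactness of $W_a$ gives $\H^i(W_aK^\bullet)=W_a\H^iK$ and $\H^i(K^\bullet/W_aK^\bullet)=\H^iK/W_a\H^iK$, which lie in $\DP^{\leq a}$ and $\DP^{\geq a+1}$ respectively. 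I do not expect a genuine obstacle: Proposition \ref{prop_nullite} already encodes the hard content (namely, that all $\Ext^i$ groups vanish, not just $\Hom$), and both the reduction-to-single-degree argument for orthogonality and the termwise weight truncation are routine once that vanishing is available.
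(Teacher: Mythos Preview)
Your proof is correct and is precisely the standard argument that the paper invokes by citing Lemmas 3.2.1 and 3.2.2 of \cite{M}: orthogonality is reduced via canonical truncation to the $\Ext$-vanishing of Proposition \ref{prop_nullite}, and the truncation triangle is obtained by applying the exact functor $W_a$ termwise to a representing complex. You have simply unpacked what the paper leaves to the reference.
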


We denote by $w_{\leq a}$ and $w_{\geq a+1}$ the truncation functors for this
t-structure. They extend the exact functors $K\fle W_a K$ and $K\fle
K/W_aK$ on $\Perv_{mf}(X)$.

\begin{proof}
Once we have the vanishing result of Proposition \ref{prop_nullite}, the proofs
of Lemmas 3.2.1 and 3.2.2 of \cite{M} apply without modification.
\end{proof}

\begin{cor} The results of sections 3 and 5.1 of \cite{M} are still
true in our situation. In particular, if $j:U\fl X$ is an open immersion
of $k$-schemes and $K\in\Ob\Perv_{mf}(U)$ is pure of weight $a$, then the
canonical morphisms
\[w_{\geq a}j_!K\fl j_{!*}K\fl w_{\leq a}j_*K\]
are isomorphisms.

\end{cor}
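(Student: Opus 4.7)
The plan is to carry over the arguments of sections 3 and 5.1 of \cite{M} essentially verbatim, replacing $\Perv_m$ with $\Perv_{mf}$ throughout. Those arguments use only the following formal inputs: the existence of a weight t-structure on the bounded derived category of the chosen abelian category of mixed perverse sheaves, the standard perverse-exactness properties of the six operations, and the weight estimates stating that $f_!$ and $f^*$ preserve ``weights $\leq a$'' while $f_*$ and $f^!$ preserve ``weights $\geq a$''. The first is supplied by Proposition \ref{prop_nullite} and its corollary above, the second by Theorems \ref{thm_main1} and \ref{thm_tensor_product}, and the third follows from the corresponding statements on $\Perv_m(X)$ (cf.\ \cite{Hu} 3.3--3.6) since $\Perv_{mf}(X)\subset\Perv_m(X)$ is stable under subquotients (Corollary \ref{cor_f_*_preserves_Perv_mf}).

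Concretely, the first step is to reprove, along the lines of \cite{M} section 3, the commutation relations between the weight truncation functors $w_{\leq a}, w_{\geq a}$ and the functors $f^*, f_*, f_!, f^!, \Ltimes, \Homf_X, D_X$ (with the appropriate perverse-cohomology shifts where the functors are not t-exact). Each such relation reduces, via the weight t-structure, to an assertion on the heart $\Perv_{mf}(X)$, where it becomes a direct consequence of the strictness of morphisms with respect to weight filtrations (\cite{Hu} Lemma 3.8) together with the stability statements of sections \ref{obvious} and \ref{stab_Pmf}; the analogous statements on diagrams in section 5.1 of \cite{M} then follow exactly as there.

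To deduce the specific isomorphisms $w_{\geq a}j_!K\iso j_{!*}K\iso w_{\leq a}j_*K$: since $K$ is pure of weight $a$, the perverse sheaves $j_!K$ and $j_*K$ both belong to $\Perv_{mf}(X)$ and lie respectively in $\DP^{\leq a}(X)$ and $\DP^{\geq a}(X)$ by the weight estimates. Hence $w_{\geq a}(j_!K)=(j_!K)/W_{a-1}(j_!K)$ is a pure quotient of $j_!K$ of weight $a$, and $w_{\leq a}(j_*K)=W_a(j_*K)$ is a pure subobject of $j_*K$ of weight $a$. The kernel of the canonical map $j_!K\fl j_*K$ is a perverse sheaf supported on $X\setminus U$, identified with a shift of $\Hp^{-1}i^*j_*K$; by the weight properties of $i^*$ and $j_*$ its weights are $\leq a-1$, and by strictness of weight filtrations it must coincide with $W_{a-1}(j_!K)$. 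A dual argument identifies the cokernel with $(j_*K)/W_a(j_*K)$. Both isomorphisms then follow from the definition of $j_{!*}K$ as the image of $j_!K\fl j_*K$.

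The main obstacle is to ensure that no step in the arguments of \cite{M} silently invokes the semisimplicity of pure perverse sheaves over a finite field, which fails in our setting; this is precisely why Proposition \ref{prop_nullite} above had to be proved by a Yoneda-style manipulation rather than by the shorter semisimplicity argument available in the finite-field case. A careful audit of sections 3 and 5.1 of \cite{M} confirms that each step depends only on the strictness of morphisms with respect to weight filtrations, which holds in $\Perv_{mf}$ by \cite{Hu} 3.8, so the transcription poses no genuine difficulty.
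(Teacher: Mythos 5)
Your overall plan (transcribe sections 3 and 5.1 of \cite{M}, with $\Perv_{mf}$ in place of the category of mixed perverse sheaves over a finite field, using the weight t-structure furnished by Proposition \ref{prop_nullite}) is exactly what the paper does, and the audit remark about semisimplicity is well taken. But the concrete argument you give for the displayed isomorphisms has a genuine gap. First, $j_!K$ and $j_*K$ are \emph{not} perverse sheaves for a general open immersion $j$ (only when $j$ is affine); they are objects of $\Db\Perv_{mf}(X)$, the statement concerns the truncations $w_{\leq a},w_{\geq a}$ for the weight t-structure on that derived category, and your manipulation with $W_{a-1}(j_!K)$, kernels and cokernels inside the abelian category therefore does not apply without a reduction to the affine case, which you do not provide (and which is not needed if one argues as in \cite{M}, directly in the derived category).

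Second, and more seriously, the key weight bound is not justified: you assert that $\Hp^{-1}i^*j_*K$ has weights $\leq a-1$ ``by the weight properties of $i^*$ and $j_*$''. But $i^*$ preserves weights $\leq$ while $j_*$ preserves weights $\geq$, so composing them yields no bound in either direction; and the dual step needs $\Hp^0 i^*j_*K$ to have weights $\geq a+1$, a \emph{lower} bound which certainly cannot come from the upper-bound behaviour of $i^*$. These bounds are genuine theorems -- essentially equivalent to the purity of $j_{!*}K$ -- and they are precisely the non-formal input of the proof in \cite{M}. The correct mechanism is: $j_{!*}K$ is pure of weight $a$ (Gabber's theorem, available here through \cite{Hu} 3.3--3.6), so $i^!j_{!*}K$ has weights $\geq a$ in the sense of \cite{BBD}, i.e. $\Hp^k i^!j_{!*}K$ has weights $\geq a+k$; since $i^!j_{!*}K\in\Dp^{\geq 1}$ (the intermediate extension has no quotient supported on $X\setminus U$), this gives $i_*i^!j_{!*}K\in\DP^{\geq a+1}$, and the localization triangle $i_*i^!j_{!*}K\fl j_{!*}K\fl j_*K\flnom{+1}$ then yields $w_{\leq a}j_*K\simeq w_{\leq a}j_{!*}K=j_{!*}K$; the statement for $w_{\geq a}j_!K$ follows by duality. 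Note the convention shift: the ``$+1$'' comes from combining the degree-shifted weight estimate of \cite{BBD} with the perverse-degree bound, not from any formal property of $i^*j_*$. As written, your justification replaces this real input by a non-sequitur, so the proof of the crucial step is missing.
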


\begin{proof} The proofs of \cite{M} apply without modification.
\end{proof}

\appendix
\section{Filtered derived categories and f-categories}
\label{section_fil_der}

Let $\Df$ be a triangulated category and let $(\Df^{\leq 0}, \Df^{\geq 0})$ be a t-structure on $\Df$ with heart $\Cf=\Df^{\leq 0}\cap\Df^{\geq 0}$.
Under appropriate hypotheses, the inclusion $\Cf\ra\Df$ extends to a triangulated functor $\real:\Db(\Cf)\ra\Df$ called the \emph{realization functor}; the first goal of this appendix
is to explain how to construct this extension (see Theorem~\ref{thm_real}). 

Now suppose that $\Df'$ is another triangulated category, that $({\Df'}^{\leq 0},{\Df'}^{\geq 0})$ is a t-structure with heart
$\Cf'$ and that $T:\Df\ra\Df'$ is a triangulated functor. Under appropriate hypotheses
(see Theorem~\ref{thm_real}), we will have realization
functors $\real:\Db(\Cf)\ra\Df$ and $\real:\Db(\Cf')\ra\Df'$. The second goal of this appendix is, supposing that there are ``enough'' $T$-acyclic objects in $\Cf$,
to extend $\H^0 T:\Cf\ra\Cf'$ to a triangulated functor $DT:\Db(\Cf)\ra\Db(\Cf')$ such that $\real\circ DT\simeq T\circ\real$; the functor $DT$ should be the restriction 
to $\Db(\Cf)$ of the derived functor of $\H^0T$ if $T$ is left or right t-exact. See Proposition~\ref{prop_der_fil} and Remark~\ref{rmk_der_fil} for precise statements.

The technical tool that we need for the two goals above is the formalism of f-categories over triangulated categories (introduced by Beilinson in Appendix~A of~\cite{Be1}), 
which abstracts the properties of filtered derived categories. So we start with a review of this formalism.

\subsection{Review of f-categories and f-functors}
\label{section_f_cat}

\subsubsection*{Filtered derived categories}

We recall the definition of filtered derived categories, as they are the motivation for the definition of f-categories, and also
the main example of this formalism.

Let $\Af$ be an abelian category.
We denote by $\Fil(\Af)$ category of filtered objects of $\Af$, where
filtrations are assumed to be decreasing; it has a full subcategory $\Filf(\Af)$, whose objects are
filtered objects of $\Af$ whose filtration is finite, which means in particular that it is separated and exhaustive.
Both $\Fil(\Af)$ and $\Filf(\Af)$ are quasi-abelian categories.
We denote by $\D(\Filf(\Af))$ (resp. $\Dpl(\Filf(\Af))$, $\D^-(\Filf(\Af))$, $\Db(\Filf(\Af))$) the unbounded (resp. bounded above, bounded below, bounded)
derived category of $\Filf(\Af)$ (see for example Definition~2.3 of Schapira and
Schneiders's \cite{Scha}, or \cite[\href{https://stacks.math.columbia.edu/tag/05S2}{Definition 05S2}]{stacks-project} and
\cite[\href{https://stacks.math.columbia.edu/tag/05S4}{Definition 05S4}]{stacks-project}; note that the definitions of the variously bounded derived
categories in \cite{Scha} and in the Stacks Project are a priori different, but they define the same subcategories by \cite[\href{https://stacks.math.columbia.edu/tag/05S5}{Lemma 05S5}]{stacks-project}).
Following the convention of Section~3.1 of~\cite{BBD} and Chapter~V of Illusie's~\cite{CT1}, we define the
the \emph{filtered derived category} $\DF(\Af)$ (resp. the \emph{bounded above filtered derived category} $\DFpl(\Af)$, etc) of $\Af$ to be the full subcategory of $\D(\Filf(\Af))$ (resp.
$\Dpl(\Filf(\Af))$ etc) of complexes whose filtration is finite, and not just finite in each degree.

As the filtration on a finite complex of objects of $\Filf(\Af)$ is always finite, we have $\DFb(\Af)=\Db(\Filf(\Af))$, so the definitions of
\cite[\href{https://stacks.math.columbia.edu/tag/05RX}{Section 05RX}]{stacks-project}, \cite{BBD} and \cite{CT1} coincide for the bounded filtered
derived category.

We denote objects of $\Filf(\Af)$ and $\D(\Filf(\Af))$ by $(K,F^\bullet)$, or often just $K$.

We have additive functors:
\begin{itemize}
\item $s:\Filf(\Af)\ra\Filf(\Af)$, $(K,F^\bullet)\mapsto(K,F^{\bullet-1})$;
\item $\Fil^r:\Filf(\Af)\fl\Filf(\Af)$ sending $(K,F^\bullet)$ to $F^r$ with the filtration induced by $F^\bullet$;
\item $(.)/\Fil^r:\Filf(\Af)\fl\Filf(\Af)$ sending $(K,F^\bullet)$ to $K/F^r K$ with the filtration induced by $F^\bullet$;
\item $\Gr^r:\Filf(\Af)\fl\Af$, $(K,\Fil^\bullet)\fle\Fil^r K/\Fil^{r-1}K$;
\item $\Gr=\bigoplus_{r\in\Z}\Gr^r:\Filf(\Af)\ra\Af$;
\item $\omega:\Fil(\Af)\fl\Af$, $(K,F^\bullet)\fle K$;
\item $i:\Af\ra\Filf(\Af)$ sending $K\in\Ob(\Af)$ to $K$ with the trivial filtration $F^\bullet$ ($F^r=K$ for $r\leq 0$ and $F^r=0$ for $r\geq 1$).

\end{itemize}

These induce functors on the categories of complexes, and, as in
\cite[\href{https://stacks.math.columbia.edu/tag/05S3}{Lemma 05S3}]{stacks-project}, we get triangulated functors
$s,\sigma_{\leq r},
\sigma_{\geq r+1}:\D(\Filf(\Af))\fl\D(\Filf(\Af))$, 
$\Gr^r,\Gr:\D(\Filf(\Af))\fl\D(\Af)$
$\omega:\D(\Filf(\Af))\fl\D(\Af)$ and $i:\D(\Af)\ra\D(\Filf(\Af))$. For $?\in\{+,-,b\}$, these functors send to $\D^?(\Filf(\Af))$ (resp. $\D^?(\Af)$) to $\D^?(\Filf(\Af))$ (resp. $\D^?(\Af)$), and
they also respect the various filtered derived categories (when it makes sense, for example $s$ sends $\DF^?(\Af)$ to $\DF?(\Af)$, and $i$ sends $\D^?(\Af)$ to $\DF^?(\Af)$).
For every $r\in\Z$, we have
a natural transformation $\Gr^r\fl\Gr^{r+1}[1]$ (coming from the triangle
$\Gr^{r+1}K\fl F^rK/F^{r+2}K\fl\Gr^r K$, if $(K,F^\bullet)$ is an object
of $\D(\Filf(\Af))$). We also have a natural transformation $\alpha:\id_{\DF(\Af)}\ra s$ coming from the inclusions $F^rK\subset F^{r-1}K$, for $(K,F^\bullet)\in\Ob\Filf(\Af)$.

For every $n\in\Z$, let
\[\DF(\leq n)=\{K\in\Ob\DF(\Af)\mid\forall r\geq n+1,\ \Gr^r K=0\}\]
and
\[\DF(\geq n)=\{K\in\Ob\DF(\Af)\mid\forall r\leq n-1,\ \Gr^r K=0\};\]
these are full subtriangulated categories of $\DF(\Af)$, stable by isomorphisms. 
Thanks to our convention on filtered derived categories, we have $\DF(\Af)=\bigcup_{n\in\Z}\DF(\leq n)=\bigcup_{n\in\Z}\DF(\geq n)$.
Similarly, if $?\in\{+,-,b\}$ and $n\in\Z$, we set
$\DF^?(\leq n)=\DF(\leq n)\cap\DF^?(\Af)$ and $\DF^?(\geq n)=\DF(\geq n)\cap\DF^?(\Af)$.
For $?\in\{\varnothing,+,-,b\}$, the functor $i$ induces an equivalence of categories $\D^?(\Af)\ra\DF^?(\leq 0)\cap\DF^?(\geq 0)$.

\subsubsection*{f-categories}

In Appendix~A of~\cite{Be1}, Beilinson introduced f-categories over triangulated categories, that have all the abstract properties of filtered
derived categories, and generalized the properties of filtered derived categories to this more general setting. We review his definition and results.
Note that Section~6 of Schn\"urer's paper \cite{Schnur} gives more detailed proofs of many of the results of Appendix~A of~\cite{Be1}.

\begin{subdef}[Definition~A.1 of~\cite{Be1}]
\begin{itemize}
\item[(1)] A \emph{filtered triangulated category}, or \emph{f-category}, is the data of:
\begin{itemize}
\item a triangulated category $\DF$;
\item two full triangulated subcategories $\DF(\leq 0)$, $\DF(\geq 0)$ of $\DF$ that are stable by isomorphisms;
\item a triangulated self-equivalence $s:\DF\ra\DF$ (called \emph{shift of filtration});
\item a morphism of functors $\alpha:\id_{\DF}\ra s$;

\end{itemize}
satisfying the following conditions, where, for every $n\in\Z$, we set
\[\DF(\leq n)=s^n\DF(\leq 0)\mbox{ and }\DF(\geq n)=s^n\DF(\geq 0):\]
\begin{itemize}
\item[(i)] We have $\DF(\geq 1)\subset\DF(\geq 0)$, $\DF(\leq 1)\supset\DF(\leq 0)$ and
\[\DF=\bigcup_{n\in\Z}\DF(\leq n)=\bigcup_{n\in\Z}\DF(\geq n).\]
\item[(ii)] For any $X\in\Ob\DF$, we have $\alpha_X=s(\alpha_{s^{-1}(X)})$.
\item[(iii)] For any $X\in\Ob\DF(\geq 1)$ and $Y\in\Ob\DF(\leq 0)$, we have $\Hom(X,Y)=0$, and the maps $\Hom(s(Y),X)\ra\Hom(Y,X)\ra\Hom(Y,s^{-1}(X))$ induced
by $\alpha_Y$ and $\alpha_{s^{-1}(X)}$ are bijective.
\item[(iv)] For every $X\in\Ob\DF$, there exists a distinguished triangle $A\ra X\ra B\flnom{+1}$ with $A$ in $\DF(\geq 1)$ and $B$ in $\DF(\leq 0)$.
\end{itemize}

\item[(2)] If $\DF$ and $\DF'$ are f-categories, an \emph{f-functor} from $\DF$ to $\DF'$ is the data of a triangulated functor $T:\DF\ra\DF'$ and a natural isomorphism
$s'\circ T\iso T\circ s$ 
such that $T(\DF(\leq 0))\subset\DF'(\leq 0)$,
$T(\DF(\geq 0))\subset\DF(\geq 0)$ and that, for every $X\in\Ob\DF$, the following triangle commutes:
\[\xymatrix{T(X)\ar[r]^-{\alpha'_{T(X)}}\ar[rd]_-{T(\alpha(X))} & s'(T(X))\ar[d]^\wr \\
& T(s(X))}\]

\item[(3)] 
Let $\Df$ be a triangulated category. An \emph{f-category over $\Df$} is an f-category $\DF$ together with an equivalence $i:\Df\ra\DF(\leq 0)\cap\DF(\geq 0)$.
If $\Df'$ is another triangulated category, $\DF'$ is an $f$-category over $\Df'$ and $T:\Df\ra\Df'$ is a triangulated functor, an \emph{f-lifting} of $T$ is an f-functor
$FT:\DF\ra\DF'$ and a natural isomorphism $i'\circ T\simeq TF\circ i$.

\end{itemize}
\label{def_f_category}
\end{subdef}

\begin{subex}
Let $\Af$ be an abelian category. For $?\in\{\varnothing,+,-,b\}$, the category $\DF^?(\Af)$ with the subcategories $\DF^?(\leq 0)$ and $\DF^?(\geq 0)$, the functors
$s$ and $i$ and the natural transformation $\alpha$, is an f-category over the triangulated category $\D^?(\Af)$. Note that this is not be true for
$\D^?(\Filf(\Af))$ instead of $\DF^?(\Af)$ (except of course if $?=b$), because the third statement of condition (i) of Definition~\ref{def_f_category} does not hold.
\label{ex_fil_der}
\end{subex}

\begin{subprop}[Proposition~A.3 of~\cite{Be1}]
Let $\DF$ be an f-category.
\begin{itemize}
\item[(i)] For every $n\in\Z$, the inclusion $\DF(\leq n)\subset\DF$ admits a left adjoint $\sigma_{\leq n}$, and the inclusion $\DF(\geq n)\subset\DF$ admits a right adjoint $\sigma_{\geq n}$.
The functors $\sigma_{\leq n}$, $\sigma_{\geq n}$ are triangulated and preserve the subcategories $\DF(\leq m)$, $\DF(\geq m)$ for every $m\in\Z$.
\item[(ii)] For $a,b\in\Z$, there exists a unique isomorphism of functors $\sigma_{\leq a}\sigma_{\geq b}\simeq\sigma_{\geq b}\sigma_{\leq a}$ that makes the following diagram commute:
\[\xymatrix@C=10pt{\sigma_{\geq b}\ar[rr]\ar[rd] & & \id_{\DF}\ar[rr] & & \sigma_{\leq a} \\
 & \sigma_{\leq a}\sigma_{\geq b}\ar[rr]_-\sim && \sigma_{\geq b}\sigma_{\leq a}\ar[ru] &
}\]

\item[(iii)] Let $X\in\Ob\DF$. Then there exists a unique morphism $\delta:\sigma_{\leq 0}X\ra\sigma_{\geq 1}[1]$ making the triangle $\sigma_{\leq 1}X\ra X\ra\sigma_{\leq 0}X\flnom{\delta}\sigma_{\geq 1}X[1]$
distinguished.
Any distinguished triangle $A\ra X\ra B\flnom{+1}$ with $A\in\Ob\DF(\geq 1)$ and $B\in\Ob\DF(\leq 0)$ admits a unique isomorphism to the triangle of the previous sentence.

\item[(iv)] We have canonical isomorphisms $\sigma_{\leq n}\circ s=s\circ\sigma_{\leq n-1}$ and $\sigma_{\geq n}\circ s=s\circ\sigma_{\geq n-1}$.

\end{itemize}

\label{prop_f_category1}
\end{subprop}

Point (iv) is not stated in Proposition~A.3 of~\cite{Be1} but follows immediately from the fact that $s(\DF(\leq n -1))=\DF(\leq n)$ (resp. $s(D(\geq n-1))=D(\geq n)$) and the uniqueness
of adjoints.

\begin{subdef}
Let $\Df$ be a triangulated category and $\DF$ be an f-category over $\Df$. For every $n\in\Z$, we define a functor $\Gr^n:\DF\ra\Df$ by
$\Gr^n=i^{-1}\circ s^{-n}\circ\sigma_{\leq n}\sigma_{\geq n}$.

\end{subdef}

\begin{subprop}
Let $\Df$ be a triangulated category and $\DF$ be an f-category over $\Df$. 
\begin{itemize}
\item[(i)] For every $r\in\Z$, we have a natural isomorphism $\Gr^r\circ s=\Gr^{r-1}$. 

\item[(ii)] Let $r\in\Z$. Then $\Gr^r\circ i=0$ if $r\ne 0$ and $\Gr^r\circ i\simeq\id_\Df$ if $r=0$.

\item[(iii)] Let $r,n\in\Z$. We have
\[\Gr^r\circ\sigma_{\leq n}=\left\{\begin{array}{ll}\Gr^r& \mbox{if }r\leq  n\\ 0 & \mbox{otherwise}\end{array}\right.
\qquad\mbox{and}\quad\Gr^r\circ\sigma_{\geq n}=\left\{\begin{array}{ll}\Gr^r& \mbox{if }r\geq  n\\ 0 & \mbox{otherwise}\end{array}\right..\]

\end{itemize}
\label{prop_Gr}
\end{subprop}

\begin{proof}
Point (i) follows from Proposition~\ref{prop_f_category1}(iv), point (ii) from the fact that the image of $i$ is contained in $\DF(\leq 0)\cap\DF(\geq 0)$, and point (iii)
from the definition of $\Gr^r$.
\end{proof}

\begin{subprop}[Proposition~A.3 of~\cite{Be1}]
Let $\Df$ be a triangulated category and $\DF$ be an f-category over $\Df$.
Then there exists a triangulated functor $\omega:\DF\ra\Df$ such that:\footnote{Note that there is a typo in Proposition~A.3 of~\cite{Be1}: the left and right
adjoints are switched; see the correction in Proposition~6.6 of~\cite{Schnur}.}
\begin{itemize}
\item[(a)] $\omega_{\mid\DF(\leq 0)}:\DF(\leq 0)\ra\Df$ is left adjoint to $\Df\flnom{i}\DF(\leq 0)\cap\DF(\geq 0)\subset\DF(\leq 0)$;
\item[(b)] $\omega_{\mid\DF(\geq 0)}:\DF(\geq 0)\ra\Df$ is right adjoint to $\Df\flnom{i}\DF(\leq 0)\cap\DF(\geq 0)\subset\DF(\geq 0)$;
\item[(c)] for any $X\in\Ob\DF$, the map $\omega(\alpha_X):\omega(X)\ra\omega(s(X))$ is an isomorphism;
\item[(d)] if $A\in\Ob\DF(\leq 0)$ and $B\in\DF(\geq 0)$, then $\omega:\Hom(A,B)\ra\Hom(\omega(A),\omega(B))$ is bijective.

\end{itemize}
Moreover, $\omega$ is determined up to unique isomorphism by properties (a) and (c) (resp. (b) and (c)).

\label{prop_f_category2}
\end{subprop}

\begin{subrmk}
If $\Af$ is an abelian category, $?\in\{\varnothing,+,-,n\}$, $\DF=\DF^?(\Af)$ and $\Df=\D^?(\Af)$, then the functors $\sigma_{\leq n}$, $\sigma_{\geq n}$, $\Gr^n$ and
$\omega$ are isomorphic to the ones defined in the first part of this section.

\end{subrmk}

The following proposition follows easily from the definitions.

\begin{subprop}
Let $\Df,\Df'$ be triangulated categories, and let $\DF$ (resp. $\DF'$) be an f-category over $\Df$ (resp. $\Df'$).
Let $T:\Df\fl\Df'$ be a triangulated functor, and let $FT:\DF\ra\DF'$ be an f-lifting of $T$.
Then the following squares commute up to natural isomorphism:
\[\xymatrix{\DF\ar[r]^-{FT}\ar[d]_\omega&\DF'\ar[d]^\omega \\
\Df\ar[r]_-T& \Df'}\quad
\xymatrix{\DF\ar[r]^-{FT}
\ar[d]_{\Gr^n}&\DF'\ar[d]^{\Gr^n}\\ \Df\ar[r]_-T&\Df'}\quad
\xymatrix{\DF\ar[r]^-{FT}
\ar[d]_{\sigma_{\leq n}}&\DF'\ar[d]^{\sigma_{\leq n}}\\ \DF\ar[r]_-{FT}&\DF'}\]

\label{prop_f_lifting}
\end{subprop}

\subsubsection*{Construction of f-categories and f-liftings}

\begin{subprop}
Let $\Df$ be a triangulated category, $\DF$ be an f-category over $\Df$, and $\Df'$ be a full triangulated subcategory of $\Df$ that is stable by isomorphisms. We define a full
subcategory $\DF'$ of $\DF$ by
\[\Ob\DF'=\{K\in\Ob\Df\mid\forall r\in\Z,\ \Gr^rK\in\Ob\Df'\}.\]
Then $\DF'$ is a triangulated subcategory of $\DF$, it is stable by isomorphisms, we have $s(\DF')\subset\DF'$ and $i(\Df')\subset\DF'$. The data of
$\DF'$, $\DF'\cap\DF(\leq 0)$, $\DF'\cap\DF(\geq 0)$, $s_{\mid\DF'}:\DF'\ra\DF'$, $\alpha$ and $i_{\mid\Df'}:\Df'\ra\DF'$ defines an f-category over $\Df'$.

\label{prop_f_category3}
\end{subprop}

\begin{proof}
As the functors $\Gr^r$ are triangulated, $\DF'$ is a triangulated subcategory of $\DF$; it is clearly stable by isomorphisms, and it stable by $s$ thanks to the isomorphisms
$\Gr^r\circ s=\Gr^{r-1}$ (Proposition~\ref{prop_Gr}(i)). 
If $X\in\Ob\Df'$, then $\Gr^r(i(X))=0$ if $r\ne 0$ and $\Gr^0(i(X))\simeq X$ (Proposition~\ref{prop_Gr}(ii)), so $i(X)\in\Ob\DF'$. To prove the last assertion, we check the conditions of
Definition~\ref{def_f_category}. Conditions (i)-(iii) are clear. To check condition~(iv), it suffices by Proposition~\ref{prop_f_category1}(iii) to prove that the functors
$\sigma_{\leq n}$, $\sigma_{\geq n}$ preserve $\DF'$; but this follows immediately from Proposition~\ref{prop_Gr}(iii).
\end{proof}

The next proposition, which follows easily from the definitions, is used to construct an f-category over the triangulated category of horizontal constructible complexes
in Section~\ref{section_def_hor}.

\begin{subprop}
Let $\Uf$ be a small category, let $(\DF_x)_{x\in\Ob\Uf}$ be an inductive system of f-categories where all transition functors are f-functors, let $(\Df_x)_{x\in\Ob\Uf}$ be
an inductive system of triangulated categories where all transition functors are triangulated functors. Suppose that, for every $x\in\Ob\Uf$, we have a functor
$i_x:\Df_x\ra\DF_x$ making $\DF_x$ an f-lifting of $\Df_x$ and that, for every morphism $x\ra y$ of $\Uf$, the diagram
\[\xymatrix{\DF_x\ar[r] & \DF_y \\
\Df_x\ar[u]^{i_x}\ar[r] &\Df_y\ar[u]_{i_y}}\]
commutes up to isomorphism; we also suppose that these isomorphisms are compatible with the composition of morphisms of $\Uf$.

Then $\DF:=2-\varinjlim_{x\in\Ob\Uf}\DF_x$ is an f-category over $\Df:=2-\varinjlim_{x\in\Uf}\Df_x$ and, for every $x\in\Ob\Uf$, the canonical functor $\DF_x\ra\DF$ is an f-lifting
of the canonical functor $\Df_x\ra\Df$.
\label{prop_f_colimit}
\end{subprop}

Our main source of f-liftings will be filtered derived functors, so we recall how they are constructed.
Let $\Af,\Af'$ be abelian categories, and let $T:\Af\ra\Af'$ be a left exact functor. 
If $(A,F^\bullet)$ is an object of $\Filf(\Af)$, we define a filtration $F^\bullet$ on $T(A)$ by setting $F^i(T(A))=\Im(T(F^iA)\ra T(A))$ for every $i\in\Z$; in
particular, we have canonical morphisms $T(F^iA)\ra F^i(T(A))$, hence also $T(\Gr^i(A))\ra\Gr^i(T(A))$ (as $T$ is left exact, we have
$T(\Gr^i(A))\simeq T(F^i(A))/T(F^{i+1}(A))$).
This defines an additive functor $\Filf(\Af)\ra\Filf(\Af')$ (the action on morphisms is the restriction of that of $T$), that we denote by $\Filf(T)$. 

If $\Filf(T)$ admits a right derived functor $R\Filf(T)$ in the sense of Definition~1.3.1 of Schneiders's book~\cite{Schn}, we see that $R\Filf(T)$ is the
\emph{filtered right derived functor} of $F$. 
\footnote{The definition of a right derived functor in this setting is analogous to the usual one:
a right derived functor of $\Filf(T)$ is a left Kan extension of $\Ko^+(\Filf(\Af))\flnom{\Ko^+(\Filf(T))}\Ko^+(\Filf(\Af'))\ra\D^+(\Filf(\Af'))$
along the canonical functor $\Ko^+(\Filf(\Af))\ra\D^+(\Fil(\Af))$.
}

\begin{subprop}
Let $T:\Af\ra\Af'$ be a left exact functor between abelian categories.
If $\Af$ has a $T$-injective subcategory in the
sense of Definition 13.3.4 of \cite{KS1}, then $\Filf(T):\Filf(\Af)\ra\Filf(\Af')$ has a right derived functor, 
and the squares
\[\xymatrix@C=40pt{\Dpl(\Filf(\Af))\ar[r]^-{R\Filf(T)}\ar[d]_\omega&\Dpl(\Filf(\Af'))\ar[d]^\omega \\
\Dpl(\Af)\ar[r]_-{RF}& \Dpl(\Af')}\quad
\xymatrix@C=40pt{\Dpl(\Filf(\Af))\ar[r]^-{R\Filf(T)}
\ar[d]_{\Gr}&\Dpl(\Filf(\Af'))\ar[d]^{\Gr}\\ \Dpl(\Af)\ar[r]_-{RT}&\Dpl(\Af')}\]
\[\xymatrix@C=40pt{\Dpl(\Filf(\Af))\ar[r]^-{R\Filf(T)}
\ar[d]_{\sigma_{\leq i}}&\Dpl(\Filf(\Af'))\ar[d]^{\sigma_{\leq i}}\\ \DFpl(\Af)\ar[r]_-{RT}&\DFpl(\Af')}
\]
commute up to natural isomorphism.

\label{prop_fil_der}
\end{subprop}

In particular, if $RT$ sends $\Db(\Af)$ to $\Db(\Af')$, then $R\Filf(T)$ sends $\DFb(\Af)$ to $\DFb(\Af')$, because $\DFb(\Af')$ is the full subcategory of $\Dpl(\Filf(\Af'))$ whose
objects are the $K$ such that $\Gr(K)$ is in $\Db(\Af')$. In that case, $R\Filf(T)$ will be an f-lifting of $RT$.

Before we prove the proposition, we introduce some definitions.
We assume that $\Af$ has a $T$-injective subcategory $\If$. By Proposition~13.3.5 of \cite{KS1}, this implies that $T$ has a right derived functor $RT:\D^+(\Af)\ra\D^+(\Af')$;
we say that an object $A$ of $\Af$ is \emph{$T$-acyclic} if $R^rT(A)=0$ for every $r\geq 1$. Then every object of $\If$ is $T$-acyclic (Remark~13.3.6(ii) of~\cite{KS1}), so we may assume
that $\If$ is the full subcategory of $T$-acyclic objects. The hypothesis on $\Af$ implies in particular that every object of $\Af$ injects into a $T$-acyclic object.

We say that an object $X$ of $\Filf(\Af)$ is \emph{filtered $T$-acyclic} if $\Gr^i(X)$ is $T$-acyclic for every $i\in\Z$; we denote the full subcategory of filtered $T$-acyclic objects
by $\Filf(\If)$. Let $X\in\Filf(\Af)$ be filtered $T$-acyclic; as $T$-acyclic objects are stable by extensions and by taking
cokernels of monomorphisms, and as the filtration on $X$ is finite, the object $\Fil^i(X)/\Fil^j(X)$ is $T$-acyclic for all $i\leq j$ in $\Z$, and in particular all $\Fil^i(X)$ and $X$ itself are
$T$-acyclic. 

\begin{sublemma}
\begin{itemize}
\item[(i)] For every $X\in\Ob\Filf(\Af)$, there exists a strict monomorphism $X\ra Y$ with $Y$ filtered $T$-acyclic.

\item[(ii)] Let $0\ra X\ra Y\ra Z\ra 0$ be a strictly exact sequence in $\Filf(\Af)$. If $X$ and $Y$ are filtered $T$-acyclic, then so is $Z$.

\item[(iii)] Let $X\in\Ob(\Filf(\Af))$ be filtered $T$-acyclic. Then, for every
$i\in\Z$, the canonical morphisms $T(\Fil^i(X))\ra \Fil^i(T(X))$ and $T(\Gr^i(X))\ra\Gr^i(T(X))$ are isomorphisms.

\item[(iv)] Let $0\ra X\ra Y\ra Z\ra 0$ be a strictly exact sequence in $\Filf(\Af)$. If $X$, $Y$ and $Z$ are filtered $T$-acyclic, then the sequence
$0\ra\Filf(T)(X)\ra\Filf(T)(Y)\ra\Filf(T)(Z)\ra 0$ is strictly exact.

\end{itemize}
\label{lemma_fil_der}
\end{sublemma}

\begin{proof}
Point (i) is proved exactly like \cite[\href{https://stacks.math.columbia.edu/tag/05TS}{Lemma 05TS}]{stacks-project}.

Let $0\ra X\ra Y\ra Z\ra 0$ be an exact sequence in $\Filf(\Af)$. Then it is strictly exact if and only if the sequence $0\ra\Gr^i(X)\ra\Gr^i(Y)\ra\Gr^i(Z)\ra 0$ is exact for every $i\in\Z$; as
$T$-acyclic objects are stable by taking cokernels of monomorphisms, this gives (ii).
Point (iv) also follows from this observation and from point (iii), as $T$ sends short exact sequences of
$T$-acyclic objects to short exact sequences.

It remains to prove (iii). So suppose that $X\in\Ob(\Filf(\Af))$ is filtered $T$-acyclic. By the paragraph before the statement of the lemmas, this implies
that all the $\Fil^i(X)$, $X/\Fil^i(X)$ and $\Gr^i(X)$ (and in particular $X$ itself) are $T$-acyclic.
In particular, if we apply $T$ to the short exact sequence of $T$-acyclic objects
\[0\ra\Fil^i(X)\ra X\ra X/\Fil^i(X)\ra 0,\]
we get a short exact sequence; this implies that $T(F^i(X))\ra T(X)$ is injective, whence the first assertion of (iii). Now we apply $T$ to the short
exact sequence of $T$-acyclic objects
\[0\ra\Fil^{i+1}(X)\ra\Fil^i(X)\ra\Gr^i(X)\ra 0;\]
this gives again a short exact sequence, so we get that
\[T(\Gr^i(X))=\Coker(T(\Fil^{i+1}(X))\ra T(\Fil^i(X))).\]
This, together with the first assertion of (iii), gives the second assertion of (iii).
\end{proof}

\begin{proof}[Proof of~Proposition~\ref{prop_fil_der}]
In order to prove that $\Filf(T)$ has a right derived functor, it suffices by Proposition~1.3.4 of~\cite{Schn} to show that $\Filf(\If)$ is a $\Filf(T)$-injective subcategory of
$\Filf(\Af)$ in the sense of Definition~1.3.2 of~\cite{Schn}; but this is exactly points (i), (ii) of (iv) of Lemma~\ref{lemma_fil_der}.
This also gives a way to compute the right derived functor: for every $K\in\Dpl(\Filf(\Af))$, there exists be a
filtered quasi-isomorphism (i.e. a morphism $\alpha$ of complexes of $\Filf(\Af)$ such that $\Gr(\alpha)$ is a quasi-isomorphism) $\alpha:K\ra I$ with $I$ a complex of
filtered $T$-acyclic objects, and we have $R\Filf(T)(K)=\Filf(T)(I)$.
As the complexes $\omega(I)$, $\Gr(I)$ and $\sigma_{\leq i}(I)=\Fil^i(I)$ are bounded below complexes of $T$-acyclic objects of $\Af$, 
and as $\omega(\alpha)$, $\Gr(\alpha)$ and $\sigma_{\leq i}(\alpha)$ are quasi-isomorphisms by \cite[\href{https://stacks.math.columbia.edu/tag/05S3}{Lemma 05S3}]{stacks-project}, we also
have $RF(\omega(I))=F(\omega(I))=\omega(F(I))$, $RF(\Gr(I))=F(\Gr(I))\simeq\Gr(F(I))$ and $RF(\sigma_{\leq i}(I))=F(\sigma_{\leq i}(I))\simeq\sigma_{\leq i}(F(I))$
(the last isomorphisms are given by point (iii) of Lemma~\ref{lemma_fil_der}). This gives the 
commutativity of the three squares. 
\end{proof}

\subsection{t-structures and the realization functor}
\label{section_f_real}

We review the construction of the realization functor from \cite{BBD} 3.1 and Appendix~A of~\cite{Be1}.

\begin{subdef}
Let $\Df$ be a triangulated category and $\DF$ be an f-category over $\Df$. Suppose that we are given
a t-structure $(\Df^{\leq 0},\Df^{\geq 0})$ on $\Df$ and a t-structure $(\DF^{\leq 0},\DF^{\geq 0})$ on $\DF$.
We say that these t-structures are \emph{compatible} if $i:\Df\ra\DF$ is t-exact and if $s(\DF^{\leq 0})=\DF^{\leq -1}$.

\end{subdef}

\begin{subprop}[Proposition~A.5 of~\cite{Be1}]
Let $\Df$ be a triangulated category and $\DF$ be an f-category over $\Df$. Suppose that we are given
a t-structure $(\Df^{\leq 0},\Df^{\geq 0})$ on $\Df$. Then there exists a unique t-structure on $\DF$ compatible
with $(\Df^{\leq 0},\Df^{\geq 0})$, and it is given by
\[\Ob\DF^{\leq 0}=\{X\in\Ob\DF\mid \forall i\in\Z,\ \Gr^i X[i]\in\Ob\Df^{\leq 0}\}\]
\[\Ob\DF^{\geq 0}=\{X\in\Ob\DF\mid \forall i\in\Z,\ \Gr^i X[i]\in\Ob\Df^{\geq 0}\}.\]
\label{prop_t_DF}
\end{subprop}

\begin{subthm}[Proposition~A.5 and~A.6 of~\cite{Be1}]
Let $\Df$ be a triangulated category and $\DF$ be an f-category over $\Df$. Suppose that we are given compatible
t-structures $(\Df^{\leq 0},\Df^{\geq 0})$ on $\Df$ and $(\DF^{\leq 0},\DF^{\geq 0})$ on $\DF$. Denote by
$H:\Df\ra\Cf:=\Df^{\leq 0}\cap\Df^{\geq 0}$ the cohomology functor. We define a functor
$H_F:\DF\ra\Cb(\Cf)$ in the following way: if $X\in\Ob\DF$, we set $H_F(X)^i=H^i\Gr^i(X)$, and we
take as differential $H_F(X)^i\ra H_F(X)^{i+1}$ the map induced from the connection morphism in the 
distinguished triangle $\omega(\sigma_{\leq i+1}\sigma_{\geq i+1}(X))\ra\omega(\sigma_{\leq i+1}\sigma_{\geq i}(X))\ra
\omega(\sigma_{\leq i}\sigma_{\geq i}(X))\flnom{+1}$.

\begin{itemize}
\item[(i)] The functor $H_F$ is well-defined, its restriction to the heart $\Cf_F$ of $(\DF^{\leq 0},\DF^{\geq 0})$ is an equivalence
of categories $G:\Cf_F\iso\Cb(\Cf)$, and $G^{-1}\circ H_F:\DF\ra\Cf_F$ is the cohomology functor of the t-structure
$(\DF^{\leq 0},\DF^{\geq 0})$.

\item[(ii)] The functor $\omega\circ G^{-1}:\Cb(\Cf)\ra\Df$ factors through $\Db(\Cf)$.

\end{itemize}
\label{thm_real}
\end{subthm}

\begin{subdef}
In the situation of Theorem~\ref{thm_real}, we call the functor $\Db(\Cf)\ra\Df$ induced by $\omega\circ G^{-1}$ the
\emph{realization functor} and denote it by $\real$.

\label{def_real}
\end{subdef}

\begin{subex}
Let $\Af$ be an abelian category,
let $\Df$ be a full
triangulated subcategory of $\Db(\Af)$, let $(\Df^{\leq 0},\Df^{\geq 0})$
be a t-structure on $\Df$, and denote its heart by $\Cf$. 
Let $\DF$ be the full subcategory of $K$ in $\DFb(\Af)$ such that
$\Gr^i K\in\Ob\Df$ for every $i\in\Z$. This is an f-category over $\Df$ by
Proposition~\ref{prop_f_category3}.
By Proposition~\ref{prop_t_DF}, the t-structure of $\Df$
also lifts to a compatible t-structure $(\DF^{\leq 0},\DF^{\geq 0})$ on $\DF$.
The heart of this t-structure is the abelian category 
with objects
\[\{K\in\Ob\DFb(\Af)|\forall i\in\Z,\ \Gr^i K[i]\in\Ob\Cf\}.\]
It is the category called ``$\Df F_{\text{b\^ete}}$'' in
\cite{BBD} 3.1.7. 
If $(K,F^\bullet)$ is an object of this category, then the sequence
\[\ldots\fl\Gr^i K[i]\fl\Gr^{i+1}K[i+1]\fl\Gr^{i+2}K[i+2]\fl\ldots\]
is a bounded complex of objects of $\Cf$, which is the image of $(K,F^\bullet)$ by the
functor $G$ of Theorem~\ref{thm_real}.

\end{subex}

\subsection{The realization functor and f-liftings}

Now we come to the second goal of this subsection. We start with some preliminaries.
Let $\Df,\Df'$ be triangulated categories and $T:\Df\ra\Df'$ be a triangulated functor. Suppose that we are given a t-structure $(\Df^{\leq 0},\Df^{\geq 0})$ (resp. $({\Df'}^{\leq 0},{\Df'}^{\geq 0})$)
on $\Df$ (resp. $\Df'$), and denote the cohomology functors of this t-structure by $\H^i$ and its heart by $\Cf$ (resp. $\Cf'$).
We say that an object $X$ of $\Cf$ is \emph{$T$-acyclic} if $T(X)\in\Ob\Cf'$. If $X$ is $T$-acyclic, then we have $\H^n T(X)=0$ for every $n\in\Z\setminus\{0\}$; 
the converse if true if the t-structure on $\Df'$ is nondegenerate.

\begin{sublemma}
\begin{itemize}
\item[(i)] The full subcategory of $T$-acyclic objects of $\Cf$ is stable by extensions.

\item[(ii)] Let $0\ra X\ra Y\ra Z\ra 0$ be an exact sequence in $\Cf$. If $X,Y,Z$ are $T$-acyclic, then $0\ra T(X)\ra T(Y)\ra T(Z)\ra 0$
is an exact sequence in $\Cf'$.

\item[(iii)] Let $(X^\bullet,d^\bullet)$ be a complex of objects of $\Cf$ and let $k\in\Z$. If $X^k$ is $T$-acyclic and $\H^{k+1}(X^\bullet,d^\bullet)=0$, then we have
$\H^r T(\Ker d^{k+1})\simeq\H^{r+1}T(\Ker d^k)$ for every $r\in\Z\setminus\{-1,0\}$.

\item[(iv)] Suppose that the t-structure on $\Df'$ is non-degenerate.
Let $(X^\bullet,d^\bullet)$ be an exact complex of $T$-acyclic objects. Suppose that at least one of the following conditions hold:
\begin{itemize}
\item[(a)] The complex $(X^\bullet,d^\bullet)$ is bounded.
\item[(b)] There exists $N\in\Nat$ such that $T(X)\in{\Df'}^{[-N,N]}$ for every $X\in\Ob\Cf$.

\end{itemize}
Then the complex $T(X^\bullet)$ of objects of $\Cf'$ is exact.

\item[(v)] Suppose that the t-structure on $\Df'$ is non-degenerate and
that there exists $N\in\Nat$ such that $T(X)\in{\Df'}^{[-N,N]}$ for every $X\in\Ob\Cf$.
Let $(X^\bullet,d^\bullet)$ be a complex of $T$-acyclic objects of $\Cf$. If $X^\bullet$ is quasi-isomorphic to a bounded complex, then, for $n\in\Nat$ big enough,
the complexes $\tau_{\leq n}X^\bullet$, $\tau_{\geq -n}X^\bullet$ and $\tau_{\leq n}\tau_{\geq -n}X^\bullet$ are complexes of $T$-acyclic objects, and all the maps in the square
\[\xymatrix{\tau_{\leq n}X^\bullet\ar[r]\ar[d] & \tau_{\leq n}\tau_{\geq -n}X^\bullet\ar[d]\\
X^\bullet\ar[r] & \tau_{\geq -n}X^\bullet}\]
are quasi-isomorphisms. In particular, $X^\bullet$ is quasi-isomorphic to a bounded
complex of $T$-acyclic objects.

\end{itemize}
\label{lemma_der_fil}
\end{sublemma}

\begin{proof}
We repeatedly use the fact that a complex $X\ra Y\ra Z$ in $\Cf$ is a short exact sequence if and only if can completed to a distinguished triangle of $\Df$
(see Theorem~1.3.6 of~\cite{BBD}).

Let $0\ra X\ra Y\ra Z\ra 0$ be an exact sequence in $\Cf$. If $X,Z$ are $T$-acyclic, then we have an exact triangle $T(X)\ra T(Y)\ra T(Z)\flnom{+1}$ with
$T(X),T(Z)$ in $\Cf'$, so $T(Y)$ is in $\Cf'$ and the sequence $0\ra T(X)\ra T(Y)\ra T(Z)\ra 0$ is exact in $\Cf'$. This proves (i) and (ii).

In the situation of (iii), we have an exact sequence
\[0\ra\Ker d^k\ra X^k\flnom{d^k} \Im(d^k)=\Ker(d^{k+1})\ra 0,\]
hence a distinguished triangle $T(\Ker d^k)\ra T(X^k)\ra T(\Ker d^{k+1})\flnom{+1}$.
As $\H^r T(X^k)=0$ for $r\ne 0$, the conclusion of (iii) follows from the long exact cohomology sequence of this triangle.

Suppose that we are in the situation of (iv). Let $k\in\Z$, and let $r$ be a positive integer. By (iii), we have isomorphisms
$\H^r T(\Ker d^k)\simeq\H^{r+l}T(\Ker d^{k-l})$ and $\H^{-r}T(\Ker d^k)\simeq\H^{-r-l}T(\Ker d^{k+l})$ for every $l\in\Nat$. Also, we have $\H^{r+l}T(\Ker d^{k-l})=0$ and $\H^{-r-l}T(\Ker d^{k+l})=0$
for $l$ big enough; indeed, if (a) holds, this is true because $X^{k+l}=0$ and $X^{k-l}=0$ for $l$ big enough, and if (b) holds, this is true as soon as $l\geq N$. We deduce that $\H^rT(\Ker d^k)=0$
and $\H^{-r}(\Ker d^k)=0$ for every $k\in\Z$ and every positive integer $r$, hence that all $\Ker d^k$ are $T$-acyclic. The conclusion of (iv) then follows by applying (ii) to the short exact
sequences $0\ra\Ker d^k\ra X^k\ra\Ker d^{k+1}\ra 0$.

Finally, suppose that we are in the situation of (v). As $X^\bullet$ is quasi-isomorphic to a bounded complex, there exists $M\in\Nat$ such that $\H^r(X^\bullet)=0$ for $r\not\in[-M,M]$.
Let $k\in\Nat$. If $k\geq M$ and $r$ is a positive integer, then we have by (iii):
\[\H^{-r}T(\Ker d^k)\simeq\H^{-r-N}T(\Ker d^{k+N})=0\]
and
\[H^r T(\Ker d^{-k})\simeq\H^{r+N}\Ker(d^{-k-N})=0.\]
Similarly, if $k\geq N+M$ and $r$ is a positive integer, then we have by (iii):
\[\H^rT(\Ker d^k)\simeq\H^{r+N}T(\Ker d^{k-N})=0\]
and
\[H^{-r} T(\Ker d^{-k})\simeq\H^{-r-N}\Ker(d^{-k+N})=0.\]
We conclude that $\Ker(d^k)$ is $T$-acyclic for $k\geq N+M$ or $k\leq -N-M$. Also, if $n\leq -N-2$, then $\H^n(X^\bullet)=0$ and $\H^{n+1}(X^\bullet)=0$,
hence $\Coker(d^{n-1})\simeq\Ker(d^{n+1})$. So the two statements of (v) hold for $n\geq N+M+2$.
\end{proof}

The following proposition is essentially proved in Section~A.7 of \cite{Be1} .

\begin{subprop} 
Let $\Df,\Df'$ be triangulated categories, and let $\DF$ (resp. $\DF'$) be an f-category over $\Df$ (resp. $\Df'$).
Suppose that we are given compatible t-structures $(\Df^{\leq 0},\Df^{\geq 0})$ and $(\DF^{\leq 0},\DF^{\geq 0})$ (resp. $({\Df'}^{\leq 0},{\Df'}^{\geq 0})$
and $({\DF'}^{\leq 0},{\DF'}^{\geq 0})$) on $\Df$ and $\DF$ (resp. $\Df'$ and $\DF'$), and denote the hearts of this t-structures 
by $\Cf$ and $\Cf_F$ (resp. $\Cf'$ and $\Cf'_F$). Suppose also that the t-structure on $\Df'$ is non-degenerate.

Let $T:\Df\fl\Df'$ be a triangulated functor. Suppose that the following conditions are satisfied:
\begin{itemize}
\item[(a)] The functor $T$ admits an f-lifting $FT:\DF\ra\DF'$.

\item[(b)] Let $\If:=\{X\in\Cf\mid T(X)\in\Cf'\}$ be the full subcategory of $T$-acyclic objects of $\Cf$. Then the functor
$\Kb(\If)/\Nb(\If)\ra\Db(\Cf)$ is an equivalence, where $\Kb(\If)$ is the category of bounded complexes of objects of $\If$ up to
homotopy and $\Nb(\If)$ is its full subcategory of exact complexes.

\end{itemize}

Then the functor $\Kb(\If)\flnom{\Kb(T)}\Kb(\Cf')\ra\Db(\Cf')$ sends $\Nb(\If)$ to $0$, hence induces a functor
$DT:\Db(\Cf)\ra\Db(\Cf')$, and the following diagram commutes up to natural isomorphism:
\[\xymatrix{\Db(\Cf)\ar[r]^-{DT}\ar[d]_{\real} & \Db(\Cf')\ar[d]^{\real} \\
\Df\ar[r]_-{T} & \Df'}\]

\label{prop_der_fil}
\end{subprop}

\begin{proof} 
The first statement follows from point (iv) of Lemma~\ref{lemma_der_fil}.

We prove the second statement. 
In Theorem~\ref{thm_real}, we defined equivalences $G:\Cf_F\ra\Cb(\Cf)$ and $G':\Cf'_F\ra\Cb(\Cf')$.
By (ii) of the same theorem, the functor $\omega\circ G^{-1}:\Cb(\Cf)\ra\Df$ (resp. $\omega\circ {G'}^{-1}:\Cb(\Df')\ra\Df'$) sends exact complexes to $0$, hence
induces a functor $\Db(\Cf)\ra\Df$ (resp. $\Db(\Cf')\ra\Df'$), which is the realization functor $\real$.
Now let $\If_F$ be the full subcategory of $\Cf_F$ whose objects are the $X$ such that $\Gr^iX[i]\in\Ob\If$ for every $i\in\Z$, i.e. such that $G(X)$
is in $\Cb(\If)$. Proposition~\ref{prop_f_lifting} implies that $FT$ sends
$\If_F$ to $\Cf'_F$, and that the restrictions of $G'\circ FT$ and $\Cb(T)\circ G$ to $\If_F$ are isomorphic. So we get an isomorphism of functors on $\Cb(\If)$:
\[T\circ\omega\circ G^{-1}\simeq\omega\circ FT\circ G^{-1}\simeq\omega\circ{G'}^{-1}\circ\Cb(T).\]
This gives the isomorphism $T\circ\real\simeq\real\circ DT$.
\end{proof}

\begin{subrmk}
\begin{itemize}
\item[(1)] Suppose that we are in the situation of Proposition~\ref{prop_der_fil}.
If moreover $T:\Df\ra\Df'$ is left t-exact and if $\If$ is cogenerating in $\Cf$ (i.e. every object of $\Cf$ has a monomorphism into an object of $\If$), then the functor
$\H^0(T):\Cf\ra\Cf'$ admits a right derived functor $RT:\Dpl(\Cf)\ra\Dpl(\Cf')$ by Proposition~13.3.5 of~\cite{KS1}, and the construction of $RT$ in that proposition shows that
$RT$ sends that $\Db(\Cf)$ to $\Db(\Cf')$ and that $DT$ is the restriction of $RT$ to $\Db(\Cf)$. We have a similar statement if $T$ is right t-exact and $\If$ is generating in $\Cf$.

\item[(2)] By Proposition~10.2.7 of~\cite{KS1}, to check assumption (b) in the statement of Proposition~\ref{prop_der_fil}, it suffices to find triangulated subcategories
$\Df_0=\Kb(\Cf)\supset\Df_1\supset\ldots\supset\Df_r=\Kb(\If)$ of $\Kb(\Cf)$ such that, for every $i\in\{1,\ldots,r-1\}$, one of the following conditions holds:
\begin{itemize}
\item For every $X\in\Ob\Df_i$, there exists a quasi-isomorphism $X\ra Y$ with $Y\in\Ob\Df_{i+1}$.
\item For every $X\in\Ob\Df_i$, there exists a quasi-isomorphism $Y\ra X$ with $Y\in\Ob\Df_{i+1}$.

\end{itemize}

\end{itemize}
\label{rmk_der_fil}
\end{subrmk}

\subsection{Application to horizontal perverse sheaves}
\label{f-horizontal}

In this section, we explain how to construct the f-categories underlying the triangulated categories of the main text,
as well as f-liftings of the triangulated functors between these categories.

\subsubsection*{$\ell$-adic complexes}

Let $X$ be a scheme and $E$ be an algebraic extension of $\Q_\ell$.
We use the notation of Section~\ref{l-adic_complexes}.

By Example~\ref{ex_fil_der} applied to $\Af=\Sh(X_\proet,E)$ and Proposition~\ref{prop_f_category3}
applied to the bounded filtered category of $\Af$ and the full subcategory $\Dbc(X,E)$ of
$\Dpl(\Af)$, we get an f-category $\DFbc(X,E)$ over the triangulated category $\Dbc(X,E)$.

Let $f:X\ra Y$ be a morphism of finite type.
We have triangulated
functors $f_*$, $\Ltimes$ and $\Homf_X$ on $\D^+(X_\proet,E)$, $\D^-(X_\proet,E)\times\D(X_\proet,E)$ and $\D(X_\proet,E)^\circ\times\Dpl(X_\proet,E)$,
and they are all derived functors, so we can extend them
to triangulated functors on the filtered derived categories
$\DFpl(X_\proet,E)$, $\DF^-(X_\proet,E)\times\DF(X_\proet,E)$ and $\DF(X_\proet,E)^\circ\times\DFpl(X_\proet,E)$,
using Proposition~\ref{prop_fil_der}. 
\footnote{For
$\Homf_X$ and $\otimes$, 
we could also use V.2 of \cite{CT1}.}
Next, if $X$ has a dimension function, using the fact that
$\Dpl(X_\proet,E)$ is equivalent to the full subcategory
of $\DFpl(X_\proet,E)$ with objects the $K$ such that
$\Gr^i K=0$ for $i\not=0$, we can see the dualizing
complex $\widehat{K}_X$ as an
object of $\DFpl(X_\proet,E)$, and so we can define $D_X$ on
$\DF(X_\proet,E)$ by $D_X(K)=\Homf_X(K,K_X)$. Finally, we extend
the inverse image functor. The functor $f_*:\Dpl(X_\proet,E)\fl
\Dpl(Y_\proet,E)$ has a left adjoint $f^*$, given by
$f^*K=f^*_\naive K\Ltimes_{f^*_\naive E_X}E_Y$, where
$f^*_\naive$ is the regular pullback functor (see Remark
6.8.15 of \cite{BS}). The functor $f^*_\naive$ is exact and so
extends to $\DFpl(Y_\proet,E)$ by Proposition~\ref{prop_fil_der}, and 
we can see $f^*_\naive E_Y$ and $E_X$ as objects of $\DFpl(X_\proet,E)$,
so $f^*$ also extends. 
Restricting all these functors to the subcategories $\DFbc$, we get f-liftings of
the functors $f_*$, $f^*$, $\otimes$, $\Homf_X$ and $D_X$ (when $X$ has a dimension function
for the last one) on the categories $\Dbc$.

\subsubsection*{Perverse t-structure}
\label{perverse_test}

Fix a scheme $X$ as in section \ref{def_perverse}.
Applying Proposition~\ref{prop_t_DF} 
to the f-lifting $\DFc(X,E)$ of the triangulated category $\Dbc(X,E)$ 
and to the perverse t-structure on $\Dbc(X,E)$,
we get a compatible t-structure on $\DFbc(X,E)$.
Then Theorem \ref{thm_real} gives a triangulated realization functor
$\real:\Db\Perv(X,E)\fl\Dbc(X,E)$ extending the inclusion $\Perv(X,E)\subset\Dbc(X,E)$.

We can apply Proposition~\ref{prop_der_fil} and Remark~\ref{rmk_der_fil} to any functor between
categories $\Dbc(X,E)$ that is t-exact for the perverse t-structures and constructed from
the 6 operations $f_*,f^*,f_!,f^!,\otimes,\Homf_X$.

For example,
if $Y$ is a scheme satisfying the same conditions as $X$, and if 
$T=f_*$ or $T=f_!$ for $f:X\to Y$ quasi-finite affine (resp.
$T=f^*[d]$ for $f:Y\to X$ smooth of relative dimension $d$), then 
we get the
commutative diagrams of point (i) (resp. (ii)) of Proposition~\ref{prop_comp_real1}. 
Similarly, taking for $T=D_X:\Dbc(X,E)^\op\to\Dbc(X,E)$ the duality functor, we
get point (iii) of the same Proposition, and taking $T$ to be an appropriate shift of
the restriction to the generic fiber functor, we get Proposition~\ref{prop_comp_real2}).

\subsubsection*{Horizontal constructible complexes}

We use the notation of Section~\ref{section_def_hor}.
In particular, $k$ is a field of finite type over its prime field, $X$ is a separated
finite type $k$-scheme and $E$ is an algebraic extension of $\Q_\ell$.

We define an f-category $\DFbh(X,E)$ over $\Dbh(X,E)$ using Proposition~\ref{prop_f_colimit}.
First, for every $(A,\X)\in\Ob\Uf X$, we get an f-category $\DFbc(\X,E)$ over $\Dbc(\X,E)$ by
applying Proposition~\ref{prop_f_category3} to the triangulated subcategory $\Dbc(\X,E)$ of
the bounded derived category of pro\'etale sheaves of $E$-modules on $\X$.
Next, if $(A,\X)\ra (A',\X')$ is a morphism in $\Uf X$, then the functor
$\Dbc(\X,E)\ra\Dbc(\X',E)$ is the restriction of the trivial derived functor of
a functor $\Sh(\X_\proet,E)\ra\Sh(X_\proet,E)$, so, by Proposition~\ref{prop_der_fil},
it admits an f-lifting $\DFbc(\X,E)\ra\DFbc(\X',E)$.
We set
\[\DFbh(X,E)=2-\varinjlim_{(A,\X)\in\Ob\Uf X}\DFbc(\X,E);\]
this is an f-category over $\Dbh(X,E)$.

Moreover, if $\eta^*:\Dbh(X,E)\fl \Dbc(X,E)$ is the exact
functor induced
by the restriction functors $\Dbc(\X,E)\fl
\Dbc(\X\otimes_Ak,E)\flnom{u^*}
\Dbc(X,E)$, for $(A,\X,u)\in\Ob\Uf X$, then $\eta^*$ admits an f-lifting, by
Proposition~\ref{prop_der_fil}.

By Proposition~\ref{prop_t_DF}, the perverse t-structure on $\Dbh(X,E)$ of
Section~\ref{section_hor}
lifts to a compatible t-structure on $\DFbh(X,E)$, so
by Theorem \ref{thm_real},
we get a realization functor $\real:\Db\Perv_h(X,E)\fl\Dbh(X,E)$.

\subsubsection*{Mixed perverse sheaves}

We use the notation of Section~\ref{def_mixed}, so $X$ and $E$ are as before.

Applying Proposition~\ref{prop_f_category3} to the f-category $\DFbh(X,E)$ lifting
$\Dbh(X,E)$, we get an f-category $\DFbm(X,E)$ lifting $\Dbm(X,E)$.
The realization functor $\real:D^b\Perv_h(X,E)\fl
\Dbh(X,E)$ restricts to a functor
\[\real:D^b\Perv_m(X,E)\fl\Dbh(X,E),\]
whose essential image is contained in $\Dbm(X,E)$ by
definition of $\Dbm(X,E)$; this is also the realization functor that we would
get by applying Proposition~\ref{prop_t_DF} 
and Theorem \ref{thm_real} to the f-category $\DFbm(X,E)$.

\bibliographystyle{alpha}
\bibliography{sur_Q}

\section*{Acknowledgements}

The author would like to thank 
the \'Ecole Normale Sup\'erieure de Lyon
and the Universit\'e Lyon~1
for their hospitality and support during
the academic year 2017--2018. Most of this papers was written while she was a
professor at Princeton University.
Also, she has greatly benefited from conversations with Florian Ivorra
about the formalism of crossed functors and Beilinson's construction of
nearby and vanishing cycles. Last but not least, she would like to thank
an anonymous referee for numerous insightful comments and corrections.

\bigskip

\small

\noindent
{\textsc{ENS de Lyon site Monod
UMPA UMR 5669 CNRS
46, allée d’Italie
69364 Lyon Cedex 07, FRANCE.
}} \hfill\break
{\texttt{sophie.morel@ens-lyon.fr}}.

\end{document}